\documentclass[12pt,a4paper]{amsart}

\usepackage{enumerate}
\usepackage{amsmath,amsthm,verbatim,amssymb,amsfonts,amscd,graphicx}
\usepackage{mathtools}
\usepackage{graphics}
\usepackage{hyperref}
\usepackage{mathrsfs}
\usepackage[left=15mm,top=15mm,right=15mm,bottom=15mm]{geometry}
\usepackage[latin2]{inputenc}
\usepackage{tikz-cd}

\theoremstyle{plain}
\newtheorem{theorem}{Theorem}[section]
\newtheorem{corollary}[theorem]{Corollary}
\newtheorem*{corollary*}{Corollary}
\newtheorem{lemma}[theorem]{Lemma}
\newtheorem{proposition}[theorem]{Proposition}

\newtheorem{conjecture}[theorem]{Conjecture}

\newtheorem{example}[theorem]{Example}
\newtheorem*{example*}{Example}
\newtheorem{remark}[theorem]{Remark}

\theoremstyle{definition}
\newtheorem{definition}[theorem]{Definition}
\newtheorem*{definition*}{Definition}
\numberwithin{equation}{theorem}

\newcommand{\mm}{\mathfrak{m}}
\newcommand{\NN}{\mathbb{N}}

\begin{document}
\title{Density functions for filtrations of graded ideals}
\author{Suprajo Das}
\address{Mathematical and Physical Sciences Division, School of Arts and Sciences, Ahmedabad University, Navrangpura, Ahmedabad, Gujarat 380009, India}
\email{suprajo.das@ahduni.edu.in}
\author{Hoang Le Truong}
\address{University of Engineering and Technology, Vietnam National University, Hanoi, Vietnam}
\email{hltruong@vnu.edu.vn}
\maketitle

\begin{abstract}
We study density functions associated to filtrations of graded ideals in standard graded domains. Extending earlier work, we develop a general theory for arbitrary filtrations and establish their fundamental properties, including existence, continuity, log-concavity, and integral representations. We show that every density function is locally uniformly approximable by piecewise polynomial density functions arising from truncated Noetherian filtrations. We prove differentiability properties of saturation density functions and give a numerical characterization of when the symbolic powers of a graded radical ideal in a polynomial ring coincide with the integral closures of its ordinary powers. Finally, we establish several inequalities for the mixed multiplicities of equigenerated ideals.
\end{abstract}

\section{Introduction}

An important theme in commutative algebra and algebraic geometry is the extraction of continuous invariants from discrete algebraic data. A useful instance of this idea is the notion of a \emph{density function}: a measurable function on $\mathbb{R}$ that encodes an algebraic invariant on an $\mathbb{R}$-scale, typically retaining far more information than the invariant alone. In commutative algebra, density functions were introduced by Trivedi \cite{Tri18} to study the Hilbert-Kunz multiplicity of graded ideals of finite colength, and have since emerged to be a useful and versatile tool. Some recent papers on this topic include \cite{MT19}, \cite{MT20}, \cite{Tri20}, \cite{TW21}, \cite{TW23}, \cite{CSTZ24}, \cite{MM25}, \cite{DRT25}, and \cite{DRT26}. Constructions of this nature play an equally prominent role in algebraic geometry, particularly in the theory of $K$-stability of Fano varieties, see \cite{Li17}, \cite{Fuj18}, and \cite{BJ20}.

Throughout this article, let $R=\oplus_{m\geq 0}R_m$ be a $d$-dimensional standard graded domain over an algebraically closed field $R_0=k$, and let $\mm_R = \oplus_{m\geq 1}R_m$ denote its graded maximal ideal. Given a filtration $\mathbb{I}=\{I_n\}_{n\in\mathbb N}$ of nonzero graded ideals in $R$, its density function $f_{\mathbb{I}} \colon \mathbb{R} \to \mathbb{R}$ is defined by
\[
f_{\mathbb{I}}(x) = \limsup_{n\to\infty} \dfrac{\dim_k {(I_n)}_{\lfloor xn\rfloor}}{n^{d-1}/d!}.
\]
Informally, this function captures the asymptotic growth of the graded components ${\left(I_n\right)}_m$ that are concentrated near rays of slope $x$ in the $(m,n)$-plane.

The first objective of this manuscript is to develop a general theory of density functions for arbitrary (possibly non-Noetherian) filtrations. This extends the work of \cite{DRT25}, where the case of Noetherian filtrations and their saturations was studied.

\subsection*{Limit existence and the Newton-Okounkov body framework}
Our main existence result is Theorem \ref{mainthm}, which forms the core of Section \ref{section 4}. We prove that the density function $f_{\mathbb{I}}$ exists as a genuine limit at every point other than the \emph{Waldschmidt constant} $$\alpha_{\mathbb{I}} = \lim_{n\to\infty}\frac{\min\{m\in\mathbb{N}\mid {(I_n)}_m\neq 0\}}{n}.$$ Moreover, it vanishes identically below $\alpha_{\mathbb{I}}$, and on the open half-line $(\alpha_{\mathbb{I}},\infty)$, it is positive, nondecreasing, log-concave, and continuous, with differentiability outside a countable subset. It also admits an integral representation $$\int_{a}^b f_{\mathbb{I}}(x)dx = \lim_{n\to\infty}\dfrac{\sum_{m=\lfloor an\rfloor}^{\lfloor bn\rfloor}\dim_k {\left(I_n\right)}_m}{n^d/d!},$$ for every compact interval $[a,b]\subset \mathbb{R}$.

The key principle behind these results is the translation of asymptotic algebraic data into convex geometry via \emph{Newton-Okounkov bodies}. Following the foundational work of Okounkov \cite{Oko96}, Lazarsfeld and Musta\c{t}\u{a} \cite{LM09}, Kaveh and Khovanskii \cite{KK12}, Boucksom \cite{Bou14}, and Cutkosky \cite{Cut14}, we associate to each rational slope $x=p/q>\alpha_{\mathbb{I}}$, a strongly nonnegative semigroup $\Gamma^{\mathbb{I}}_{(p,q)} \subseteq \mathbb{N}^{d+1}$, constructed from a rank one monomial valuation dominating $R_{\mathfrak{m}_R}$. The graded components ${(I_{qn})}_{pn}$ are then encoded by the lattice points in the slice $\Gamma^{\mathbb{I}}_{(p,q)} \cap \left(\mathbb{N}^d \times \{qn\}\right)$, and general results on valuation semigroups show that the asymptotics of these lattice-point counts are governed by the Euclidean volumes of the associated Newton-Okounkov bodies $\Delta_x(\mathbb{I})$. An important outcome is the existence of a \emph{global Newton-Okounkov body} $\Delta(\mathbb{I}) \subseteq \mathbb{R}^{d+1}$, whose horizontal slices $\Delta_x(\mathbb{I})$ are the compact convex bodies governing the density function at each slope $x$, and whose truncated volume controls the integrated density function, see Remark \ref{global_okounkov}. From this interpretation, the log-concavity of $f_{\mathbb{I}}$ is a consequence of the Brunn-Minkowski inequality applied to these slices, continuity follows from log-concavity, and the integral representation is deduced from the Lebesgue's dominated convergence theorem. Related ideas have appeared previously in \cite{CMM24}.

These constructions can be related to the notion of the \emph{concave transform} introduced by Boucksom and Chen \cite{BC11} in the context of filtered graded linear series. We also propose a notion of density functions for filtrations of ideals in certain local rings; see Section \ref{local_density}. The idea is to reduce to the graded setting by passing to the associated graded ring, after which Theorem \ref{mainthm} applies.

\subsection*{Noetherian approximation and polynomial behavior}
The passage from Noetherian to arbitrary filtrations is mediated by an approximation theorem. For every integer $a\ge1$, we associate to $\mathbb{I}$ its \emph{$a$-th truncated filtration} $\mathbb{I}^{[a]}$, the Noetherian filtration generated by the ideals $I_1,\ldots,I_a$. Theorem \ref{approx_noeth_filt} establishes that the density functions $\{f_{\mathbb{I}^{[a]}}\}_{a=1}^{\infty}$ converge locally uniformly to $f_{\mathbb{I}}$. Since each $f_{\mathbb{I}^{[a]}}$ is piecewise polynomial by \cite{DRT25}, this realizes $f_{\mathbb{I}}$ as a locally uniform limit of piecewise polynomial functions. The idea of employing truncated filtrations is adapted from \cite{CSS19}.

We present two applications of this approximation result. The first is Theorem \ref{even_linear}, which shows that if $\mathbb{I}$ has linearly bounded generating degrees, then its density function $f_{\mathbb{I}}$ eventually coincides with a real polynomial of degree $d-1$. Every Noetherian filtration satisfies this hypothesis, as do certain non-Noetherian ones; see \cite{HHT02}. The second is Theorem \ref{density_integral}, which shows that density functions are invariant under taking integral closures. For Noetherian filtrations, the converse implication is known by \cite{DRT26}. More general statements for $\mathfrak{m}_R$-primary filtrations in local rings can be found in \cite{Cut21} and \cite{BLQ24}.

\subsection*{Saturation filtrations and differentiability of density functions}

For an arbitrary filtration $\mathbb{I}=\{I_n\}_{n\in\mathbb N}$, the density function $f_{\mathbb{I}}$ has at most countably many points of non-differentiability, though this set can still be dense. This motivates the study of filtrations whose density functions are differentiable away from a finite set of points. The property holds automatically for Noetherian filtration, since in that case $f_{\mathbb{I}}$ is a piecewise polynomial function with only finitely many pieces. We suspect that it persists for saturated filtrations, even though they need not be Noetherian. In fact, there are examples of graded ideals $I$ in a polynomial ring $R$ for which the saturation filtration $\left\{I^n \colon \mathfrak{m}_R^{\infty}\right\}_{n\in\mathbb{N}}$ fails to be Noetherian, see \cite{Nagata59} and \cite{CHST05}. Based on our results we are led to the following conjecture:

\begin{conjecture}\label{conj1}
Let $\mathbb{I}=\{I_n\}_{n\in\mathbb{N}}$ be a filtration of graded ideals in $R$, and $J\subseteq \mathfrak{m}_R$ be a graded ideal. Consider the saturation filtration $\mathbb{I}^{\mathrm{sat}}_J = \left\{I_n \colon J^{\infty}\right\}_{n\in\mathbb{N}}$. Then the corresponding saturation density function $f_{\mathbb{I}^{\mathrm{sat}}_J}$ is continuous on $\mathbb{R}$, and continuously differentiable away from finitely many points.
\end{conjecture}

We are able to prove this conjecture under any of the following hypotheses:
\begin{enumerate}
 \item[$(i)$] $R$ is a polynomial ring over a field (Theorem \ref{saturated_density_diff}).
 \item[$(ii)$] $\mathbb{I}=\{I_n\}_{n\in\mathbb{N}}$ is a Noetherian filtration of graded ideals in $R$ (Theorem \ref{symb_geom}).
 \item[$(iii)$] $\dim R = 2$ (Theorem \ref{dimension_two}).
\end{enumerate}

For polynomial rings $R$, the argument proceeds by passing to generic initial ideals with respect to the reverse lexicographic order, which preserve Hilbert functions and also commute with saturations. The latter property generally fails for ordinary initial ideals. This transforms the problem into analyzing a filtration of monomial ideals in one fewer variable, allowing us to represent the saturation density function as the integral of a lower-dimensional density function. The continuous differentiability of $f_{\mathbb{I}^{\mathrm{sat}}_J}$ on $(\alpha_{\mathbb{I}^{\mathrm{sat}}_J},\infty)$ then follows. This approach is inspired by \cite{May14} and \cite{DST15}.

\subsection*{Generalized symbolic powers and birational geometry} Conjecture \ref{conj1} is also valid for arbitrary standard graded domains $R$ provided $\mathbb{I}=\{I_n\}_{n\in\mathbb{N}}$ is a Noetherian filtration. After passing to a suitable Veronese, the filtration may be reduced to the adic filtration $\mathbb{I}=\{I^n\}_{n\in\mathbb{N}}$ associated to a single graded ideal $I\subseteq R$. In this case, Theorem \ref{symb_geom} gives a geometric realization of the saturation density function $f_{\mathbb{I}^{\mathrm{sat}}_J}$. More precisely, there exist a normal projective variety $X$, a projective birational morphism $\varphi \colon X \to \mathrm{Proj}\;R$ dominating the blow-up of $\mathrm{Proj}\;R$ along the ideal sheaf associated to $I$, and an effective Cartier divisor $F$ on $X$ with exceptional support, such that $$f_{\mathbb{I}^{\mathrm{sat}}_J}(x) = d\cdot \mathrm{vol}_X(xH-F)\quad \forall x\in \mathbb{R}.$$ Here, $H$ is the pullback of a hyperplane section to $X$, and $\mathrm{vol}_X$ is the volume function on $N^1(X)_{\mathbb{R}}$ in the sense of Lazarsfeld \cite{Laz04a}. This description identifies the Waldschmidt constant with the pseudoeffective threshold: $$\alpha_{\mathbb{I}^{\mathrm{sat}}_J} = \min\left\{x\in \mathbb{R} \mid xH-F\;\text{is pseudoeffective}\right\}.$$ The continuity of $f_{\mathbb{I}^{\mathrm{sat}}_J}$ then follows from the continuity of the volume function established by Lazarsfeld \cite{Laz04a}. Furthermore, the continuous differentiability of $f_{\mathbb{I}^{\mathrm{sat}}_J}$ on $(\alpha_{\mathbb{I}^{\mathrm{sat}}_J},\infty)$ follows from the $C^1$-differentiability of the volume function on the big cone, proved independently by Lazarsfeld and Musta\c{t}\u{a} \cite{LM09}, and Boucksom, Favre, and Jonsson \cite{BFJ09}.

The geometric formulation suggests several natural questions and potential applications. For instance, one may ask whether the saturation density function $f_{\mathbb{I}^{\mathrm{sat}}_J}$ is a piecewise polynomial. In dimension two, the answer is affirmative under more general hypotheses, see Theorem \ref{dimension_two}. In dimension three, this is established in Theorem \ref{symb_geom} though with potentially infinitely many pieces. The proof relies crucially on the locally polynomial behavior of volume functions on nonsingular projective surfaces \cite{BKS04}. In higher dimensions, the situation is less clear, owing to the pathological behavior of volume functions, as illustrated in \cite{Cut86}, \cite{BKS04}, and \cite{KLM13}.

The special case $J=\mathfrak{m}_R$ was treated earlier in \cite{DRT25}. Extending the analysis to arbitrary ideals $J$ involves additional technical difficulties.

\subsection*{Symbolic powers and integral closures}
For a given ideal $I$, the symbolic powers $\{I^{(n)}\}_{n\in\mathbb{N}}$ of $I$ can be recovered as a saturation filtration $\left\{I^n \colon J^{\infty}\right\}_{n\in\mathbb{N}}$ for a suitable choice of an ideal $J$. The problem of characterizing ideals whose symbolic and ordinary powers coincide is classical and notoriously difficult. As an application, we obtain a Rees-type numerical characterization for the equality of the symbolic powers of a graded radical ideal in a polynomial ring with the integral closures of its ordinary powers; see Corollary \ref{sym_reg}. Our main ingredient is a result from \cite{FKL16} concerning the equality of volume functions of divisors.

\subsection*{Mixed multiplicities of filtrations of equigenerated ideals}
The study of mixed multiplicities of $\mathfrak{m}_R$-primary ideals in a Noetherian local ring $(R,\mathfrak{m}_R)$ goes back to the foundational work of Bhattacharya \cite{Bha57}, Rees \cite{Ree61b}, and Risler and Teissier \cite{Tei73}. A generalization was obtained by Katz and Verma \cite{KV89}, who extended the notion to collections of ideals that are not necessarily $\mathfrak{m}_R$-primary. Trung and Verma \cite{TV07} gave a polyhedral description of mixed multiplicities for monomial ideals in terms of mixed volumes of suitable polytopes. These invariants have also played a role in combinatorics: Huh \cite{Huh12} employed mixed multiplicities in his analysis of the coefficients of chromatic polynomials of graphs.

In the final section \ref{section 8}, we study the (generalized) mixed multiplicities $\{e_i(\mathfrak{m}_R\vert I)\}_{i=1}^{d-1}$, where $I\subseteq R$ is a nonzero graded ideal generated in a single degree. The key ingredient is Proposition \ref{prop_intersect} that realizes these mixed multiplicities as intersection numbers of nef divisors on a suitable projective variety. This interpretation allows us to derive inequalities among these mixed multiplicities from known inequalities for intersection numbers of nef divisors. In Corollary \ref{cor_huh}, the Khovanskii-Teissier inequalities \cite[Section 1.6.A]{Laz04a} yield that $\{e_i(\mathfrak{m}R\vert I)\}_{i=1}^{d-1}$ is log-concave with no internal zeros, thus recovering a result of Huh \cite{Huh12}. Our proof differs from that of Huh as we only use intersection theory of divisors, which is easier compared to that of algebraic cycles. In Corollary \ref{cor_rev}, the reverse Khovanskii-Teissier inequality \cite{JL23} is used to obtain a new inequality for mixed multiplicities. It is well-known that the epsilon multiplicity is bounded above by the $j$-multiplicity \cite{UV08, UV11}, while little is known about lower bounds. In Corollary \ref{cor_disk}, we apply Diskant's inequality \cite{BFJ09} to obtain a sharp, computable lower bound for the epsilon multiplicity of an equigenerated ideal. These results extend to filtrations of equigenerated ideals via ``volume=multiplicity''-type formulas, see Proposition \ref{prop_filt}.

The article is organized as follows: Section \ref{cones} collects preliminary material on cones associated to semigroups, following \cite{LM09} and \cite{KK12}. Section \ref{section 4} develops the general theory of density functions for arbitrary filtrations and proves the main existence theorem (Theorem \ref{mainthm}), including the convex-geometric description via Newton-Okounkov bodies. Section \ref{section 5} establishes the Noetherian approximation result (Theorem \ref{approx_noeth_filt}), eventual polynomial behavior under linear growth assumptions (Theorem \ref{even_linear}), and invariance of density functions under integeral closures (Theorem \ref{density_integral}). Section \ref{section 6} treats saturation filtrations and their differentiability properties, answering Conjecture \ref{conj1} for polynomial rings. Section \ref{section 7} studies the density functions for generalized symbolic powers and their birational geometry (Theorem \ref{symb_geom}), their behavior in low dimensions (Theorems \ref{dimension_two} and \ref{dimension_three}), and the equality of these functions with those arising from ordinary powers (Theorem \ref{adic=symb}). In Section \ref{section 8}, we interpret the mixed multiplicities of equigenerated ideals as intersection numbers (Proposition \ref{prop_intersect}) and use this interpretation to derive several inequalities for these mixed multiplicities (Corollaries \ref{cor_huh}, \ref{cor_rev}, and \ref{cor_disk}). These results are also extended to filtrations of equigenerated ideals via ``volume=multiplicity''-type formulas (Proposition \ref{prop_filt}).

\section{Cones associated to semigroups}\label{cones}

Here, we summarize some results of Kaveh and Khovanskii \cite{KK12}.

Suppose that $S$ is an additive subsemigroup of $\mathbb{Z}^d\times \mathbb{N}$ which is not contained in $\mathbb{Z}^d\times \{0\}$. Let $L(S)$ be the $\mathbb{R}$-subspace of $\mathbb{R}^{d+1}$ which is generated by $S$. Let $M(S) = L(S)\cap \left(\mathbb{R}^d\times \mathbb{R}_{\geq 0}\right)$ with boundary $\partial M(S) = L(S)\cap \left(\mathbb{R}^d\times \{0\}\right)$. Let $\mathrm{Con}(S) \subset L(S)$ be the closed convex cone spanned by $S$, i.e., $$\mathrm{Con}(S) = \text{closure of}\;\;\Big\{\sum_{i}a_i\mathbf{v}_i \mid a_i\in\mathbb{R}_{\geq 0}\;\;\text{and}\;\;\mathbf{v}_i\in S\Big\}.$$

$S$ is called \emph{strongly nonnegative} (\cite[Section 1.4]{KK12}) if $\mathrm{Con}(S)$ intersects $\partial M(S)$ only at the origin (this is equivalent to being strongly admissible \cite[Definition 1.9]{KK12}) since with our assumptions, $\mathrm{Con}(S) \subset \mathbb{R}^d\times \mathbb{R}_{\geq 0}$, so the ridge of $S$ (i.e., the biggest linear subspace contained in $\mathrm{Con}(S)$) must be contained in $\partial M(S)$. We now introduce some more notations from \cite[Section 1.4]{KK12}.
\begin{align*}
 S_n &= S \cap \left(\mathbb{R}^d\times \{n\}\right).\\
 q(S) &= \dim \partial M(S) = \dim L(S)-1.\\
 G(S) &= \text{subgroup of $\mathbb{Z}^{d+1}$ generated by $S$}.\\
 m(S) &= \left[\mathbb{Z} \colon \pi(G(S))\right],\;\text{where $\pi\colon \mathbb{R}^{d+1}\to \mathbb{R}$ is the projection map onto the last factor}\\
 \Delta(S) &= \mathrm{Con}(S)\cap \left(\mathbb{R}^d\times \{m(S)\}\right);\;\text{the Newton-Okounkov body of $S$}.\\
 \partial M(S)_{\mathbb{Z}} &= \partial M(S) \cap \mathbb{Z}^{d+1} = L(S)\cap \left(\mathbb{Z}^d\times \{0\}\right).\\
 \mathrm{ind}(S) &= \left[\partial M(S)_{\mathbb{Z}} \colon G(S)\cap \partial M(S)_{\mathbb{Z}}\right] = \left[L(S)\cap \left(\mathbb{Z}^d\times \{0\}\right) \colon G(S)\cap \left(\mathbb{Z}^d\times \{0\}\right)\right].
\end{align*}

If $S$ is strongly nonnegative then $\Delta(S)$ is a compact convex set (i.e., convex body) and $$\Delta(S) = \overline{\bigcup\limits_{n\in\mathbb{Z}_{\geq 1}}\Big\{\tfrac{1}{n}\cdot \mathbf{v}\mid \left(\mathbf{v},m(S)n\right)\in S_{m(S)n}\Big\}}.$$ We write $\mathrm{vol}_{q(S)}\left(\Delta(S)\right)$ for the integral volume of $\Delta(S)$ (see \cite[Definition 1.13]{KK12}). This volume is computed using the translation of the integral measure on $\partial M(S)$.

\begin{theorem}[Kaveh and Khovanskii]\label{KK1} Suppose that $S$ is strongly nonnegative. Then $$\lim_{n\to\infty}\dfrac{\#S_{m(S)n}}{n^{q(S)}} = \dfrac{\mathrm{vol}_{q(S)}\left(\Delta(S)\right)}{\mathrm{ind}(S)}.$$
\end{theorem}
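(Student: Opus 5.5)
The plan is to reduce to the case where the semigroup generates the whole lattice, and then count lattice points in dilates of the convex body from both sides.

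\emph{Normalization.} Let $G=G(S)$, a free abelian group of rank $q(S)+1$ spanning $L(S)$. The projection $\pi(G)=m(S)\mathbb{Z}$, so $G_0=G\cap(\mathbb{Z}^d\times\{0\})$ has rank $q(S)$. Choose $\gamma\in G$ with $\pi(\gamma)=m(S)$. Then $G=G_0\oplus\mathbb{Z}\gamma$, and the level set $G\cap(\mathbb{R}^d\times\{m(S)n\})$ equals $n\gamma+G_0$. After an affine change of coordinates on $L(S)$ sending $G_0$ to $\mathbb{Z}^{q}$ and the last coordinate divided by $m(S)$, we may assume $S\subset\mathbb{Z}^q\times\mathbb{N}$ generates $\mathbb{Z}^{q+1}$ and $m(S)=1$. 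Under this change, Lebesgue volume on slices gets multiplied by the covolume of $G_0$ inside $\partial M(S)_{\mathbb{Z}}$, which is $\mathrm{ind}(S)$. This accounts for the factor $1/\mathrm{ind}(S)$. It therefore suffices to prove $\#S_n/n^q\to\mathrm{vol}(\Delta(S))$ in this normalized case.

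\emph{Upper bound.} Since $S_n\subset \mathrm{Con}(S)\cap(\mathbb{R}^q\times\{n\})=n\Delta(S)$, and $\Delta(S)$ is compact (by strong nonnegativity), we have $\#S_n\le\#(n\Delta\cap\mathbb{Z}^q)$. This count equals $n^q\mathrm{vol}(\Delta)+o(n^q)$ by standard lattice point counting for convex bodies, since the boundary of a convex body is Jordan null. If $\Delta$ has empty interior the same estimate gives $o(n^q)$, and both sides are $0$.

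\emph{Lower bound (main step).} This is Khovanskii's theorem on finitely generated semigroups. Pick finitely many elements of $S$ generating $\mathbb{Z}^{q+1}$ as a group, and let $S'\subset S$ be the subsemigroup they generate, enlarged by any further finite set of elements of $S$. Then $\mathrm{Con}(S')$ is a rational polyhedral cone, and these cones exhaust $\mathrm{Con}(S)$ from inside. Hence $\mathrm{vol}(\Delta(S'))\uparrow\mathrm{vol}(\Delta(S))$, because $\Delta(S)$ is the closure of the union of the $\Delta(S')$.

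For finitely generated $S'$ with $G(S')=\mathbb{Z}^{q+1}$, Khovanskii's theorem says there is a vector $v$ in the cone such that $(v+\mathrm{Con}(S'))\cap\mathbb{Z}^{q+1}\subset S'$. This is proven by choosing $v$ as a sum of enough generators so that every lattice point in the fundamental parallelepipeds of a simplicial subdivision is reachable. It gives $\#S'_n\ge\#\big((n\Delta(S')-v')\cap\mathbb{Z}^q\big)$ for a shifted body, and this is $n^q\mathrm{vol}(\Delta(S'))-o(n^q)$. Therefore $\liminf\#S_n/n^q\ge\mathrm{vol}(\Delta(S'))$ for each such $S'$, and letting $S'$ grow gives $\liminf\ge\mathrm{vol}(\Delta(S))$.

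The main obstacle is the lower bound, specifically the "saturation up to translation" statement for finitely generated semigroups. It is handled by the standard argument that $\mathbb{Z}^{q+1}=S'-S'$: one adds a large fixed element of $S'$ to absorb the negative parts of representatives of each coset, using that a point deep inside the cone is a nonnegative combination of generators with large coefficients. Everything else is bookkeeping of the index and the normalization.
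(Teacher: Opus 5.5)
Your proposal is correct. It is essentially the Kaveh--Khovanskii argument, which the paper does not reprove but cites from \cite[Corollary 1.16]{KK12}: normalize so that $G(S)$ becomes $\mathbb{Z}^{q(S)+1}$, with the index entering as the covolume of $G(S)\cap(\mathbb{Z}^d\times\{0\})$; bound $\#S_n$ above by lattice points in $n\Delta(S)$; and bound it below using Khovanskii's theorem for finitely generated subsemigroups together with exhaustion of $\mathrm{Con}(S)$ by their cones. Two points of bookkeeping, neither of which affects the argument: with $v=(v',h)\in S'$ the lower bound should read $\#S'_n\ge\#\big((v'+(n-h)\Delta(S'))\cap\mathbb{Z}^q\big)$, and the convergence $\mathrm{vol}(\Delta(S'))\to\mathrm{vol}(\Delta(S))$ uses that a countable increasing union of the convex sets $\Delta(S')$ is convex, so it differs from its closure $\Delta(S)$ only by a null set.
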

This is proven in \cite[Corollary 1.16]{KK12}. With our assumptions, we have that $S_n = \emptyset$ if $m(S)$ does not divide $n$ and the limit is positive, since $\mathrm{vol}_{q(S)}\left(\Delta(S)\right)>0$.

\begin{theorem}\label{KK2}
Suppose that $q$ is a positive integer such there exists a sequence $n_i \to\infty$ of positive integers such that the sequence $\frac{\#S_{m(S)n_i}}{n_i^q}$ is bounded. Then $S$ is strongly nonnegative with $q(S)\leq q$.
\end{theorem}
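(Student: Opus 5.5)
The plan is to reduce to finitely generated subsemigroups of $S$, to which Theorem \ref{KK1} applies, and to compare lattice-point counts. Write $m=m(S)$ and put $\ell(v)$ for the last coordinate of $v\in\mathbb{R}^{d+1}$. Throughout, we use the bound $\#S_{mn_i}\leq C n_i^q$ for all $i$.

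\textbf{Step 1: no nonzero elements of height $0$, and $q(S)\le q$ in the cone-pointed case.} Suppose $0\neq u\in S$ with $\ell(u)=0$. Pick any $v\in S_{mn_1}$. Then the points $v+ku$, $k\ge 0$, are pairwise distinct and all lie in $S_{mn_1}$, which is finite. This is a contradiction, so every nonzero element of $S$ has positive height.

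Next, $G(S)$ is finitely generated. Choose finitely many elements of $S$ that generate $G(S)$ as a group and span $L(S)$, and let $S'$ be the subsemigroup they generate. All generators have positive height, so $\mathrm{Con}(S')$ is a polyhedral cone meeting $\partial M(S')$ only at the origin. Hence $S'$ is strongly nonnegative, with $q(S')=q(S)$, $m(S')=m$ and $\mathrm{ind}(S')=\mathrm{ind}(S)$. Theorem \ref{KK1} then gives
\[
\#S_{mn}\;\ge\;\#S'_{mn}\;\sim\;\frac{\mathrm{vol}_{q(S)}(\Delta(S'))}{\mathrm{ind}(S)}\,n^{q(S)},
\]
and the limit is positive. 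Comparing along $n=n_i$ with the bound $Cn_i^q$ forces $q(S)\le q$.

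\textbf{Step 2: strong nonnegativity.} Suppose $\mathrm{Con}(S)$ contains some $w\neq 0$ with $\ell(w)=0$. By the definition of $\mathrm{Con}(S)$ as a closure, there are elements $u_j\in S$ with $u_j/\ell(u_j)$ unbounded in the direction of $w$. Enlarge $S'$ to $S'_j$ by adjoining $u_j$. Each $S'_j$ is still strongly nonnegative with the same $q$, $m$ and $\mathrm{ind}$. However, $\Delta(S'_j)$ contains the convex hull of $\Delta(S')$ and the far-away point $m\,u_j/\ell(u_j)$. Since $\Delta(S')$ is full-dimensional in its $q(S)$-dimensional affine slice, we get $\mathrm{vol}_{q(S)}(\Delta(S'_j))\to\infty$. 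Applying Theorem \ref{KK1} to each $S'_j$ gives, for every $j$,
\[
\liminf_i \frac{\#S_{mn_i}}{n_i^{q(S)}}\;\ge\;\frac{\mathrm{vol}_{q(S)}(\Delta(S'_j))}{\mathrm{ind}(S)}.
\]
This contradicts boundedness as soon as the exponent $q(S)$ can be compared with $q$, that is, when $q(S)=q$.

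\textbf{Main obstacle.} The difficulty is Step 2 in the case $q(S)<q$. There the volume blow-up only shows that $\#S_{mn}/n^{q(S)}$ is unbounded, which does not contradict a bound of order $n^{q}$. I would first try to extract genuinely higher growth from the directions near $w$, by counting sums $a u_j + s$ with $s\in S'$ at a fixed level. Doing this carefully, however, only yields growth of order $n^{q(S)+1}$ at best, and only in controlled ways. Indeed, the semigroup $\{(a,n)\in\mathbb{Z}\times\mathbb{N} : 0\le a\le n^2\}$ satisfies the hypothesis with $q=2$ but has $(1,0)\in\mathrm{Con}(S)$. So for $q>q(S)$ the strong nonnegativity conclusion cannot be obtained by this route and seems to fail. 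I would therefore establish the statement in the form that Steps 1--2 actually prove: $q(S)\le q$ always, and strong nonnegativity whenever the bound holds with $q=q(S)$. I would check in the applications in Section \ref{section 4} that $q$ is always taken equal to the expected dimension $q(S)$, since there the slices are bounded by $O(n)$-balls.
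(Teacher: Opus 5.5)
You are right on the main point: Theorem~\ref{KK2}, as transcribed, is false when $q>q(S)$. Your example checks out. The set $S=\{(a,n)\in\mathbb{Z}\times\mathbb{N} : 0\le a\le n^2\}$ is closed under addition because $n^2+n'^2\le (n+n')^2$. It contains $(0,1)$, so $m(S)=1$, and $\#S_n=n^2+1$, so the hypothesis holds with $q=2$. Yet $(n^2,n)/n^2\to(1,0)$ puts $(1,0)$ in $\mathrm{Con}(S)\cap\partial M(S)$, where $q(S)=1$.

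The paper gives no argument of its own, only the citation \cite[Theorem 1.18]{KK12}, so there is no route to compare. What can actually be proved is what you isolate: $q(S)\le q$ always, and $S$ is strongly nonnegative when $q=q(S)$. Equivalently, if $S$ is not strongly nonnegative then $\#S_{m(S)n}/n^{q(S)}\to\infty$. Your argument for this is sound. Finitely generated semigroups $S'\subseteq S'_j\subseteq S$ with $G(S')=G(S)$ share $q$, $m$, $\mathrm{ind}$ and the affine slice with $S$, and Theorem~\ref{KK1} does the rest.

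Two small repairs are needed:
\begin{enumerate}
\item $S_{mn_1}$ may be empty. However, $\pi(S)/m$ is a subsemigroup of $\mathbb{N}$ generating $\mathbb{Z}$, so $S_{mn}\neq\emptyset$ for $n\gg 0$, and you can use a large $n_i$.
\item ``In the direction of $w$'' is unjustified, but it is also unnecessary. If $\|u\|\le C\ell(u)$ held for all $u\in S$, then $\mathrm{Con}(S)$ would lie in the closed convex cone $\{x : \|x\|\le C\ell(x)\}$, which meets $\{\ell=0\}$ only at the origin. So some $u_j\in S$ have $\|u_j\|/\ell(u_j)\to\infty$. The convex hull of a ball in $\Delta(S')$ with $m u_j/\ell(u_j)$ then has $q(S)$-volume tending to infinity.
\end{enumerate}

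Your closing remark about the application needs more than a check. The only use of Theorem~\ref{KK2} is Lemma~\ref{lem_properties}$(iii)$, with exponent $d-1$. There the paper never knows in advance that $q\big(\Gamma^{\mathbb{I}}_{(p,q)}\big)=d-1$: it only extracts ``$\le$'' from the faulty statement. The lower bound of Lemma~\ref{easy} cannot give $q\big(\Gamma^{\mathbb{I}}_{(p,q)}\big)\ge d-1$ without strong nonnegativity, as your own example shows. So the corrected version needs this equality as a separate input.

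Your justification (``the slices are bounded by $O(n)$-balls'') is an extra Izumi-type lemma, $\nu(f)=O(m)$ for $0\neq f\in R_m$, which the paper does not prove. Once you have it, strong nonnegativity is immediate and Theorem~\ref{KK2} is not needed at all.

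Alternatively, the equality follows from Claim 4 of Proposition~\ref{welldefined}, which does not use strong nonnegativity. The map $\mathbf{v}\mapsto\langle\mathbf{v},\boldsymbol{\xi}\rangle$ identifies $G\big(\Gamma^{R_{\bullet}}_{(1,1)}\big)\cap(\mathbb{Z}^d\times\{0\})$ with the value group of $\nu$ on the field $K_0$ of degree-zero fractions $f/g$ ($f,g\in R_n$). Abhyankar's inequality, applied to $K_0/k$ and to $\mathrm{QF}(R)=K_0(z)$ with $z\in R_1$, shows this group has rank exactly $d-1$.
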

This is proven in \cite[Theorem 1.18]{KK12}.

\section{Filtrations of graded ideals in a standard graded domain}\label{section 4}

\subsection{Setup}\label{setup}
Let $k$ be an algebraically closed field. Let $R=\oplus_{m\geq 0}R_m$ be a standard graded finitely generated $k$-algebra with $R_0=k$. Assume that $R$ is a domain of Krull dimension $d\geq 2$. Denote by $\mm_R=\oplus_{m\geq 1}R_m$ the unique graded maximal ideal of $R$.

\begin{definition}\label{def_filt}
 A family of graded ideals $\mathbb{I}=\{I_n\}_{n\in\mathbb{N}}$ in $R$ is called a \emph{filtration} if
 \begin{itemize}
  \item $I_0 = R$,
  \item $I_n \supseteq I_{n+1}$ for all $n\geq 0$, and
  \item $I_m \cdot I_n \subseteq I_{m+n}$ for all $m,n\geq 0$.
 \end{itemize}
All inclusions are understood to be graded. The filtration $\mathbb{I}=\{I_n\}_{n\in\mathbb{N}}$ is said to be \emph{Noetherian} if the Rees algebra $\oplus_{n\geq 0}I_nt^n$ is a finitely generated $R$-algebra.
\end{definition}

We now recall some standard examples of filtrations of graded ideals that will be central to our discussion. An extensive list of examples of filtrations can be found in \cite[Example 2.4.16]{Laz04a}.

The \emph{adic filtration} $\mathbb{I} = \{I^n\}_{n\in\mathbb{N}}$ associated to a graded ideal $I\subseteq R$, is given by the ordinary powers $I^n$ of $I$.

\subsubsection{Integral closure}
An element $x\in R$ is said to be \emph{integral} over $I$ if there exist an integer $n\geq 1$ and elements $a_i\in I^i$ for $i=1,\ldots,n$, such that $$x^n+a_1x^{n-1}+a_2x^{n-2}+\cdots+a_{n-1}x + a_n =0.$$ The set of all elements that are integral over $I$ is called the \emph{integral closure} of $I$ and is denoted $\overline{I}$. It is again a graded ideal. The family $\overline{\mathbb{I}} = \{\overline{I^n}\}_{n\in\mathbb{N}}$ forms a Noetherian filtration of graded ideals in $R$, see \cite[Proposition 5.3.4]{HS06}.

\subsubsection{Saturation}
The \emph{saturation} of $I$ (with respect to $\mathfrak{m}_R$) is defined by $$I \colon \mathfrak{m}_R^{\infty} := \bigcup_{n\geq 1} \left(I \colon \mathfrak{m}_R^{n}\right).$$ Equivalently, $I \colon \mathfrak{m}_R^{\infty}$ is the ideal obtained by simply removing the $\mathfrak{m}_R$-primary component from a (graded) primary decomposition of $I$. The family $\mathbb{I}^{\mathrm{sat}} = \{I^n \colon \mathfrak{m}_R^{\infty}\}_{n\in\mathbb{N}}$ forms a (possibly non-Noetherian) filtration of graded ideals in $R$.

\subsubsection{Symbolic powers}
By \cite[Corollary 4.11]{AM69}, the isolated primary components of an ideal in $R$ are uniquely determined. The \emph{$n$-th symbolic power} of $I$ is defined as $$I^{(n)} = \bigcap_{P\in\mathrm{Min}(R/I)}I^nR_P\cap R,$$ that is, $I^{(n)}$ is obtained by intersecting the isolated primary components of $I^n$. The family $\mathbb{I}^{\mathrm{symb}} = \{I^{(n)}\}_{n\in\mathbb{N}}$ forms a (possibly non-Noetherian) filtration of graded ideals in $R$. Note that if $\dim R/I=1$, then $I^{(n)} = I^n \colon \mathfrak{m}_R^{\infty}$. (The definition of symbolic powers is not uniform in the literature. Some authors use $\mathrm{Ass}(R/I)$ instead of $\mathrm{Min}(R/I)$ in the above definition. When $I$ has no embedded associated primes, both definitions coincide.)

\subsubsection{Generalized symbolic powers}\label{gen_symb}
Fix a graded ideal $J\subseteq R$. The \emph{``$n$-th symbolic power of $I$ with respect to $J$''} is defined to be the graded ideal $$I^n \colon J^{\infty} = \bigcup_{t\ge 1} \left(I^n \colon J^t\right) = \left\{r\in R \mid rJ^t \subseteq I^n\;\;\text{for some $t\ge 1$}\right\},$$ see \cite[Section 3]{HHT07}. Moreover, if $I^n = \bigcap_{i=1}^s Q_{i,n}$ is a graded primary decomposition of $I^n$, then $$I^n \colon J^{\infty} = I^n \colon {\sqrt{J}}^{\infty} = \bigcap_{\left\{i \mid J\not\subset \sqrt{Q_{i,n}}\right\}}Q_{i,n}.$$

If $J= \mm_R$, then $I^n \colon \mm_R^{\infty}$ is the $n$-th saturated power of $I$. Moreover, let $\mathrm{Min}(I)$ be the collection of minimal prime ideals of $I$, and $\mathrm{Ass}^*(I) = \bigcup_{n\ge 1}\mathrm{Ass}(R/I^n)$ be the collection of all associated prime ideals of the powers of $I$ (which is finite by \cite{Bro79}). Clearly, $\mathrm{Min}(I) \subseteq \mathrm{Ass}^*(I)$. If we set $J = \bigcap_{P\in \mathrm{Ass}^*(I)\setminus \mathrm{Min}(I)} P$ for $\mathrm{Ass}^*(I) \neq \mathrm{Min}(I)$ and $J = R$ otherwise, then $I^n \colon J^{\infty}$ agrees with the ordinary $n$-th symbolic power $I^{(n)}$ of $I$.

Henceforth, fix a filtration $\mathbb{I}=\{I_n\}_{n\in\mathbb{N}}$ of nonzero graded ideals in $R$.

\begin{definition}
Following \cite{DRT25}, the \emph{density function} $f_{\mathbb{I}}$ associated to the filtration $\mathbb{I}=\{I_n\}_{n\in\mathbb{N}}$, is defined as the function $$f_{\mathbb{I}} \colon \mathbb{R}_{\geq 0}\to\mathbb{R}_{\geq 0} \quad \text{given by}\quad f_{\mathbb{I}}(x) = \limsup_{n\to\infty}\dfrac{\dim_k {\left(I_n\right)}_{\lfloor xn \rfloor}}{n^{d-1}/d!}.$$ Following \cite{BCG+16}, the \emph{Waldschmidt constant} of the filtration $\mathbb{I}$ is defined as the real number $$\alpha_{\mathbb{I}} = \limsup_{n\to\infty}\dfrac{\min\left\{m\in\mathbb{N} \mid {\left(I_n\right)}_m\neq 0\right\}}{n}.$$
\end{definition}

\begin{lemma}
The invariant $\alpha_{\mathbb{I}}$ exists as a limit.
\end{lemma}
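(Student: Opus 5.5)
Set $a_n = \min\{m\in\mathbb{N} \mid (I_n)_m\neq 0\}$ for $n\geq 1$; this is well defined because each $I_n$ is a nonzero graded ideal. The plan is to show that $\{a_n\}$ is subadditive and then apply Fekete's lemma.

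\emph{Subadditivity.} Choose nonzero homogeneous elements $f\in (I_m)_{a_m}$ and $g\in (I_n)_{a_n}$. Since $R$ is a domain, $fg\neq 0$. It is homogeneous of degree $a_m+a_n$, and it lies in $I_mI_n\subseteq I_{m+n}$ by the multiplicativity axiom in Definition \ref{def_filt}. Therefore $(I_{m+n})_{a_m+a_n}\neq 0$, which gives
\[
a_{m+n}\le a_m+a_n \qquad \text{for all } m,n\geq 1.
\]
The domain hypothesis is the one point that genuinely matters here; everything else is formal.

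\emph{Boundedness and Fekete.} Each $a_n\geq 0$, so $a_n/n$ is bounded below by $0$. Fekete's lemma then gives that $\lim_{n\to\infty} a_n/n$ exists and equals $\inf_{n\geq 1} a_n/n$. Because of the lower bound, this limit is a finite nonnegative real number. Hence the $\limsup$ defining $\alpha_{\mathbb{I}}$ is an honest limit, and moreover $\alpha_{\mathbb{I}}=\inf_n a_n/n\le a_1$.

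\emph{Proof of Fekete's lemma.} If it is to be included, the argument is short. Fix $q\geq 1$ and write $n=sq+r$ with $0\le r<q$. Subadditivity gives
\[
a_n\le s\,a_q+a_r,
\]
where we set $a_0=0$; this is consistent with $I_0=R$, since $1\in R_0$. Dividing by $n$ and letting $n\to\infty$ yields $\limsup_n a_n/n\le a_q/q$. Taking the infimum over $q$ gives $\limsup_n a_n/n\le \inf_q a_q/q\le\liminf_n a_n/n$, so the limit exists.

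No step is a real obstacle. The only care needed is to work with homogeneous elements of minimal degree and to use that $R$ is a domain, so that their product is nonzero.
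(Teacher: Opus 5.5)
Your proof is correct and takes essentially the same approach as the paper: subadditivity of $\alpha(I_n)$, obtained from the domain hypothesis and $I_mI_n\subseteq I_{m+n}$, followed by Fekete's lemma. You also prove Fekete's lemma and note that the limit equals $\inf_n \alpha(I_n)/n$; the paper only cites the lemma.
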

\begin{proof}
Let $\alpha(I_n) = \min\left\{m \mid {\left(I_n\right)}_m\neq 0\right\}$. Since $R$ is domain, the sequence $\{\alpha(I_n)\}_{n\in\mathbb{N}}$ is subadditive. By Fekete's subadditive lemma, $\lim_{n\to\infty}\frac{\alpha(I_n)}{n}$ exists.
\end{proof}

\subsection{Main Theorem}

\begin{theorem}\label{mainthm}
Let the assumptions be as above. Then the following statements are true.
\begin{enumerate}[$(i)$]
 \item For every $x\in \mathbb{R}_{\geq 0}\setminus\{\alpha_{\mathbb{I}}\}$, the limit $$f_{\mathbb{I}}(x) = \lim_{n\to\infty}\dfrac{\dim_k {\left(I_n\right)}_{\lfloor xn \rfloor}}{n^{d-1}/d!}$$ exists.
 \item For all $x<\alpha_{\mathbb{I}}$, one has $f_{\mathbb{I}}(x) = 0$. For all $x>\alpha_{\mathbb{I}}$, there exist bounds $$de(R)(x-\alpha_{\mathbb{I}})^{d-1} \leq f_{\mathbb{I}}(x) \leq de(R)x^{d-1},$$ where $e(R)$ denotes the Hilbert-Samuel multiplicity of $R$.
 \item The function $f_{\mathbb{I}}\colon (\alpha_{\mathbb{I}},\infty) \to \mathbb{R}_{\geq 0}$ is non-decreasing, and log-concave i.e., $${\left(f_{\mathbb{I}}(x+y)\right)}^{1/(d-1)} \geq {\left(f_{\mathbb{I}}(x)\right)}^{1/(d-1)} + {\left(f_{\mathbb{I}}(y)\right)}^{1/(d-1)} \quad \forall x,y>\alpha_{\mathbb{I}}.$$
 \item In particular, $f_{\mathbb{I}}$ is continuous on $\mathbb{R}_{\geq 0}\setminus \{\alpha_{\mathbb{I}}\}$. Moreover, there exists a countable subset $\Omega\subset \mathbb{R}$ such that $f_{\mathbb{I}}$ is differentiable on $\mathbb{R}_{\geq 0}\setminus \Omega$.
 \item For every real number $y>0$, one has $$\int_0^{y} f_{\mathbb{I}}(x)dx = \lim_{n\to\infty}\dfrac{\sum_{m=0}^{\lfloor yn\rfloor}\dim_k {\left(I_n\right)}_m}{n^d/d!}.$$
\end{enumerate}
\end{theorem}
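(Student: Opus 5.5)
We will follow the Newton--Okounkov strategy outlined in the introduction and work with semigroups of valuation vectors. First we fix a rank one monomial valuation $\nu$ on the fraction field of $R$ dominating $R_{\mathfrak{m}_R}$, with values in $\mathbb{N}^d$ (after a Noether normalization and a choice of generic coordinates, as in \cite{Cut14}). Since $k$ is algebraically closed, $\nu$ has one-dimensional leaves. Hence for any finite-dimensional subspace $V\subseteq R_m$ we have $\dim_k V=\#\nu(V\setminus\{0\})$.

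For a rational slope $x=p/q$ we set
\[
\Gamma^{\mathbb{I}}_{(p,q)}=\{(\nu(f),qn)\mid 0\neq f\in (I_{qn})_{pn},\ n\geq 0\}\subseteq\mathbb{N}^{d+1}.
\]
This is a semigroup because $I_{qn}I_{qn'}\subseteq I_{q(n+n')}$ and $\nu(fg)=\nu(f)+\nu(g)$. The inclusion $(I_{qn})_{pn}\subseteq R_{pn}$ gives the bound $\#\Gamma_{qn}\le \dim R_{pn}=O(n^{d-1})$, so Theorem \ref{KK2} shows the semigroup is strongly nonnegative with $q(S)\le d-1$. For $x>\alpha_{\mathbb{I}}$ we will show $q(S)=d-1$ and $m(S)$, $\mathrm{ind}(S)$ are trivial. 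The argument: pick $n_0$ and $g\in (I_{qn_0})_{pn_0-c}$ with $c$ large, which is possible since $pn_0/(qn_0)>\alpha_{\mathbb{I}}$ leaves room. Then $g\cdot R_c\subseteq (I_{qn_0})_{pn_0}$, and the valuations of $gR_{c}$, $gR_{c+1}$ generate the full lattice, exactly as for $\nu(R)$. Theorem \ref{KK1} then gives the existence of
\[
\lim_n \frac{\dim_k (I_{qn})_{pn}}{n^{d-1}}=\mathrm{vol}(\Delta_x(\mathbb{I})).
\]
The bounds in (ii) come from the same inclusions. We have $(I_n)_m\subseteq R_m$, which gives the upper bound $de(R)x^{d-1}$. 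For the lower bound we use $g\cdot R_{m-\deg g}\subseteq (I_n)_m$ with $\deg g\approx \alpha_{\mathbb{I}} n$ (up to $\epsilon$, using a fixed $n_0$ and powers), which yields $de(R)(x-\alpha_{\mathbb{I}}-\epsilon)^{d-1}$. For $x<\alpha_{\mathbb{I}}$ the vanishing is immediate from the definition of $\alpha_{\mathbb{I}}$ and subadditivity, since $\alpha(I_n)\ge \alpha_{\mathbb{I}} n$ by Fekete.

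For (iii) we use two facts. Monotonicity: multiplying by a fixed nonzero linear form $\ell\in R_1$ injects $(I_n)_m\to(I_n)_{m+1}$. Log-concavity: $(I_{n})_{a}\cdot (I_{n})_{b}\subseteq (I_{2n})_{a+b}$, so at rational slopes the semigroups give $\Delta_x+\Delta_y\subseteq 2\Delta_{(x+y)/2}$ after rescaling; more precisely we use the global body $\Delta(\mathbb{I})\subseteq\mathbb{R}^{d+1}$ built from the semigroup of all $(\nu(f),m,n)$, whose slices are the $\Delta_x$. Brunn--Minkowski then gives concavity of $\mathrm{vol}(\Delta_x)^{1/(d-1)}$ on rationals, which is the stated superadditive inequality after using homogeneity of degree $d-1$. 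A function that is monotone and concave to the power $1/(d-1)$ on a dense set of rationals extends continuously to $(\alpha_{\mathbb{I}},\infty)$.

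The main obstacle is upgrading the rational statements to existence of the limit at irrational $x$, and handling subsequences $n$ not divisible by $q$. To control $n\not\equiv 0 \bmod q$, we will compare $(I_n)_{\lfloor xn\rfloor}$ with $(I_{q\lceil n/q\rceil})$ and $(I_{q\lfloor n/q\rfloor})$ using $I_{n}\supseteq I_{n+1}$ and multiplication by $\ell$. For irrational $x$, we squeeze between rationals $x'<x<x''$ in the same way and obtain
\[
\mathrm{vol}\Delta_{x'}\le\liminf\le\limsup\le\mathrm{vol}\Delta_{x''}.
\]
Continuity of the rational function from the concavity argument closes the gap. This gives (i). For (iv), concavity of $f^{1/(d-1)}$ forces one-sided derivatives to exist everywhere and to agree off a countable set. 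Continuity away from $\alpha_{\mathbb{I}}$ follows from the rational case, with $f=0$ below $\alpha_{\mathbb{I}}$. Finally, for (v) we write the sum as $\int_0^{(\lfloor yn\rfloor+1)/n}\dim_k(I_n)_{\lfloor tn\rfloor}\,n\,dt$ and normalize. The integrand converges pointwise off $\{\alpha_{\mathbb{I}}\}$ by (i), and it is dominated by $de(R)t^{d-1}+1$ by the Hilbert polynomial of $R$, so dominated convergence concludes.
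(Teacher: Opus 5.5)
Your plan is sound in outline and is close to the paper's architecture. Both use a monomial valuation with residue field $k$, the semigroups $\Gamma^{\mathbb I}_{(p,q)}$, Theorems \ref{KK2} and \ref{KK1}, Brunn--Minkowski on the slices, a monotone squeeze, and dominated convergence. You make two worthwhile changes. First, you aim for concavity of $f_{\mathbb I}^{1/(d-1)}$, which is stronger than the multiplicative log-concavity $f(\lambda x+(1-\lambda)y)\geq f(x)^{\lambda}f(y)^{1-\lambda}$ that the paper's proof establishes. Second, you try to get the limit along all $n$ by squeezing, instead of using Lemma \ref{lem_converge} and Proposition \ref{prop_rational}. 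However, two steps fail as written.

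\textbf{Superadditivity.} The inequality in $(iii)$ does not follow from concavity plus ``homogeneity of degree $d-1$''.
\begin{itemize}
\item $f_{\mathbb I}$ is not homogeneous: for $\mathbb I=\{(g^n)\}$ with $\deg g=a$ one has $f_{\mathbb I}(x)=de(R)(x-a)^{d-1}$ for $x>a$.
\item Concavity and monotonicity cannot close the gap, since a concave nondecreasing function can be strictly subadditive (e.g.\ $\sqrt{x}$).
\item What $\Delta_x+\Delta_y\subseteq 2\Delta_{(x+y)/2}$ and Brunn--Minkowski actually give is only $2f((x+y)/2)^{1/(d-1)}\geq f(x)^{1/(d-1)}+f(y)^{1/(d-1)}$.
\end{itemize}
The missing input is that the filtration is decreasing. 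If $f\in (I_n)_{xn}$ and $h\in (I_n)_{yn}$ (a common level is reached by taking powers), then $fh\in (I_{2n})_{(x+y)n}\subseteq (I_n)_{(x+y)n}$. Hence $\Delta_x+\Delta_y\subseteq\Delta_{x+y}$, and Brunn--Minkowski applied to this inclusion gives the stated inequality, with the same constant $d!/\mathrm{ind}(R)$ at all three slopes. This is the mechanism of Corollary \ref{Brunn_Min}; the proof of the theorem itself only records the multiplicative form.

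\textbf{Same-slope comparison for $n\not\equiv 0\pmod q$.} This cannot work with only $\ell$ and decreasingness. Write $n=qk+r$ with $0<r<q$. Then $\lfloor xn\rfloor=pk+\lfloor pr/q\rfloor<p\lceil n/q\rceil$, and $\lfloor xn\rfloor>p\lfloor n/q\rfloor$ as soon as $pr\geq q$. So both comparisons would have to move from a higher degree to a lower one, which multiplication by $\ell$ cannot do. This is exactly why the paper multiplies by fixed elements of $(I_{q+1})_p$ and $(I_q)_{p-1}$.

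Your neighbour-slope squeeze repairs this, but only if you apply it to rational $x$ as well. Take rationals $\alpha_{\mathbb I}<x'=p'/q'<x<x''=p''/q''$. For $n\gg0$ one has $I_{q'\lceil n/q'\rceil}\subseteq I_n\subseteq I_{q''\lfloor n/q''\rfloor}$ and $p'\lceil n/q'\rceil\leq\lfloor xn\rfloor\leq p''\lfloor n/q''\rfloor$. So only $\ell$ and decreasingness are needed. The subsequence limits depend only on $p/q$ and are continuous by concavity, so the squeeze closes. This is a lighter route to $(i)$ than Proposition \ref{prop_rational}.

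\textbf{Minor points.}
\begin{itemize}
\item Nothing forces the index to be $1$; the paper keeps it as the constant $\mathrm{ind}(R)$. Brunn--Minkowski only needs it to be independent of the slope.
\item Your $g$-trick gives that independence only if run at every level, i.e.\ with $g^jR_{cj}\subseteq (I_{qn_0j})_{pn_0j}$. The values of $gR_{c+1}$ lie in degree $pn_0+1$ and are not points of $\Gamma^{\mathbb I}_{(p,q)}$.
\item With that correction the argument yields the equalities of Proposition \ref{welldefined}. The limit is then $\mathrm{vol}_{d-1}(\Delta(\Gamma^{\mathbb I}_{(p,q)}))/\mathrm{ind}(R)$, not $\mathrm{vol}_{d-1}(\Delta_x)$.
\item In $(v)$ the dominating function should be $de(R)t^{d-1}+C$ on $[0,y+1]$, for a suitable constant $C$.
\end{itemize}
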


The following lemma proves part $(ii)$ of Theorem \ref{mainthm}.

\begin{lemma}\label{easy}
For all $x<\alpha_{\mathbb{I}}$, $f_{\mathbb{I}}(x)=0$. For all $x>\alpha_{\mathbb{I}}$, $\frac{e(R)}{(d-1)!}(x-\alpha_{\mathbb{I}})^{d-1} \leq f_{\mathbb{I}}(x) \leq \frac{e(R)}{(d-1)!}x^{d-1}$.
\end{lemma}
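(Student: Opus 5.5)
Throughout, write $\alpha(I_n)=\min\{m \mid (I_n)_m\neq 0\}$ and use the Hilbert polynomial of $R$: $\dim_k R_m = \frac{e(R)}{(d-1)!}m^{d-1}+O(m^{d-2})$. The strategy is to bound $\dim_k(I_n)_m$ between two quantities expressible through the Hilbert function of $R$. The upper bound comes from $I_n\subseteq R$; the lower bound comes from multiplying by a single nonzero element of minimal degree.

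\textbf{Vanishing below $\alpha_{\mathbb{I}}$.} The sequence $\alpha(I_n)$ is subadditive, so Fekete's lemma gives not only existence of the limit but also $\alpha_{\mathbb{I}}=\inf_n \alpha(I_n)/n$. Hence $\alpha(I_n)\geq n\alpha_{\mathbb{I}}$ for every $n$. If $x<\alpha_{\mathbb{I}}$, then $\lfloor xn\rfloor\leq xn<n\alpha_{\mathbb{I}}\le\alpha(I_n)$. Therefore $(I_n)_{\lfloor xn\rfloor}=0$ for all $n$, and the limsup is $0$.

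\textbf{Upper bound.} Since $(I_n)_m\subseteq R_m$, we get $\dim_k(I_n)_{\lfloor xn\rfloor}\le \dim_k R_{\lfloor xn\rfloor}$. Dividing by $n^{d-1}/d!$ and letting $n\to\infty$ gives $d\,e(R)x^{d-1}/(d-1)!$. This equals $d\,e(R)x^{d-1}$ only after matching normalizations. The displayed statement of the lemma has the constant $\frac{e(R)}{(d-1)!}$, and I will track constants carefully so the bound matches whichever normalization is intended. Either way, the argument is the Hilbert-polynomial asymptotic, and it works for any $x\ge 0$.

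\textbf{Lower bound for $x>\alpha_{\mathbb{I}}$.} Pick a nonzero $g_n\in (I_n)_{\alpha(I_n)}$. Since $R$ is a domain, multiplication by $g_n$ is injective. Because $I_n$ is an ideal, this gives $g_nR_{m-\alpha(I_n)}\subseteq (I_n)_m$, hence
\[
\dim_k(I_n)_{\lfloor xn\rfloor}\ge \dim_k R_{\lfloor xn\rfloor-\alpha(I_n)}.
\]
Now $\alpha(I_n)/n\to\alpha_{\mathbb{I}}$, so $(\lfloor xn\rfloor-\alpha(I_n))/n\to x-\alpha_{\mathbb{I}}>0$. The index therefore tends to infinity, and the Hilbert polynomial asymptotic applies. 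It yields
\[
\liminf_{n\to\infty}\frac{\dim_k R_{\lfloor xn\rfloor-\alpha(I_n)}}{n^{d-1}/d!} = \frac{d\,e(R)}{(d-1)!}(x-\alpha_{\mathbb{I}})^{d-1}.
\]
Thus even the liminf of the ratio in the definition of $f_{\mathbb{I}}$ dominates this quantity, giving the lower bound.

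The only genuinely delicate point is the lower bound. One must make sure the shift $\alpha(I_n)$ is controlled uniformly, and this follows from the convergence $\alpha(I_n)/n\to\alpha_{\mathbb{I}}$. One must also check that the error term $O(m^{d-2})$ is negligible after dividing by $n^{d-1}$, which holds since $m$ grows linearly in $n$. No Newton--Okounkov machinery is needed for this part.
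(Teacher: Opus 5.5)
Your argument is the paper's argument. Vanishing below $\alpha_{\mathbb{I}}$ comes from the growth of $\alpha(I_n)$. For $x>\alpha_{\mathbb{I}}$ you use the sandwich $g_nR_{\lfloor xn\rfloor-\alpha(I_n)}\subseteq (I_n)_{\lfloor xn\rfloor}\subseteq R_{\lfloor xn\rfloor}$ together with the Hilbert polynomial of $R$. Using $\alpha_{\mathbb{I}}=\inf_n \alpha(I_n)/n$ from Fekete is a harmless sharpening: it gives vanishing for every $n$, not just $n\gg 0$. Your lower bound also genuinely holds for the $\liminf$.

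The constant, however, is wrong, and you leave it unresolved instead of settling it. With $\dim_k R_m=\frac{e(R)}{(d-1)!}m^{d-1}+O(m^{d-2})$ and the normalization $n^{d-1}/d!$ from the definition of $f_{\mathbb{I}}$,
\[
\lim_{n\to\infty}\frac{\dim_k R_{\lfloor xn\rfloor}}{n^{d-1}/d!}=\frac{d!}{(d-1)!}\,e(R)\,x^{d-1}=d\,e(R)\,x^{d-1}.
\]
Likewise the shifted term tends to $d\,e(R)(x-\alpha_{\mathbb{I}})^{d-1}$. Your value $\frac{d\,e(R)}{(d-1)!}$ is an arithmetic slip. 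It agrees with $d\,e(R)$ only for $d=2$, so for $d\ge 3$ the upper bound you would be asserting is false.

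The paper's proof arrives at $d\,e(R)$, consistent with Theorem \ref{mainthm}$(ii)$. The constant $\frac{e(R)}{(d-1)!}$ in the displayed lemma is what one gets by normalizing by $n^{d-1}$ instead of $n^{d-1}/d!$. So it is a misprint relative to the paper's definition, not a second ``intended normalization'' you can match.

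Under the paper's definition, the displayed lower bound follows a fortiori from yours, since $\frac{1}{(d-1)!}\le d$. The displayed upper bound, however, is false. For $I_n=\mathfrak{m}_R^n$ one has $(I_n)_m=R_m$ for $m\ge n$, so $f_{\mathbb{I}}(x)=d\,e(R)\,x^{d-1}>\frac{e(R)}{(d-1)!}x^{d-1}$ for all $x>1$. So state and prove the bounds with $d\,e(R)$, and flag the discrepancy with the statement explicitly.
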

\begin{proof}
Suppose first that $x<\alpha_{\mathbb{I}}$. By definition, $\lim\limits_{n\to\infty}\frac{\alpha(I_n)}{n} \;=\;\alpha_{\mathbb{I}}$, where $\alpha(I_n) = \min\left\{m \mid {\left(I_n\right)}_m\neq 0\right\}$. Hence, for $n\gg 0$, we have $\frac{\lfloor xn \rfloor}{n} < \frac{\alpha(I_n)}{n}$, which means $\lfloor xn \rfloor < \alpha(I_n)$. Therefore, $(I_n)_{\lfloor xn \rfloor}=0$ for all $n\gg 0$, so $f_{\mathbb{I}}(x)=0$.

Now assume that $x>\alpha_{\mathbb{I}}$. Then for all $n\in\mathbb{N}$, we can write $$\lfloor xn \rfloor = \alpha(I_n) + \theta(n),$$ where $\theta \colon \mathbb{N} \to \mathbb{Z}$ is a function such that $\lim_{n\to\infty}\frac{\theta(n)}{n} = x-\alpha_{\mathbb{I}}>0$. Choose a nonzero element $f_n \in {(I_n)}_{\alpha(I_n)}$. For all $n\gg 0$, we have $$(f_n)\cdot R_{\lfloor xn \rfloor - \alpha(I_n)} \subseteq {(I_n)}_{\lfloor xn \rfloor} \subseteq {R}_{\lfloor xn \rfloor}.$$ Since $R$ is a standard graded finitely generated $k$-algebra of dimension $d$, for all $m\gg0$ we have \[
\dim_k R_m \;=\; \frac{e(R)}{(d-1)!} m^{d-1} + \text{lower degree terms in $m$}.
\] This gives $$de(R)(x-\alpha_{\mathbb{I}})^{d-1} = \lim_{n\to\infty}\dfrac{\dim_k {R}_{\lfloor xn \rfloor - \alpha(I_n)}}{n^{d-1}/d!} \leq \limsup_{n\to\infty}\dfrac{\dim_k {\left(I_n\right)}_{\lfloor xn \rfloor}}{n^{d-1}/d!} \leq \lim_{n\to\infty}\dfrac{\dim_k {R}_{\lfloor xn \rfloor}}{n^{d-1}/d!} = de(R)x^{d-1}.$$
\end{proof}

\subsection{Construction of a monomial valuation}

The following construction, due to Cutkosky \cite{Cut15}, will play a key role. We reproduce the arguments here for completeness.

\begin{lemma}[Cutkosky]\label{lem1}
There exists a regular local ring $(S,\mm_S)$ such that
\begin{enumerate}[$(i)$]
 \item $R\subseteq S$ and $\mm_S\cap R = \mm_R$,
 \item $S$ is essentially of finite of type over $R$ (hence over $k$),
 \item $\dim R = \dim S$,
 \item the induced map on the residue fields $R/\mm_R \hookrightarrow S/\mm_S$ is an isomorphism,
 \item $R$ and $S$ have the same quotient field.
\end{enumerate}
\end{lemma}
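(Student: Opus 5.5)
The plan is to realize $S$ as the local ring of a suitable point on a resolution-free model: take a point of $\mathrm{Spec}\,R$ lying over $\mm_R$ on a blow-up where the local ring becomes regular after localization. Since we work over an algebraically closed field without resolution of singularities in positive characteristic, I will avoid resolution and instead use the following device.

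First take the normalization-free route: choose a homogeneous system of parameters, or more concretely, since $R$ is standard graded over $k$ algebraically closed, the variety $\mathrm{Proj}\,R$ has a smooth (regular) closed point $Q$, because the regular locus of an integral variety over a perfect field is dense and open. Let $\pi\colon \mathrm{Bl}_{\mm_R}\mathrm{Spec}\,R \to \mathrm{Spec}\,R$ be the blow-up of the vertex. This blow-up is the total space of the line bundle $\mathcal{O}(-1)$ over $\mathrm{Proj}\,R$, namely $\mathrm{Spec}_{\mathrm{Proj} R}\bigoplus_{n\ge 0}\mathcal{O}(n)$, and its exceptional divisor is the zero section, isomorphic to $\mathrm{Proj}\,R$. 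Let $Q'$ be the point $Q$ on the zero section and set $S=\mathcal{O}_{\mathrm{Bl},Q'}$.

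Then verify the conditions. Regularity: near $Q$, $\mathrm{Proj}\,R$ is regular of dimension $d-1$, and over an affine neighborhood $D_+(h)$ with $h\in R_1$ on which $\mathcal{O}(1)$ is trivial, the blow-up is $\mathrm{Spec}\,R_{(h)}[h]$, which is a polynomial ring in one variable over $R_{(h)}$. Hence $S$ is regular of dimension $d$, which gives (iii). Concretely, $S$ is a localization of $R[R_1/h]=R_{(h)}[h]$, so it is essentially of finite type over $R$, giving (ii). Its fraction field equals that of $R$, since $R_{(h)}[h]\subseteq R_h$ and contains $R$, giving (v). The closed point $Q'$ lies on the exceptional fiber, so $\mm_S\cap R$ is a prime containing $\mm_R$. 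Since it cannot be all of $R$, it equals $\mm_R$, giving (i). For the residue fields, $Q$ is a closed point of a variety over algebraically closed $k$, so its residue field is $k=R/\mm_R$, giving (iv).

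The main obstacle is the existence of a regular closed point and the identification of $\mathrm{Bl}$ locally as $R_{(h)}[h]$. The first holds because $k$ is perfect, so generic smoothness applies to the integral variety $\mathrm{Proj}\,R$. For the second, one needs $R$ standard graded, so that $R_h = R_{(h)}[h,h^{-1}]$ and the subring generated by $R$ and $R_1/h$ is exactly $R_{(h)}[h]$. I will check that $h$ is transcendental over $R_{(h)}$ via the grading, which confirms the dimension count $(d-1)+1=d$.
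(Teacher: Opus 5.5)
Your proof is correct, but it obtains regularity differently from the paper. The paper passes to the \emph{normalization} $X$ of the blow-up of the vertex, which is finite over the blow-up because $R$ is excellent. It then invokes Serre's criterion: $X$ is normal, hence regular in codimension one, so its singular locus has codimension at least two and cannot contain the codimension-one exceptional fibre. By openness of the regular locus, a regular closed point $\eta$ of that fibre exists, and $S=\mathcal{O}_{X,\eta}$.

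You avoid normalization by using that $R$ is generated in degree one. The chart of the blow-up over $D_+(h)$ is $R[R_1/h]=R_{(h)}[h]$, and this is a polynomial ring over $R_{(h)}$ because $R_h=\bigoplus_{n\in\mathbb{Z}}R_{(h)}h^n$ is a direct sum and $h$ is a unit there. This settles the transcendence of $h$ that you promised to check. Consequently the blow-up is regular along the exceptional divisor $\mathrm{Proj}\,R$ exactly over the regular locus of $\mathrm{Proj}\,R$. Moreover $S=(R_{(h)}[h])_{(\mathfrak{q},h)}$ has an explicit regular system of parameters: one for $\mathcal{O}_{\mathrm{Proj}\,R,Q}$ together with $h$.

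Your route is more elementary and explicit: it needs neither finiteness of normalization nor Serre's criterion, and it describes $S$ and its coordinates concretely. The cost is generality. It relies essentially on the standard grading, namely $\mathrm{gr}_{\mm_R}R\cong R$ and the cone structure of the blow-up. The argument reproduced in the paper uses nothing about the grading and is the form of Cutkosky's lemma that applies to more general local domains. The verifications of (i)--(v) proceed the same way in both versions.
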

\begin{proof}
Let $$\varphi\colon X \longrightarrow \mathrm{Spec}(R)$$ be the normalization of the blow up of the closed point $\{\mm_R\}$ in $\mathrm{Spec}(R)$. The morphism $\varphi$ is of finite type since $R$ is excellent. The exceptional fibre $\varphi^{-1}\left(\{\mm_R\}\right)$ is a closed subscheme of codimension one in $X$. Since $X$ is normal, its singular locus has codimension $\geq 2$. So there exists a closed point $\eta \in \varphi^{-1}\left(\{\mm_R\}\right)$ such that the local ring $\mathcal{O}_{X,\eta}$ is regular and set $S=\mathcal{O}_{X,\eta}$. One can now verify that $(S,\mm_S)$ satisfies all the conclusions of the lemma.
\end{proof}

Henceforth, fix a regular local ring $(S,\mm_S)$ as in Lemma \ref{lem1}. By Cohen's structure theorem, the $\mm_S$-adic completion $\widehat{S} \cong k[[x_1,\ldots,x_d]]$, a formal power series ring in $d$ indeterminates $x_1,\ldots,x_d$ over the residue field $k$. Fix $\mathbb{Q}$-linearly independent real numbers $\xi_1,\ldots,\xi_d$ such that $\xi_j\geq 1$ for each $j$. Any nonzero element $f\in S \subseteq \widehat{S}$ can be written as $$f = \sum_{(i_1,\ldots,i_d)\in\mathbb{N}^d}a_{i_1,\ldots,i_d}x_1^{i_1}\cdots x_d^{i_d},$$ where $a_{i_1,\ldots,i_d}\in k$ and not all zero. Define $$\nu(f) = \min\Big\{\sum_{j=1}^d i_j\xi_j \mid a_{i_1,\ldots,i_d}\neq 0\Big\}.$$

\begin{lemma}\label{lem_valuation}
$\nu$ defines a (rank one) monomial valuation on $\mathrm{QF}(R)$ dominating $(S,\mm_S)$ with value group $\sum_{i=1}^d \mathbb{Z}\xi_i$ and residue field $k$.
\end{lemma}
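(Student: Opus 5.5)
Define $\nu$ first on $\widehat{S}=k[[x_1,\ldots,x_d]]$ by the same formula, and check there that it is a valuation. Since $\xi_j\ge 1>0$ and the exponents $(i_1,\ldots,i_d)$ lie in $\mathbb{N}^d$, the set $\{\sum_j i_j\xi_j\}$ of exponent weights is discrete and bounded below. Hence the minimum over the (nonempty) support of any nonzero $f$ exists. Because $\xi_1,\ldots,\xi_d$ are $\mathbb{Q}$-linearly independent, the map $\mathbb{N}^d\to\mathbb{R}$, $(i_j)\mapsto\sum_j i_j\xi_j$, is injective. So the minimum is attained at a \emph{unique} monomial, which we call the initial monomial $\mathrm{in}(f)$.

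Next verify the valuation axioms. The inequality $\nu(f+g)\ge\min\{\nu(f),\nu(g)\}$ is immediate, since the support of $f+g$ lies in the union of the supports. For multiplicativity, write $f=a\,\mathrm{in}(f)+(\text{terms of larger weight})$ and do the same for $g$. In $fg$ the monomial $\mathrm{in}(f)\mathrm{in}(g)$ has coefficient $ab\neq 0$, because any other product of monomials from the two supports has strictly larger weight. Uniqueness of the minimizing exponent is the point here: no cancellation can occur. So $\nu(fg)=\nu(f)+\nu(g)$, and in particular $\widehat S$ is a domain, as we already know. Restricting to $S\subseteq\widehat S$, which is injective since $S$ is Noetherian local, and extending by $\nu(f/g)=\nu(f)-\nu(g)$ gives a valuation on $\mathrm{QF}(S)=\mathrm{QF}(R)$ by Lemma \ref{lem1}(v). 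It is nonnegative on $S$, and positive on $\mm_S$ because elements of $\mm_S$ have no constant term. Thus $\nu$ dominates $(S,\mm_S)$.

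For the value group, the values on $S$ lie in $\sum_j\mathbb{N}\xi_j$, so the value group is contained in $\sum_j\mathbb{Z}\xi_j$. Conversely, choose a regular system of parameters of $S$ mapping to $x_1,\ldots,x_d$ in $\widehat S$; we may take the isomorphism of Cohen's theorem so that this holds. Then $\nu(x_j)=\xi_j$, so every $\xi_j$ is a value, and the value group is exactly $\sum_j\mathbb{Z}\xi_j$, which is free of rank $d$. Rank one holds because this group embeds in $\mathbb{R}$ as an ordered group.

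The residue field takes the most care. Let $u=f/g$ with $\nu(f)=\nu(g)$. Since the minimizing exponent is determined by the value, $\mathrm{in}(f)$ and $\mathrm{in}(g)$ are the same monomial $x^{\mathbf i}$, with coefficients $a,b\in k^\times$. Then $f-(a/b)g$ has value strictly larger than $\nu(g)$, so $\nu(u-a/b)>0$. Hence the residue of $u$ is $a/b\in k$, and the residue field is $k$. To be careful I would carry out this computation in $\mathrm{QF}(\widehat S)$, where $\nu$ is also a valuation, and then note that the residue field of the restricted valuation embeds in it and contains $k$. I expect the only genuine subtlety to be this compatibility: that $\nu$ restricted from $\widehat S$ is well defined on $S$. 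This follows from the faithful flatness of $S\to\widehat S$, together with a check that the choice of coordinates gives $\nu(x_j)=\xi_j$.
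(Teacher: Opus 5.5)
Your argument is correct and is essentially the paper's: both rest on the $\mathbb{Q}$-linear independence of the $\xi_j$ forcing a unique minimizing exponent, so that no cancellation occurs in products. The paper cites Zariski--Samuel for the valuation axioms and leaves domination, the value group and the residue field implicit, whereas you verify all of these directly, including correctly passing through $\mathrm{QF}(\widehat{S})$ for the residue field. One simplification: you need not re-choose the Cohen isomorphism to see that each $\xi_j$ is a value on $S$, since for any isomorphism an element $s_j\in S$ with $s_j-x_j\in\mathfrak{m}_S^N\widehat{S}$ for an integer $N>\xi_j$ already satisfies $\nu(s_j)=\xi_j$, because every element of $\mathfrak{m}_S^N\widehat{S}=(x_1,\ldots,x_d)^N$ has value at least $N$.
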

\begin{proof}
The fact that $\nu$ is a valuation is true more generally, see \cite[Chapter VI, Section 15]{ZS60}. The $\mathbb{Q}$-linear independence of the $\xi_j$'s ensures that different exponent vectors $(i_1,\dots,i_d)$ give different $\nu$-values, so the valuation is monomial of rank one.
\end{proof}

In particular, for each $0\neq f\in S$ there exists a unique exponent vector $\mathbf{v}_f = (i_1,\ldots,i_d)\in \mathbb{N}^d$ such that $\nu(f) = \langle \mathbf{v}_f, \boldsymbol{\xi}\rangle$, where $\boldsymbol{\xi}=(\xi_1,\ldots,\xi_d)\in\mathbb{R}^d$ and $\langle\_ ,\_\rangle$ represents the usual inner product on $\mathbb{R}^d$.

\begin{lemma}\label{lem_count}
For all $(m,n)\in\mathbb{N}^2$, we have
\begin{align*}
 \dim_k {\left(I_n\right)}_m = \# \Big\{(i_1,\ldots,i_d) \in \mathbb{N}^{d} \mid \exists f\in {\left(I_n\right)}_m\;\;\text{with}\;\;\nu(f)=\sum_{j=1}^d i_j\xi_j\Big\}.
\end{align*}
\end{lemma}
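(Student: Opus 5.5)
The plan is the standard valuation-filtration argument. Set $V = (I_n)_m$, a finite-dimensional $k$-vector space, and note that $V\subseteq R\subseteq S$, so $\nu$ is defined on $V\setminus\{0\}$. For each $\lambda\in\mathbb{R}$ put $V_{\geq\lambda} = \{f\in V\mid f=0 \text{ or } \nu(f)\geq \lambda\}$ and define $V_{>\lambda}$ in the same way. These are $k$-subspaces because $\nu(f+g)\geq\min\{\nu(f),\nu(g)\}$ and $\nu(cf)=\nu(f)$ for $c\in k^{\times}$. The goal is to show that
$$\dim_k V = \#\nu(V\setminus\{0\}),$$
and then, by Lemma \ref{lem_valuation} and the remark following it, to identify $\nu(V\setminus\{0\})$ bijectively with the set of exponent vectors in the statement. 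The map from exponent vectors to values is $\mathbf{v}\mapsto\langle\mathbf{v},\boldsymbol{\xi}\rangle$, which is injective since the $\xi_j$ are $\mathbb{Q}$-linearly independent. Under this map, the set of $\mathbf{v}$ such that some $f\in V$ has $\nu(f)=\langle\mathbf v,\boldsymbol\xi\rangle$ corresponds exactly to the value set $\nu(V\setminus\{0\})$.

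The key step is to show that for every $\lambda$, $\dim_k V_{\geq\lambda}/V_{>\lambda}\leq 1$. The reason is that the residue field of $\nu$ is $k$: given nonzero $f,g\in V$ with $\nu(f)=\nu(g)=\lambda$, the quotient $f/g$ has value $0$, so its residue is some $c\in k^{\times}$, and then $\nu(f-cg)>\lambda$. One can also see this directly from the monomial description. The initial term of $f$ in $\widehat{S}$ is a single monomial $a_{\mathbf v}x^{\mathbf v}$ with $\mathbf v$ uniquely determined by $\lambda$, and likewise for $g$; taking $c=a_{\mathbf v}(f)/a_{\mathbf v}(g)$ cancels it. This direct route avoids any appeal to residue fields.

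With that in hand, I would choose a basis adapted to the filtration and argue by induction on $\dim V$. Since $V$ is finite-dimensional, the chain of subspaces $V_{\geq\lambda}$ takes only finitely many distinct values. Each jump in the chain occurs exactly at a value $\lambda\in\nu(V\setminus\{0\})$, and by the key step each jump has size exactly one. Summing the jumps gives $\dim V=\#\nu(V\setminus\{0\})$.

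An equivalent argument runs as follows. Pick nonzero $f_1,\dots,f_r\in V$ realizing the distinct values. They are linearly independent, because in any nontrivial combination the term of minimal value has a unique minimum and so cannot cancel. They span $V$, because of a reduction procedure: repeatedly subtract scalar multiples of the $f_i$ to raise the value. Raising the value must terminate because the value set is finite and any nonzero element has a value in that set, so an element that cannot be reduced further is zero. The only delicate point, and the one I expect to need care, is this termination. It is handled by the finiteness of the value set, which is bounded by $\dim V$ once independence is known.
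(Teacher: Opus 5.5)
Your proposal is correct and follows essentially the paper's route. Your ``equivalent argument'' is exactly the paper's proof: elements realizing distinct values are independent, and they span via a terminating reduction. Your filtration version with $\dim_k V_{\geq\lambda}/V_{>\lambda}\leq 1$ only repackages the same two facts, and you also make explicit two points the paper leaves tacit: why a scalar $c$ with $\nu(f-cg)>\nu(g)$ exists (residue field $k$, or cancelling the unique initial monomial), and why the set of values is finite in the first place.
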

\begin{proof}
Fix $(m,n)\in\mathbb{N}^2$. Let $\Omega = \big\{(i_1,\ldots,i_d) \in \mathbb{N}^{d} \mid \exists f\in {\left(I_n\right)}_m\;\text{with}\;\nu(f)=\sum_{j=1}^d i_j\xi_j\big\}$. Let $r=\#\Omega$. Choose elements $f_1,\ldots,f_r \in {\left(I_n\right)}_m$ such that $\nu(f_t) = \sum_{j=1}^d i_{j,t}\xi_j$ and $\Omega = \left\{(i_{1,t},\ldots,i_{d,t})\right\}_{t=1}^r$. Reorder so that $\nu(f_1)<\cdots<\nu(f_r)$. We claim that $f_1,\ldots,f_r$ forms a $k$-basis for ${\left(I_n\right)}_m$.

If $f_1,\ldots,f_r$ are linearly dependent over $k$ then there exists a relation $\sum_{t=1}^r a_tf_t = 0$ where $a_t\in k$ and not all zero. Then we must have $\nu(f_t)=\nu(f_{t^{\prime}})$ for some $t\neq t^{\prime}$ but this contradicts that $\nu(f_t)$'s are all distinct.

Let $f\in {\left(I_n\right)}_m$ and $f\neq 0$. Then $\nu(f)=\nu(f_{t_1})$ for some $1\leq t_1\leq r$. If $f\in k\cdot  f_{t_1}$ then we are done. Otherwise, there exists $a_{t_1}\in k\setminus \{0\}$ such that $\nu(f-a_{t_1}f_{t_1})>\nu(f_{t_1})$ and $f-a_{t_1}f_{t_1} \in {\left(I_n\right)}_m$. So write $\nu(f-a_{t_1}f_{t_1}) = \nu(f_{t_2})$ for some $t_1<t_2\leq r$. If $f \in k\cdot f_{t_1} + k\cdot f_{t_2}$ then we are done. Otherwise, there exists $a_{t_2}\in k\setminus \{0\}$ such that $\nu(f-a_{t_1}f_{t_1}-a_{t_2}f_{t_2})>\nu(f-a_{t_1}f_{t_1})$ and $f-a_{t_1}f_{t_1}-a_{t_2}f_{t_2} \in {\left(I_n\right)}_m$. This process can only continue for finitely many steps. Therefore, we conclude that there is an integer $1\leq s\leq r$ and elements $a_{i_1},\ldots,a_{i_s}\in k\setminus\{0\}$ with $1\leq i_1<\cdots<i_s\leq r$ such that $f = \sum_{j=1}^s a_{i_j}f_{i_j}$.
\end{proof}

\subsection{A semigroup construction}

Let $p,q>0$ be integers such that $p/q>\alpha_{\mathbb{I}}$. Now define
\begin{equation}\label{eq_semigroup}
 \Gamma^{\mathbb{I}}_{(p,q)} = \Big\{(i_1,\ldots,i_d,qn) \in \mathbb{N}^{d+1} \;\big\vert \; \exists f\in {\left(I_{qn}\right)}_{pn}\;\text{with}\;\nu(f)=\sum_{j=1}^d i_j\xi_j\Big\}.
\end{equation}

Recall the notations from section \ref{cones}.

\begin{lemma}\label{lem_properties}
 The following statements are true.
 \begin{enumerate}[$(i)$]
  \item $\Gamma^{\mathbb{I}}_{(p,q)} \subseteq \mathbb{N}^{d+1}$ is an additive semigroup.
  \item $\Gamma^{\mathbb{I}}_{(p,q)} \cap \left(\mathbb{Z}^d \times \{0\}\right) = \{(0,\ldots,0)\}$.
  \item $\Gamma^{\mathbb{I}}_{(p,q)}$ is strongly nonnegative with $q\big(\Gamma^{\mathbb{I}}_{(p,q)}\big) = d-1$.
  \item $m\big(\Gamma^{\mathbb{I}}_{(p,q)}\big)=q$.
  \item The limit $\lim\limits_{n\to\infty}\dfrac{\dim_k {\left(I_{qn}\right)}_{pn}}{n^{d-1}}$ exists and equals $\dfrac{\mathrm{vol}_{d-1}\big(\Delta\big(\Gamma^{\mathbb{I}}_{(p,q)}\big)\big)}{\mathrm{ind}\big(\Gamma^{\mathbb{I}}_{(p,q)}\big)}$.
 \end{enumerate}
\end{lemma}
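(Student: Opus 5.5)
The plan is to verify the semigroup axioms directly from the valuation $\nu$. Strong nonnegativity and the dimension then come from Theorem \ref{KK2} together with the growth bounds of Lemma \ref{easy}, and part $(v)$ is read off from Theorem \ref{KK1} via the counting Lemma \ref{lem_count}.

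\emph{Parts $(i)$ and $(ii)$.} For $(i)$, take $(\mathbf{v},qn)$ and $(\mathbf{w},qn')$ in $\Gamma^{\mathbb{I}}_{(p,q)}$, realized by $f\in (I_{qn})_{pn}$ and $g\in (I_{qn'})_{pn'}$. Since $I_{qn}I_{qn'}\subseteq I_{q(n+n')}$, the product $fg$ lies in $(I_{q(n+n')})_{p(n+n')}$, and $fg\neq 0$ because $R$ is a domain. Also $\nu(fg)=\nu(f)+\nu(g)=\langle \mathbf{v}+\mathbf{w},\boldsymbol{\xi}\rangle$. By the $\mathbb{Q}$-linear independence of the $\xi_j$ (Lemma \ref{lem_valuation}), the exponent vector of $fg$ is exactly $\mathbf{v}+\mathbf{w}$. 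The identity element is realized by $1\in (I_0)_0=R_0=k$, which has $\nu(1)=0$. For $(ii)$, a point with last coordinate $0$ forces $n=0$, and $(I_0)_0=k$ consists of units of $S$. Hence their valuation is $0$, so the only such point is the origin.

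\emph{Parts $(iii)$ and $(iv)$.} I would first record that the slices are nonempty for all large $n$. Since $p/q>\alpha_{\mathbb{I}}$ and $\alpha(I_{qn})/(qn)\to\alpha_{\mathbb{I}}$, we get $pn\geq \alpha(I_{qn})$ for $n\gg0$. Then $f_n R_{pn-\alpha(I_{qn})}\subseteq (I_{qn})_{pn}$ for a nonzero $f_n\in (I_{qn})_{\alpha(I_{qn})}$, as in the proof of Lemma \ref{easy}. In particular $\Gamma^{\mathbb{I}}_{(p,q)}\not\subseteq \mathbb{Z}^d\times\{0\}$.

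Next, the projection of $G(\Gamma^{\mathbb{I}}_{(p,q)})$ to the last coordinate contains $qn$ and $q(n+1)$ for large $n$. It is also contained in $q\mathbb{Z}$, since every last coordinate is a multiple of $q$. Hence it equals $q\mathbb{Z}$, which gives $m=q$ and proves $(iv)$.

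For $(iii)$, Lemma \ref{lem_count} gives
\[
\#\big(\Gamma^{\mathbb{I}}_{(p,q)}\big)_{qn}=\dim_k (I_{qn})_{pn}\leq \dim_k R_{pn}=O(n^{d-1}).
\]
So Theorem \ref{KK2} applies with the exponent $d-1$, and $\Gamma^{\mathbb{I}}_{(p,q)}$ is strongly nonnegative with $q(\Gamma^{\mathbb{I}}_{(p,q)})\leq d-1$.

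The main obstacle is the reverse inequality. I would argue by contradiction. If $q(\Gamma^{\mathbb{I}}_{(p,q)})<d-1$, then Theorem \ref{KK1} (using $m=q$) gives $\#(\Gamma^{\mathbb{I}}_{(p,q)})_{qn}=O(n^{d-2})$. On the other hand, the slice count equals $\dim_k(I_{qn})_{pn}\geq \dim_k R_{pn-\alpha(I_{qn})}$. This grows like
\[
\frac{e(R)}{(d-1)!}\big(p-q\alpha_{\mathbb{I}}\big)^{d-1}n^{d-1},
\]
which is a positive multiple of $n^{d-1}$ because $p-q\alpha_{\mathbb{I}}>0$. The two bounds are incompatible, so $q(\Gamma^{\mathbb{I}}_{(p,q)})=d-1$. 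This also yields $L(\Gamma^{\mathbb{I}}_{(p,q)})=\mathbb{R}^{d+1}$ implicitly, although that is not needed.

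\emph{Part $(v)$.} Combining Lemma \ref{lem_count} with Theorem \ref{KK1}, and using $(iii)$ and $(iv)$, gives
\[
\lim_{n\to\infty}\frac{\dim_k (I_{qn})_{pn}}{n^{d-1}}=\lim_{n\to\infty}\frac{\#\big(\Gamma^{\mathbb{I}}_{(p,q)}\big)_{qn}}{n^{d-1}}=\frac{\mathrm{vol}_{d-1}\big(\Delta\big(\Gamma^{\mathbb{I}}_{(p,q)}\big)\big)}{\mathrm{ind}\big(\Gamma^{\mathbb{I}}_{(p,q)}\big)}.
\]
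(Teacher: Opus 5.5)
Your proof is correct and follows the same route as the paper. Writing $\Gamma=\Gamma^{\mathbb{I}}_{(p,q)}$, you use additivity of $\nu$ for $(i)$--$(ii)$, eventually nonempty slices to get $m(\Gamma)=q$, Theorem \ref{KK2} for strong nonnegativity, and Theorem \ref{KK1} with Lemma \ref{lem_count} for $(v)$. The one difference is that you also prove $q(\Gamma)\ge d-1$, using the lower bound $\dim_k (I_{qn})_{pn}\ge \dim_k R_{pn-\alpha(I_{qn})}$. The paper's proof of $(iii)$ only extracts $q(\Gamma)\le d-1$ from Theorem \ref{KK2}, even though $(v)$ needs equality, so your contradiction argument supplies a step the paper omits.

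One correction: your aside that this yields $L(\Gamma)=\mathbb{R}^{d+1}$ is false. Since $q(\Gamma)=\dim L(\Gamma)-1=d-1$, $L(\Gamma)$ is a $d$-dimensional hyperplane of $\mathbb{R}^{d+1}$. Indeed, $L(\Gamma)=\mathbb{R}^{d+1}$ would give $q(\Gamma)=d$, contradicting the bound you already obtained from Theorem \ref{KK2}. Since the aside is unused, just delete it.
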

\begin{proof}
\emph{$(i)$}. Since $\{I_n\}_{n\in\mathbb{N}}$ is a filtration and $\nu$ is a valuation, we conclude that if $f\in {\left(I_{qn}\right)}_{pn}$ and $f^{\prime}\in {\left(I_{qn^{\prime}}\right)}_{pn^{\prime}}$ then $ff^{\prime} \in {\left(I_{q(n+n^{\prime})}\right)}_{p(n+n^{\prime})}$ and $\nu(ff^{\prime}) = \nu(f) + \nu(f^{\prime})$. Thus $\Gamma^{\mathbb{I}}_{(p,q)}$ is closed under addition.

\vspace{0.2cm}
\noindent
\emph{$(ii)$}. We have ${\left(I_{0}\right)}_{0}= R_0 = k$, and $\left. \nu\right|_{k\setminus \{0\}} = 0$. Hence, $\Gamma^{\mathbb{I}}_{(p,q)} \cap \left(\mathbb{Z}^d \times \{0\}\right) = \{(0,\ldots,0)\}$.

\vspace{0.2cm}
\noindent
\emph{$(iii)$}. By Lemma \ref{lem_count}, $$\limsup_{n\to\infty}\dfrac{\#\big(\Gamma^{\mathbb{I}}_{(p,q)} \cap \big(\mathbb{N}^d\times \{qn\}\big)\big)}{n^{d-1}} = \limsup_{n\to\infty}\dfrac{\dim_k {\left(I_{qn}\right)}_{pn}}{n^{d-1}} \leq \lim_{n\to\infty}\dfrac{\dim_k {R}_{pn}}{n^{d-1}} = \frac{e(R)}{(d-1)!}p^{d-1}.$$ Using Theorem \ref{KK2}, we conclude that $\Gamma^{\mathbb{I}}_{(p,q)}$ is strongly nonnegative with $q\big(\Gamma^{\mathbb{I}}_{(p,q)}\big)\leq d-1$.

\vspace{0.2cm}
\noindent
\emph{$(iv)$}. Since $p/q>\alpha_{\mathbb{I}}$, we have ${\left(I_{qn}\right)}_{pn}\neq 0$ for all $n\gg 0$, which implies $qn \in \pi\big(\Gamma^{\mathbb{I}}_{(p,q)}\big)$ for all $n\gg 0$. Thus, $q\mathbb{Z}\subseteq \pi(G(\Gamma^{\mathbb{I}}_{(p,q)}))$, and the reverse inclusion is obvious. Therefore, $m\big(\Gamma^{\mathbb{I}}_{(p,q)}\big)=q$.

\vspace{0.2cm}
\noindent
\emph{$(v)$}. Applying Theorem \ref{KK1} to $\Gamma^{\mathbb{I}}_{(p,q)}$, together with parts $(iii), (iv)$ and Lemma \ref{lem_count}, yields $$\lim_{n\to\infty}\dfrac{\dim_k {\left(I_{qn}\right)}_{pn}}{n^{d-1}} = \lim_{n\to\infty}\dfrac{\#\big(\Gamma^{\mathbb{I}}_{(p,q)} \cap \big(\mathbb{N}^d\times \{qn\}\big)\big)}{n^{d-1}} = \dfrac{\mathrm{vol}_{d-1}\big(\Delta\big(\Gamma^{\mathbb{I}}_{(p,q)}\big)\big)}{\mathrm{ind}\big(\Gamma^{\mathbb{I}}_{(p,q)}\big)}.$$
\end{proof}

Note that part $(v)$ of Lemma \ref{lem_properties} can also be seen as a special case of \cite[Theorem 4]{Das21}.

\subsection{Existence of limit for rational values}

\begin{lemma}\label{lem_converge}
Let $p,q>0$ be integers such that $\tfrac{p}{q}>\alpha_{\mathbb{I}}$. Then there exists an integer $n_0>0$ such that ${\left(I_{qn}\right)}_{pn-1}\neq 0$ and ${\left(I_{qn+1}\right)}_{pn}\neq 0$ for all integers $n\geq n_0$.
\end{lemma}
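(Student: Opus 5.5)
The argument uses only the definition of $\alpha_{\mathbb{I}}$ as a limit (the preceding lemma) and the fact that $\alpha(I_n)=\min\{m\mid (I_n)_m\neq 0\}$ controls nonvanishing of graded pieces in all higher degrees.

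\emph{Nonvanishing above $\alpha(I_n)$.} Since $R$ is a standard graded domain with $\dim R=d\ge 2$, we have $R_1\neq 0$ and $R_j\neq 0$ for all $j\ge 0$. If $0\neq f\in (I_n)_{\alpha(I_n)}$ and $0\neq g\in R_j$, then $fg\neq 0$ and $fg\in (I_n)_{\alpha(I_n)+j}$. Hence $(I_n)_m\neq 0$ for every $m\ge \alpha(I_n)$. So it suffices to show that $\alpha(I_{qn})\le pn-1$ and $\alpha(I_{qn+1})\le pn$ for all $n\gg 0$.

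\emph{The slack.} Set $\varepsilon=\tfrac{p}{q}-\alpha_{\mathbb{I}}>0$. Since $\alpha(I_N)/N\to \alpha_{\mathbb{I}}$, there is $N_0$ with $\alpha(I_N)\le (\alpha_{\mathbb{I}}+\varepsilon/2)N$ for all $N\ge N_0$.

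\emph{First inequality.} For $N=qn$ this gives
\[
\alpha(I_{qn})\le pn-\tfrac{\varepsilon q}{2}n ,
\]
which is at most $pn-1$ once $n\ge 2/(\varepsilon q)$.

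\emph{Second inequality.} For $N=qn+1$ we get
\[
\alpha(I_{qn+1})\le \Bigl(\tfrac{p}{q}-\tfrac{\varepsilon}{2}\Bigr)(qn+1)= pn-\tfrac{\varepsilon q}{2}n+\tfrac{p}{q}-\tfrac{\varepsilon}{2},
\]
which is at most $pn$ once $n$ is large enough that $\tfrac{\varepsilon q}{2}n\ge \tfrac{p}{q}$.

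Taking $n_0$ larger than all three thresholds finishes the proof. There is no real obstacle; the only points needing care are the use of the domain property to propagate nonvanishing upward in degree, and leaving a linear margin $\varepsilon n$ that absorbs the shifts by $1$.
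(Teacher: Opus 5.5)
Your proof is correct and follows essentially the same route as the paper: use $\alpha(I_N)/N\to\alpha_{\mathbb{I}}$ with a linear margin to get $\alpha(I_{qn})\le pn-1$ and $\alpha(I_{qn+1})\le pn$ for $n\gg 0$, then multiply a lowest-degree element of the ideal by a power of a nonzero linear form, using that $R$ is a domain. You also write out the second inequality explicitly, which the paper only calls analogous.
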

\begin{proof}
We only prove the first claim as the second is analogous. Choose a real number $\varepsilon$ such that $$0 < \varepsilon < \frac{p}{q}-\alpha_{\mathbb{I}}.$$ Let $\alpha(I_n) = \min\left\{m \mid {\left(I_n\right)}_m\neq 0\right\}$, and by definition $\alpha_{\mathbb{I}} = \lim_{n\to\infty} \frac{\alpha(I_n)}{n}$. There exists an integer $N_{\varepsilon}>0$ such that for all integers $n\geq N_{\varepsilon}$, $$\left\vert \alpha_{\mathbb{I}} - \frac{\alpha(I_{qn})}{qn}\right\vert<\varepsilon.$$ Moreover, for all integers $n>\frac{1}{p-q(\alpha_{\mathbb{I}}+\varepsilon)}$, we have $$\frac{pn-1}{qn}>\alpha_{\mathbb{I}}+\varepsilon.$$ Thus, for all integers $n\geq \max\left\{N_{\varepsilon},\frac{1}{p-q(\alpha_{\mathbb{I}}+\varepsilon)}\right\}$, one has $$\frac{pn-1}{qn} > \frac{\alpha(I_{qn})}{qn},$$ i.e., $pn-1>\alpha(I_{qn})$. Choose nonzero elements $f\in{\left(I_{qn}\right)}_{\alpha(I_{qn})}$ and $z\in R_1$. Since $R$ is a domain, the element $z^{pn-1-\alpha(I_{qn})}\cdot f \in {\left(I_{qn}\right)}_{pn-1}$ is nonzero for all $n\gg 0$.
\end{proof}

\begin{proposition}\label{prop_rational}
Suppose that $x\in \mathbb{Q}$ and $x>\alpha_{\mathbb{I}}$. Then the limit $$f_{\mathbb{I}}(x) = \lim_{n\to\infty}\dfrac{\dim_k {\left(I_n\right)}_{\lfloor xn \rfloor}}{n^{d-1}/d!}$$ exists.
\end{proposition}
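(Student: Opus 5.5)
Write $x=p/q$ in lowest terms with $p,q>0$, so $x>\alpha_{\mathbb{I}}$. By Lemma \ref{lem_properties}$(v)$, the subsequence $n=qs$ already converges:
\[
\lim_{s\to\infty}\frac{\dim_k (I_{qs})_{ps}}{s^{d-1}} = L:=\frac{\mathrm{vol}_{d-1}(\Delta(\Gamma^{\mathbb{I}}_{(p,q)}))}{\mathrm{ind}(\Gamma^{\mathbb{I}}_{(p,q)})}.
\]
The plan is to write an arbitrary $n$ as $n=qs+r$ with $0\le r<q$. Then $\lfloor xn\rfloor = ps+\lfloor pr/q\rfloor$ with $0\le \lfloor pr/q\rfloor\le p-1$. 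We will show that $\dim_k (I_n)_{\lfloor xn\rfloor}$ is squeezed between quantities of the form $\dim_k(I_{q s'})_{ps'}$ with $s'=s+O(1)$. Since $n^{d-1}\sim q^{d-1}s^{d-1}$, both bounds will then tend to $d!\,L/q^{d-1}$ after dividing by $n^{d-1}/d!$.

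For the squeeze we multiply by fixed nonzero elements; since $R$ is a domain, this gives injective $k$-linear maps between graded pieces, and the filtration property controls the ideal index. Lemma \ref{lem_converge} supplies the nonzero elements needed.

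\emph{Upper bound.} Pick $t$ large and a nonzero $g\in (I_{qt+1})_{pt}$, which Lemma \ref{lem_converge} provides. Iterating it, or more simply arguing as in its proof since $p/q>\alpha_{\mathbb{I}}$, we can find a nonzero $h\in (I_{q t'-r})_{pt'-\lfloor pr/q\rfloor}$ for some fixed $t'$ depending only on $r$. To get this, use $\alpha(I_{qt'-r})/(qt') \to \alpha_{\mathbb{I}}<p/q$, so that for large $t'$ we have $\alpha(I_{qt'-r})\le pt'-p$, and then multiply by a power of $z\in R_1$. Multiplication by $h$ gives an injection
\[
(I_n)_{\lfloor xn\rfloor}\hookrightarrow (I_{q(s+t')})_{p(s+t')}.
\]
Taking finitely many $t'$, one for each residue $r$, yields
\[
\limsup_n \frac{\dim_k(I_n)_{\lfloor xn\rfloor}}{n^{d-1}} \le \frac{L}{q^{d-1}}.
\]

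\emph{Lower bound.} Choose $u$ large and a nonzero $h'\in (I_{qu+r})_{pu+\lfloor pr/q\rfloor}$. This exists because $(pu+\lfloor pr/q\rfloor)/(qu+r)\to p/q>\alpha_{\mathbb{I}}$, by the same argument as Lemma \ref{lem_converge}. Multiplication by $h'$ maps $(I_{q(s-u)})_{p(s-u)}$ injectively into $(I_{qs+r})_{ps+\lfloor pr/q\rfloor}=(I_n)_{\lfloor xn\rfloor}$. This gives the matching $\liminf$ bound, so the limit exists and equals
\[
f_{\mathbb{I}}(x)=\frac{d!\,L}{q^{d-1}}.
\]

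The main obstacle is producing the correcting elements uniformly for each of the finitely many residues $r$. This requires care with the degree offsets and ideal indices. It is handled by the Fekete-type convergence $\alpha(I_m)/m\to\alpha_{\mathbb{I}}$ together with the strict inequality $p/q>\alpha_{\mathbb{I}}$, exactly as in Lemma \ref{lem_converge}. The remaining steps are routine asymptotic comparison.
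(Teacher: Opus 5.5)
Your proof is correct and is essentially the paper's argument. Both proofs sandwich $\dim_k(I_n)_{\lfloor xn\rfloor}$ along each residue class $n=qs+r$ between values on the ray $\{(qs',ps')\}$, which Lemma \ref{lem_properties}$(v)$ controls. Both do this by multiplying by fixed nonzero homogeneous elements in the domain $R$, whose existence comes from $p/q>\alpha_{\mathbb{I}}$ as in Lemma \ref{lem_converge}.

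The only difference is packaging. The paper first rescales $(p,q)$ so that $(I_q)_{p-1}\neq 0$ and $(I_{q+1})_p\neq 0$, then builds every multiplier from $z$ and these two elements. (Its upper-bound multiplier should be $g^{b}$, landing in $(I_{q(n+a+b)})_{p(n+a+b)}$, rather than $g^{p-b}$, for the degrees to match.) You instead construct a correcting element directly for each of the finitely many residues $r$. This lands exactly on the ray and avoids the rescaling step.
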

\begin{proof}
Write $x=\frac{p}{q}$. By Lemma \ref{lem_converge}, we may replace $(p,q)$ by $(pn_0,qn_0)$ for some $n_0>0$ so that ${\left(I_{q}\right)}_{p-1}\neq 0$ and ${\left(I_{q+1}\right)}_{p}\neq 0$. By Lemma \ref{lem_properties}\;$(v)$, the limit $$L:=\lim_{n\to\infty}\dfrac{\dim_k {\left(I_{qn}\right)}_{pn}}{(qn)^{d-1}}$$ exists. It remains to show that for each $a\in\{0,\ldots,q-1\}$, and $b\in\{0,\ldots,p-1\}$, the sequence $$\left\{\dfrac{\dim_k {\left(I_{qn+a}\right)}_{pn+b}}{(qn+a)^{d-1}}\right\}_{n\in\mathbb{N}}$$ converges to the same value $L$. Choose three nonzero homogeneous elements $z\in R_1$, $f\in {\left(I_{q+1}\right)}_p$, and $g\in {\left(I_{q}\right)}_{p-1}$. Then there are inclusions of $k$-vector spaces $$z^b f^a\cdot {\left(I_{qn}\right)}_{pn} \subseteq {\left(I_{q(n+a)+a}\right)}_{p(n+a)+b}\subseteq {\left(I_{q(n+a)}\right)}_{p(n+a)+b},$$ and
$$g^{p-b}\cdot{\left(I_{q(n+a)}\right)}_{p(n+a)+b}\subseteq {\left(I_{q(n+p+a-b)}\right)}_{p(n+p+a-b)}$$ for all integers $n\geq 0$. Hence $$\dim_k {\left(I_{qn}\right)}_{pn} \leq \dim_k {\left(I_{q(n+a)+a}\right)}_{p(n+a)+b} \leq \dim_k {\left(I_{q(n+p+a-b)}\right)}_{p(n+p+a-b)}$$ for all $n\geq 0$. Dividing by $(qn)^{d-1}$ and adjusting the denominators gives
\begin{multline*}
 \dfrac{\dim_k {\left(I_{qn}\right)}_{pn}}{(qn)^{d-1}} \leq \dfrac{\dim_k {\left(I_{q(n+a)+a}\right)}_{p(n+a)+b}}{(q(n+a)+a)^{d-1}} \cdot \left(1+\frac{a(q+1)}{qn}\right)^{d-1}\\ \leq \dfrac{\dim_k {\left(I_{q(n+p+a-b)}\right)}_{p(n+p+a-b)}}{(q(n+p+a-b))^{d-1}}\cdot \left(1+\frac{p+a-b}{n}\right)^{d-1}
\end{multline*}
for all $n\geq 0$. Both the outer terms converge to $L$, and the sandwich theorem shows that $$\lim_{n\to\infty}\dfrac{\dim_k {\left(I_{qn+a}\right)}_{pn+b}}{(qn+a)^{d-1}}=L.$$
\end{proof}

\subsection{Invariance of the index}

Recall the notations from section \ref{cones}.

\begin{proposition}\label{welldefined}
Let $p,q>0$ be integers such that $\tfrac{p}{q}>\alpha_{\mathbb{I}}$. Then
\begin{align*}
G\big(\Gamma^{\mathbb{I}}_{(p,q)}\big) \cap \left(\mathbb{Z}^d\times \{0\}\right) &= G\big(\Gamma^{R_{\bullet}}_{(1,1)}\big) \cap \left(\mathbb{Z}^d\times \{0\}\right),\;\;\text{and}\\
L\big(\Gamma^{\mathbb{I}}_{(p,q)}\big) \cap \left(\mathbb{Z}^d\times \{0\}\right) &= L\big(\Gamma^{R_{\bullet}}_{(1,1)}\big) \cap \left(\mathbb{Z}^d\times \{0\}\right).
\end{align*}
In particular, $\mathrm{ind}\big(\Gamma^{\mathbb{I}}_{(p,q)}\big)= \mathrm{ind}\big(\Gamma^{{R}_{\bullet}}_{(1,1)}\big)$, where ${R}_{\bullet}$ denotes the constant filtration $\{R\}_{n\in\mathbb{N}}$. Moreover, for any integer $n_0>0$, we have $\frac{1}{q}\cdot\Delta\big(\Gamma^{\mathbb{I}}_{(p,q)}\big) = \frac{1}{qn_0}\cdot\Delta\big(\Gamma^{\mathbb{I}}_{(pn_0,qn_0)}\big)$.
\end{proposition}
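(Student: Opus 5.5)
The plan is to show that both the group $G(\Gamma^{\mathbb{I}}_{(p,q)})\cap(\mathbb{Z}^d\times\{0\})$ and the lattice $L(\Gamma^{\mathbb{I}}_{(p,q)})\cap(\mathbb{Z}^d\times\{0\})$ coincide with the corresponding objects for the constant filtration, by exhibiting both as ``differences of valuation vectors of pairs of elements of the same degree''. Write $\Gamma=\Gamma^{\mathbb{I}}_{(p,q)}$ and $\Gamma'=\Gamma^{R_\bullet}_{(1,1)}$. Since $\Gamma\cap(\mathbb{N}^d\times\{qn\})$ consists of the vectors $(\mathbf{v}_f,qn)$ with $f\in (I_{qn})_{pn}$, and $m(\Gamma)=q$ by Lemma \ref{lem_properties}(iv), an element of $G(\Gamma)\cap(\mathbb{Z}^d\times\{0\})$ is a $\mathbb{Z}$-combination of vectors $(\mathbf{v}_{f_i},qn_i)$ whose last coordinates sum to $0$. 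Such an element is a $\mathbb{Z}$-combination of differences $\mathbf{v}_f-\mathbf{v}_g$ with $f,g$ in the same graded piece $(I_{qn})_{pn}$. To see this, first add the same element many times so that everything lies in one large level $qN$, using the semigroup property (Lemma \ref{lem_properties}(i)) and the fact that $(I_{qn})_{pn}\neq0$ for $n\gg0$. The inclusion ``$\subseteq$'' then follows: $\mathbf{v}_f-\mathbf{v}_g=(\mathbf{v}_f,pN)-(\mathbf{v}_g,pN)$ lies in $G(\Gamma')$, because $f,g\in R_{pN}$.

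For ``$\supseteq$'' I take $a,b\in R_m$ and show that $\mathbf{v}_a-\mathbf{v}_b$ lies in $G(\Gamma)$. Choose $n$ with $pn\ge m$ and $(I_{qn})_{pn-m}\neq0$; Lemma \ref{lem_converge}, or its straightforward variant with $pn-m$ in place of $pn-1$, gives such $n$. Pick a nonzero $h\in(I_{qn})_{pn-m}$. Then $ha,hb\in(I_{qn})_{pn}$, since $I_{qn}$ is an ideal, and $\mathbf{v}_{ha}-\mathbf{v}_{hb}=\mathbf{v}_a-\mathbf{v}_b$ because $\nu$ is additive and exponent vectors are determined by values. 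So $(\mathbf{v}_a-\mathbf{v}_b,0)\in G(\Gamma)$. Every element of $G(\Gamma')\cap(\mathbb{Z}^d\times\{0\})$ is a combination of such differences, again after moving to a common degree, and this proves the first equality.

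For the $L$ statement, note that $L(S)\cap(\mathbb{R}^d\times\{0\})$ is the $\mathbb{R}$-span of $G(S)\cap(\mathbb{Z}^d\times\{0\})$. Indeed, $L(S)=\mathbb{R}\otimes G(S)$, and since the projection $\pi(G(S))$ is cyclic, we have $G(S)=(G(S)\cap\ker\pi)\oplus\mathbb{Z}w$. The first equality therefore implies the second after intersecting with $\mathbb{Z}^d\times\{0\}$. The statement about the indices then follows directly from the definition of $\mathrm{ind}$.

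For the final scaling claim, $\Gamma^{\mathbb{I}}_{(pn_0,qn_0)}$ is exactly the subsemigroup of $\Gamma$ consisting of levels divisible by $qn_0$. Using the description of $\Delta(S)$ as the closure of $\bigcup_n \frac1n\{\mathbf{v}:(\mathbf{v},m(S)n)\in S\}$, the body $\Delta(\Gamma^{\mathbb{I}}_{(pn_0,qn_0)})$ is the closure of the points $\frac{1}{n}\mathbf{v}$ with $(\mathbf{v},qn_0n)\in\Gamma$. Rescaled by $\frac{1}{qn_0}$, these points are a subset of the corresponding points for $\frac1q\Delta(\Gamma)$, which gives one inclusion. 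For the other inclusion, take a point $\frac1{qn}\mathbf{v}$ coming from level $qn$ of $\Gamma$. Multiplying by $n_0$, that is, raising the element to the $n_0$-th power, lands at level $qnn_0$ and gives the same rescaled point. So the two closures agree. The main obstacle I expect is the bookkeeping in reducing arbitrary integer combinations to differences of elements in a single graded piece. This requires care with signs and with the requirement that levels be large enough for $(I_{qn})_{pn}$ to be nonzero.
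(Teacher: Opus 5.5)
Your proof is correct, but it is organized quite differently from the paper's.

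The paper argues through a chain of reductions:
\begin{itemize}
\item It first shows that $L$, $\mathrm{Con}$ and $G\cap(\mathbb{Z}^d\times\{0\})$ are unchanged under $(p,q)\mapsto(pn_0,qn_0)$. For this it uses $n_0\Gamma^{\mathbb{I}}_{(p,q)}\subseteq\Gamma^{\mathbb{I}}_{(pn_0,qn_0)}\subseteq\Gamma^{\mathbb{I}}_{(p,q)}$ and pads levels with a fixed element of level $q$.
\item It then compares two slopes $p'/q'>p/q$ by multiplying with $z^{cn}$, where $z\in R_1$, and with $f^{cn'}$, where $f\in(I_q)_{p-1}$.
\item Finally, after normalizing to $q=1$ and $(I_1)_{p-1}\neq 0$, it sandwiches $\Gamma^{\mathbb{I}}_{(p,1)}$ between the semigroups of the principal filtration $\{(g^n)\}_{n}$ and of $R_\bullet$. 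Both of these have the same difference lattice as $\Gamma^{R_\bullet}_{(1,1)}$.
\end{itemize}
The $L$-statement is then obtained ``by similar methods''.

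You instead describe $G(\Gamma)\cap(\mathbb{Z}^d\times\{0\})$ intrinsically, as differences $\mathbf{v}_f-\mathbf{v}_g$ of equal-degree elements, and prove both inclusions directly. One inclusion is just $(I_{qn})_{pn}\subseteq R_{pn}$. For the other, a single multiplier $h\in(I_{qn})_{pn-m}$ carries any pair $a,b\in R_m$ into one level of $\Gamma$ without changing $\mathbf{v}_a-\mathbf{v}_b$. This removes the need for the slope comparison and the auxiliary principal filtration. It also makes clear that the lattice depends only on $(R,\nu)$.

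Your derivation of the $L$-equality from the $G$-equality is more explicit than the paper's. It uses $L(S)\cap(\mathbb{R}^d\times\{0\})=\mathbb{R}\cdot\bigl(G(S)\cap(\mathbb{Z}^d\times\{0\})\bigr)$, which holds because $\pi(G(S))$ is nonzero and cyclic, so $G(S)$ splits. What the paper's route buys is a set of intermediate invariance statements under rescaling and change of slope. Your $\Delta$-scaling argument is essentially the paper's Claim 1, phrased with the closure description instead of the equality of cones.

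One simplification: the bookkeeping you flag as the main obstacle is vacuous. Since $\Gamma$ is a semigroup containing $0$, we have $G(\Gamma)=\Gamma-\Gamma$. So every element of $G(\Gamma)\cap(\mathbb{Z}^d\times\{0\})$ is already a single difference $(\mathbf{v}_F,qN)-(\mathbf{v}_G,qN)$ with $F,G\in(I_{qN})_{pN}$. Large levels are needed only when choosing $h$, and there Lemma~\ref{lem_converge} already suffices: a product of $m$ elements of $(I_{qn})_{pn-1}$ lies in $(I_{qnm})_{pnm-m}$.
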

\begin{proof} We shall breakdown our proof into multiple claims.

\vspace{0.2cm}
\noindent
\emph{Claim $1$}. For any integer $n_0>0$, we have
\begin{equation*}
L\big(\Gamma^{\mathbb{I}}_{(p,q)}\big) = L\big(\Gamma^{\mathbb{I}}_{(pn_0,qn_0)}\big),\;\; \mathrm{Con}\big(\Gamma^{\mathbb{I}}_{(p,q)}\big) = \mathrm{Con}\big(\Gamma^{\mathbb{I}}_{(pn_0,qn_0)}\big), \;\; \text{and} \;\; \frac{1}{q}\cdot\Delta\big(\Gamma^{\mathbb{I}}_{(p,q)}\big) = \frac{1}{qn_0}\cdot\Delta\big(\Gamma^{\mathbb{I}}_{(pn_0,qn_0)}\big).
\end{equation*}

For each $n_0>0$, there are inclusions $$n_0 \cdot \Gamma^{\mathbb{I}}_{(p,q)} \subseteq \Gamma^{\mathbb{I}}_{(pn_0,qn_0)} \subseteq \Gamma^{\mathbb{I}}_{(p,q)}.$$ These imply that $L\big(\Gamma^{\mathbb{I}}_{(p,q)}\big) = L\big(\Gamma^{\mathbb{I}}_{(pn_0,qn_0)}\big)$ and $\mathrm{Con}\big(\Gamma^{\mathbb{I}}_{(p,q)}\big) = \mathrm{Con}\big(\Gamma^{\mathbb{I}}_{(pn_0,qn_0)}\big)$. Intersecting with the hyperplane $\mathbb{R}^d\times \{1\}$ yields
\begin{multline*}
 \frac{1}{q}\cdot\Delta\big(\Gamma^{\mathbb{I}}_{(p,q)}\big) = \frac{1}{q}\cdot\left(\mathrm{Con}\big(\Gamma^{\mathbb{I}}_{(p,q)}\big) \cap \left(\mathbb{R}^d\times \{q\}\right)\right) = \mathrm{Con}\big(\Gamma^{\mathbb{I}}_{(p,q)}\big) \cap \left(\mathbb{R}^d\times \{1\}\right)\\ =\mathrm{Con}\big(\Gamma^{\mathbb{I}}_{(pn_0,qn_0)}\big) \cap \left(\mathbb{R}^d\times \{1\}\right) = \frac{1}{qn_0}\cdot\left(\mathrm{Con}\big(\Gamma^{\mathbb{I}}_{(pn_0,qn_0)}\big) \cap \left(\mathbb{R}^d\times \{qn_0\}\right)\right) = \frac{1}{qn_0}\cdot\Delta\big(\Gamma^{\mathbb{I}}_{(pn_0,qn_0)}\big).
\end{multline*}

\vspace{0.2cm}
\noindent
\emph{Claim $2$}. For every integer $n_0>0$, we have
\begin{equation*}
G\big(\Gamma^{\mathbb{I}}_{(p,q)}\big) \cap \left(\mathbb{Z}^d\times \{0\}\right) = G\big(\Gamma^{\mathbb{I}}_{(pn_0,qn_0)}\big) \cap \left(\mathbb{Z}^d\times \{0\}\right).
\end{equation*}

Clearly, $G\big(\Gamma^{\mathbb{I}}_{(pn_0,qn_0)}\big) \cap \left(\mathbb{Z}^d\times \{0\}\right) \subseteq G\big(\Gamma^{\mathbb{I}}_{(p,q)}\big) \cap \left(\mathbb{Z}^d\times \{0\}\right)$.
Let $(\mathbf{u},0) \in G\big(\Gamma^{\mathbb{I}}_{(p,q)}\big) \cap \left(\mathbb{Z}^d\times \{0\}\right)$. Then there exist $\mathbf{a}, \mathbf{b}\in\mathbb{N}^d$, and $s\in\mathbb{N}$ such that $(\mathbf{u},0) = (\mathbf{a}, qs) - (\mathbf{b}, qs)$ with $(\mathbf{a}, qs), (\mathbf{b}, qs) \in \Gamma^{\mathbb{I}}_{(p,q)}$. As ${(I_q)}_p\neq 0$, there exist $\mathbf{c}\in\mathbb{N}^d$ with $(\mathbf{c},q)\in \Gamma^{\mathbb{I}}_{(p,q)}$. Write $s = n_0k + t$, where $0\leq t\leq n_0-1$. So,
\begin{align*}
 (\mathbf{u},0) &= (\mathbf{a}, qs) - (\mathbf{b}, qs)\\
 &= (\mathbf{a}, qs) + (n_0-t)\cdot (\mathbf{c},q) - (\mathbf{b}, qs) - (n_0-t)\cdot (\mathbf{c},q)\\
 &= (\mathbf{a} + (n_0-t)\mathbf{c}, (k+1)qn_0) - (\mathbf{b} + (n_0-t)\mathbf{c}, (k+1)qn_0),
\end{align*}
where $(\mathbf{a} + (n_0-t)\mathbf{c}, (k+1)qn_0), (\mathbf{b} + (n_0-t)\mathbf{c}, (k+1)qn_0) \in \Gamma^{\mathbb{I}}_{(pn_0,qn_0)}$. In other words, $(\mathbf{u},0) \in G\big(\Gamma^{\mathbb{I}}_{(pn_0,qn_0)}\big) \cap \left(\mathbb{Z}^d\times \{0\}\right)$. This completes the proof of Claim $2$.

\vspace{0.2cm}
\noindent
\emph{Claim $3$}. Let $p^{\prime},q^{\prime}>0$ be integers such that $\frac{p^{\prime}}{q^{\prime}}>\frac{p}{q}$. Then
\begin{align}
G\big(\Gamma^{\mathbb{I}}_{(p,q)}\big) \cap \left(\mathbb{Z}^d\times \{0\}\right) &= G\big(\Gamma^{\mathbb{I}}_{(p^{\prime},q^{\prime})}\big) \cap \left(\mathbb{Z}^d\times \{0\}\right),\;\;\text{and}\label{equal1}\\
L\big(\Gamma^{\mathbb{I}}_{(p,q)}\big) \cap \left(\mathbb{Z}^d\times \{0\}\right) &= L\big(\Gamma^{\mathbb{I}}_{(p^{\prime},q^{\prime})}\big) \cap \left(\mathbb{Z}^d\times \{0\}\right).\label{equal2}
\end{align}

Choose $n_0, n_0^{\prime}>0$ with $qn_0 = q^{\prime}n_0^{\prime}$. By Claim $1$ and Claim $2$, we may replace $(p,q)$ and $(p^{\prime},q^{\prime})$ by $(pn_0,qn_0)$ and $(p^{\prime}n^{\prime}_0,q^{\prime}n^{\prime}_0)$ respectively, and by Lemma \ref{lem_converge}, we may further assume that ${\left(I_{q}\right)}_{p-1} \neq 0$. Thus, it suffices to assume from the beginning that $q=q^{\prime}$, and ${\left(I_{q}\right)}_{p-1} \neq 0$.

Let $z\in R_1$ and $f\in {\left(I_{q}\right)}_{p-1}$ be two nonzero elements. There exist unique vectors $\mathbf{v}_z, \mathbf{v}_f \in \mathbb{N}^d$ such that $\nu(z) = \langle \mathbf{v}_z, \boldsymbol{\xi}\rangle$, and $\nu(f) = \langle \mathbf{v}_f, \boldsymbol{\xi}\rangle$, where $\boldsymbol{\xi}=(\xi_1,\ldots,\xi_d)\in \mathbb{R}^d$. Put $c=p^{\prime}-p$. For all $n,n^{\prime}>0$, there are inclusions $$z^{cn}\cdot {\left(I_{qn}\right)}_{pn} \subseteq {\left(I_{qn}\right)}_{p^{\prime}n}, \quad \text{and} \quad f^{cn^{\prime}}\cdot {\left(I_{qn^{\prime}}\right)}_{p^{\prime}n^{\prime}} \subseteq {\left(I_{(1+c)qn^{\prime}}\right)}_{(1+c)pn^{\prime}}.$$ In other words, if $(\mathbf{u},qn)\in \Gamma^{\mathbb{I}}_{(p,q)}$ then $(\mathbf{u}+cn\mathbf{v}_z,qn)\in \Gamma^{\mathbb{I}}_{(p^{\prime},q)}$. Similarly, if $(\mathbf{u}^{\prime},qn^{\prime})\in \Gamma^{\mathbb{I}}_{(p^{\prime},q)}$ then $(\mathbf{u}^{\prime}+cn^{\prime}\mathbf{v}_f,(1+c)qn^{\prime})\in \Gamma^{\mathbb{I}}_{\left((1+c)p,(1+c)q\right)}$.

To prove \eqref{equal1}, let $\left(\mathbf{u},0\right) \in G\big(\Gamma^{\mathbb{I}}_{(p,q)}\big) \cap \left(\mathbb{Z}^d\times \{0\}\right)$. Then there exist $\mathbf{a},\mathbf{b}\in \mathbb{N}^d$, and $n\in\mathbb{N}$ such that $\left(\mathbf{u},0\right) = (\mathbf{a},qn) - (\mathbf{b},qn)$, with $(\mathbf{a},qn), (\mathbf{b},qn) \in \Gamma^{\mathbb{I}}_{(p,q)}$. Since we can write $(\mathbf{a},qn) - (\mathbf{b},qn) = (\mathbf{a}+cn\mathbf{v}_z,qn) - (\mathbf{b}+cn\mathbf{v}_z,qn)$, it follows that $\left(\mathbf{u},0\right) \in G\big(\Gamma^{\mathbb{I}}_{(p^{\prime},q)}\big) \cap \left(\mathbb{Z}^d\times \{0\}\right)$. Again, let $\left(\mathbf{u}^{\prime},0\right) \in G\big(\Gamma^{\mathbb{I}}_{(p^{\prime},q)}\big) \cap \left(\mathbb{Z}^d\times \{0\}\right)$. Then we can write $\left(\mathbf{u}^{\prime},0\right) = (\mathbf{a}^{\prime},qn^{\prime}) - (\mathbf{b}^{\prime},qn^{\prime})$, for some $(\mathbf{a}^{\prime},qn^{\prime}), (\mathbf{b}^{\prime},qn^{\prime}) \in \Gamma^{\mathbb{I}}_{(p^{\prime},q)}$. Since $(\mathbf{a}^{\prime},qn^{\prime}) - (\mathbf{b}^{\prime},qn^{\prime}) = (\mathbf{a}^{\prime}+cn^{\prime}\mathbf{v}_f,qn^{\prime}) - (\mathbf{b}^{\prime}+cn^{\prime}\mathbf{v}_f,qn^{\prime})$, it follows that $\left(\mathbf{u}^{\prime},0\right) \in G\big(\Gamma^{\mathbb{I}}_{((1+c)p,(1+c)q)}\big)\cap \left(\mathbb{Z}^d\times \{0\}\right)$. So, $$G\big(\Gamma^{\mathbb{I}}_{(p,q)}\big) \cap \left(\mathbb{Z}^d\times \{0\}\right) \subseteq G\big(\Gamma^{\mathbb{I}}_{(p^{\prime},q)}\big) \cap \left(\mathbb{Z}^d\times \{0\}\right) \subseteq G\big(\Gamma^{\mathbb{I}}_{((1+c)p,(1+c)q)}\big) \cap \left(\mathbb{Z}^d\times \{0\}\right).$$ Clearly, $\Gamma^{\mathbb{I}}_{((1+c)p,(1+c)q)} \subseteq \Gamma^{\mathbb{I}}_{(p,q)}$ and this proves \eqref{equal1}. One also proves \eqref{equal2} by similar methods.

\vspace{0.2cm}
\noindent
\emph{Claim $4$}. We have
\begin{align*}
G\big(\Gamma^{\mathbb{I}}_{(p,q)}\big) \cap \left(\mathbb{Z}^d\times \{0\}\right) &= G\big(\Gamma^{R_{\bullet}}_{(1,1)}\big) \cap \left(\mathbb{Z}^d\times \{0\}\right),\;\;\text{and}\\
L\big(\Gamma^{\mathbb{I}}_{(p,q)}\big) \cap \left(\mathbb{Z}^d\times \{0\}\right) &= L\big(\Gamma^{R_{\bullet}}_{(1,1)}\big) \cap \left(\mathbb{Z}^d\times \{0\}\right).
\end{align*}

By Claim $3$, we may reduce to $q=1$ and $(I_1)_{p-1}\neq 0$. Pick a nonzero element $g \in {\left(I_{1}\right)}_{p-1}$. There is a unique vector $\mathbf{v}_g \in \mathbb{N}^d$ such that $\nu(g) = \langle \mathbf{v}_g, \boldsymbol{\xi}\rangle$. Let $g_{\bullet}$ denote the filtration $\left\{(g^n)\right\}_{n\in\mathbb{N}}$. Then
\begin{align*}
\Gamma^{g_{\bullet}}_{(p,1)} &= \left\{(\mathbf{v}_f,n)\in\mathbb{N}^{d+1} \mid \exists f \in {(g^{n})}_{pn}\;\text{with}\;\nu(f) = \langle \mathbf{v}_f, \boldsymbol{\xi} \rangle\right\}\\
  &= \left\{(n\mathbf{v}_g + \mathbf{v}_{f^{\prime}},n)\in\mathbb{N}^{d+1} \mid \exists f^{\prime} \in {R}_{n}\;\text{with}\;\nu(f^{\prime}) = \langle \mathbf{v}_{f^{\prime}}, \boldsymbol{\xi} \rangle\right\},
\end{align*}
which shows that
\begin{align*}
 G\big(\Gamma^{g_{\bullet}}_{(p,1)}\big)\cap \left(\mathbb{Z}^d\times \{0\}\right) &= G\big(\Gamma^{R_{\bullet}}_{(1,1)}\big)\cap \left(\mathbb{Z}^d\times \{0\}\right),\;\text{and}\\
  L\big(\Gamma^{g_{\bullet}}_{(p,1)}\big)\cap \left(\mathbb{Z}^d\times \{0\}\right) &= L\big(\Gamma^{R_{\bullet}}_{(1,1)}\big)\cap \left(\mathbb{Z}^d\times \{0\}\right).
 \end{align*}
For each $n$, we have $(g^n)_{pn} \subseteq (I_n)_{pn} \subseteq R_{pn}$. Now using the above equalities, one gets
\begin{align*}
 G\big(\Gamma^{R_{\bullet}}_{(1,1)}\big)\cap \left(\mathbb{Z}^d\times \{0\}\right) &\subseteq G\big(\Gamma^{\mathbb{I}}_{(p,1)}\big)\cap \left(\mathbb{Z}^d\times \{0\}\right) \subseteq G\big(\Gamma^{R_{\bullet}}_{(p,1)}\big)\cap \left(\mathbb{Z}^d\times \{0\}\right),\;\text{and}\\
 L\big(\Gamma^{R_{\bullet}}_{(1,1)}\big)\cap \left(\mathbb{Z}^d\times \{0\}\right) &\subseteq L\big(\Gamma^{\mathbb{I}}_{(p,1)}\big)\cap \left(\mathbb{Z}^d\times \{0\}\right) \subseteq L\big(\Gamma^{R_{\bullet}}_{(p,1)}\big)\cap \left(\mathbb{Z}^d\times \{0\}\right).
\end{align*}
The outer terms are equal from Claim $3$, hence all coincide. Thus Claim $4$ is established.

The equality $\mathrm{ind}\big(\Gamma^{\mathbb{I}}_{(p,q)}\big) = \mathrm{ind}\big(\Gamma^{R_{\bullet}}_{(1,1)}\big)$ is a direct consequence of Claim $4$.
\end{proof}

\begin{remark}\label{suffices}
Proposition \ref{welldefined} shows that $\mathrm{ind}\big(\Gamma^{\mathbb{I}}_{(p,q)}\big)$ is independent of the filtration $\mathbb{I}$ and integers $p,q$ such that $\tfrac{p}{q}>\alpha_{\mathbb{I}}$. We denote this common value simply by $\mathrm{ind}(R)$. For such $x=\frac{p}{q}$, Proposition \ref{welldefined} also shows that $$\Delta_x(\mathbb{I}) := \frac{1}{q}\cdot\Delta\big(\Gamma^{\mathbb{I}}_{(p,q)}\big) = \overline{\bigcup_{n\in\mathbb{Z}_{\geq 1}}\left\{\left(\tfrac{i_1}{qn},\ldots,\tfrac{i_d}{qn}\right)\in\mathbb{Q}_{\geq 0}^d \mid \exists f\in {\left(I_{qn}\right)}_{pn}\;\;\text{with}\;\;\nu(f)=i_1\xi_1+\cdots+i_d\xi_d\right\}}$$ is well-defined, independent of the rational representation of $x$. Here $\mathbb{R}^d$ is identified with $\mathbb{R}^d\times \{1\}$. From  Lemma \ref{lem_properties}\;$(v)$ and Proposition \ref{prop_rational}, one has
\begin{equation*}
 f_{\mathbb{I}}(x) = \dfrac{d!}{q^{d-1}}\cdot\dfrac{\mathrm{vol}_{d-1}\big(\Delta\big(\Gamma^{\mathbb{I}}_{(p,q)}\big)\big)}{\mathrm{ind}(R)} = \frac{d!}{\mathrm{ind}(R)}\cdot \mathrm{vol}_{d-1}\left(\Delta_x(\mathbb{I})\right)
\end{equation*}
by homogenity of volume.
\end{remark}

\subsection{Proof of the Main Theorem}

\begin{proof}[Proof of Theorem \ref{mainthm}]
Part $(ii)$ has already been established in Lemma \ref{easy}.

In order to prove parts $(i)$, $(iii)$, and $(iv)$, we will first assume that $x,y\in \mathbb{Q} \cap (\alpha_{\mathbb{I}},\infty)$, and $\lambda \in \mathbb{Q}\cap [0,1]$. We first claim that $$\lambda \cdot \Delta_x(\mathbb{I}) + (1-\lambda)\cdot \Delta_y(\mathbb{I}) \subseteq \Delta_{\lambda x + (1-\lambda)y}(\mathbb{I}).$$

Write $\lambda = \tfrac{s}{t}$, $x=\tfrac{p}{q}$, and $y=\tfrac{p^{\prime}}{q^{\prime}}$. Choose elements $f\in {(I_{qn_0})}_{pn_0}$ and $g\in {(I_{q^{\prime}n_0^{\prime}})}_{p^{\prime}n_0^{\prime}}$ with $\nu(f)=\sum_{j=1}^d i_j\xi_j$ and $\nu(g)=\sum_{j=1}^d i_j^{\prime}\xi_j$ respectively. Then $\big(\tfrac{i_1}{qn_0},\ldots,\tfrac{i_d}{qn_0}\big) \in \Delta_x(\mathbb{I})$ and $\big(\tfrac{i_1^{\prime}}{q^{\prime}n_0^{\prime}},\ldots,\tfrac{i_d^{\prime}}{q^{\prime}n_0^{\prime}}\big) \in \Delta_y(\mathbb{I})$. After replacing $f$ and $g$ with $f^{q^{\prime}n_0^{\prime}}$ and $g^{qn_0}$ respectively, we may assume that $q=q^{\prime}$ and $n_0=n_0^{\prime}$. Then $\lambda x + (1-\lambda)y = \tfrac{(ps+p^{\prime}(t-s))n_0}{qtn_0}$, $\nu(f^sg^{t-s}) = \sum_{j=1}^d \left(si_j+(t-s)i_j^{\prime}\right)\xi_j$, and $$f^sg^{t-s} \in {\left(I_{qsn_0}\right)}_{psn_0}\cdot {\left(I_{q(t-s)n_0}\right)}_{p^{\prime}(t-s)n_0}\subseteq  {\left(I_{qtn_0}\right)}_{(ps+p^{\prime}(t-s))n_0}.$$
Thus,
$$\lambda\cdot\big(\tfrac{i_1}{qn_0},\ldots,\tfrac{i_d}{qn_0}\big) + (1-\lambda)\cdot\big(\tfrac{i_1^{\prime}}{qn_0},\ldots,\tfrac{i_d^{\prime}}{qn_0}\big) = \big(\tfrac{si_1+(t-s)i_1^{\prime}}{qtn_0}, \cdots, \tfrac{si_d+(t-s)i_d^{\prime}}{qtn_0}\big) \in \Delta_{\lambda x + (1-\lambda)y}(\mathbb{I}),$$ and our claim is proven.

Note that $\Delta_x(\mathbb{I})$ and $\Delta_y(\mathbb{I})$ are nonempty compact convex sets in $\mathbb{R}^d$. By the multiplicative form of the Brunn-Minkowski inequality \cite[Theorem 8.2]{Gru07},
\begin{align*}
 \mathrm{vol}_{d-1}\left(\Delta_{\lambda x + (1-\lambda)y}(\mathbb{I})\right) &\geq \mathrm{vol}_{d-1}\left(\lambda\cdot \Delta_x(\mathbb{I}) + (1-\lambda)\cdot \Delta_y(\mathbb{I})\right)\\
 &\geq {\left(\mathrm{vol}_{d-1}\left(\Delta_x(\mathbb{I})\right)\right)}^{\lambda}\cdot {\left(\mathrm{vol}_{d-1}\left(\Delta_y(\mathbb{I})\right)\right)}^{1-\lambda}.
\end{align*}
By Remark \ref{suffices} $$f_{\mathbb{I}}(z) = \frac{d!}{\mathrm{ind}(R)}\cdot \mathrm{vol}_{d-1}\left(\Delta_z(\mathbb{I})\right);\quad z\in \mathbb{Q} \cap (\alpha_{\mathbb{I}},\infty),$$ we obtain $$f_{\mathbb{I}}(\lambda x + (1-\lambda)y) \geq {\left(f_{\mathbb{I}}(x)\right)}^{\lambda}\cdot {\left(f_{\mathbb{I}}(y)\right)}^{1-\lambda}.$$ Thus $g(z) = -\ln\left(f_{\mathbb{I}}(z)\right)$ is convex on $\mathbb{Q} \cap (\alpha_{\mathbb{I}},\infty)$, hence continuous there \cite[Theorem 1.1]{Gru07}. Therefore, $f_{\mathbb{I}}(z) = e^{-g(z)}$ is continuous and log-concave on this domain.

Now let $x>\alpha_{\mathbb{I}}$ be any real number. Choose sequences ${\{x^{\prime}_t\}}_{t\in\mathbb{N}}$, ${\{x^{\prime\prime}_t\}}_{t\in\mathbb{N}}$ of rational numbers converging to $x$ from below and above, respectively. By Proposition \ref{prop_rational}, $f_{\mathbb{I}}(z)$ exists as a limit for all $z\in \mathbb{Q}\cap (\alpha_{\mathbb{I}},\infty)$. Moreover, we just showed that $f_{\mathbb{I}} \colon \mathbb{Q}\cap (\alpha_{\mathbb{I}},\infty) \to \mathbb{R}_{\geq 0}$ is continuous and log-concave. Since $R$ has a nonzero divisor of degree one, $f_{\mathbb{I}}$ is non-decreasing. So $$\lim_{n\to\infty}\dfrac{\dim_k {\left(I_n\right)}_{\lfloor x^{\prime}_t n \rfloor}}{n^{d-1}/d!} \leq \liminf_{n\to\infty}\dfrac{\dim_k {\left(I_n\right)}_{\lfloor xn \rfloor}}{n^{d-1}/d!} \leq \limsup_{n\to\infty}\dfrac{\dim_k {\left(I_n\right)}_{\lfloor xn \rfloor}}{n^{d-1}/d!} \leq \lim_{n\to\infty}\dfrac{\dim_k {\left(I_n\right)}_{\lfloor x^{\prime\prime}_t n \rfloor}}{n^{d-1}/d!}$$ for all $t\in \mathbb{N}$. Letting $t\to\infty$ gives the existence of the limit $f_{\mathbb{I}}(x)$ for all $x\in (\alpha_{\mathbb{I}},\infty)$. Hence, $f_{\mathbb{I}}$ is continuous and log-concave on $(\alpha_{\mathbb{I}},\infty)$. As $g(z) = -\ln\left(f_{\mathbb{I}}(z)\right)$ is convex, it is differentiable outside a countable set, see \cite[Theorem 1.4]{Gru07}. Thus, part $(iv)$ follows.

Finally, part $(v)$ is a direct consequence of Lebesgue's dominated convergence theorem \cite[Theorem 11.32]{Rud76}.
\end{proof}

\subsection{Density functions and Newton-Okounkov bodies}
The following remark is inspired from the construction given in \cite[Remark 4.14]{LM09}.

\begin{remark}\label{global_okounkov}
Consider the closed convex set, $$\Delta(\mathbb{I}) = \overline{\bigcup\left\{(\mathbf{v},x)\in \mathbb{R}^{d+1}\mid \mathbf{v}\in \Delta_x(\mathbb{I})\;\text{and}\;x\in \mathbb{Q}\cap (\alpha_{\mathbb{I}},\infty)\right\}},$$ where $\Delta_x(\mathbb{I})$ is as in Remark \ref{suffices}. Let $\pi\colon \mathbb{R}^{d+1}\to \mathbb{R}$ be the projection onto the last factor. For each real number $x>\alpha_{\mathbb{I}}$, we define $\Delta_x(\mathbb{I})$ to be the compact convex set $\Delta(\mathbb{I}) \cap \pi^{-1}(\{x\})$. Then
\begin{equation}\label{mult=vol}
f_{\mathbb{I}}(x) = \frac{d!}{\mathrm{ind}(R)}\cdot \mathrm{vol}_{d-1}\left(\Delta_x(\mathbb{I})\right)
\end{equation}
from the approximation of $x$ by rationals together with continuity of $f_{\mathbb{I}}$. The set $\Delta(\mathbb{I})$ enjoys the following properties:
\begin{enumerate}
 \item if $\mathbf{w}_1, \mathbf{w}_2 \in \Delta(\mathbb{I})$ then $\mathbf{w}_1 + \mathbf{w}_2 \in \Delta(\mathbb{I})$, and
 \item if $\mathbf{w} \in \Delta(\mathbb{I})$ then $\lambda\cdot\mathbf{w} \in \Delta(\mathbb{I})$ for all real numbers $\lambda$ such that $\lambda\pi(\mathbf{w}) \geq \alpha_{\mathbb{I}}$.
\end{enumerate}
Further, given two real numbers $b>a>\alpha_{\mathbb{I}}$, one has
\begin{align*}
 \lim_{n\to\infty}\dfrac{\sum_{m=\lfloor an\rfloor}^{\lfloor bn\rfloor}\dim_k {\left(I_n\right)}_m}{n^d/d!} = \int_a^b f_{\mathbb{I}}(x)dx &= \frac{d!}{\mathrm{ind}(R)}\int_a^b \mathrm{vol}_{d-1}\left(\Delta_x(\mathbb{I})\right)dx\\
 &= \frac{d!}{\mathrm{ind}(R)}\cdot \mathrm{vol}_{d}\left(\Delta(\mathbb{I}) \cap \pi^{-1}\left((a,b)\right)\right).
\end{align*}
\end{remark}

\begin{corollary}[Brunn-Minkowski inequality]\label{Brunn_Min}
Let $\mathbb{I}=\{I_n\}_{n\in\mathbb{N}}$ and $\mathbb{J}=\{J_n\}_{n\in\mathbb{N}}$ be filtrations of nonzero graded ideals in $R$. Then $${\left(f_{\mathbb{I}\cap \mathbb{J}}(x+y)\right)}^{\tfrac{1}{d-1}} \geq {\left(f_{\mathbb{IJ}}(x+y)\right)}^{\tfrac{1}{d-1}} \geq {\left(f_{\mathbb{I}}(x)\right)}^{\tfrac{1}{d-1}} + {\left(f_{\mathbb{J}}(y)\right)}^{\tfrac{1}{d-1}},$$ for all real numbers $x>\alpha_{\mathbb{I}}$ and $y>\alpha_{\mathbb{J}}$.
\end{corollary}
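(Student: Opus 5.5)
The first inequality is immediate from the definitions, and the second follows from a Minkowski-sum inclusion of Newton--Okounkov bodies together with the classical Brunn--Minkowski inequality.

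\emph{Setup.} Write $\mathbb{IJ}=\{I_nJ_n\}_{n\in\mathbb{N}}$ and $\mathbb{I}\cap\mathbb{J}=\{I_n\cap J_n\}_{n\in\mathbb{N}}$. Both are filtrations of nonzero graded ideals. Indeed, $(I_mJ_m)(I_nJ_n)\subseteq I_{m+n}J_{m+n}$, and the intersections are nonzero because $R$ is a domain, so $I_nJ_n\neq 0$.

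\emph{First inequality.} Since $I_nJ_n\subseteq I_n\cap J_n$ in each degree, we get $\dim_k(I_nJ_n)_{\lfloor zn\rfloor}\le \dim_k(I_n\cap J_n)_{\lfloor zn\rfloor}$ for all $z$ and $n$. Hence $f_{\mathbb{IJ}}\le f_{\mathbb{I}\cap\mathbb{J}}$ pointwise, taking $\limsup$ or limits, and the first inequality follows after raising to the power $1/(d-1)$.

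\emph{Waldschmidt constants.} Note that $\alpha(I_nJ_n)\le \alpha(I_n)+\alpha(J_n)$, since a product of lowest-degree nonzero elements is nonzero. Thus $\alpha_{\mathbb{IJ}}\le\alpha_{\mathbb{I}}+\alpha_{\mathbb{J}}<x+y$. So $x+y$ lies in the range where Theorem \ref{mainthm} applies to $\mathbb{IJ}$, and likewise for $\mathbb{I}\cap\mathbb{J}$, whose Waldschmidt constant is at most $\alpha_{\mathbb{IJ}}$.

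\emph{Second inequality for rational $x,y$.} First take $x=p/q$ and $y=p'/q$ rational with a common denominator; this is possible after rescaling, using Proposition \ref{welldefined}. The key claim is
\[
\Delta_x(\mathbb{I})+\Delta_y(\mathbb{J})\subseteq \Delta_{x+y}(\mathbb{IJ}).
\]
To prove it, take $f\in(I_{qn})_{pn}$ and $g\in(J_{qn})_{p'n}$. Then $fg\in(I_{qn}J_{qn})_{(p+p')n}$ and $\nu(fg)=\nu(f)+\nu(g)$. Using the description of $\Delta_x$ in Remark \ref{suffices} (identifying $\mathbb{R}^d$ with level $1$), the points $\mathbf{v}_f/(qn)$ and $\mathbf{v}_g/(qn)$ sum to $\mathbf{v}_{fg}/(qn)\in\Delta_{x+y}(\mathbb{IJ})$. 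To match different $n$ for $f$ and $g$, replace them by suitable powers, as in the proof of Theorem \ref{mainthm}. Passing to closures gives the inclusion.

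The classical Brunn--Minkowski inequality in dimension $d-1$ then gives
\[
\mathrm{vol}_{d-1}(\Delta_{x+y}(\mathbb{IJ}))^{1/(d-1)}\ge \mathrm{vol}_{d-1}(\Delta_x(\mathbb{I}))^{1/(d-1)}+\mathrm{vol}_{d-1}(\Delta_y(\mathbb{J}))^{1/(d-1)}.
\]
Multiply through by $(d!/\mathrm{ind}(R))^{1/(d-1)}$. Remark \ref{suffices} says the index is the same constant $\mathrm{ind}(R)$ for all three filtrations, and this uniformity is exactly what makes the constants match and yields the inequality.

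\emph{Main obstacle: dimension of the bodies.} One must make sure the bodies really sit in a common $(d-1)$-dimensional affine space with the same integral measure, so that the volumes are comparable. I expect Proposition \ref{welldefined} to handle this: $L(\Gamma)\cap(\mathbb{Z}^d\times\{0\})$ is the same lattice for every filtration and slope, so all $\Delta_z$ lie in translates of one hyperplane. Brunn--Minkowski then applies inside that hyperplane.

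\emph{Real $x,y$.} For general real $x>\alpha_{\mathbb{I}}$ and $y>\alpha_{\mathbb{J}}$, approximate by rationals and use the continuity of all three density functions on their open half-lines, from Theorem \ref{mainthm}(iv).
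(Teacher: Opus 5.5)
Your proposal is correct and follows the paper's proof. Both rest on the same three ingredients: the Minkowski-sum inclusion $\Delta_x(\mathbb{I})+\Delta_y(\mathbb{J})\subseteq\Delta_{x+y}(\mathbb{IJ})$, the Brunn--Minkowski inequality, and the fact that the constant $d!/\mathrm{ind}(R)$ is the same for all three filtrations.

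The differences are minor:
\begin{itemize}
\item You prove the inclusion at rational slopes and pass to real $x,y$ by continuity, whereas the paper works directly with the real slices of the global body from Remark \ref{global_okounkov}.
\item You get the first inequality from $I_nJ_n\subseteq I_n\cap J_n$ at the level of Hilbert functions, whereas the paper uses the inclusion $\Delta_{x+y}(\mathbb{IJ})\subseteq\Delta_{x+y}(\mathbb{I}\cap\mathbb{J})$ of bodies.
\item You explicitly check that all the bodies lie in translates of one hyperplane with a common lattice-normalized measure, a point the paper leaves implicit.
\end{itemize}
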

\begin{proof}
Recall the notations from Remark \ref{global_okounkov}. For all real numbers $x>\alpha_{\mathbb{I}}$ and $y>\alpha_{\mathbb{J}}$, we have $$ \Delta_x(\mathbb{I}) + \Delta_y(\mathbb{J}) \subseteq \Delta_{x+y}(\mathbb{IJ}) \subseteq \Delta_{x+y}(\mathbb{I\cap J}).$$ From the Brunn-Minkowski inequality \cite[Theorem 8.1]{Gru07}, we get $${\left(\mathrm{vol}_{d-1}\left(\Delta_{x+y}(\mathbb{I\cap J})\right)\right)}^{\tfrac{1}{d-1}} \geq {\left(\mathrm{vol}_{d-1}\left(\Delta_{x+y}(\mathbb{IJ})\right)\right)}^{\tfrac{1}{d-1}} \geq {\left(\mathrm{vol}_{d-1}\left(\Delta_x(\mathbb{I})\right)\right)}^{\tfrac{1}{d-1}} + {\left(\mathrm{vol}_{d-1}\left(\Delta_y(\mathbb{J})\right)\right)}^{\tfrac{1}{d-1}}.$$ In view of \eqref{mult=vol}, the desired inequality follows by multiplying throughout by ${\left(\frac{d!}{\mathrm{ind}(R)}\right)}^{1/(d-1)}$.
\end{proof}

\subsection{Density functions for filtrations of ideals in local rings}\label{local_density}

Suppose that $(R,\mm_R)$ be a Noetherian local ring such that its associated graded ring $\mathrm{gr}_{\mm_R}(R) = \oplus_{m\geq 0}\mm_R^m/\mm_R^{m+1}$ is a domain, the residue field $k=R/\mm_R$ is algebraically closed, and $d=\dim R\geq 2$. Let $\mathbb{I} = \{I_n\}_{n\in\mathbb{N}}$ be a filtration of nonzero ideals in $R$. Define the \emph{density} function $f_{\mathbb{I}} \colon \mathbb{R}_{\geq 0} \to \mathbb{R}_{\geq 0}$  of the filtration $\mathbb{I} = \{I_n\}_{n\in\mathbb{N}}$ by $$f_{\mathbb{I}}(x) = \limsup_{n\to\infty} \dfrac{\dim_k\big(\big(I_n\cap \mm_R^{\lfloor xn\rfloor}\big)/\big(I_n\cap \mm_R^{\lfloor xn\rfloor + 1}\big)\big)}{n^{d-1}/d!}.$$ Define the \emph{Waldschmidt constant} of $\mathbb{I}$ to be the real number $$\alpha_{\mathbb{I}} = \limsup_{n\to\infty}\dfrac{\min\left\{m\in\mathbb{N} \mid I_n\cap \mm_R^m \neq I_n\cap \mm_R^{m+1}\right\}}{n}.$$ Since $\mathrm{gr}_{\mm_R}(R)$ is a domain, it follows that $\alpha_{\mathbb{I}}$ exists as a limit.

\begin{theorem}
 Let the notations be as in Section \ref{local_density}. Then the following statements are true.
 \begin{enumerate}[$(i)$]
  \item $f_{\mathbb{I}}(x)$ exists as a limit for all $x\in \mathbb{R}_{\geq 0}\setminus\{\alpha_{\mathbb{I}}\}$.
  \item $f_{\mathbb{I}}(x) = 0$ for all $x<\alpha_{\mathbb{I}}$.
  \item On the interval $(\alpha_{\mathbb{I}},\infty)$, the function $f_{\mathbb{I}}$ is positive, nondecreasing, and log-concave. In particular, it is continuous.
  \item For every real number $y>0$, we have $$\int_{0}^y f_{\mathbb{I}}(x) dx = \lim_{n\to\infty}\dfrac{\lambda_R\big(I_n/\big(I_n\cap \mm_R^{\lfloor yn\rfloor}\big)\big)}{n^d/d!}.$$
 \end{enumerate}
\end{theorem}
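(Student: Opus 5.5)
The plan is to reduce to the graded case of Theorem \ref{mainthm} by passing to the associated graded ring $G=\mathrm{gr}_{\mm_R}(R)$. This ring is a standard graded domain over the algebraically closed field $k=G_0$. Its Krull dimension is $\dim R=d\geq 2$, since the associated graded ring of a local ring has the same dimension.

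For each $n$, consider the initial ideal
$$I_n^* = \bigoplus_{m\geq 0}\big(I_n\cap \mm_R^m + \mm_R^{m+1}\big)/\mm_R^{m+1}\subseteq G,$$
which is a graded ideal. Its degree $m$ component satisfies
$$(I_n^*)_m\cong \big(I_n\cap\mm_R^m\big)/\big(I_n\cap\mm_R^{m+1}\big),$$
so $\dim_k (I_n^*)_m$ is exactly the numerator in the local definition of $f_{\mathbb{I}}$.

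The first step is to check that $\mathbb{I}^*=\{I_n^*\}_{n\in\mathbb{N}}$ is a filtration of nonzero graded ideals in $G$ in the sense of Definition \ref{def_filt}.
\begin{itemize}
\item $I_0^*=G$, because $I_0=R$.
\item $I_{n+1}^*\subseteq I_n^*$, because $I_{n+1}\subseteq I_n$.
\item $I_m^*I_n^*\subseteq I_{m+n}^*$: if $a\in I_m\cap\mm_R^i$ and $b\in I_n\cap \mm_R^j$, then $ab\in I_{m+n}\cap\mm_R^{i+j}$, and the product of initial forms is either the initial form $ab^*$ or zero.
\item $I_n^*\neq 0$: since $I_n\neq 0$ and $\bigcap_m \mm_R^m=0$ by Krull's intersection theorem, every nonzero element of $I_n$ has a nonzero initial form.
\end{itemize}

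With this in place, $f_{\mathbb{I}}=f_{\mathbb{I}^*}$ by definition. The Waldschmidt constants also agree: $\min\{m\mid I_n\cap\mm_R^m\neq I_n\cap\mm_R^{m+1}\}$ is precisely $\alpha(I_n^*)$, so $\alpha_{\mathbb{I}}=\alpha_{\mathbb{I}^*}$. Parts $(i)$ and $(ii)$ are then Theorem \ref{mainthm}$(i)$ and Lemma \ref{easy}.

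Part $(iii)$ follows from Theorem \ref{mainthm}$(iii)$ and $(iv)$. Positivity on $(\alpha_{\mathbb{I}},\infty)$ comes from the lower bound in Lemma \ref{easy}: we have $e(G)=e(R)>0$, and this bound is positive for $x>\alpha_{\mathbb{I}}$. Continuity comes from log-concavity, as in the graded proof.

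For part $(iv)$, the filtration $I_n\supseteq I_n\cap\mm_R\supseteq\cdots\supseteq I_n\cap\mm_R^{N}$ has successive quotients that are $k$-vector spaces of dimension $\dim_k(I_n^*)_m$. Additivity of length therefore gives
$$\lambda_R\big(I_n/(I_n\cap\mm_R^{N})\big)=\sum_{m=0}^{N-1}\dim_k (I_n^*)_m .$$
These lengths are finite, since each quotient is a subquotient of $R/\mm_R^{N}$. Taking $N=\lfloor yn\rfloor$, this differs from $\sum_{m=0}^{\lfloor yn\rfloor}\dim_k (I_n^*)_m$ by the single term $\dim_k (I_n^*)_{\lfloor yn\rfloor}\le \dim_k G_{\lfloor yn\rfloor}=O(n^{d-1})$, which is negligible after dividing by $n^d$. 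Theorem \ref{mainthm}$(v)$ applied to $\mathbb{I}^*$ then gives the formula.

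The main obstacle is only bookkeeping: the isomorphism of graded pieces, the multiplicativity of initial forms in the domain $G$, and the off-by-one in the index of the sum in $(iv)$. I expect no genuine difficulty beyond these.
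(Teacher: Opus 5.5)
Your proposal is correct and follows the paper's proof. Both pass to $\mathrm{gr}_{\mm_R}(R)$ and replace $\mathbb{I}$ by the filtration of initial ideals, which has the same graded dimensions and the same Waldschmidt constant, and then apply Theorem \ref{mainthm}. Your version is somewhat more careful than the paper's: you check the filtration axioms directly, and in part $(iv)$ you account for the off-by-one term between $\lambda_R\big(I_n/(I_n\cap \mm_R^{\lfloor yn\rfloor})\big)$ and the sum up to $\lfloor yn\rfloor$.
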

\begin{proof}
By \cite[Corollary 10.19]{AM69}, we get $\bigcap_{n\geq 0}\mm_R^n = 0$. Given a nonzero element $f\in R$, define the \emph{order} of $f$ to be $$\mathrm{ord}(f) = \max\{n\in\NN \mid f\in \mm_R^n\}.$$ The \emph{initial form} of $f$ is the residue class $\mathrm{in}(f)$ of $f$ in $\mm_R^{\mathrm{ord}(f)}/\mm_R^{\mathrm{ord}(f)+1}$. Given a nonzero ideal $I\subseteq R$, the \emph{initial ideal} of $I$ is the graded ideal $\mathrm{in}(I)$ of $\mathrm{gr}_{\mm_R}(R)$ generated by $\{\mathrm{in}(f)\mid f\in I\}$. Since the associated graded ring $\mathrm{gr}_{\mm_R}(R)$ is a domain, the order function is a valuation, see \cite[Chapter VIII, Theorem 1]{ZS60}. Note that $\mathrm{in}(\mathbb{I}) = \{\mathrm{in}(I_n)\}_{n\in\mathbb{N}}$ is a filtration of nonzero graded ideals in the standard graded $k$-algebra $\mathrm{gr}_{\mm_R}(R)$, $\alpha_{\mathbb{I}} = \alpha_{\mathrm{in}(\mathbb{I})}$, and $$\dim_k \left(\frac{I_n \cap \mm_R^m}{I_n \cap \mm_R^{m+1}}\right) = \dim_k {\left(\mathrm{in}(I_n)\right)}_m \quad \forall m,n\in\NN.$$ Now the assertions of our theorem is a direct consequence of Theorem \ref{mainthm}.
\end{proof}

There exists an ideal $I$ in a regular local ring $(R,\mm_R)$ such that the filtration $\{\mathrm{in}(I^n)\}_{n\in\mathbb{N}}$ is non-Noetherian, see \cite[Example 3.4]{CHS10}.

\begin{corollary}[Brunn-Minkowski inequality]
 Let $(R,\mm_R)$ be as in Section \ref{local_density}. Let $\mathbb{I} = \{I_n\}_{n\in\mathbb{N}}$ and $\mathbb{J} = \{J_n\}_{n\in\mathbb{N}}$ be filtrations of nonzero ideals in $R$. Then $${\left(f_{\mathbb{I}\cap \mathbb{J}}(x+y)\right)}^{\tfrac{1}{d-1}} \geq {\left(f_{\mathbb{IJ}}(x+y)\right)}^{\tfrac{1}{d-1}} \geq {\left(f_{\mathbb{I}}(x)\right)}^{\tfrac{1}{d-1}} + {\left(f_{\mathbb{J}}(y)\right)}^{\tfrac{1}{d-1}},$$ for all real numbers $x>\alpha_{\mathbb{I}}$ and $y>\alpha_{\mathbb{J}}$.
\end{corollary}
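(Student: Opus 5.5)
The plan is to reduce to the graded Brunn--Minkowski inequality, Corollary \ref{Brunn_Min}, by passing to initial ideals in $G=\mathrm{gr}_{\mm_R}(R)$, as in the proof of the preceding theorem. Recall that $\mathrm{ord}$ is a valuation since $G$ is a domain, so $\mathrm{in}(fg)=\mathrm{in}(f)\mathrm{in}(g)$. For each filtration $\mathbb{K}=\{K_n\}$ of nonzero ideals in $R$, we have
\[
f_{\mathbb{K}}=f_{\mathrm{in}(\mathbb{K})},\qquad \alpha_{\mathbb{K}}=\alpha_{\mathrm{in}(\mathbb{K})},
\]
by the dimension identity $\dim_k\big((K_n\cap\mm_R^m)/(K_n\cap\mm_R^{m+1})\big)=\dim_k(\mathrm{in}(K_n))_m$. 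We apply this to $\mathbb{I}$, $\mathbb{J}$, $\mathbb{IJ}=\{I_nJ_n\}$ and $\mathbb{I}\cap\mathbb{J}=\{I_n\cap J_n\}$. These are filtrations of nonzero ideals: $I_nJ_n\neq 0$ because $R$ is a domain, which follows from $G$ being a domain; and $I_n\cap J_n\supseteq I_nJ_n$.

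Next I would compare initial ideals. Since $\mathrm{in}$ is multiplicative on elements, $\mathrm{in}(I_n)\mathrm{in}(J_n)\subseteq \mathrm{in}(I_nJ_n)$. Monotonicity of $\mathrm{in}$ gives $\mathrm{in}(I_nJ_n)\subseteq \mathrm{in}(I_n\cap J_n)\subseteq \mathrm{in}(I_n)\cap\mathrm{in}(J_n)$. Hence, in every degree,
\[
\dim_k\big(\mathrm{in}(I_n)\mathrm{in}(J_n)\big)_m\le \dim_k\big(\mathrm{in}(I_nJ_n)\big)_m\le \dim_k\big(\mathrm{in}(I_n\cap J_n)\big)_m .
\]
Taking limsups gives $f_{\mathrm{in}(\mathbb{I})\mathrm{in}(\mathbb{J})}\le f_{\mathbb{IJ}}\le f_{\mathbb{I}\cap\mathbb{J}}$ pointwise. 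This already yields the first inequality of the statement.

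For the second inequality, apply Corollary \ref{Brunn_Min} in $G$ to the graded filtrations $\mathrm{in}(\mathbb{I})$ and $\mathrm{in}(\mathbb{J})$, with $x>\alpha_{\mathbb{I}}=\alpha_{\mathrm{in}(\mathbb{I})}$ and $y>\alpha_{\mathrm{in}(\mathbb{J})}$. Its middle term gives
\[
f_{\mathrm{in}(\mathbb{I})\mathrm{in}(\mathbb{J})}(x+y)^{1/(d-1)}\ge f_{\mathbb{I}}(x)^{1/(d-1)}+f_{\mathbb{J}}(y)^{1/(d-1)}.
\]
Chaining this with the comparison above gives the claim. Here $G$ is a standard graded domain of dimension $d\ge 2$ over the algebraically closed field $k$, so Setup \ref{setup} applies.

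The only point needing care is the inclusion $\mathrm{in}(I_n)\mathrm{in}(J_n)\subseteq\mathrm{in}(I_nJ_n)$ at the level of ideals. $\mathrm{in}(I_n)$ is generated by initial forms of elements, so a product of generators is $\mathrm{in}(a)\mathrm{in}(b)=\mathrm{in}(ab)$ with $ab\in I_nJ_n$, and the inclusion follows. Everything else is formal.
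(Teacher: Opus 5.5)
Your proof is correct and follows the paper's route: the paper's proof simply applies Corollary \ref{Brunn_Min} to $\mathrm{in}(\mathbb{I})$ and $\mathrm{in}(\mathbb{J})$ in $\mathrm{gr}_{\mm_R}(R)$. You also spell out the comparison $\mathrm{in}(I_n)\mathrm{in}(J_n)\subseteq \mathrm{in}(I_nJ_n)\subseteq \mathrm{in}(I_n\cap J_n)$ that the paper's one-line argument leaves implicit, and using only the middle term of the graded inequality is the right choice, since $\mathrm{in}(I_n\cap J_n)$ can be strictly smaller than $\mathrm{in}(I_n)\cap\mathrm{in}(J_n)$.
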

\begin{proof}
Here we apply Corollary \ref{Brunn_Min} to the filtrations $\mathrm{in}(\mathbb{I}) = \{\mathrm{in}(I_n)\}_{n\in\mathbb{N}}$ and $\mathrm{in}(\mathbb{J}) = \{\mathrm{in}(J_n)\}_{n\in\mathbb{N}}$ in the associated graded ring $\mathrm{gr}_{\mm_R}(R) = \oplus_{m\geq 0}\mm_R^m/\mm_R^{m+1}$.
\end{proof}

\section{Truncated filtrations, density functions and integral closures}\label{section 5}

Let $R$ be as in Setup \ref{setup}. Fix a filtration $\mathbb{I}=\{I_n\}_{n\in\mathbb{N}}$ of nonzero graded ideals in $R$.

\subsection{Approximation by Noetherian filtrations}\label{Noeth_Filt}

Following \cite[Definition 4.1]{CSS19}, given an integer $a>0$, we define the \emph{$a$-th truncated filtration} of $\mathbb{I}$ as $\mathbb{I}^{[a]} = \big\{I^{[a]}_n\big\}_{n\in\mathbb{N}}$, where $$I^{[a]}_n=\begin{cases}
 I_n & \text{if}\;\;0\leq n\leq a,\\
 \sum_{i=1}^{n-1} I^{[a]}_i\cdot I^{[a]}_{n-i} & \text{if}\;\;n>a.                                                                                                                                                                                           \end{cases}
$$
By construction, $\mathbb{I}^{[a]}$ is a Noetherian filtration of graded ideals in $R$. The density function associated with a Noetherian filtration is known to be piecewise polynomial, see \cite[Theorem 4.4]{DRT25} for a precise formulation.

\begin{theorem}\label{approx_noeth_filt}
Let $R$ and $\mathbb{I}=\{I_n\}_{n\in\mathbb{N}}$ be as above. Then the following statements are true.
\begin{enumerate}[$(i)$]
 \item $\left\{\alpha_{\mathbb{I}^{[a]}}\right\}_{a=1}^{\infty}$ is a non-increasing sequence of positive rational numbers, and $\lim\limits_{a\to\infty} \alpha_{\mathbb{I}^{[a]}} = \alpha_{\mathbb{I}}$.
 \item For all $x<\alpha_{\mathbb{I}}$, we have $\lim\limits_{a\to\infty} f_{\mathbb{I}^{[a]}}(x)=0$.
 \item For all $x>\alpha_{\mathbb{I}}$, the sequence $\left\{f_{\mathbb{I}^{[a]}}(x)\right\}_{a=1}^{\infty}$ is non-decreasing, and $\lim\limits_{a\to\infty} f_{\mathbb{I}^{[a]}}(x)=f_{\mathbb{I}}(x)$.
 \item Let $[\eta_0,\eta_1]\subset \mathbb{R}_{\geq 0}$ be a compact interval which does not contain the point $\{\alpha_{\mathbb{I}}\}$. Then $f_{\mathbb{I}^{[a]}} \to f_{\mathbb{I}}$ uniformly on $[\eta_0,\eta_1]$.
 \item For every real number $y>0$, we have $$\lim_{a\to\infty}\int_0^{y}f_{\mathbb{I}^{[a]}}(x)dx = \int_0^{y}f_{\mathbb{I}}(x)dx.$$
\end{enumerate}
\end{theorem}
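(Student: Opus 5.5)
The plan is to compare the truncated filtrations with $\mathbb{I}$ through the inclusions $I^{[a]}_n\subseteq I^{[a+1]}_n\subseteq I_n$. These follow by induction on $n$ from the definition, using $I_iI_{n-i}\subseteq I_n$. They give the pointwise monotonicity $f_{\mathbb{I}^{[a]}}\le f_{\mathbb{I}^{[a+1]}}\le f_{\mathbb{I}}$, and also $\alpha(I^{[a]}_n)\ge \alpha(I^{[a+1]}_n)\ge\alpha(I_n)$. Hence $\alpha_{\mathbb{I}^{[a]}}$ is non-increasing and bounded below by $\alpha_{\mathbb{I}}$.

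\emph{Part $(i)$.} Rationality holds because $\mathbb{I}^{[a]}$ is Noetherian: $\alpha(I^{[a]}_n)$ is a minimum of sums $\sum c_i\alpha(I_i)$ with $\sum c_i i=n$, $i\le a$. This gives $\alpha_{\mathbb{I}^{[a]}}=\min_{1\le i\le a}\alpha(I_i)/i$, which is rational and positive since each $I_i\subseteq \mm_R$ for $i\geq1$. Passing to the limit, $\lim_a\alpha_{\mathbb{I}^{[a]}}=\inf_i \alpha(I_i)/i=\alpha_{\mathbb{I}}$, using Fekete's lemma. If positivity is an issue when $I_i=R$, I would note instead that the relevant values are nonnegative.

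\emph{Part $(ii)$.} This is immediate: for $x<\alpha_{\mathbb{I}}\le\alpha_{\mathbb{I}^{[a]}}$, Lemma \ref{easy} gives $f_{\mathbb{I}^{[a]}}(x)=0$.

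\emph{Part $(iii)$.} This is the main step. Fix a rational $x=p/q>\alpha_{\mathbb{I}}$; by $(i)$ we have $x>\alpha_{\mathbb{I}^{[a]}}$ for $a\gg0$. I would work with the semigroups $\Gamma^{\mathbb{I}^{[a]}}_{(p,q)}\subseteq\Gamma^{\mathbb{I}}_{(p,q)}$ and use Remark \ref{suffices}: all indices equal $\mathrm{ind}(R)$, so it suffices to show $\mathrm{vol}_{d-1}\Delta_x(\mathbb{I}^{[a]})\to\mathrm{vol}_{d-1}\Delta_x(\mathbb{I})$. The key observation is that for $a\ge qn$ the slice $\Gamma^{\mathbb{I}^{[a]}}_{(p,q)}\cap(\mathbb{N}^d\times\{qN\})$, for $N\le n$, coincides with that of $\Gamma^{\mathbb{I}}_{(p,q)}$, since $I^{[a]}_{qN}=I_{qN}$. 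Hence $\Gamma^{\mathbb{I}^{[a]}}_{(p,q)}$ contains the subsemigroup $S_n$ generated by levels $\le qn$ of $\Gamma^{\mathbb{I}}_{(p,q)}$. Since $\Gamma^{\mathbb{I}}_{(p,q)}=\bigcup_n S_n$ is an increasing union, $\mathrm{Con}(S_n)$ increases to a dense subset of $\mathrm{Con}(\Gamma^{\mathbb{I}}_{(p,q)})$. The slices $\Delta(S_n)$ therefore increase to a dense convex subset of $\Delta_x(\mathbb{I})$, which has nonempty relative interior in the $(d-1)$-dimensional space by Lemma \ref{lem_properties}$(iii)$. Their volumes then converge, because the boundary of a convex body has measure zero. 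I must check that $L(S_n)=L(\Gamma^{\mathbb{I}}_{(p,q)})$ for large $n$, which holds since $L$ is finite dimensional. This gives the convergence for rational $x$, and alternatively also the squeeze $f_{\mathbb{I}^{[a]}}\le f_{\mathbb{I}}$ with lower bound volume. For irrational $x$, I take rationals $x'<x$ with $x'>\alpha_{\mathbb{I}}$. Then
\[
f_{\mathbb{I}}(x')=\lim_a f_{\mathbb{I}^{[a]}}(x')\le\liminf_a f_{\mathbb{I}^{[a]}}(x)\le f_{\mathbb{I}}(x),
\]
and letting $x'\to x$ uses the continuity in Theorem \ref{mainthm}.

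\emph{Parts $(iv)$ and $(v)$.} For $(iv)$, on an interval to the left of $\alpha_{\mathbb{I}}$ all the functions vanish for large $a$, by $(i)$ and Lemma \ref{easy}. On $[\eta_0,\eta_1]\subset(\alpha_{\mathbb{I}},\infty)$ I apply Dini's theorem: the sequence is monotone, converges pointwise to a continuous limit, and the $f_{\mathbb{I}^{[a]}}$ are continuous there for large $a$ by Theorem \ref{mainthm}$(iv)$. For $(v)$, I use monotone (or dominated) convergence with the bound $de(R)x^{d-1}$ from Lemma \ref{easy}, since pointwise convergence holds off the single point $\alpha_{\mathbb{I}}$.

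The expected obstacle is the convex-geometric limit in $(iii)$, namely controlling that the increasing family of Okounkov bodies exhausts $\Delta_x(\mathbb{I})$ up to measure zero.
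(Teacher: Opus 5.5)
Your proposal is correct and follows essentially the same route as the paper: the inclusions $I^{[a]}_n\subseteq I^{[a']}_n\subseteq I_n$, exhaustion of $\Delta_x(\mathbb{I})$ by the bodies $\Delta_x(\mathbb{I}^{[a]})$ at rational slopes (using $I^{[a]}_{qn}=I_{qn}$ for $a\geq qn$ and the common index $\mathrm{ind}(R)$), continuity of $f_{\mathbb{I}}$ to pass to irrational $x$, and Dini's theorem for $(iv)$. The steps where you go further than the paper are refinements rather than a different route: your formula $\alpha_{\mathbb{I}^{[a]}}=\min_{1\le i\le a}\alpha(I_i)/i$ actually proves rationality (positivity fails only for the constant filtration $I_n=R$), your relative-interior argument justifies that the increasing union of bodies has full volume, and dominated convergence in $(v)$ avoids needing uniform convergence near $\alpha_{\mathbb{I}}$.
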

\begin{proof}
\textit{$(i)$} We have $f_{\mathbb{I}^{[a]}}(x) \leq f_{\mathbb{I}^{[a^{\prime}]}}(x) \leq f_{\mathbb{I}}(x)$ for every $a\leq a^{\prime}$ and $x\in\mathbb{R}_{\geq 0}$, which shows that $\{\alpha_{\mathbb{I}^{[a]}}\}_{a=1}^{\infty}$ is non-increasing and bounded below by $\alpha_{\mathbb{I}}$. Moreover, for every real number $x>\alpha_{\mathbb{I}}$, $f_{\mathbb{I}^{[a]}}(x)>0$ for all integers $a\gg 0$, hence $\alpha_{\mathbb{I}^{[a]}}\leq x$. Letting $x\downarrow \alpha_{\mathbb{I}}$ yields $\lim_{a\to\infty}\alpha_{\mathbb{I}^{[a]}}=\alpha_{\mathbb{I}}$.

\textit{$(ii)$} If $x<\alpha_{\mathbb{I}}$, then $f_{\mathbb{I}}(x)=0$ and $f_{\mathbb{I}^{[a]}}(x)\leq f_{\mathbb{I}}(x)=0$, hence $f_{\mathbb{I}^{[a]}}(x)= 0$ and the claim follows.

\textit{$(iii)$} Let $x\in \mathbb{Q}\cap (\alpha_{\mathbb{I}},\infty)$. Write $x=\tfrac{p}{q}$ so that ${(I_q)}_p\neq 0$. Then ${\big(I^{[a]}_q\big)}_p = {(I_q)}_p \neq 0$ for all integers $a\geq q$. Consider the Newton-Okounkov bodies (see Remark \ref{suffices})
$$\Delta_x(\mathbb{I}) = \overline{\bigcup_{n\in\mathbb{Z}_{\geq 1}}\left\{\left(\tfrac{i_1}{qn},\ldots,\tfrac{i_d}{qn}\right)\in\mathbb{Q}_{\geq 0}^d \mid \exists f\in {\left(I_{qn}\right)}_{pn}\;\;\text{with}\;\;\nu(f)=i_1\xi_1+\cdots+i_d\xi_d\right\}},$$ and for each $a\geq q$,
$$\Delta_x\big(\mathbb{I}^{[a]}\big) = \overline{\bigcup_{n\in\mathbb{Z}_{\geq 1}}\left\{\left(\tfrac{i_1}{qn},\ldots,\tfrac{i_d}{qn}\right)\in\mathbb{Q}_{\geq 0}^d \mid \exists f\in {\big(I^{[a]}_{qn}\big)}_{pn}\;\;\text{with}\;\;\nu(f)=i_1\xi_1+\cdots+i_d\xi_d\right\}}.$$
Because $I^{[a]}_{qn} \subseteq I^{[a']}_{qn} \subseteq I_{qn}$ for $a \leq a'$, we obtain $\Delta_x(\mathbb{I}^{[a]}) \subseteq \Delta_x(\mathbb{I}^{[a']}) \subseteq \Delta_x(\mathbb{I})$. Thus $\left\{\Delta_x\big(\mathbb{I}^{[a]}\big)\right\}_{a\geq q}$ is an increasing sequence of convex bodies, and since $I^{[a]}_{qn} = I_{qn}$ whenever $a \geq qn$, it follows that
\[
\overline{\bigcup_{a\geq q} \Delta_x\big(\mathbb{I}^{[a]}\big)} = \Delta_x(\mathbb{I}).
\]
By the countable additivity of Lebesgue measure, $\lim_{a\to\infty}\mathrm{vol}_{d-1}\left(\Delta_x\big(\mathbb{I}^{[a]}\big)\right) = \mathrm{vol}_{d-1}\left(\Delta_x(\mathbb{I})\right)$. Using Remark \ref{suffices} we conclude that $$\lim\limits_{a\to\infty} f_{\mathbb{I}^{[a]}}(x) = \lim\limits_{a\to\infty} \dfrac{\mathrm{vol}_{d-1}\left(\Delta_x\big(\mathbb{I}^{[a]}\big)\right)}{\mathrm{ind}(R)/d!} = \dfrac{\mathrm{vol}_{d-1}\left(\Delta_x(\mathbb{I})\right)}{\mathrm{ind}(R)/d!} = f_{\mathbb{I}}(x).$$ Now using the continuity property, one can show that $\lim\limits_{a\to\infty} f_{\mathbb{I}^{[a]}}(x)=f_{\mathbb{I}}(x)$ for all $x\in (\alpha_{\mathbb{I}},\infty)$.

\textit{$(iv)$} The uniform convergence of the sequence of functions $\{f_{\mathbb{I}^{[a]}}\}_{a=1}^{\infty}$ on a compact interval follows from Dini's theorem \cite[Theorem 7.13]{Rud76}.

\textit{$(v)$} The conclusion regarding integration then follows from part $(iv)$ together with \cite[Theorem 7.16]{Rud76}.
\end{proof}

\subsection{Filtrations of graded ideals having at most linear growth}

We begin with an elementary observation concerning pointwise limits of polynomial sequences.

\begin{lemma}\label{elementary}
Let $[a,b]\subset \mathbb{R}$ be a compact interval with $b>a$. Suppose there exists a sequence of polynomials $\{P_n\}_{n\in\mathbb{N}}$ that converges pointwise on $[a,b]$ to a function $P$, and assume that the degrees of the polynomials $P_n$ are uniformly bounded. Then $P$ is a polynomial function.
\end{lemma}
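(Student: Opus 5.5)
We use that the space of polynomials of degree at most $D$ is finite dimensional, so pointwise convergence on finitely many points forces convergence of coefficients. Let $D$ be a uniform bound on $\deg P_n$ and fix $D+1$ distinct points $t_0<t_1<\cdots<t_D$ in $[a,b]$, which exist because $b>a$.

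By Lagrange interpolation, every polynomial $Q$ of degree at most $D$ satisfies
\[
Q(x)=\sum_{j=0}^{D} Q(t_j)\,\ell_j(x), \qquad \ell_j(x)=\prod_{i\neq j}\frac{x-t_i}{t_j-t_i}.
\]
Applying this to $Q=P_n$ gives $P_n(x)=\sum_{j} P_n(t_j)\ell_j(x)$ for all $x$.

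Now let $n\to\infty$ at a fixed $x\in[a,b]$. The left side tends to $P(x)$, and each $P_n(t_j)$ tends to $P(t_j)$. Since the $\ell_j$ are fixed polynomials independent of $n$ and the sum is finite, we may pass to the limit term by term:
\[
P(x)=\sum_{j=0}^{D} P(t_j)\,\ell_j(x) \qquad \text{for all } x\in[a,b].
\]
So $P$ agrees on $[a,b]$ with a polynomial of degree at most $D$.

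The only point needing care is that the interpolation basis $\ell_j$ does not depend on $n$. This is exactly why the uniform degree bound is required. Without it one can only conclude continuity-type properties, since any continuous function is a pointwise (indeed uniform) limit of polynomials by Weierstrass.
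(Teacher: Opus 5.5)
Your proof is correct and is essentially the paper's argument: the paper fixes $e+1$ distinct nodes in $[a,b]$ and inverts the Vandermonde matrix to get convergence of the coefficient vectors, which is the same finite-dimensional interpolation fact you express through the Lagrange basis (your version has the small bonus of writing the limit explicitly as $\sum_j P(t_j)\ell_j$). One minor quibble with your closing aside, which is not part of the proof: without a degree bound a pointwise limit of polynomials need not even be continuous (e.g.\ $x^n$ on $[0,1]$), so ``continuity-type properties'' overstates what survives.
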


\begin{proof}
By assumption, there exists an integer $e>0$ such that $\deg P_n\le e$ for all $n$. Choose $e+1$ distinct points $x_0,\dots,x_e\in[a,b]$, for instance, $x_j = a+\tfrac{(b-a)j}{e}$. Write
\[
P_n(x) = \sum_{k=0}^e c_{n,k}x^k,
\]
where the coefficients $c_{n,k}\in\mathbb{R}$ and $c_{n,k}=0$ if $\deg P_n<e$. Let $V$ denote the Vandermonde matrix
\[
V =
\begin{bmatrix}
1 & x_0 & x_0^2 & \cdots & x_0^e\\
1 & x_1 & x_1^2 & \cdots & x_1^e\\
\vdots & \vdots & \vdots & \ddots & \vdots\\
1 & x_e & x_e^2 & \cdots & x_e^e
\end{bmatrix},
\]
which is invertible since $\det(V) = \prod\limits_{0\leq i<j\leq e}(x_i-x_j) \neq 0$. For each $n$, we have
\[
V
\begin{bmatrix}
c_{n,0}\\
c_{n,1}\\
\vdots\\
c_{n,e}
\end{bmatrix}
=
\begin{bmatrix}
P_n(x_0)\\
P_n(x_1)\\
\vdots\\
P_n(x_e)
\end{bmatrix}.
\]
Hence
\[
\begin{bmatrix}
c_{n,0}\\
c_{n,1}\\
\vdots\\
c_{n,e}
\end{bmatrix}
=
V^{-1}
\begin{bmatrix}
P_n(x_0)\\
P_n(x_1)\\
\vdots\\
P_n(x_e)
\end{bmatrix}.
\]
Since $\lim_{n\to\infty}P_n(x_j) = P(x_j)$ for each $j$, and matrix multiplication is continuous,
it follows that the coefficient vectors $(c_{n,0},\dots,c_{n,e})$ converge to some vector $(c_0,\dots,c_e)$. Therefore,
\[
P(x) = \lim_{n\to\infty} P_n(x) = \sum_{k=0}^e c_k x^k \quad \forall x\in[a,b].
\]
Thus $P$ is a polynomial of degree at most $e$.
\end{proof}

\begin{theorem}\label{even_linear}
Let $R$ and $\mathbb{I}=\{I_n\}_{n\in\mathbb{N}}$ be as above. Assume that there exists a constant $c>0$ such that each $I_n$ is generated in degrees $\leq cn$. Then there exists a polynomial $P_{\mathbb{I}}(x)$ of degree $d-1$ with real coefficients such that for all $x>c$, $$f_{\mathbb{I}}(x) = P_{\mathbb{I}}(x).$$
\end{theorem}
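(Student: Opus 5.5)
The plan is to reduce to the Noetherian case through the truncated filtrations $\mathbb{I}^{[a]}$ of Section \ref{Noeth_Filt}, and then pass to the limit using Theorem \ref{approx_noeth_filt} together with Lemma \ref{elementary}.

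\emph{Step 1: truncations inherit the degree bound.} By hypothesis each $I_i$ with $i\le a$ is generated in degrees $\le ci$. If $I_i$ is generated in degrees $\le ci$ and $I_j$ in degrees $\le cj$, then $I_iI_j$ is generated in degrees $\le c(i+j)$. Induction on $n$ in the recursive definition of $I^{[a]}_n$ then shows that $I^{[a]}_n$ is generated in degrees $\le cn$ for every $n$. Moreover, the Rees algebra $\oplus_n I^{[a]}_n t^n$ is bigraded and generated over $R$ by finitely many homogeneous elements of bidegree $(m_j,n_j)$ with $m_j\le c n_j$ and $n_j\le a$.

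\emph{Step 2: the Noetherian case.} For each $a$, I want $f_{\mathbb{I}^{[a]}}$ to agree with a single polynomial of degree at most $d-1$ on $(c,\infty)$. I would invoke the piecewise polynomial description \cite[Theorem 4.4]{DRT25}. There the breakpoints of $f_{\mathbb{I}^{[a]}}$ come from the slopes $m_j/n_j$ of the bigraded generators of the Rees algebra, together with the ray of $R_1$ in bidegree $(1,0)$. The bigraded Hilbert function $(m,n)\mapsto \dim_k (I^{[a]}_n)_m$ is quasi-polynomial on each chamber of the fan cut out by these generator rays. Since every slope satisfies $m_j/n_j\le c$, the whole region $\{m>cn\}$ lies in the single chamber between the largest generator slope and the ray $(1,0)$. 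Hence $f_{\mathbb{I}^{[a]}}$ is a single polynomial $P_a$ on $(c,\infty)$, of degree $\le d-1$ since it is a homogeneous degree $d-1$ leading term.

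I expect this step to be the main obstacle. It requires checking, from the precise formulation in \cite{DRT25}, that no breakpoint lies beyond the maximal generator slope. If the cited statement is not explicit enough, I would argue directly: for $m\ge cn$, $(I^{[a]}_n)_m$ is the degree-$(m,n)$ part of a finitely generated bigraded module over $k[R_1,\text{generators}]$. Its Hilbert function is then a quasi-polynomial on that cone, and quasi-periodicity disappears in the limit by the continuity of density functions.

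\emph{Step 3: passage to the limit.} Fix a compact interval $[\eta_0,\eta_1]\subset(c,\infty)$. Note that $c\ge\alpha_{\mathbb{I}}$, since $I_n$ has nonzero elements of degree $\le cn$. By Theorem \ref{approx_noeth_filt}$(iii)$, $P_a=f_{\mathbb{I}^{[a]}}\to f_{\mathbb{I}}$ pointwise on $[\eta_0,\eta_1]$, and the degrees of the $P_a$ are bounded by $d-1$. So Lemma \ref{elementary} shows that $f_{\mathbb{I}}$ is a polynomial there. Two such polynomials agree on the overlap of nested intervals, so they coincide. Exhausting $(c,\infty)$ by an increasing sequence of such intervals then yields a single polynomial $P_{\mathbb{I}}$ with $f_{\mathbb{I}}=P_{\mathbb{I}}$ on $(c,\infty)$.

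\emph{Step 4: the degree is exactly $d-1$.} By Lemma \ref{easy}, for $x>c$ we have $\frac{e(R)}{(d-1)!}(x-\alpha_{\mathbb{I}})^{d-1}\le P_{\mathbb{I}}(x)\le \frac{e(R)}{(d-1)!}x^{d-1}$. Letting $x\to\infty$ forces $\deg P_{\mathbb{I}}=d-1$ with leading coefficient $\frac{e(R)}{(d-1)!}$.
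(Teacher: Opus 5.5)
Your proposal is correct and follows the paper's proof: pass to the truncations $\mathbb{I}^{[a]}$, use \cite[Theorem 4.4]{DRT25} to get a single polynomial $P_a$ on $(c,\infty)$, and take the limit via Theorem \ref{approx_noeth_filt} and Lemma \ref{elementary}. Two details of yours are, if anything, tidier than the paper's: exhausting $(c,\infty)$ by compact intervals (the paper applies Lemma \ref{elementary} directly on a non-compact interval), and using Lemma \ref{easy} for the exact degree (the paper instead compares $P_{\mathbb{I}}\ge P_{\mathbb{I}^{[a]}}$). One small correction: with the normalization $n^{d-1}/d!$, the two-sided bound, and hence the leading coefficient of $P_{\mathbb{I}}$, is $d\,e(R)$, as in Theorem \ref{mainthm}$(ii)$ and the final display of the proof of Lemma \ref{easy}, not the $e(R)/(d-1)!$ misprinted in that lemma's statement; this does not affect your degree argument.
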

\begin{proof}
For every integer $a>0$, consider the $a$-th truncated filtration $\mathbb{I}^{[a]} = \big\{I^{[a]}_n\big\}_{n\in\mathbb{N}}$, as defined in Section \ref{Noeth_Filt}. By construction, $\mathbb{I}^{[a]}$ is a Noetherian filtration and the ideal $I^{[a]}_n$ is generated in degrees $\leq cn$. It then follows from \cite[Theorem 4.4]{DRT25} that there exists a polynomial $P_{\mathbb{I}^{[a]}}(x)$ of degree $d-1$ with rational coefficients such that for all $x>c$, $$f_{\mathbb{I}^{[a]}}(x) := \lim_{n\to\infty}\dfrac{\dim_k {\big(I^{[a]}_n\big)}_{\lfloor xn \rfloor}}{n^{d-1}/d!} = P_{\mathbb{I}^{[a]}}(x).$$ Moreover, by Theorem \ref{approx_noeth_filt}, the sequence $\left\{P_{\mathbb{I}^{[a]}}(x)\right\}_{a=1}^{\infty}$ converges pointwise on $(c,\infty)$ to the density function $f_{\mathbb{I}}(x):=\lim_{n\to\infty}\frac{\dim_k {\left(I_n\right)}_{\lfloor xn \rfloor}}{n^{d-1}/d!}$. Since the degrees of $P_{\mathbb{I}^{[a]}}$ are uniformly bounded by $d-1$ and the convergence is pointwise on $(c,\infty)$, Lemma \ref{elementary} implies that $f_{\mathbb{I}}$ coincides on $(0,\infty)$ with a polynomial $P_{\mathbb{I}}(x)$ of degree at most $d-1$. Because $P_{\mathbb{I}}(x) - P_{\mathbb{I}^{[a]}}(x) \geq 0$ on $(c,\infty)$ and every $P_{\mathbb{I}^{[a]}}(x)$ has degree exactly $d-1$, the leading coefficient of $P_{\mathbb{I}}(x)$ cannot vanish; consequently $\deg P_{\mathbb{I}} = d-1$.
\end{proof}

Some examples of filtrations of graded ideals satisfying the \emph{linear growth} condition in Theorem \ref{even_linear} can be found in \cite{HHT02}.

\subsection{Density functions and integral closures}
Let $\mathbb{I}=\{I_n\}_{n\in\mathbb{N}}$ and $\mathbb{J}=\{J_n\}_{n\in\mathbb{N}}$ be filtrations of nonzero graded ideals in $R$. We write $\mathbb{I} \subseteq \mathbb{J}$ if $I_n \subseteq J_n$ for all $n \in \mathbb{N}$. The inclusion $\mathbb{I} \subseteq \mathbb{J}$ is said to be \emph{integral} if the corresponding inclusion of the Rees algebras $$\bigoplus_{n\geq 0}I_nt^n \hookrightarrow \bigoplus_{n\geq 0}J_nt^n$$ is an integral extension of graded rings.

\begin{theorem}\label{density_integral}
Let $\mathbb{I}=\{I_n\}_{n\in\mathbb{N}} \subseteq \mathbb{J}=\{J_n\}_{n\in\mathbb{N}}$ be filtrations of nonzero graded ideals in $R$ such that the inclusion $\mathbb{I} \subseteq \mathbb{J}$ is integral. Then $\alpha_{\mathbb{I}} = \alpha_{\mathbb{J}}$; denote this common value by $\alpha$. Moreover, $$f_{\mathbb{I}}(x) = f_{\mathbb{J}}(x) \quad \forall x\in \mathbb{R}_{\geq 0}\setminus\{\alpha\}.$$
\end{theorem}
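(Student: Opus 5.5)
Since $\mathbb{I}\subseteq\mathbb{J}$ gives $f_{\mathbb{I}}\le f_{\mathbb{J}}$ and $\alpha_{\mathbb{J}}\le\alpha_{\mathbb{I}}$ directly from the definitions, the work is in the reverse inequalities. We first reduce to the Noetherian case by truncation, as in Theorem \ref{approx_noeth_filt}. The inclusion $\mathbb{I}^{[a]}\subseteq\mathbb{J}^{[a]}$ is still integral, because the Rees algebra of $\mathbb{J}^{[a]}$ is generated by $J_1t,\dots,J_at^a$. Each such generator lies in the Rees algebra of $\mathbb{J}$ and is therefore integral over $\bigoplus I_nt^n$. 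Using the Okounkov-body description of $\Delta_x$, an integral equation can be realized inside a truncation, since it only involves finitely many graded pieces. It follows that for each $a$ there is some $b\ge a$ with $\mathbb{J}^{[a]}$ integral over $\mathbb{I}^{[b]}$. The Rees algebra of $\mathbb{J}^{[a]}$ is then a finite module over that of $\mathbb{I}^{[b]}$.

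We therefore use the Noetherian finite-module argument directly. Suppose the Rees algebra $B=\bigoplus J'_nt^n$ is generated as a module over $A=\bigoplus I'_nt^n$ by homogeneous elements $h_1t^{n_1},\dots,h_st^{n_s}$, with $\deg h_i=m_i$. Then
\[
J'_n=\sum_i h_i I'_{n-n_i},
\]
so in each degree
\[
\dim_k (J'_n)_m\le \sum_i \dim_k (I'_{n-n_i})_{m-m_i}.
\]
Dividing by $n^{d-1}/d!$ with $m=\lfloor xn\rfloor$, each summand converges to $f_{\mathbb{I}'}(x)$, because the shifts $n_i,m_i$ are bounded; this uses Proposition \ref{prop_rational} and the continuity in Theorem \ref{mainthm}. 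This naive bound only gives $f_{\mathbb{J}'}\le s\,f_{\mathbb{I}'}$, which is the main obstacle. To remove the factor $s$, pass to a Veronese-type reduction. If $h\in J'_N$ is integral over $A$, then $h\cdot J'_{n}\subseteq$ the $A$-span of finitely many elements, and one obtains $c\,J'_{n}\subseteq I'_{n+n_0}$ for a fixed nonzero homogeneous $c$ (a conductor element of the finite extension $A\subseteq B$ in the common fraction field). Concretely, $c\,B\subseteq A$ for some nonzero homogeneous $c\in A$, say $c=gt^{n_0}$ with $\deg g=m_0$, because $B$ is a finite $A$-module contained in $\operatorname{Frac}(A)$. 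This gives
\[
g\,(J'_n)_m\subseteq (I'_{n+n_0})_{m+m_0},
\]
hence $\dim_k(J'_n)_m\le\dim_k(I'_{n+n_0})_{m+m_0}$. Normalizing and letting $n\to\infty$ gives $f_{\mathbb{J}'}(x)\le f_{\mathbb{I}'}(x)$ for $x$ off the Waldschmidt constant, and also $\alpha_{\mathbb{I}'}\le\alpha_{\mathbb{J}'}$, since $\alpha(I'_{n+n_0})\le\alpha(J'_n)+m_0$.

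Applying this with $\mathbb{I}'=\mathbb{I}^{[b]}$ and $\mathbb{J}'=\mathbb{J}^{[a]}$ gives $f_{\mathbb{J}^{[a]}}\le f_{\mathbb{I}^{[b]}}\le f_{\mathbb{I}}$ and $\alpha_{\mathbb{I}}\le\alpha_{\mathbb{I}^{[b]}}\le\alpha_{\mathbb{J}^{[a]}}$. Letting $a\to\infty$ and using Theorem \ref{approx_noeth_filt}$(i)$ and $(iii)$ yields $\alpha_{\mathbb{I}}\le\alpha_{\mathbb{J}}$ and $f_{\mathbb{J}}\le f_{\mathbb{I}}$ on $(\alpha,\infty)$. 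Below $\alpha$ both functions vanish by Lemma \ref{easy}, so $f_{\mathbb{I}}=f_{\mathbb{J}}$ on $\mathbb{R}_{\ge0}\setminus\{\alpha\}$.

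The step requiring the most care is producing $b$ with $\mathbb{J}^{[a]}$ integral over $\mathbb{I}^{[b]}$. Each element of $J_jt^j$ with $j\le a$ satisfies an integral equation over $\bigoplus I_nt^n$. Since $J_j$ is finitely generated, finitely many such equations suffice, and their coefficients involve only $I_n$ with $n\le$ some bound $b$. Hence the generators of $\mathbb{J}^{[a]}$ are integral over the Rees algebra of $\mathbb{I}^{[b]}$, which is Noetherian, and the conductor argument applies.
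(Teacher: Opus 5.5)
Your strategy works, but one step is wrong as written. When $b>a$, the Rees algebra $A$ of $\mathbb{I}^{[b]}$ is in general \emph{not} contained in the Rees algebra $B$ of $\mathbb{J}^{[a]}$, because $I^{[b]}_n=I_n$ for $a<n\leq b$ need not lie in $J^{[a]}_n$. So ``$B$ is a finite $A$-module'' does not make sense, and the conductor argument cannot be run on $A\subseteq B$.

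Here is an example in $k[x,y]$. Let $I_1=(x,y^3)$, $J_1=(x,y^2)$, $I_2=J_2=(x^2,xy,y^4)$, and set $I_n=\sum_{i=1}^{n-1}I_iI_{n-i}$ and $J_n=\sum_{i=1}^{n-1}J_iJ_{n-i}$ for $n\geq 3$.
\begin{itemize}
\item The inclusion $\mathbb{I}\subseteq\mathbb{J}$ is integral, since $(y^2t)^2\in I_2t^2$.
\item But $y^2\notin\overline{(x,y^3)}$. So your opening claim that $\mathbb{I}^{[1]}\subseteq\mathbb{J}^{[1]}$ stays integral is false, and one needs $b=2$.
\item With $b=2$ we have $xy\in I^{[2]}_2$ while $xy\notin J^{[1]}_2=(x^2,xy^2,y^4)$, so $A\not\subseteq B$.
\end{itemize}

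The repair is to replace $B$ by $A[B]$, the Rees algebra of $K_n=\sum_{i=0}^{n}I^{[b]}_iJ^{[a]}_{n-i}$; this is exactly the paper's $\mathbb{K}_a$. It is generated over $A$ by finitely many elements integral over $A$, so it is a finite $A$-module inside $R[t]\subseteq\mathrm{Frac}(A)$. A conductor $c$ with $c\,A[B]\subseteq A$ then gives $cB\subseteq A$, which is all you use afterwards. Concretely, let $h_1t^{n_1},\dots,h_st^{n_s}$ be bihomogeneous $A$-module generators of $A[B]$ and let $0\neq f\in I_1$ be homogeneous. Then $f^{n_i}h_i\in I^{[b]}_{n_i}$, so $c=f^N$ with $N=\max_i n_i$ works. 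Thus $n_0=0$ and $f^NJ^{[a]}_n\subseteq I^{[b]}_n$ for all $n$.

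With this repair your proof is complete, and it takes a genuinely different route from the paper for the Noetherian comparison.
\begin{itemize}
\item \textbf{The paper's route.} It reduces a Noetherian integral extension to an adic one via a Veronese subfiltration ($I_{cn}=I_c^n$). It then invokes \cite[Theorem 6.2]{DRT25} to get the equality $f_{\mathbb{I}^{[c_a]}}=f_{\mathbb{K}_a}$, and sandwiches $\mathbb{J}^{[a]}\subseteq\mathbb{K}_a\subseteq\mathbb{J}$.
\item \textbf{Your route.} You need only the one-sided bound $f_{\mathbb{J}^{[a]}}\leq f_{\mathbb{I}^{[b]}}$. You obtain it from a single fixed multiplier, together with two facts: $m\mapsto\dim_k(I_n)_m$ is monotone (multiply by a nonzero linear form), and $f_{\mathbb{I}^{[b]}}$ is continuous off its Waldschmidt constant by Theorem \ref{mainthm}.
\end{itemize}
Your route is more elementary and self-contained. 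It bypasses the adic case, the Veronese step and the external citation altogether. Applied directly to a finite extension $A\subseteq B$ of Noetherian Rees algebras, it also reproves the Noetherian case of the theorem as a byproduct. The paper's route is shorter on the page only because it outsources exactly that case to an existing result.
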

\begin{proof}
 We proceed following the strategy employed in the proof of \cite[Theorem 6.9]{CSS19}.

 \textit{Step 1.} Let $I \subseteq J$ be two nonzero graded ideals in $R$, and set $\mathbb{I} = \{I^n\}_{n \in \mathbb{N}}$ and $\mathbb{J} = \{J^n\}_{n \in \mathbb{N}}$. Suppose that $\mathbb{I} \subseteq \mathbb{J}$ is integral. Then, by \cite[Theorem 6.2]{DRT25}, the conclusions of Theorem \ref{density_integral} hold for these filtrations.

 \textit{Step 2.} Suppose that $\mathbb{I}=\{I_n\}_{n\in\mathbb{N}} \subseteq \mathbb{J}=\{J_n\}_{n\in\mathbb{N}}$ are Noetherian filtrations of nonzero graded ideals in $R$ such that $\mathbb{I} \subseteq \mathbb{J}$ is integral. Then there exists an integer $c>0$ such that $I_{cn} = I_c^n$ and $J_{cn} = J_c^n$ for all $n\in\mathbb{N}$. Consider the sub-filtrations $\mathbb{I}^{(c)} :=\{I_c^n\}_{n\in\mathbb{N}}$ and $\mathbb{J}^{(c)} :=\{J_c^n\}_{n\in\mathbb{N}}$. Their density functions satisfy $$f_{\mathbb{I}}(x) = \frac{1}{c^{d-1}}\cdot f_{\mathbb{I}^{(c)}}(cx), \quad \text{and}\quad f_{\mathbb{J}}(x) = \frac{1}{c^{d-1}}\cdot f_{\mathbb{J}^{(c)}}(cx) \quad \forall x\in \mathbb{R}_{\geq 0}.$$ Since $\mathbb{I} \subseteq \mathbb{J}$ is integral, it follows that $\mathbb{I}^{(c)} \subseteq \mathbb{J}^{(c)}$ is also integral. The desired conclusions now follow from Step 1 together with the above relations between the density functions.

 \textit{Step 3.} Suppose that $\mathbb{I}=\{I_n\}_{n\in\mathbb{N}} \subseteq \mathbb{J}=\{J_n\}_{n\in\mathbb{N}}$ are filtrations of nonzero graded ideals in $R$ such that $\mathbb{I} \subseteq \mathbb{J}$ is integral. Let $a>0$ be an integer and consider the $a$-th truncated filtration $\mathbb{J}^{[a]} = \big\{J^{[a]}_n\big\}_{n\in\mathbb{N}}$, as defined in subsection \ref{Noeth_Filt}. Then there exists an integer $c_a>0$ such that every element of $\oplus_{n\geq 0}J^{[a]}_nt^n$ (considered as a subring of $\oplus_{n\geq 0}J_nt^n$) is integral over $\oplus_{n\geq 0}I^{[c_a]}_nt^n$, where $\mathbb{I}^{[c_a]} = \big\{I^{[c_a]}_n\big\}_{n\in\mathbb{N}}$ is the $c_a$-th truncated filtration of $\mathbb{I}$, as defined in subsection \ref{Noeth_Filt}.

 Define a Noetherian filtration of graded ideals $\mathbb{K}_a = \{K_{a,n}\}_{n\in\mathbb{N}}$ in $R$ by $$K_{a,n} = \sum_{i=0}^n I^{[c_a]}_i J^{[a]}_{n-i}.$$ Since $I^{[c_a]}_0 = R = J^{[a]}_0$, it follows that $\mathbb{I}^{[c_a]} \subseteq \mathbb{K}_a$, and this inclusion is integral. By Step 2, the equality $$f_{\mathbb{I}^{[c_a]}}(x) = f_{\mathbb{K}_a}(x)$$ holds for all $x\in\mathbb{R}_{\geq 0}$, except possibly for a single value of $x$. From Theorem \ref{approx_noeth_filt}, we obtain $$\lim_{a\to\infty} f_{\mathbb{I}^{[c_a]}}(x) = f_{\mathbb{I}}(x) \quad \forall x\in\mathbb{R}_{\geq 0}\setminus\{\alpha_{\mathbb{I}}\},$$ and hence $$\lim_{a\to\infty} f_{\mathbb{K}_a}(x) = f_{\mathbb{I}}(x) \quad \forall x\in\mathbb{R}_{\geq 0}\setminus\{\alpha_{\mathbb{I}}\}.$$

 We also have inclusions $$\mathbb{J}^{[a]} \subseteq \mathbb{K}_a \subseteq \mathbb{J},$$ and again, by Theorem \ref{Noeth_Filt}, $$\lim_{a\to\infty} f_{\mathbb{J}^{[a]}}(x) = f_{\mathbb{J}}(x) \quad \forall x\in\mathbb{R}_{\geq 0}\setminus\{\alpha_{\mathbb{J}}\}.$$ Applying the sandwich theorem yields $$\lim_{a\to\infty}f_{\mathbb{K}_a}(x) = f_{\mathbb{J}}(x) \quad \forall x\in\mathbb{R}_{\geq 0}\setminus\{\alpha_{\mathbb{J}}\}.$$ The theorem now follows by combining the above equalities.
\end{proof}

\section{Saturated filtrations of graded ideals in a standard graded polynomial ring}\label{section 6}

\subsection{Setup}\label{setup3}
Let $k$ be an algebraically closed field. Let $R = k[x_1, \ldots, x_d]$ be the standard graded polynomial ring in the variables $x_1, \ldots, x_d$ over $k$ with graded maximal ideal $\mm_R = (x_1, \ldots, x_d)$. Assume that $d\geq 2$.

Let $\mathbb{I}=\{I_n\}_{n\in\mathbb{N}}$ be a filtration of graded ideals in $R$ satisfying $1\leq \mathrm{height}\;I_1\leq d-1$. Let $J\subseteq R$ be a graded ideal such that $\sqrt{J} \not\subset \sqrt{I_1}$ and $J \subseteq \mathfrak{m}_R$. Consider the filtration $\mathbb{I}^{\mathrm{sat}}_J = \left\{I_n \colon J^{\infty}\right\}_{n\in\mathbb{N}}$, where $$I_n \colon J^{\infty} = \bigcup_{t\ge 1} \left(I_n \colon J^t\right) = \left\{r\in R \mid rJ^t \subseteq I^n\;\;\text{for some $t\ge 1$}\right\}.$$ This is the \emph{saturation} of the filtration $\mathbb{I}$ with respect to $J$. We shall now study the associated density function $f_{\mathbb{I}^{\mathrm{sat}}_J}$.

\subsection{Generic initial ideals}
Let $>$ denote the \emph{graded reverse lexicographic order} on $R$ defined by: $\prod_{i=1}^d x_i^{a_i} > \prod_{i=1}^d x_i^{b_i}$ if $\sum_{i=1}^d a_i > \sum_{i=1}^d b_i$ or $\sum_{i=1}^d a_i = \sum_{i=1}^d b_i$ and the last nonzero entry of the vector $(a_1-b_1,\ldots,a_d-b_d)$ is negative. For a graded ideal $I \subseteq R$, write $\mathrm{Gin}(I)$ for the \emph{generic initial ideal} of $I$ with respect to $>$.

The following lemma records basic properties of generic initial ideals that will be repeatedly used in our asymptotic analysis. Some useful references are Chapter 15 of Eisenbud's book \cite{eisenbud} and the lecture notes by Green \cite{Gre98}.

\begin{lemma}\label{generic_lemma}
Let $R$, and $\mathbb{I}=\{I_n\}_{n\in\mathbb{N}}$ be as in Setup \ref{setup3}. Then the following statements are true.
\begin{enumerate}[$(i)$]
 \item $\mathrm{Gin}(\mathbb{I}) = \{\mathrm{Gin}(I_n)\}_{n\in\mathbb{N}}$ is a filtration of Borel-fixed monomial ideals in $R$.
 \item $\dim_k {\left(I_n\right)}_m = \dim_k {\left(\mathrm{Gin}(I_n)\right)}_m$ for all $m,n\in \mathbb{N}$.
 \item $\mathrm{Gin}\left(I_n \colon \mm_R^{\infty}\right) = \mathrm{Gin}(I_n) \colon \mm_R^{\infty} = \mathrm{Gin}(I_n) \colon (x_d)^{\infty}$. Consequently, none of the minimal generators of $\mathrm{Gin}\left(I_n \colon \mm_R^{\infty}\right)$ involves the variable $x_d$.
\end{enumerate}
\end{lemma}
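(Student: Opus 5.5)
Each of the three parts reduces to standard facts about generic initial ideals with respect to the graded reverse lexicographic order (Eisenbud, Chapter 15; Green's notes). We fix once and for all a nonempty Zariski open set of coordinate changes $g \in \mathrm{GL}_d(k)$ on which the relevant generic initial ideals are attained. Each $\mathrm{Gin}(I_n)$ is attained on a nonempty Zariski open subset $U_n \subseteq \mathrm{GL}_d(k)$. A countable intersection of nonempty Zariski open sets is nonempty only when $k$ is uncountable, so we handle this differently. The main point in $(i)$ is the multiplicativity $\mathrm{Gin}(I_m)\mathrm{Gin}(I_n)\subseteq \mathrm{Gin}(I_{m+n})$. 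For this we only need a single $g$ lying simultaneously in $U_m$, $U_n$ and $U_{m+n}$, and a finite intersection of nonempty opens is nonempty. For such $g$ we have $\mathrm{in}(gI_m)=\mathrm{Gin}(I_m)$, and similarly for $n$ and $m+n$. Since $\mathrm{in}(F)\mathrm{in}(G)=\mathrm{in}(FG)$ and $gI_m\cdot gI_n = g(I_mI_n)\subseteq gI_{m+n}$, we get
\[
\mathrm{Gin}(I_m)\mathrm{Gin}(I_n)\subseteq \mathrm{in}(gI_{m+n})=\mathrm{Gin}(I_{m+n}).
\]
The containment $\mathrm{Gin}(I_{n+1})\subseteq \mathrm{Gin}(I_n)$ is handled the same way, choosing $g\in U_n\cap U_{n+1}$ and using $gI_{n+1}\subseteq gI_n$. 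Also $\mathrm{Gin}(R)=R$. Borel-fixedness of each $\mathrm{Gin}(I_n)$ is Galligo--Bayer--Stillman (Eisenbud, Theorem 15.20), which is where we use that $k$ is algebraically closed, hence infinite. This proves $(i)$.

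For $(ii)$, a change of coordinates $g$ preserves Hilbert functions. Passing to an initial ideal also preserves Hilbert functions, since the standard monomials in each degree form a $k$-basis of the quotient (Macaulay's theorem, Eisenbud, Theorem 15.3). Together these give $\dim_k (I_n)_m=\dim_k(\mathrm{Gin}(I_n))_m$.

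For $(iii)$ we use the key property of the reverse lexicographic order: for a homogeneous ideal $I$ in generic coordinates, $\mathrm{in}(I\colon x_d^\infty)=\mathrm{in}(I)\colon x_d^\infty$ (Eisenbud, Proposition 15.12 and Bayer--Stillman). Since $I_n$ is saturated with respect to $\mm_R$ exactly when $x_d$ is a generic linear form, after a general coordinate change we have $gI_n\colon \mm_R^\infty = gI_n \colon x_d^\infty$. Here we choose $x_d$ to avoid the finitely many associated primes of $gI_n$ other than $\mm_R$, which is a further open condition on $g$. Taking initial ideals gives
\[
\mathrm{Gin}(I_n\colon\mm_R^\infty)=\mathrm{Gin}(I_n)\colon x_d^\infty .
\]
For a Borel-fixed ideal, $\mathrm{Gin}(I_n)\colon x_d^\infty=\mathrm{Gin}(I_n)\colon\mm_R^\infty$ (Green, Proposition 2.21), since Borel moves let us replace any variable by $x_d$. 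Finally, a monomial ideal of the form $M\colon x_d^\infty$ is generated by the generators of $M$ with their $x_d$-powers deleted, so no minimal generator involves $x_d$. The main obstacle is the bookkeeping of genericity: we must use that the saturation, Borel-fixedness and $x_d$-regularity statements all hold on a common nonempty Zariski open set for each fixed $n$ (or pair $m,n$). No uniform choice over all $n$ is needed, because every assertion involves only finitely many indices at once.
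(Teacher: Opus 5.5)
Your proposal is correct and matches the paper's proof: part $(i)$ uses the same device of choosing a single $g\in U_m\cap U_n\cap U_{m+n}$ together with $\mathrm{in}(F)\,\mathrm{in}(G)=\mathrm{in}(FG)$ (the paper adapts this from Mayes), and part $(ii)$ is the same Hilbert-function argument. For $(iii)$ the paper just cites Green, Proposition 2.21 and Eisenbud, Proposition 15.24. You instead unpack the standard reasoning behind those results: revlex commutes with colon by $x_d$, a generic $x_d$ computes the $\mathfrak{m}_R$-saturation, and Borel-fixedness turns $x_d^\infty$ into $\mathfrak{m}_R^\infty$ (note the Borel move there replaces $x_d$ by smaller-index variables, not the other way round), which is fine.
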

\begin{proof}
\textit{$(i)$}\;
It is well-known that generic initial ideals are Borel-fixed monomial ideals; see \cite[Theorem 1.27]{Gre98} or \cite[Theorem 15.20]{eisenbud}. To verify that $\mathrm{Gin}(\mathbb{I})$ is a filtration, we adapt an argument from \cite[Lemma 2.12]{May14}. For each $n \ge 1$, Galligo's theorem \cite[Theorem 1.27]{Gre98} ensures that there exists a nonempty Zariski open subset $U_n \subseteq \mathrm{GL}_d(k)$ such that $\mathrm{Gin}(I_n) = \mathrm{in}_>(g(I_n))$ for all $g \in U_n$.
Since the intersection $U_n \cap U_m \cap U_{n+m}$ is nonempty, choose $g$ in this intersection so that $\mathrm{Gin}(I_i) = \mathrm{in}_>(g(I_i))$ for $i=n,m,n+m$. For $f \in I_n$ and $h \in I_m$, set $f^{\prime} = \mathrm{in}_>(g(f))$ and $h^{\prime} = \mathrm{in}_>(g(h))$. Then
\[
    f^{\prime} h^{\prime} = \mathrm{in}_>(g(f))\, \mathrm{in}_>(g(h)) = \mathrm{in}_>(g(f)g(h))
    = \mathrm{in}_>(g(fh)) \in \mathrm{Gin}(I_{n+m}),
\]
so that $\mathrm{Gin}(I_n) \cdot \mathrm{Gin}(I_m) \subseteq \mathrm{Gin}(I_{n+m})$. A similar argument will also show that if $I_{n+1} \subseteq I_n$ then $\mathrm{Gin}(I_{n+1}) \subseteq \mathrm{Gin}(I_n)$. Hence $\mathrm{Gin}(\mathbb{I})$ is a filtration of Borel-fixed monomial ideals.

\textit{$(ii)$}\; Both an invertible linear change of coordinates and the operation of taking an initial ideal preserve the Hilbert function; thus the equality of graded pieces follows immediately.

\textit{$(iii)$}\; For the graded reverse lexicographic order, it is known that
\[
    \mathrm{Gin}(I_n \colon \mm_R^{\infty})
    = \mathrm{Gin}(I_n) \colon \mm_R^{\infty}
    = \mathrm{Gin}(I_n) \colon (x_d)^{\infty},
\]
see \cite[Proposition 2.21]{Gre98} and \cite[Proposition 15.24]{eisenbud}.
Consequently, no minimal generator of $\mathrm{Gin}(I_n \colon \mm_R^{\infty})$ involves $x_d$.
\end{proof}

An example in \cite[Section 3]{HHT02} shows that the filtration $\{\operatorname{Gin}(I^n)\}_{n\in\mathbb{N}}$, where $I$ is a graded ideal in a polynomial ring $R$, need not be Noetherian.

\subsection{Differentiability of the saturation density function}

\begin{theorem}\label{saturated_density_diff}
Let $R$, $\mathbb{I}=\{I_n\}_{n\in\mathbb{N}}$, and $J$ be as in Setup \ref{setup3}. Then the saturation density function $$f_{\mathbb{I}^{\mathrm{sat}}_J}(x) = \lim_{n\to\infty}\dfrac{\dim_k {\left(I_n \colon J^{\infty}\right)}_{\lfloor xn \rfloor}}{n^{d-1}/d!},$$ is a continuous function on $\mathbb{R}_{\geq 0}$ whose support is the open interval $(\alpha_{\mathbb{I}^{\mathrm{sat}}_J},\infty)$. On its support, the function $f_{\mathbb{I}^{\mathrm{sat}}_J}$ is nondecreasing, log-concave and continuously differentiable. Moreover, its derivative $\frac{d}{dx}f_{\mathbb{I}^{\mathrm{sat}}_J}(x)$ is strictly positive, nondecreasing, and log-concave on the interval $(\alpha_{\mathbb{I}^{\mathrm{sat}}_J},\infty)$.
\end{theorem}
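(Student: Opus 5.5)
The plan is to reduce, via generic initial ideals, to the density function of a filtration of monomial ideals in $d-1$ variables, and then to express $f_{\mathbb{I}^{\mathrm{sat}}_J}$ as an integral of that lower-dimensional density function. Set $K_n = I_n \colon J^{\infty}$, so $\mathbb{K}=\mathbb{I}^{\mathrm{sat}}_J=\{K_n\}$ is a filtration of nonzero graded ideals (it contains $I_n\neq 0$).

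\emph{Step 1: $K_n$ is already $\mathfrak{m}_R$-saturated.} By the description in \S\ref{gen_symb}, $K_n$ is the intersection of those primary components $Q_{i,n}$ of $I_n$ with $J\not\subseteq\sqrt{Q_{i,n}}$. Hence no associated prime of $R/K_n$ contains $J$. Since $J\subseteq \mathfrak{m}_R$, the ideal $\mathfrak{m}_R$ is not associated to $K_n$, and so $K_n\colon\mathfrak{m}_R^\infty = K_n$.

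\emph{Step 2: pass to Gin.} By Lemma \ref{generic_lemma}, $\mathrm{Gin}(\mathbb{K})$ is a filtration with the same Hilbert functions as $\mathbb{K}$. By part $(iii)$ and Step 1, $\mathrm{Gin}(K_n)=\mathrm{Gin}(K_n)\colon x_d^\infty$, so $\mathrm{Gin}(K_n)=L_nR$ with $L_n:=\mathrm{Gin}(K_n)\cap R'$, where $R'=k[x_1,\dots,x_{d-1}]$. The family $\mathbb{L}=\{L_n\}$ is a filtration of nonzero graded ideals in $R'$, by contracting the filtration properties. Since $R=R'[x_d]$, we get
\[
\dim_k (K_n)_m=\dim_k(\mathrm{Gin}(K_n))_m=\sum_{j=0}^{m}\dim_k (L_n)_j .
\]
Moreover, the least degree of $K_n$ equals the least degree of $L_n$, so $\alpha_{\mathbb{K}}=\alpha_{\mathbb{L}}$.

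\emph{Step 3: integral formula.} Assume first $d-1\ge 2$. Apply Theorem \ref{mainthm}$(v)$ to $\mathbb{L}$ in $R'$, whose dimension is $d-1$ and whose normalization is $n^{d-2}/(d-1)!$. Using Step 2, this gives
\[
f_{\mathbb{K}}(x)=\lim_{n\to\infty}\frac{\sum_{j\le \lfloor xn\rfloor}\dim_k (L_n)_j}{n^{d-1}/d!}=d\int_0^x f_{\mathbb{L}}(t)\,dt ,
\]
and in particular the limit exists for every $x$. If $d=2$, then $R'=k[x_1]$ and $L_n=(x_1^{a_n})$ with $a_n/n\to\alpha_{\mathbb{L}}$. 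A direct count then gives $f_{\mathbb{K}}(x)=2\max(x-\alpha,0)$, and the same formula holds with $f_{\mathbb{L}}=\mathbf{1}_{(\alpha,\infty)}$.

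\emph{Step 4: read off the properties.} By Theorem \ref{mainthm} applied to $\mathbb{L}$, $f_{\mathbb{L}}$ vanishes below $\alpha$, is bounded on compact sets by part $(ii)$, and is positive, nondecreasing, continuous and log-concave on $(\alpha,\infty)$.
\begin{itemize}
\item $f_{\mathbb{K}}$ is continuous on all of $\mathbb{R}_{\ge0}$, being the integral of a locally bounded function. It vanishes exactly on $[0,\alpha]$.
\item By the fundamental theorem of calculus, $f_{\mathbb{K}}$ is $C^1$ on $(\alpha,\infty)$ with $f_{\mathbb{K}}'=d\,f_{\mathbb{L}}$. Hence the derivative is strictly positive, nondecreasing and log-concave there.
\item Monotonicity and log-concavity of $f_{\mathbb{K}}$ itself follow from Theorem \ref{mainthm}$(iii)$ applied to $\mathbb{K}$. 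Alternatively, log-concavity follows from Prékopa's theorem, since $f_{\mathbb{K}}$ is an integral of a log-concave function.
\end{itemize}

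The main obstacle is Step 1 together with the structure in Step 2. The Gin trick only commutes with $\mathfrak{m}_R$-saturation, not with $J$-saturation. So the key point is recognizing that $J$-saturation already yields $\mathfrak{m}_R$-saturated ideals, which requires $J\subseteq\mathfrak{m}_R$, and then carefully checking that $\{L_n\}$ is a genuine filtration of nonzero ideals, so that Theorem \ref{mainthm} applies in $R'$. Justifying the interchange of limit and sum is handled by Theorem \ref{mainthm}$(v)$, which was itself obtained by dominated convergence.
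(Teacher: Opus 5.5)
Your proposal is correct and follows essentially the same route as the paper. Both arguments reduce to $\mathfrak{m}_R$-saturated ideals, pass to $\mathrm{Gin}$ to get a filtration $\{L_n\}$ in $k[x_1,\ldots,x_{d-1}]$, write $f_{\mathbb{I}^{\mathrm{sat}}_J}(x)=d\int_0^x f_{\mathbb{L}}(t)\,dt$ via Theorem~\ref{mainthm}$(v)$, and read off the remaining properties from the fundamental theorem of calculus and Theorem~\ref{mainthm}$(ii)$--$(iii)$.

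The differences are minor. You justify $I_n\colon J^{\infty}=(I_n\colon J^{\infty})\colon\mathfrak{m}_R^{\infty}$ up front via primary decomposition, whereas the paper treats $J=\mathfrak{m}_R$ first and then reduces. You also treat $d=2$ separately, which covers a case the paper's appeal to Theorem~\ref{mainthm} in $d-1$ variables formally leaves out.
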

\begin{proof}
\emph{Step 1.} Assume that $J = \mathfrak{m}_R$. By Lemma  \ref{generic_lemma}, for all $m, n \in \mathbb{N}$, $$\dim_k {\left(I_n \colon \mm_R^{\infty}\right)}_m = \dim_k {\left(\mathrm{Gin}\left(I_n \colon \mm_R^{\infty}\right)\right)}_m = \dim_k {\left(\mathrm{Gin}(I_n) \colon (x_d)^{\infty}\right)}_m.$$ Since $\mathrm{Gin}(I_n) \colon (x_d)^{\infty}$ has no minimal generator involving $x_d$, there exists a monomial ideal $J_n \subseteq k[x_1,\ldots,x_{d-1}]$ such that
\[
J_nR = \mathrm{Gin}(I_n) \colon (x_d)^{\infty}.
\]
Thus $\mathbb{J} = \{J_n\}_{n\in\mathbb{N}}$ is a filtration of Borel-fixed monomial ideals in $k[x_1,\ldots,x_{d-1}]$. For each $m\in\mathbb{N}$, $${(J_nR)}_m = \sum_{i=0}^m {(J_n)}_i\cdot x_d^{m-i},$$ and hence $$\dim_k {\left(I_n \colon \mm_R^{\infty}\right)}_{\lfloor xn \rfloor} = \dim_k {\left(J_nR\right)}_{\lfloor xn \rfloor} = \sum_{i=0}^{\lfloor xn \rfloor} \dim_k {(J_n)}_i.$$ By Theorem \ref{mainthm}, the density function $$f_{\mathbb{J}}(y) = \lim_{n\to\infty}\frac{\dim_k {\left(J_n\right)}_{\lfloor yn \rfloor}}{n^{d-2}/(d-1)!}$$ exists as a limit, and is continuous on $\mathbb{R}_{\geq 0}\setminus \{\alpha_{\mathcal{J}}\}$. Moreover, $\alpha_{\mathcal{J}} = \alpha_{\mathbb{I}^{\mathrm{sat}}}$, since $$\dim_k (I_n \colon \mm_R^{\infty})_m = \dim_k (J_n R)_m \quad \forall m,n\in\mathbb{N}.$$ Using part $(v)$ of Theorem \ref{mainthm}, one has
\begin{align*}
 f_{\mathbb{I}^{\mathrm{sat}}_{\mathfrak{m}_R}}(x) = \lim_{n\to\infty}\dfrac{\dim_k {\left(\mathrm{Gin}(I_n)\colon (x_d)^{\infty}\right)}_{\lfloor xn \rfloor}}{n^{d-1}/d!} = \lim_{n\to\infty}\dfrac{\sum_{m=0}^{\lfloor xn \rfloor}\dim_k {\left(J_n\right)}_m}{n^{d-1}/d!} = d\int_0^x f_{\mathbb{J}}(y)dy.
\end{align*}
Since $f_{\mathbb{J}}$ is continuous except possibly at $y = \alpha_{\mathbb{I}^{\mathrm{sat}}_{\mathfrak{m}_R}}$, the function $$x \mapsto \int_0^x f_{\mathbb{J}}(y)dy$$ is continuous on $\mathbb{R}_{\ge 0}$, and continuously differentiable on $\mathbb{R}_{\ge 0} \setminus \{\alpha_{\mathbb{I}^{\mathrm{sat}}}\}$. By the fundamental theorem of calculus \cite[Theorem 6.20]{Rud76}, $$\frac{d}{dx}f_{\mathbb{I}^{\mathrm{sat}}_{\mathfrak{m}_R}}(x) = d\cdot\frac{d}{dx}\int_0^x f_{\mathbb{J}}(y)dy = d\cdot f_{\mathbb{J}}(x) \quad \forall x\neq \alpha_{\mathbb{I}^{\mathrm{sat}}_{\mathfrak{m}_R}}.$$   The remaining claims follow from part $(iii)$ of Theorem \ref{mainthm}.

\emph{Step 2}. Let $J$ be as in the hypothesis of the theorem. For every $n \geq 1$, one checks that $$I_n \colon J^{\infty} = \left(I_n \colon J^{\infty}\right) \colon \mm_R^{\infty}.$$ Now apply Step 1 to the filtration $\mathbb{I}^{\mathrm{sat}}_J=\left\{I_n \colon J^{\infty}\right\}_{n\in\mathbb{N}}$.
\end{proof}

The above Theorem \ref{saturated_density_diff} can be compared with Theorem \ref{symb_geom} and Remark \ref{symb_geom_rem} below.

\section{Generalized symbolic powers of a graded ideal in a standard graded domain}\label{section 7}

\subsection{Setup}\label{setup4}
Let $R$ be as in Setup \ref{setup}. Let $I\subseteq R$ be a graded ideal satisfying $1\leq \mathrm{height}\;I\leq d-1$. Fix another graded ideal $J\subseteq R$. Consider the following filtrations; $$\mathbb{I} = \{I^n\}_{n\in\mathbb{N}}\quad \text{and}\quad \mathbb{I}^{\mathrm{sat}}_J = \{I^n \colon J^{\infty}\}_{n\in\mathbb{N}},$$ together with their respective density functions $f_{\mathbb{I}}$ and  $f_{\mathbb{I}^{\mathrm{sat}}_J}$, and their corresponding Waldschmidt constants $\alpha_{\mathbb{I}}$ and $\alpha_{\mathbb{I}^{\mathrm{sat}}_J}$. To avoid trivialities, we further insist that $\sqrt{J} \not\subset \sqrt{I}$ and $J \subseteq \mathfrak{m}_R$.

\subsection{A geometric construction}
The material presented in this section is inspired by the approach of \cite[Section 5]{DRT25}, where the \emph{saturation density function} $f_{\mathbb{I}^{\mathrm{sat}}_{\mathfrak{m}_R}}$ was studied via its relationship with the volume function of divisors. We adapt and generalize these ideas to investigate the \emph{generalized symbolic density function} $f_{\mathbb{I}^{\mathrm{sat}}_J}$. For notations and standard facts about volume functions, we refer the reader to \cite[Chapter 2]{Laz04a} and \cite{LM09}.

\begin{theorem}\label{symb_geom}
Let $R$, $I$, and $J$ be as in Setup \ref{setup4}. Consider the projective variety $V = \mathrm{Proj}\, R$ with a very ample invertible sheaf $\mathcal{O}_V(1)$, and denote by $\mathcal{I}$ the ideal sheaf on $V$ corresponding to $I$. Then the following statements are true.
\begin{enumerate}[$(i)$]
 \item There exist a normal projective variety $X$, a projective birational morphism $\varphi\colon X \longrightarrow V$ dominating the blow-up of $V$ along $\mathcal{I}$, and an effective Cartier divisor $F$ on $X$ with exceptional support, such that for every real number $x\geq 0$,
 \begin{align*}
 f_{\mathbb{I}^{\mathrm{sat}}_J}(x) & = \lim_{n\to\infty}\dfrac{\dim_k {\left(\overline{I^n} \colon J^{\infty}\right)}_{\lfloor xn \rfloor}}{n^{d-1}/d!} = d\cdot\mathrm{vol}_X(xH-F),
\end{align*}
where $H$ denotes the pullback to $X$ of a hyperplane section on $V$.
\item The Waldschmidt constant $\alpha_{\mathbb{I}^{\mathrm{sat}}_J}$ is strictly positive and admits the following characterization:
 \begin{align*}
   \alpha_{\mathbb{I}^{\mathrm{sat}}_J}
      &= \min \{\, x \in \mathbb{R}_{\geq 0} \mid xH - F \text{ is pseudoeffective} \,\}.
 \end{align*}
 \item The density function $f_{\mathbb{I}^{\mathrm{sat}}_J}$ is continuous on $\mathbb{R}_{\geq 0}$, and is supported on the open interval $(\alpha_{\mathbb{I}^{\mathrm{sat}}_J}, \infty)$. Moreover, $f_{\mathbb{I}^{\mathrm{sat}}_J}$ is strictly increasing, log-concave, and continuously differentiable on $(\alpha_{\mathbb{I}^{\mathrm{sat}}_J}, \infty)$.
\end{enumerate}
\end{theorem}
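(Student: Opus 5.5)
Our plan is to realize the graded pieces of $I^n\colon J^\infty$ as spaces of sections on a suitable birational model and then invoke the standard theory of volumes. First we pass from $I^n$ to $\overline{I^n}$. Since $\{I^n\}\subseteq\{\overline{I^n}\}$ is integral, we would show that $\{I^n\colon J^\infty\}\subseteq\{\overline{I^n}\colon J^\infty\}$ is also integral: if $rJ^t\subseteq\overline{I^n}$, then an integral equation for $rs$ with $s\in J^t$, combined with a fixed power $J^{t'}$ killing the discrepancy, gives integrality of $r$ over the Rees algebra of the saturated filtration. Then Theorem \ref{density_integral} yields the first equality in $(i)$ off the single point $\alpha$, and continuity will take care of that point at the end.

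Next comes the construction of $X$. We let $X$ be the normalization of the blow-up of $V$ along $\mathcal{I}$, further blown up and normalized along the ideal sheaf of $J$ so that $\mathcal{I}\mathcal{O}_X=\mathcal{O}_X(-E)$ and $\mathcal{J}\mathcal{O}_X=\mathcal{O}_X(-G)$ are invertible. Then $\overline{\mathcal{I}^n}=\varphi_*\mathcal{O}_X(-nE)$. Write $E=F+E'$, where $F$ consists of the prime components of $E$ whose centers on $V$ are not contained in $V(\mathcal{J})$, and $E'$ consists of the remaining components. Components of $E$ lying over the center of a minimal prime of $I$ are allowed in $F$; the support is exceptional for the blow-up, which is what is claimed. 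Using the primary-decomposition description in \S\ref{gen_symb} together with the valuative description of integral closure (Rees valuations), $\overline{I^n}\colon J^\infty$ in degree $m\gg0$ should identify with $H^0(X,\mathcal{O}_X(mH-nF))$. Equivalently, the saturation with respect to $J$ discards exactly the valuation conditions centered in $V(J)$. One must also account for the discrepancy between $R_m$ and $H^0(V,\mathcal{O}_V(m))$, which is handled by passing to the normalization or the $\mathfrak m_R$-saturation. This changes dimensions only in bounded degrees relative to $n$ when $x>0$, so it does not affect the limit. Then
\[\dim_k(\overline{I^n}\colon J^\infty)_{\lfloor xn\rfloor}=h^0(X,\lfloor xn\rfloor H-nF)\sim \mathrm{vol}_X(xH-F)\,n^{d-1}/(d-1)!,\]
giving $f=d\cdot\mathrm{vol}_X(xH-F)$ first for rational $x$ and then for all $x$ by continuity of $\mathrm{vol}$. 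We expect this identification, especially handling $J$ not $\mathfrak m_R$-primary and keeping only the components of $F$ not lying over $V(J)$, to be the main obstacle. We would first treat $J=\mathfrak m_R$ as in \cite{DRT25} and then localize the valuation argument.

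For $(ii)$, the vanishing $f(x)=0$ for $x<\alpha$ and positivity for $x>\alpha$ (Lemma \ref{easy}) combine with the fact that $\mathrm{vol}_X>0$ exactly on the big cone, the interior of the pseudoeffective cone. Hence $\alpha=\inf\{x: xH-F\text{ big}\}$, which equals the minimum $x$ for which $xH-F$ is pseudoeffective, since the pseudoeffective cone is closed and $H$ is big. Positivity of $\alpha$ holds because $F\neq0$ is effective and exceptional, so $-F$ is not pseudoeffective: its pushforward is $0$, and a nonzero effective exceptional divisor is not movable-equivalent to a pseudoeffective one by the negativity lemma. For $(iii)$, continuity on $\mathbb{R}_{\ge0}$ follows from continuity of $\mathrm{vol}_X$ on $N^1(X)_\mathbb{R}$ \cite{Laz04a}, vanishing at $\alpha$ because $\alpha H-F$ lies on the boundary of the big cone. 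Continuous differentiability on $(\alpha,\infty)$ follows from the $C^1$ property of $\mathrm{vol}$ on the big cone \cite{LM09,BFJ09}. The derivative is $d(d-1)\langle (xH-F)^{d-2}\rangle\cdot H>0$, since $H$ is big and nef and positive intersection products of big classes are positive, which gives strict monotonicity. Log-concavity is already provided by Theorem \ref{mainthm}$(iii)$.
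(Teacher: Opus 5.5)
\textbf{There is a genuine gap: your $F$ is never shown to be Cartier, and your model does not make it so.}

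Blowing up $\mathcal{J}$ makes $\mathcal{J}\mathcal{O}_X=\mathcal{O}_X(-G)$ invertible. That does not force the part of the Cartier divisor $E$ lying off $\operatorname{Supp}G$ to be Cartier. For example, on $\operatorname{Spec}k[x,y,z,w]/(xy-zw)$ both $\operatorname{div}(x)=V(x,z)+V(x,w)$ and $\operatorname{div}(w)$ are Cartier, yet $V(x,z)$ is not even $\mathbb{Q}$-Cartier. The statement asserts that $F$ is Cartier. Everything you invoke afterwards also needs it: $\mathrm{vol}_X$ as a continuous function on $N^1(X)_{\mathbb{R}}$, the big and pseudoeffective cones, and the $C^1$ results of \cite{LM09,BFJ09}.

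Your choice of $F$ is nevertheless the right one. Every new exceptional divisor lies over $V(\mathcal{J})$. So $F$ is the strict transform of $F'=\sum a_iE_i'$, where the $E_i'$ are the Rees divisors on the normalized blow-up $X'$ of $\mathcal{I}$ whose centres avoid $V(\mathcal{J})$, and $F$ has coefficient $0$ on all other primes. For such an $F$, the identity $\mathcal{O}_V\cap\varphi_*\mathcal{O}_X(-nF)=\overline{\mathcal{I}^n}\colon\mathcal{J}^\infty$ holds on every normal model dominating $X'$. What is missing is a normal model on which this strict transform is Cartier, for instance a resolution when one is available.

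Do not repair this as the paper does, by taking the normalized blow-up $\theta$ of $\mathcal{O}_{X'}(-F')$ and letting $F$ be the total transform. Then $\theta_*\mathcal{O}_X(-nF)=\overline{\mathcal{O}_{X'}(-F')^n}$, which can be strictly smaller than $\mathcal{O}_{X'}(-nF')$. So the identity $\theta_*\mathcal{O}_X(mH-nF)=\mathcal{O}_{X'}(mH'-nF')$ asserted there can fail. Here is an example.
Let $R=k[x,y,z]^{(2)}[w]$ with $R_1=k[x,y,z]_2\oplus kw$, the projective cone over the Veronese surface. Let $I=(xy)$, and let $J$ be the ideal of the cone $D_y$ over $\{y=0\}$. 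Then $I^n\colon J^\infty=I_{D_x}^{(n)}$, and $\alpha_{\mathbb{I}^{\mathrm{sat}}_J}=\tfrac12$ because $x^{2k}\in R_k$. Here $X'=V$, $F'=D_x$, and $\theta$ is the blow-up of the vertex. At the vertex, $\mathfrak{a}=(x^2,xy,xz)$ satisfies $\overline{\mathfrak{a}^2}=x^2\mathfrak{n}\subsetneq(x^2)=\mathfrak{a}^{(2)}$, with $\mathfrak{n}$ the maximal ideal of the vertex. The total transform $F=\widetilde{D}_x+Z$, with $Z$ the exceptional divisor, makes $xH-F$ pseudoeffective only for $x\ge1$. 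The strict transform $\widetilde{D}_x$, which is Cartier there and is what your construction produces in this case, gives the correct threshold $\tfrac12$.

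\textbf{Two smaller points.}

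First, your integrality argument for $\{I^n\colon J^\infty\}\subseteq\{\overline{I^n}\colon J^\infty\}$ does not work as sketched. An integral equation for $rs$ with coefficients in $I^{ni}$ cannot be divided by $s^k$ inside $R$. What Rees's theorem $\overline{I^{n+c}}\subseteq I^n$ gives cheaply is only $h\,(\overline{I^n}\colon J^\infty)\subseteq I^n\colon J^\infty$ for a fixed $h\neq0$. That is almost integrality, which does not imply integrality over a non-Noetherian Rees algebra. You do not need Theorem \ref{density_integral} at all. The sandwich $\overline{I^{n+c}}\colon J^\infty\subseteq I^n\colon J^\infty\subseteq\overline{I^n}\colon J^\infty$, together with continuity of $x\mapsto\mathrm{vol}_X(xH-F)$, gives the first equality in $(i)$ for every $x$, including $x=\alpha$. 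This is the paper's Claim 3. For non-normal $V$, use $\overline{\mathcal{I}^n}=\mathcal{O}_V\cap\varphi_*\mathcal{O}_X(-nE)$; the discrepancy is supported in dimension $\le d-2$, as in Claim 2.

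Second, for $\alpha>0$ the pushforward/negativity-lemma reasoning is not an argument. Instead, $\sqrt J\not\subset\sqrt I$ gives a minimal prime $P$ of $I$ with $J\not\subseteq P$, so $F\neq0$. A nonzero effective Cartier $F$ satisfies $(-F)\cdot A^{d-2}<0$ for $A$ ample, so $-F$ is not pseudoeffective. The paper uses degrees instead.

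The rest of $(ii)$ and $(iii)$ is fine once $F$ is Cartier. This covers the threshold via the big cone, continuity, and the $C^1$ property. It also covers strict monotonicity via the derivative formula of \cite{BFJ09} (in place of the paper's Brunn--Minkowski argument) and log-concavity from Theorem \ref{mainthm}.
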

\begin{proof}
Let $\mathcal{J}$ be the ideal sheaf on $V$ associated to the ideal $J$. Note that if $J$ is $\mathfrak{m}_R$-primary then $\mathcal{J}=\mathcal{O}_V$, otherwise $\mathcal{J}\subset \mathcal{O}_V$ is a proper ideal sheaf. From the discussions in Section \ref{gen_symb}, we may assume that $\mathcal{J}$ is a radical ideal sheaf, i.e., $\mathrm{Proj}\;R/J$ is reduced. Consider a primary decomposition $$\overline{\mathcal{I}^n} = \bigcap_{i=1}^s \mathcal{Q}_{i,n}$$ of the integral closure $\overline{\mathcal{I}^n}$ of the ideal sheaf $\mathcal{I}^n$. Here, $\mathcal{Q}_{i,n}$ are primary ideal sheaves for some closed integral subvarieties of $V$. This can be obtained by sheafifying a graded primary decomposition of the ideal $\overline{I^n} \colon \mathfrak{m}_R^{\infty}$. Then $$\overline{\mathcal{I}^n} : \mathcal{J}^{\infty} = \bigcap_{\{i \mid \mathcal{J} \not\subset \sqrt{\mathcal{Q}_{i,n}}\}} \mathcal{Q}_{i,n}.$$

This can be alternatively described as follows. Consider the morphism $$\pi\colon X^{\prime} \longrightarrow V,$$ where $X^{\prime}$ is the normalization of the blow-up of $V$ along $\mathcal{I}$. By construction $\mathcal{I}\mathcal{O}_{X^{\prime}}$ is an invertible sheaf, so there exists an effective Cartier divisor $E^{\prime}$ on $X^{\prime}$ with $\mathcal{I}\mathcal{O}_{X^{\prime}} = \mathcal{O}_{X^{\prime}}(-E^{\prime})$ . Write $$E^{\prime} = \sum_{i=1}^{s} a_i E_i^{\prime},$$ where $a_i\in\mathbb{Z}_{>0}$ and $E_i^{\prime}$ are the irreducible components of the support of $E^{\prime}$. For each $n\geq 1$ and $1\leq i\leq s$ define the (reflexive rank one) ideal sheaf $\mathcal{O}_{X^{\prime}}(-a_inE_i^{\prime})$ on an open affine subset $U \subseteq X^{\prime}$ as follows: if $U\cap E^{\prime} \neq \emptyset$, then define $$\Gamma\left(U,\mathcal{O}_{X^{\prime}}(-a_inE_i^{\prime})\right) = \left\{f\in k[U] \mid \mathrm{ord}_{E_i^{\prime}}(f)\geq a_in\right\}.$$ Otherwise, if $U\cap E^{\prime} = \emptyset$ then define $\Gamma\left(U,\mathcal{O}_{X^{\prime}}(-a_inE_i^{\prime})\right)=0$. Note that the subsheaves $\mathcal{O}_{V} \cap \pi_*\mathcal{O}_{X^{\prime}}(-a_inE_i) \subseteq \mathcal{O}_{V}$ are primary ideal sheaves on $V$, and one has
\begin{equation}\label{reesval0}
\mathcal{O}_{V} \cap \pi_*\mathcal{O}_{X^{\prime}}(-nE^{\prime}) = \bigcap_{i=1}^s \left(\mathcal{O}_{V} \cap \pi_*\mathcal{O}_{X^{\prime}}(-a_inE_i^{\prime})\right) =  \overline{\mathcal{I}^n},
\end{equation}
see \cite[Section 9.6.A]{Laz04b} and \cite[Chapter 10]{HS06}. Thus, $\bigcap_{i=1}^s \left(\mathcal{O}_{V} \cap \pi_*\mathcal{O}_{X^{\prime}}(-a_inE_i^{\prime})\right)$ is a (possibly redundant) primary decomposition of $\overline{\mathcal{I}^n}$. In particular, every associated prime of $\overline{\mathcal{I}^n}$ arises from the centre on $V$ of some divisorial valuation $\mathrm{ord}_{E_i'}$. Consider the subset $$\left\{\mathrm{ord}_{E_i^{\prime}} \mid \mathcal{J} \not\subset \sqrt{\mathcal{O}_{V} \cap \pi_*\mathcal{O}_{X^{\prime}}(-a_iE_i^{\prime})}\right\}.$$ After reindexing, we may assume that $\mathrm{ord}_{E_1^{\prime}},\ldots,\mathrm{ord}_{E_r^{\prime}}$ are precisely the divisorial valuations among $\mathrm{ord}_{E_1^{\prime}},\ldots,\mathrm{ord}_{E_s^{\prime}}$ that occur in the set above. Now set $$F^{\prime}=\sum_{i=1}^{r} a_i E_i^{\prime},$$ and observe that for all integers $n\geq 1$,
\begin{equation}\label{reesval1}
 \mathcal{O}_V \cap \pi_*\mathcal{O}_{X^{\prime}}(-nF^{\prime}) = \bigcap_{i=1}^r \left(\mathcal{O}_{V} \cap \pi_*\mathcal{O}_{X^{\prime}}(-a_inE_i^{\prime})\right) = \overline{\mathcal{I}^n} : \mathcal{J}^{\infty}.
\end{equation}

Note that if $V$ is normal, then $\pi_*\mathcal{O}_{X^{\prime}}(-a_inE_i) \subseteq \pi_*\mathcal{O}_{X^{\prime}} = \mathcal{O}_{V}$ are already primary ideal sheaves on $V$, in which case the additional intersection with $\mathcal{O}_V$ is unnecessary.

Let $H^\prime$ be the pullback to $X^{\prime}$ of a hyperplane section on $V$. By the projection formula \cite[Chapter II, Exercise 5.1]{Har77}, for all $m,n\in\mathbb{N}$, we have
\begin{align}\label{reesval2}
 \Gamma\left(X^{\prime},\mathcal{O}_{X^{\prime}}(mH^{\prime}-nF^{\prime})\right) &= \Gamma\left(V,\pi_*\mathcal{O}_{X^{\prime}}(-nF^{\prime})\otimes \mathcal{O}_V(m)\right).
\end{align}

Note that $F^{\prime}$ is an effective Weil divisor but not necessarily Cartier. To fix this, let $$\theta\colon X \longrightarrow X^{\prime}$$ be the normalization of the blow-up of the divisorial ideal sheaf $\mathcal{O}_{X^{\prime}}(-F^{\prime})$, see also \cite[Corollary 3.4]{FKL16}. Then there exist effective Cartier divisors $F$, $E$, and $H$ on $X$ such that $E\geq F$, $\theta^*\mathcal{O}_{X^{\prime}}(-F^{\prime}) = \mathcal{O}_X(-F)$, $\theta^*\mathcal{O}_{X^{\prime}}(-E^{\prime}) = \mathcal{O}_X(-E)$, and $\theta^*\mathcal{O}_{X^{\prime}}(H^{\prime}) = \mathcal{O}_X(H)$ respectively. For all $m,n\in\mathbb{N}$, we have
\begin{equation}\label{reesval3}
\theta_*\mathcal{O}_{X}(mH-nF) = \mathcal{O}_{X^{\prime}}(mH^{\prime}-nF^{\prime}), \quad \text{and} \quad \theta_*\mathcal{O}_{X}(mH-nE) = \mathcal{O}_{X^{\prime}}(mH^{\prime}-nE^{\prime}).
\end{equation}
Again using the projection formula, we obtain that for all $m,n\in\mathbb{N}$,
\begin{align}\label{reesval4}
 \Gamma\left(X,\mathcal{O}_{X}(mH-nF)\right) &= \Gamma\left(X^{\prime},\theta_*\mathcal{O}_{X}(mH-nF)\right) = \Gamma\left(X^{\prime},\mathcal{O}_{X^{\prime}}(mH^{\prime}-nF^{\prime})\right), \;\text{and}\\
 \Gamma\left(X,\mathcal{O}_{X}(mH-nE)\right) &= \Gamma\left(X^{\prime},\theta_*\mathcal{O}_{X}(mH-nE)\right) = \Gamma\left(X^{\prime},\mathcal{O}_{X^{\prime}}(mH^{\prime}-nE^{\prime})\right).\nonumber
\end{align}

\vspace{0.6em}
\noindent\textit{Claim 1.} For every real number $x\geq 0$,
\begin{align*}
 \lim_{n\to\infty}\dfrac{\dim_k \Gamma \left(X,\mathcal{O}_X\left(\lfloor xn\rfloor H - nF\right)\right)}{n^{d-1}/(d-1)!} &= \mathrm{vol}_X(xH-F),\;\text{and}\\
 \lim_{n\to\infty}\dfrac{\dim_k \Gamma \left(X,\mathcal{O}_X\left(\lfloor xn\rfloor H - nE\right)\right)}{n^{d-1}/(d-1)!} &= \mathrm{vol}_X(xH-E).
\end{align*}
\textit{Proof of Claim 1.} For $x\in\mathbb{Z}_{\geq 0}$, the claim follows directly from the definition of the volume of a Cartier divisor and the fact that the volume exists as a limit, see \cite[Theorem A]{LM09}. For $x\in\mathbb{Q}_{\geq 0}$, the claim holds by arguments similar to that in \cite[Lemma 2.2.38]{Laz04a}. For $x\in\mathbb{R}_{\geq 0}$, the result follows from the continuity of the volume function on $N^1(X)_{\mathbb{R}}$, see \cite[Corollary 2.2.45]{Laz04a}.

\vspace{0.6em}
\noindent\textit{Claim 2.} For every real number $x\geq 0$,
\begin{align*}
 \lim_{n\to\infty}\dfrac{\dim_k \Gamma \big(V,(\overline{\mathcal{I}^n}\colon \mathcal{J}^{\infty})\otimes\mathcal{O}_V\left(\lfloor xn\rfloor\right)\big)}{n^{d-1}/(d-1)!} &= \mathrm{vol}_X(xH-F), \;\text{and}\\
 \lim_{n\to\infty}\dfrac{\dim_k \Gamma \big(V,\overline{\mathcal{I}^n}\otimes\mathcal{O}_V\left(\lfloor xn\rfloor\right)\big)}{n^{d-1}/(d-1)!} &= \mathrm{vol}_X(xH-E).
\end{align*}
\textit{Proof of Claim 2.} Let $\varphi = \pi\circ \theta $ be the composition map from $X$ to $V$. Since $\varphi$ is birational, there is an exact sequence of $\mathcal{O}_V$-modules, $$0 \to \mathcal{O}_V \to \varphi_*\mathcal{O}_X \to \mathcal{G}\to 0,$$ where $\dim \left(\mathrm{Supp}\;\mathcal{G}\right)\leq \dim V -1 = d-2$. For each $n\in\mathbb{N}$, this induces exact sequences $$0 \to \overline{\mathcal{I}^n}\colon \mathcal{J}^{\infty} = \mathcal{O}_{V} \cap \varphi_*\mathcal{O}_{X}(-nF) \to \varphi_*\mathcal{O}_{X}(-nF) \to \mathcal{G}_n \to 0,$$ where $\mathcal{G}_n \subseteq \mathcal{G}$ is a coherent $\mathcal{O}_V$-module. Tensoring with $\mathcal{O}_V\left(\lfloor xn\rfloor\right)$ and taking global sections yields the left exact sequence $$0 \to \Gamma\big(V,(\overline{\mathcal{I}^n}\colon \mathcal{J}^{\infty})\otimes\mathcal{O}_V\left(\lfloor xn\rfloor\right)\big) \to \Gamma\big(V,\varphi_*\mathcal{O}_X(-nF) \otimes \mathcal{O}_V\left(\lfloor xn\rfloor\right)\big) \to \Gamma\big(V,\mathcal{G}_n\otimes \mathcal{O}_V\left(\lfloor xn\rfloor\right)\big).$$ Since $\mathcal{G}$ is supported on a closed subscheme of dimension $\leq d-2$, we have $$\lim_{n\to\infty}\dfrac{\dim_k \Gamma\big(V,\mathcal{G}_n\otimes \mathcal{O}_V\left(\lfloor xn\rfloor\right)\big)}{n^{d-1}/(d-1)!} \leq \lim_{n\to\infty}\dfrac{\dim_k \Gamma\big(V,\mathcal{G}\otimes \mathcal{O}_V\left(\lfloor xn\rfloor\right)\big)}{n^{d-1}/(d-1)!} = 0.$$ Using this fact together with the projection formula and Claim 1 gives the desired limit, i.e.,
\begin{align*}
 \lim_{n\to\infty}\dfrac{\dim_k \Gamma \big(V,(\overline{\mathcal{I}^n}\colon \mathcal{J}^{\infty})\otimes\mathcal{O}_V\left(\lfloor xn\rfloor\right)\big)}{n^{d-1}/(d-1)!} &= \lim_{n\to\infty}\dfrac{\dim_k \Gamma \big(V,\varphi_*\mathcal{O}_{X}(-nF)\otimes\mathcal{O}_V\left(\lfloor xn\rfloor\right)\big)}{n^{d-1}/(d-1)!}\\
 &= \lim_{n\to\infty}\dfrac{\dim_k \Gamma \left(X,\mathcal{O}_X\left(\lfloor xn\rfloor H - nF\right)\right)}{n^{d-1}/(d-1)!}\\
 &= \mathrm{vol}_X(xH-F).
\end{align*}

\vspace{0.6em}
\noindent\textit{Claim 3.} For every real number $x\geq 0$,
\begin{align*}
 \lim_{n\to\infty}\dfrac{\dim_k \Gamma \big(V,(\mathcal{I}^n\colon \mathcal{J}^{\infty})\otimes\mathcal{O}_V\left(\lfloor xn\rfloor\right)\big)}{n^{d-1}/(d-1)!} &= \mathrm{vol}_X(xH-F),\;\text{and}\\
 \lim_{n\to\infty}\dfrac{\dim_k \Gamma \big(V,\mathcal{I}^n\otimes\mathcal{O}_V\left(\lfloor xn\rfloor\right)\big)}{n^{d-1}/(d-1)!} &= \mathrm{vol}_X(xH-E).
\end{align*}
\noindent\textit{Proof of Claim 3.} By \cite[Theorem 1.4]{Ree61a}, there exists an integer $c>0$, depending only on $\mathcal{I}$, such that for all $n>0$, $\overline{\mathcal{I}^{n+c}} \subseteq \mathcal{I}^{n} \subseteq \overline{\mathcal{I}^n}$. Hence, for all $n>0$, $${(\overline{\mathcal{I}^{n+c}})}\colon \mathcal{J}^{\infty} \subseteq {(\mathcal{I}^{n})}\colon \mathcal{J}^{\infty} \subseteq {(\overline{\mathcal{I}^n})}\colon \mathcal{J}^{\infty}.$$

Because $cF$ is an effective Cartier divisor on $X$, there is an exact sequence of $\mathcal{O}_X$-modules $$0 \to \mathcal{O}_X(-cF) \to \mathcal{O}_X \to \mathcal{O}_{cF} \to 0.$$ Tensoring this with $\mathcal{O}_X\left(\lfloor xn\rfloor H - nF\right)$ and taking global sections gives the left exact sequence $$0 \to \Gamma \left(X,\mathcal{O}_X\left(\lfloor xn\rfloor H - (n+c)F\right)\right) \to \Gamma \left(X,\mathcal{O}_X\left(\lfloor xn\rfloor H - nF\right)\right) \to \Gamma \left(X,\mathcal{O}_{cF}\otimes\mathcal{O}_X\left(\lfloor xn\rfloor H - nF\right)\right).$$ Since $\mathcal{O}_{cF}$ is supported on a scheme of dimension $\leq d-2$, $$\lim_{n\to\infty}\dfrac{\dim_k \Gamma \left(X,\mathcal{O}_{cF}\otimes\mathcal{O}_X\left(\lfloor xn\rfloor H - nF\right)\right)}{n^{d-1}/(d-1)!}=0.$$ Using this along with Claim 2, we get $$\lim_{n\to\infty}\dfrac{\dim_k \Gamma \left(X,\mathcal{O}_X\left(\lfloor xn\rfloor H - (n+c)F\right)\right)}{n^{d-1}/(d-1)!} = \lim_{n\to\infty}\dfrac{\dim_k \Gamma \left(X,\mathcal{O}_X\left(\lfloor xn\rfloor H - nF\right)\right)}{n^{d-1}/(d-1)!} = \mathrm{vol}_X(xH-F).$$

Moreover, by an argument analogous to that used in the proof of Claim 2, $$\lim_{n\to\infty}\dfrac{\dim_k \Gamma \big(V,(\overline{\mathcal{I}^{n+c}}\colon \mathcal{J}^{\infty})\otimes\mathcal{O}_V\left(\lfloor xn\rfloor\right)\big)}{n^{d-1}/(d-1)!} = \lim_{n\to\infty}\dfrac{\dim_k \Gamma \left(X,\mathcal{O}_X\left(\lfloor xn\rfloor H - (n+c)F\right)\right)}{n^{d-1}/(d-1)!} = \mathrm{vol}_X(xH-F).$$

Combining this with Claim 2 yields the desired asymptotic equalities by sandwich arguments.

\vspace{0.6em}
\noindent\textit{Claim 4.} For every real number $x\geq 0$,
\begin{align*}
 \lim_{n\to\infty}\dfrac{\dim_k {\left(I^n \colon J^{\infty}\right)}_{\lfloor xn \rfloor}}{n^{d-1}/(d-1)!} = \mathrm{vol}_X(xH-F), \quad \text{and} \quad \lim_{n\to\infty}\dfrac{\dim_k {\left(I^n \colon \mathfrak{m}_R^{\infty}\right)}_{\lfloor xn \rfloor}}{n^{d-1}/(d-1)!} = \mathrm{vol}_X(xH-E).
\end{align*}
\textit{Proof of Claim 4.} First note that $\mathcal{I}^n\colon \mathcal{J}^{\infty}$ is the ideal sheaf associated to $I^n\colon J^{\infty}$ on $V$. By the Serre-Grothendieck correspondence \cite[Proposition 2.2]{Har67}, there is an exact sequence $$0 \to I^n \colon J^{\infty} \to \bigoplus_{m\in\mathbb{N}}\Gamma\big(V,(\mathcal{I}^n\colon \mathcal{J}^{\infty})\otimes\mathcal{O}_V(m)\big) \to H^1_{\mathfrak{m}_R}\big(I^n \colon J^{\infty}\big) \to 0$$ of graded $R$-modules. There exists an integer $m_0\geq 0$ (for instance, $m_0 = \mathrm{reg}(R)$) such that $\left(H^1_{\mathfrak{m}_R}(R)\right)_m = 0$ for all $m\geq m_0$. Also, observe that $H^0_{\mathfrak{m}_R}(R/(I^n\colon J^{\infty}))=0$. Hence, for all $m\geq m_0$ and $n\geq 0$, we have $${\big(I^n : J^{\infty}\big)}_m = \Gamma\big(V,(\mathcal{I}^n\colon \mathcal{J}^{\infty})\otimes\mathcal{O}_V(m)\big).$$ In particular, for $x>0$ and $n\geq \frac{m_0+1}{x}$, this gives $${\big(I^n\colon J^{\infty}\big)}_{\lfloor xn\rfloor} = \Gamma\big(V,(\mathcal{I}^n : \mathcal{J}^{\infty})\otimes\mathcal{O}_V(\lfloor xn\rfloor)\big).$$ Applying Claim 3 completes the proof for $x>0$. When $x=0$, the equalities are trivially satisfied. This finishes the proof of part $(i)$ of our theorem.

\textit{$(ii)$} From part $(i)$ we have the identification $f_{\mathbb{I}^{\mathrm{sat}}_J}(x) = d\cdot\mathrm{vol}_X(xH-F)$. Further, the support of the volume function is the big cone, which is the interior of the pseudoeffective cone, see \cite[Theorem 2.2.26]{Laz04a}. Consequently,
\begin{align*}
 \alpha_{\mathbb{I}^{\mathrm{sat}}_J} = \inf \left\{\, x \in \mathbb{R}_{\geq 0} \mid \mathrm{vol}_X(xH-F)>0 \,\right\} &= \inf \left\{\, x \in \mathbb{R}_{\geq 0} \mid xH-F\;\;\text{is big} \,\right\}\\
 &= \min \left\{\, x \in \mathbb{R}_{\geq 0} \mid xH-F\;\;\text{is pseudoeffective} \,\right\}.
\end{align*}

Fix a closed embedding $X\hookrightarrow \mathbb{P}^N_k$ for some $N>0$, and recall the notion of the degree of a Weil divisor, as given in \cite[Chapter II, Exercise 6.2]{Har77}. If the line bundle $\mathcal{O}_X(mH-nF)$ admits a nontrivial global section, then $\deg (mH-nF) = m\deg H - n\deg F \geq 0$, from which it follows $$\alpha_{\mathbb{I}^{\mathrm{sat}}_J}\geq \frac{\deg F}{\deg H}>0.$$

\textit{$(iii)$} The volume function $\mathrm{vol}_X$ is continuous on all of the N\'{e}ron-Severi space $ N^1(X)_{\mathbb{R}}$, see \cite[Corollary 2.2.45]{Laz04a}. Moreover, its restriction to the big cone $\mathrm{Big}(X)$ is strictly positive, log-concave \cite[Corollary 4.12]{LM09}, and continuously differentiable \cite[Corollary 4.27]{LM09}. Since $f_{\mathbb{I}^{\mathrm{sat}}_J}(x) = d\cdot\mathrm{vol}_X(xH-F)$, the desired properties follow immediately.

For all real numbers $x_1>x_2>\alpha_{\mathbb{I}^{\mathrm{sat}}_J}$, both divisors $x_2H-F$ and $(x_1-x_2)H$ are big. Applying the Brunn-Minkowski-type inequality for the volumes of big divisors \cite[Corollary 4.12]{LM09}, we get
\begin{align*}
 {\left(f_{\mathbb{I}^{\mathrm{sat}}_J}(x_1)\right)}^{\tfrac{1}{d-1}} = {\left(d\cdot\mathrm{vol}_X(x_1H-F)\right)}^{\tfrac{1}{d-1}} &\geq {\left(d\cdot\mathrm{vol}_X(x_2H-F)\right)}^{\tfrac{1}{d-1}} + {\left(d\cdot\mathrm{vol}_X\left((x_1-x_2)H\right)\right)}^{\tfrac{1}{d-1}}\\
 &= {\left(f_{\mathbb{I}^{\mathrm{sat}}_J}(x_2)\right)}^{\tfrac{1}{d-1}} + (x_1-x_2){(d\cdot e(R))}^{\tfrac{1}{d-1}},
\end{align*}
where $e(R) = H^{d-1}$ denotes the Hilbert-Samuel multiplicity of $R$. In particular, this inequality shows that $f_{\mathbb{I}^{\mathrm{sat}}_J}$ is strictly increasing on its support. The proof of our theorem is now complete.
\end{proof}

\begin{remark}\label{symb_geom_rem}
The conclusions of Theorem \ref{symb_geom} remain valid for the filtration $\mathbb{I}^{\mathrm{sat}}_J = \{I_n \colon J^{\infty}\}_{n\in\mathbb{N}}$, where $\mathbb{I} = \{I_n\}_{n\in\mathbb{N}}$ is a Noetherian filtration of graded ideals satisfying $1\leq \mathrm{height}\;I_1\leq d-1$. Indeed, since $\mathbb{I}$ is Noetherian, \cite[Theorem 2.1]{HHT07} shows that there exists an integer $c>0$ such that $$I_{cn} = (I_c)^n \quad \forall n\in\mathbb{N}.$$ By Theorem \ref{mainthm}, the density function $f_{\mathbb{I}^{\mathrm{sat}}_J}$ can be computed by passing to this subsequence. So, $$f_{\mathbb{I}^{\mathrm{sat}}_J}(x) = \lim_{n\to\infty}\dfrac{\dim_k\left(I_n \colon J^{\infty}\right)_{\lfloor xn\rfloor}}{n^{d-1}/d!} = \dfrac{1}{c^{d-1}}\lim_{n\to\infty}\dfrac{\dim_k\left((I_c)^n \colon J^{\infty}\right)_{\lfloor cxn\rfloor}}{n^{d-1}/d!}.$$ The desired statement then follows from applying Theorem \ref{symb_geom}.
\end{remark}

\subsection{Equality of density functions}

This subsection is inspired from \cite{DRT26}. We recall the following important result from \cite{FKL16}.

\begin{proposition}\cite[Theorem B]{FKL16}\label{equality_volumes}
 Suppose that $Y$ is a normal projective variety over a perfect field, and $D_1,D_2$ are big $\mathbb{R}$-Weil divisors on $Y$ with $D_1\leq D_2$. Then the following are equivalent: $$\mathrm{vol}_Y(D_1) = \mathrm{vol}_Y(D_2) \iff \Gamma\left(Y,\mathcal{O}_Y\left(\lfloor nD_1\rfloor\right)\right) = \Gamma\left(Y,\mathcal{O}_Y\left(\lfloor nD_2\rfloor\right)\right)\;\; \forall n\in\mathbb{N}.$$
\end{proposition}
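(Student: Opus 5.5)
The statement is \cite[Theorem B]{FKL16}, quoted from the literature, so the natural route is to recall the structure of the Fulger--Koll\'ar--Lehmann argument rather than build a new one.

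\textbf{The easy direction.} The implication ``$\Leftarrow$'' is immediate from the definition of volume. If $\Gamma(Y,\mathcal{O}_Y(\lfloor nD_1\rfloor)) = \Gamma(Y,\mathcal{O}_Y(\lfloor nD_2\rfloor))$ for every $n$, then the dimension sequences coincide. Hence the normalized limits $\mathrm{vol}_Y(D_i)=\lim_n \dim_k\Gamma(Y,\mathcal{O}_Y(\lfloor nD_i\rfloor))/(n^{\dim Y}/\dim Y!)$ agree. These limits exist for big $\mathbb{R}$-Weil divisors on normal varieties, by the Okounkov body method as in \cite{LM09} or \cite{KK12}.

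\textbf{Reduction for ``$\Rightarrow$''.} Since $D_1\le D_2$, there is a natural inclusion $\Gamma(\lfloor nD_1\rfloor)\subseteq \Gamma(\lfloor nD_2\rfloor)$. Suppose it is strict for some $n$. Then there is an effective divisor $D'\sim_{\mathbb{R}} D_2$ of the form $\frac1n\mathrm{div}(s)+D_2$ such that $D'\not\ge D_2-D_1$ at some prime divisor $P$ in the support of $D_2-D_1$. I will aim to show that $\mathrm{vol}(D_2)-\mathrm{vol}(D_1)>0$.

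\textbf{Main step.} The key tool is the differentiability of volume \cite{LM09}, \cite{BFJ09}:
\[
\frac{d}{dt}\mathrm{vol}(D_2-tP)\big|_{t=0}=-\dim Y\cdot \langle D_2^{\dim Y-1}\rangle\cdot P .
\]
This uses the positive intersection product, after passing to a resolution, which is legitimate since $k$ is perfect. Because $t\mapsto\mathrm{vol}(D_2-tP)$ is nonincreasing, equality of volumes forces $\langle D_2^{\dim Y-1}\rangle\cdot P=0$ along the whole segment from $D_2$ to $D_1$. One then wants to conclude that $P$ lies in the diminished base locus $\mathbf{B}_-(D_2)$, in the sense of Nakamaye and Boucksom--Favre--Jonsson. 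This in turn gives $\mathrm{ord}_P\|D_2\|\ge \mathrm{mult}_P(D_2-D_1)$, i.e. every section of $\lfloor nD_2\rfloor$ vanishes to that order along $P$. That is exactly the equality of section spaces, and it contradicts the strict inclusion above.

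\textbf{Expected obstacle.} The hardest point is the converse implication: that vanishing of the restricted positive product along $P$ forces $P$ into the asymptotic base locus with the right multiplicity, uniformly in $n$ and in characteristic $p$. I would try first to use the Nakamaye-type theorem that $P\not\subset\mathbf{B}_+$ implies $\mathrm{vol}_{Y|P}>0$, together with the restricted-volume derivative formula. If that fails, I would fall back on the Zariski-decomposition description of $\sigma_P$.

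In the paper this is simply invoked as a black box from \cite{FKL16}, so the plan amounts to checking that its hypotheses are met: $Y$ normal, $D_i$ big, and $D_1\le D_2$.
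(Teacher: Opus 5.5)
The paper gives no proof of this statement; it imports \cite[Theorem B]{FKL16} as a black box. Your closing remark and your argument for ``$\Leftarrow$'' are therefore in line with it. As a proof of ``$\Rightarrow$'', however, your sketch has a genuine gap, located exactly at the step you call the expected obstacle.

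Monotonicity does show that $t\mapsto \mathrm{vol}_Y(D_2-tP)$ is constant on $[0,c_P]$, where $c_P=\mathrm{mult}_P(D_2-D_1)$, since $D_1\le D_2-c_PP\le D_2$. Put $W_m:=\Gamma(Y,\mathcal{O}_Y(\lfloor mD_2\rfloor))$. What must be shown is that $\sigma_P(D_2):=\inf_m \frac{1}{m}\min_{0\neq f\in W_m}\mathrm{ord}_P(\mathrm{div} f+mD_2)\ge c_P$, i.e.\ that every section of every $\lfloor mD_2\rfloor$ vanishes to order $\ge mc_P$ along $P$. That is the whole content of the theorem, and none of your proposed tools delivers it:
\begin{itemize}
\item Vanishing of $\langle D_2^{\dim Y-1}\rangle\cdot P$ does not put $P$ into $\mathbf{B}_-(D_2)$. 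On $\mathrm{Bl}_p\mathbb{P}^2$ take $D=H$ and $P=E$. Then $\mathrm{vol}(H-tE)=1-t^2$ for $0\le t\le 1$, so the derivative at $t=0$ vanishes, yet $\mathbf{B}_-(H)=\emptyset$ and $\sigma_E(H)=0$.
\item The Nakamaye/restricted-volume route only re-expresses a vanishing derivative as $P\subseteq\mathbf{B}_+(D_2-tP)$. This is a set-theoretic condition with no multiplicity content: in the same example $E\subseteq\mathbf{B}_+(H)$ although $\sigma_E(H)=0$.
\item A $\sigma$-decomposition description of $\sigma_P$ just restates the goal, which is that $\sigma_P(D_2)<c_P$ must force a strict drop in volume.
\end{itemize}
Separately, ``passing to a resolution, which is legitimate since $k$ is perfect'' is not correct. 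Perfectness does not provide resolution of singularities in positive characteristic. Also, the positive products and differentiability results of \cite{BFJ09} are characteristic-zero statements about $\mathbb{R}$-Cartier classes, while your $D_i$ are merely Weil divisors on a normal $Y$.

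What is missing is a direct way to see that volume drop. Newton--Okounkov bodies whose first coordinate is $\mathrm{ord}_P$ give it without any derivatives.
\begin{itemize}
\item Since $k$ is perfect, a general point of $P$ is a smooth point of both $Y$ and $P$. Take a flag $Y\supset P\supset Y_2\supset\cdots$ there and its lexicographic valuation $\nu$ as in \cite{LM09}, normalized on $W_m$ so that $\nu_1(f)=\mathrm{ord}_P(\mathrm{div} f+mD_2)$.
\item Then $\Gamma(Y,\mathcal{O}_Y(\lfloor m(D_2-c_PP)\rfloor))=\{f\in W_m\mid \nu_1(f)\ge mc_P\}$. Arguing as in Lemma~\ref{lem_count}, its codimension in $W_m$ is at least the number of values $\nu(f)$, $f\in W_m\setminus\{0\}$, with first coordinate $<mc_P$.
\item Your reduction step supplies a section $s$ of $\lfloor nD_2\rfloor$ with $\nu_1(s)/n<c_P$, i.e.\ a point of $\Delta(D_2)$ in $\{x_1<c_P\}$.
\item Since $D_2$ is big, $\Delta(D_2)$ is a convex body with nonempty interior, so it contains a ball $B\subseteq\{x_1<c_P\}$. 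By \cite{KK12}, the dilates $mB$ contain $\gtrsim m^{\dim Y}$ points of the value semigroup of $D_2$.
\item Hence $\mathrm{vol}(D_1)\le\mathrm{vol}(D_2-c_PP)<\mathrm{vol}(D_2)$, a contradiction.
\end{itemize}
Put positively: equal volumes force $\Delta(D_2)\subseteq\{x_1\ge c_P\}$ for the flag through each prime $P\subseteq\mathrm{Supp}(D_2-D_1)$. Every $f\in W_m$ contributes the point $\nu(f)/m$ to $\Delta(D_2)$, so this yields $\mathrm{div} f+mD_1\ge 0$ exactly for every $m$, not just asymptotically.
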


\begin{theorem}\label{sat=symb}
 Consider the Setup \ref{setup4}. Then the following statements are equivalent:
 \begin{enumerate}[$(i)$]
  \item $\displaystyle f_{\mathbb{I}^{\mathrm{sat}}_{\mathfrak{m}_R}}(x) = f_{\mathbb{I}^{\mathrm{sat}}_J}(x)$ for all $x\in\mathbb{R}_{\geq 0}$.
  \item $\displaystyle\int_0^{\infty}e^{-x}f_{\mathbb{I}^{\mathrm{sat}}_{\mathfrak{m}_R}}(x) dx = \int_0^{\infty}e^{-x}f_{\mathbb{I}^{\mathrm{sat}}_J}(x) dx$.
  \item $\overline{I^n} : \mathfrak{m}_R^{\infty} = \overline{I^n} : J^{\infty}$ for all $n\in\mathbb{N}$.
 \end{enumerate}
\end{theorem}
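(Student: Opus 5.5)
We prove $(i)\Rightarrow(ii)\Rightarrow(iii)\Rightarrow(i)$, using the geometric model of Theorem \ref{symb_geom}. Apply the construction there twice, to $J$ and to $\mathfrak{m}_R$. The divisors differ only in which components are kept: $E'=\sum_{i=1}^s a_iE_i'$ on $X'$, and $F'=\sum_{i=1}^r a_iE_i'\le E'$. We work on a single normal model $X$: blow up both $\mathcal{O}_{X'}(-F')$ and $\mathcal{O}_{X'}(-E')$ (the product ideal) and normalize. On this $X$ the pullbacks $F\le E$ are both effective Cartier divisors, and the projection-formula identities \eqref{reesval4} still hold. Claim 4 of the proof of Theorem \ref{symb_geom} then gives
\[
f_{\mathbb{I}^{\mathrm{sat}}_J}(x)=d\cdot\mathrm{vol}_X(xH-F),\qquad f_{\mathbb{I}^{\mathrm{sat}}_{\mathfrak{m}_R}}(x)=d\cdot\mathrm{vol}_X(xH-E),
\]
and $xH-E\le xH-F$ for every $x$.

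The implication $(i)\Rightarrow(ii)$ is trivial. For $(ii)\Rightarrow(i)$: since $I^n\colon\mathfrak{m}_R^\infty\subseteq I^n\colon J^\infty$, we have $f_{\mathbb{I}^{\mathrm{sat}}_{\mathfrak{m}_R}}\le f_{\mathbb{I}^{\mathrm{sat}}_J}$ pointwise. Both functions are continuous on $\mathbb{R}_{\ge0}$ by Theorem \ref{symb_geom}(iii), and both are bounded by $de(R)x^{d-1}$, so the integrals converge. Equal integrals of ordered continuous functions against the positive weight $e^{-x}$ force the functions to agree everywhere.

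For $(iii)\Rightarrow(i)$: the first equality in Theorem \ref{symb_geom}(i) shows that each density function can be computed using the filtrations $\{\overline{I^n}\colon J^\infty\}$ and $\{\overline{I^n}\colon\mathfrak{m}_R^\infty\}$. If these filtrations coincide termwise, the density functions coincide.

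The main step is $(i)\Rightarrow(iii)$. Pick a rational $x=p/q>\alpha_{\mathbb{I}^{\mathrm{sat}}_{\mathfrak{m}_R}}$, so that both $D_1=xH-E$ and $D_2=xH-F$ are big $\mathbb{Q}$-Cartier divisors with $D_1\le D_2$ and equal volumes. Proposition \ref{equality_volumes} gives
\[
\Gamma(X,\mathcal{O}_X(\lfloor nD_1\rfloor))=\Gamma(X,\mathcal{O}_X(\lfloor nD_2\rfloor))\quad\text{for all } n.
\]
Taking $n=qt$ gives equality of $\Gamma(X,\mathcal{O}_X(ptH-qtE))$ and $\Gamma(X,\mathcal{O}_X(ptH-qtF))$. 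By \eqref{reesval2}, \eqref{reesval4} and Claim 2, these are the global sections of $\overline{\mathcal{I}^{qt}}\otimes\mathcal{O}_V(pt)$ and of $(\overline{\mathcal{I}^{qt}}\colon\mathcal{J}^\infty)\otimes\mathcal{O}_V(pt)$, respectively. Here care is needed with the intersection with $\mathcal{O}_V$: the sections correspond to the rational functions in the image of $R$ via the Serre--Grothendieck comparison, and by Claim 4 they agree with the graded pieces of the ideals once $pt\ge m_0$.

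Since $x$ ranges over all rationals beyond the threshold, and we may enlarge $p$ freely, we obtain
\[
(\overline{I^{N}}\colon\mathfrak{m}_R^\infty)_m=(\overline{I^{N}}\colon J^\infty)_m
\]
for every $N$ and all $m$ with $m/N$ large; this includes all $m\gg0$. The remaining obstacle is to pass from equality in large degrees to equality of the ideals. Both ideals are saturated with respect to $\mathfrak{m}_R$, since $\overline{I^N}\colon J^\infty$ has no $\mathfrak{m}_R$-primary component. Two saturated graded ideals that agree in all degrees $m\gg0$ define the same ideal sheaf on $V$, and their saturations, which are the ideals themselves, therefore coincide. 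This yields $(iii)$. The delicate technical point I expect to check carefully is producing a common normal model $X$ on which both $E$ and $F$ are Cartier while the section identifications \eqref{reesval4} remain valid for both divisors simultaneously.
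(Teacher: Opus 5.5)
Your proposal is correct and follows essentially the same route as the paper. The paper likewise uses the volume identities from Theorem \ref{symb_geom}, applies Proposition \ref{equality_volumes} to $xH-E\le xH-F$, intersects with $\Gamma(V,\mathcal{O}_V(m))$ and uses the Serre--Grothendieck correspondence to get agreement in large degrees, and then concludes with the saturation argument. Your common-model worry is already settled by the paper's construction: $E'$ is Cartier on $X'$ because $\mathcal{I}\mathcal{O}_{X'}$ is invertible, so the normalized blow-up of $\mathcal{O}_{X'}(-F')$ carries both $E$ and $F$ as Cartier divisors, and Claim 4 computes both density functions on that single $X$.
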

\begin{proof}
$(ii) \implies (i)$. By Theorem \ref{symb_geom}, the density functions $f_{\mathbb{I}^{\mathrm{sat}}_{\mathfrak{m}_R}}$ and $f_{\mathbb{I}^{\mathrm{sat}}_{J}}$ are both continuous on $\mathbb{R}_{\geq 0}$. Moreover, $f_{\mathbb{I}^{\mathrm{sat}}_{J}}(x) \geq f_{\mathbb{I}^{\mathrm{sat}}_{\mathfrak{m}_R}}(x)$ for all $x\geq 0$. Hence the function
\[
g(x) = e^{-x}\bigl(f_{\mathbb{I}^{\mathrm{sat}}_{J}}(x) - f_{\mathbb{I}^{\mathrm{sat}}_{\mathfrak{m}_R}}(x)\bigr)
\] is continuous and nonnegative on $\mathbb{R}_{\ge 0}$. The hypothesis $\int_0^{\infty} g(x)\,dx = 0$
therefore forces $g(x)$ to be identically zero; see \cite[Exercise 6.2]{Rud76}. Consequently,
$f_{\mathbb{I}^{\mathrm{sat}}_{\mathfrak{m}_R}}(x) = f_{\mathbb{I}^{\mathrm{sat}}_{J}}(x)$ for all $x \geq 0$. The other direction $(i) \implies (ii)$ is trivial.

$(i)\implies (iii)$. Assume that $f_{\mathbb{I}^{\mathrm{sat}}_{\mathfrak{m}_R}}(x) = f_{\mathbb{I}^{\mathrm{sat}}_{J}}(x)$ for all $x\in\mathbb{R}_{\geq 0}$. Adopt the notations and constructions used in the proof of Theorem \ref{symb_geom}. It relates density functions with volumes, i.e., $$f_{\mathbb{I}^{\mathrm{sat}}_{\mathfrak{m}_R}}(x) = d\cdot\mathrm{vol}_X(xH-E), \quad \text{and} \quad f_{\mathbb{I}^{\mathrm{sat}}_{J}}(x) = d\cdot\mathrm{vol}_X(xH-F).$$ From our hypothesis, we have $$\mathrm{vol}_X(xH-E) = \mathrm{vol}_X(xH-F)\quad \forall x\in\mathbb{R}_{\geq 0}.$$ The Waldschmidt constant $\alpha_{\mathbb{I}^{\mathrm{sat}}_{J}}$ (resp. $\alpha_{\mathbb{I}^{\mathrm{sat}}_{\mathfrak{m}_R}}$) is characterized as the infimum of $x$ for which $xH-F$ (resp. $xH-E$) is big. Since the two volumes coincide, we obtain $\alpha_{\mathbb{I}^{\mathrm{sat}}_{J}} = \alpha_{\mathbb{I}^{\mathrm{sat}}_{\mathfrak{m}_R}}$ and denote this common value by $\alpha$. For every $x>\alpha$, both $xH-F$ and $xH-E$ are big  $\mathbb{R}$-Cartier divisors and $xH-E \leq xH-F$ (as $E\geq F$). So we may apply Proposition \ref{equality_volumes} to obtain $$\Gamma\left(X,\mathcal{O}_X\left(\lfloor xnH-nE\rfloor \right)\right) = \Gamma\left(X,\mathcal{O}_X\left(\lfloor xnH-nF\rfloor\right)\right)$$ for all reals $x>\alpha$ and integers $n\geq 1$. Taking $x = m/n$ with integers $m, n > 0$ and $m/n > \alpha$ yields $$\Gamma\left(X,\mathcal{O}_X\left(mH-nE\right)\right) = \Gamma\left(X,\mathcal{O}_X\left(mH-nF\right)\right),$$ and by the projection formula it translates to the equality $$\Gamma\left(V,\varphi_*\mathcal{O}_X(-nE)\otimes \mathcal{O}_V(m)\right) = \Gamma\left(V,\varphi_*\mathcal{O}_X(-nF)\otimes \mathcal{O}_V(m)\right).$$

Moreover, combining \eqref{reesval0}, \eqref{reesval1}, and \eqref{reesval3} yields for all $n\in \mathbb{N}$, $$\mathcal{O}_V \cap \varphi_*\mathcal{O}_X(-nE) = \overline{\mathcal{I}^n},\quad \text{and}\quad \mathcal{O}_V \cap \varphi_*\mathcal{O}_X(-nF) = \overline{\mathcal{I}^n}\colon \mathcal{J}^{\infty}.$$ Applying the Serre-Grothendieck correspondence \cite[Proposition 2.2]{Har67}, one deduces that for all integers $m\geq \mathrm{reg}(R)$ and $n\geq 0$,
\begin{align*}
\Gamma\left(V,\mathcal{O}_V(m)\right) \cap \Gamma\left(V,\varphi_*\mathcal{O}_X(-nE)\otimes \mathcal{O}_V(m)\right) &= \Gamma \big(V,\overline{\mathcal{I}^n}\otimes \mathcal{O}_V(m)\big) = {(\overline{I^n} \colon \mathfrak{m}_R^{\infty})}_m,\quad \text{and}\\
\Gamma\left(V,\mathcal{O}_V(m)\right) \cap \Gamma\left(V,\varphi_*\mathcal{O}_X(-nF)\otimes \mathcal{O}_V(m)\right) &= \Gamma \big(V,(\overline{\mathcal{I}^n}\colon \mathcal{J}^{\infty})\otimes \mathcal{O}_V(m)\big) = \big(\overline{I^n} : J^{\infty}\big)_m.
\end{align*}

By combining the above equalities, for all integers $m\geq \max\{\alpha n+1,\mathrm{reg}(R)\}$ and $n\geq 1$, we obtain
\[
 \big(\overline{I^n} \colon \mathfrak{m}_R^{\infty}\big)_m
  = \big(\overline{I^n} : J^{\infty}\big)_m
\]
Thus, for each fixed $n$, the two graded ideals coincide in all sufficiently large degrees $m$. Hence, the quotient $Q_n = \big(\overline{I^n} : J^{\infty}\big) \big/ \big(\overline{I^n} : \mathfrak{m}_R^{\infty}\big)$ has finite length as an $R$-module, since it is supported only at $\mathfrak{m}_R$. Both ideals in question are already $\mathfrak{m}_R$-saturated. So, $Q_n =0$ for every $n\geq 1$.

$(iii)\implies (i)$. This claim follows from the equalities obtained in part $(i)$ of Theorem \ref{symb_geom}, i.e., $$f_{\mathbb{I}^{\mathrm{sat}}_{\mathfrak{m}_R}}(x) = \lim\limits_{n\to\infty}\frac{\dim_k {(\overline{I^n} : \mathfrak{m}_R^{\infty})}_{\lfloor xn\rfloor}}{n^{d-1}/d!},\quad \text{and}\quad f_{\mathbb{I}^{\mathrm{sat}}_J}(x) = \lim\limits_{n\to\infty}\frac{\dim_k {(\overline{I^n} : J^{\infty})}_{\lfloor xn\rfloor}}{n^{d-1}/d!}.$$
\end{proof}

\begin{theorem}\label{adic=symb}
 Consider the Setup \ref{setup4}. Then the following statements are equivalent:
 \begin{enumerate}[$(i)$]
  \item $\displaystyle f_{\mathbb{I}}(x) = f_{\mathbb{I}^{\mathrm{sat}}_J}(x)$ for all $x\in\mathbb{R}_{\geq 0}$.
  \item $\displaystyle \int_0^{\infty}e^{-x}f_{\mathbb{I}}(x) dx = \int_0^{\infty}e^{-x}f_{\mathbb{I}^{\mathrm{sat}}_J}(x) dx$.
  \item $\displaystyle \overline{I^n} = \overline{I^n} \colon J^{\infty}$ for all $n\in\mathbb{N}$.
 \end{enumerate}
\end{theorem}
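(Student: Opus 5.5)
The proof will follow that of Theorem \ref{sat=symb}, with the adic filtration $\mathbb{I}$ in place of $\mathbb{I}^{\mathrm{sat}}_{\mathfrak{m}_R}$. We work throughout with the notation of the proof of Theorem \ref{symb_geom}: the normal projective variety $X$, the morphism $\varphi=\pi\circ\theta$, and the effective Cartier divisors $E\geq F$ and $H$ on $X$.

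\emph{Step 1: the adic density function as a volume.} First we identify $f_{\mathbb{I}}$ with $d\cdot\mathrm{vol}_X(xH-E)$. Claim 3 in the proof of Theorem \ref{symb_geom} gives
\[
\lim_{n\to\infty}\dim_k\Gamma\big(V,\mathcal{I}^n\otimes\mathcal{O}_V(\lfloor xn\rfloor)\big)\big/\big(n^{d-1}/(d-1)!\big)=\mathrm{vol}_X(xH-E).
\]
For $m\geq\mathrm{reg}(R)$, the Serre--Grothendieck correspondence identifies $(I^n\colon\mathfrak{m}_R^{\infty})_m$ with these sections. The inclusions $I^n\subseteq I^n\colon\mathfrak{m}_R^\infty$ and $\overline{I^n}\subseteq\overline{I^n}\colon\mathfrak{m}_R^\infty$ only differ by the $\mathfrak{m}_R$-torsion $H^0_{\mathfrak{m}_R}(R/I^n)$, and we need this difference to be negligible in degree $\lfloor xn\rfloor$ asymptotically.

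This is the main obstacle. The cleanest route is to use Theorem \ref{density_integral}, which gives $f_{\mathbb{I}}=f_{\overline{\mathbb{I}}}$ away from $\alpha_{\mathbb{I}}$, together with the known equality $f_{\mathbb{I}}=f_{\mathbb{I}^{\mathrm{sat}}_{\mathfrak{m}_R}}$ from \cite{DRT25}. If that equality is not available, the fallback is a degree bound. The local cohomology $H^0_{\mathfrak{m}_R}(R/I^n)$ vanishes in degrees beyond $\mathrm{reg}(I^n)$, which is linear in $n$. So for $x$ larger than that slope the two filtrations agree in degree $\lfloor xn\rfloor$, and on the remaining bounded range of $x$ the torsion contributes $o(n^{d-1})$. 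Either way we obtain $f_{\mathbb{I}}(x)=f_{\mathbb{I}^{\mathrm{sat}}_{\mathfrak{m}_R}}(x)=d\cdot\mathrm{vol}_X(xH-E)$, a continuous function on $\mathbb{R}_{\geq0}$, at least away from $\alpha_{\mathbb{I}}$. Both sides are continuous there, so the equality extends to all $x\geq 0$.

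\emph{Step 2: $(i)\Leftrightarrow(ii)$.} We have $I^n\subseteq I^n\colon J^\infty$, so $f_{\mathbb{I}}\leq f_{\mathbb{I}^{\mathrm{sat}}_J}$. Both functions are continuous by Step 1 and Theorem \ref{symb_geom}. Hence $e^{-x}\big(f_{\mathbb{I}^{\mathrm{sat}}_J}-f_{\mathbb{I}}\big)$ is continuous and nonnegative, and it has zero integral under $(ii)$, so it vanishes identically. This gives $(ii)\Rightarrow(i)$; the converse is trivial.

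\emph{Step 3: $(i)\Rightarrow(iii)$.} From $(i)$ we get $\mathrm{vol}_X(xH-E)=\mathrm{vol}_X(xH-F)$ for all $x$. The Waldschmidt constants therefore coincide; call the common value $\alpha$. For $x>\alpha$, both $xH-E\leq xH-F$ are big, so Proposition \ref{equality_volumes} gives $\Gamma(X,\mathcal{O}_X(mH-nE))=\Gamma(X,\mathcal{O}_X(mH-nF))$ whenever $m/n>\alpha$. Pushing forward and intersecting with $\Gamma(V,\mathcal{O}_V(m))$, and using \eqref{reesval0}, \eqref{reesval1}, \eqref{reesval3}, yields $(\overline{I^n}\colon\mathfrak{m}_R^\infty)_m=(\overline{I^n}\colon J^\infty)_m$ for all $m\gg0$. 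Since both ideals are $\mathfrak{m}_R$-saturated, we conclude $\overline{I^n}\colon\mathfrak{m}_R^\infty=\overline{I^n}\colon J^\infty$.

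It remains to show $\overline{I^n}=\overline{I^n}\colon\mathfrak{m}_R^\infty$. Here we use that every associated prime of $\overline{I^n}$ is the centre of some Rees valuation $\mathrm{ord}_{E_i'}$ (\eqref{reesval0}). Such a centre is the image of a divisor on the blow-up, so it cannot be $\mathfrak{m}_R$, which is the irrelevant ideal; equivalently, $\overline{I^n}$ is saturated as an ideal of the domain $R$ because $\mathrm{depth}\,R/\overline{I^n}\geq1$. If this is not immediate, we instead compare the pieces in degrees $m\geq\alpha n+1$ and reduce to a torsion argument as in Step 1. Once this is settled, $\overline{I^n}=\overline{I^n}\colon J^\infty$, which is $(iii)$.

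\emph{Step 4: $(iii)\Rightarrow(i)$.} Theorem \ref{symb_geom}$(i)$ computes $f_{\mathbb{I}^{\mathrm{sat}}_J}$ using $\overline{I^n}\colon J^\infty$. By Theorem \ref{density_integral}, $f_{\mathbb{I}}$ is computed using $\overline{I^n}$ away from $\alpha_{\mathbb{I}}$. Under $(iii)$ these are the same ideals, so the two density functions agree off a single point, and continuity gives $(i)$.
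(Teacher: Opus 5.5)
There is a genuine gap. It sits exactly at the $\mathfrak{m}_R$-torsion that separates this theorem from Theorem \ref{sat=symb}.

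\textbf{Step 1 is false as stated.} You claim $f_{\mathbb{I}}=f_{\mathbb{I}^{\mathrm{sat}}_{\mathfrak{m}_R}}$ everywhere, either as a ``known'' fact from \cite{DRT25} or because $H^0_{\mathfrak{m}_R}(R/I^n)$ contributes only $o(n^{d-1})$ in degree $\lfloor xn\rfloor$. Both are false. By \cite[Theorem 5.19]{DRT25}, $\int_0^\infty\big(f_{\mathbb{I}^{\mathrm{sat}}_{\mathfrak{m}_R}}-f_{\mathbb{I}}\big)\,dx=\varepsilon(I)$, which is positive whenever $I$ has maximal analytic spread. So the torsion is of order $n^{d-1}$ on a set of slopes of positive measure.

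Concretely, take $R$ a polynomial ring and $I=(f)\mathfrak{m}_R$ with $\deg f\geq 1$. Then $I^n\colon\mathfrak{m}_R^\infty=(f^n)$, so $f_{\mathbb{I}^{\mathrm{sat}}_{\mathfrak{m}_R}}(x)=d(x-\deg f)^{d-1}$ for $x>\deg f$. Meanwhile $f_{\mathbb{I}}$ vanishes on $(\deg f,\deg f+1)$ and jumps to $d$ at $\alpha_{\mathbb{I}}=\deg f+1$. In particular $f_{\mathbb{I}}$ is not continuous on $\mathbb{R}_{\geq 0}$, contrary to what Steps 1, 2 and 4 assume.

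Those steps can be repaired:
\begin{itemize}
\item In Steps 2 and 4, use that $f_{\mathbb{I}}$ is continuous off the single point $\alpha_{\mathbb{I}}$, and check that point by hand.
\item In the first half of Step 3, get $\mathrm{vol}_X(xH-E)=\mathrm{vol}_X(xH-F)$ from the sandwich $f_{\mathbb{I}}\leq f_{\mathbb{I}^{\mathrm{sat}}_{\mathfrak{m}_R}}\leq f_{\mathbb{I}^{\mathrm{sat}}_J}$. This is how the paper reduces to Theorem \ref{sat=symb} to obtain $\overline{I^n}\colon\mathfrak{m}_R^\infty=\overline{I^n}\colon J^\infty$.
\end{itemize}

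\textbf{The second half of Step 3 fails outright.} $\overline{I^n}$ need not be $\mathfrak{m}_R$-saturated. In the same example $\overline{I^n}=f^n\mathfrak{m}_R^n$ and $\mathfrak{m}_R\in\mathrm{Ass}_R(R/\overline{I^n})$, so $\mathrm{depth}\,R/\overline{I^n}=0$. The Rees-valuation description \eqref{reesval0} concerns the sheaf $\overline{\mathcal{I}^n}$ on $V=\mathrm{Proj}\,R$. It only recovers $\overline{I^n}\colon\mathfrak{m}_R^\infty$ and cannot see an $\mathfrak{m}_R$-primary component of $\overline{I^n}$ in $R$. By McAdam's theorem \cite[Theorem 5.4.6]{HS06}, such a component does occur for large $n$ whenever $I$ has maximal analytic spread. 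Your fallback ``torsion argument as in Step 1'' fails for the same reason Step 1 does.

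\textbf{The missing idea.} Saturatedness must be extracted from the hypothesis:
\begin{enumerate}
\item Hypothesis (i) (or (ii)), with the sandwich, forces $f_{\mathbb{I}}=f_{\mathbb{I}^{\mathrm{sat}}_{\mathfrak{m}_R}}$ off $\alpha_{\mathbb{I}}$.
\item Hence $\varepsilon(I)=0$ by the integral formula above.
\item Then \cite[Theorem 4.7]{KV10} together with \cite[Theorem 5.4.6]{HS06} gives $\overline{I^n}=\overline{I^n}\colon\mathfrak{m}_R^\infty$ for all $n$.
\item Combined with $\overline{I^n}\colon\mathfrak{m}_R^\infty=\overline{I^n}\colon J^\infty$, this yields (iii).
\end{enumerate}
This is exactly the paper's argument.
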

\begin{proof}
 $(ii)\implies (iii)$. Assume that $\int_0^{\infty}e^{-x}f_{\mathbb{I}}(x) dx = \int_0^{\infty}e^{-x}f_{\mathbb{I}^{\mathrm{sat}}_J}(x) dx$. Since $f_{\mathbb{I}}(x) \leq f_{\mathbb{I}^{\mathrm{sat}}_{\mathfrak{m}_R}}(x) \leq f_{\mathbb{I}^{\mathrm{sat}}_J}(x)$, the equality of the outer terms forces $\int_0^{\infty}e^{-x}f_{\mathbb{I}}(x) dx = \int_0^{\infty}e^{-x}f_{\mathbb{I}^{\mathrm{sat}}_{\mathfrak{m}_R}}(x)dx = \int_0^{\infty}e^{-x}f_{\mathbb{I}^{\mathrm{sat}}_J}(x) dx$. From Theorem \ref{sat=symb}, we obtain that
 \begin{equation}\label{equal_1}
  \overline{I^n} : \mathfrak{m}_R^{\infty} = \overline{I^n} : J^{\infty} \quad \forall n\in\mathbb{N}.
 \end{equation}
Moreover, $g(x)= e^{-x}\left(f_{\mathbb{I}^{\mathrm{sat}}_{\mathfrak{m}_R}}(x) - f_{\mathbb{I}}(x)\right)$ is a compactly supported nonnegative function that is continuous everywhere except possibly at $x=\alpha_{\mathbb{I}}$. Since $\int_0^{\infty}g(x)dx = 0$, it follows that $f_{\mathbb{I}}(x) = f_{\mathbb{I}^{\mathrm{sat}}_{\mathfrak{m}_R}}(x)$ for all $x\neq\alpha_{\mathbb{I}}$. On the other hand, the $\varepsilon$-multiplicity of $I$, which is defined by $\varepsilon(I) = \lim\limits_{n\to\infty}\frac{\lambda_R\left(H^0_{\mathfrak{m}_R}\left(R/I^n\right)\right)}{n^d/d!}$, admits an integral representation $$\varepsilon(I) = \int\limits_0^{\infty} \left(f_{\mathbb{I}^{\mathrm{sat}}_{\mathfrak{m}_R}}(x) - f_{\mathbb{I}}(x)\right)dx,$$ see \cite[Theorem 5.19]{DRT25}. Since $f_{\mathbb{I}}(x) = f_{\mathbb{I}^{\mathrm{sat}}_{\mathfrak{m}_R}}(x)$ almost everywhere, we get $\varepsilon(I)=0$. By combining \cite[Theorem 4.7]{KV10} and \cite[Theorem 5.4.6]{HS06}, $\varepsilon(I)=0$ is equivalent to the condition $\mathfrak{m}_R \not\in \mathrm{Ass}_R\left(R/\overline{I^n}\right)$ for all $n\in\mathbb{N}$, that is,
 \begin{equation}\label{equal_2}
  \overline{I^n} = \overline{I^n} \colon \mathfrak{m}_R^{\infty} \quad \forall n\in\mathbb{N}.
 \end{equation}
Combining \eqref{equal_1} and \eqref{equal_2} immediately yields $\overline{I^n} = \overline{I^n} \colon J^{\infty}$ for all $n\in\mathbb{N}$. The rest of the implications are clear.
\end{proof}

\begin{corollary}\label{sym_reg}
Consider the Setup \ref{setup4}. Further, assume that $R$ is regular and $I$ is radical. Let $\mathbb{I}^{\mathrm{symb}} = \{I^{(n)}\}_{n\in\mathbb{N}}$ be the filtration of symbolic powers of $I$. Then the following are equivalent:
 \begin{enumerate}[$(i)$]
  \item $\displaystyle f_{\mathbb{I}}(x) = f_{\mathbb{I}^{\mathrm{symb}}}(x)$ for all $x\in\mathbb{R}_{\geq 0}$.
  \item $\displaystyle \int_0^{\infty}e^{-x}f_{\mathbb{I}}(x) dx = \int_0^{\infty}e^{-x}f_{\mathbb{I}^{\mathrm{symb}}}(x) dx$.
  \item $\displaystyle \overline{I^n} = I^{(n)}$ for all $n\in\mathbb{N}$.
 \end{enumerate}
\end{corollary}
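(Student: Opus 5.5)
The plan is to reduce Corollary \ref{sym_reg} to Theorem \ref{adic=symb} by choosing $J$ so that $I^n\colon J^{\infty}=I^{(n)}$, and then to handle the mismatch between $\overline{I^n}\colon J^{\infty}$ and $I^{(n)}$ separately.

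\emph{Choice of $J$.} Following Section \ref{gen_symb}, set $J=\bigcap_{P\in \mathrm{Ass}^*(I)\setminus\mathrm{Min}(I)}P$ when $\mathrm{Ass}^*(I)\neq\mathrm{Min}(I)$. Then $I^n\colon J^{\infty}=I^{(n)}$, so $\mathbb{I}^{\mathrm{sat}}_J=\mathbb{I}^{\mathrm{symb}}$ and $f_{\mathbb{I}^{\mathrm{sat}}_J}=f_{\mathbb{I}^{\mathrm{symb}}}$. This $J$ satisfies $J\subseteq\mathfrak{m}_R$. Each embedded prime strictly contains a minimal prime of $I$, so $\sqrt{J}\not\subset\sqrt{I}$. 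Hence Setup \ref{setup4} holds.

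In the degenerate case $\mathrm{Ass}^*(I)=\mathrm{Min}(I)$ we have $I^n=I^{(n)}$ for all $n$. Then (i) and (ii) hold trivially, and (iii) also holds: $I^n\subseteq\overline{I^n}\subseteq I^{(n)}$, where the last inclusion uses that $I^{(n)}$ is integrally closed, as explained below. So all three conditions hold and this case is done.

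\emph{Applying Theorem \ref{adic=symb}.} In the main case, Theorem \ref{adic=symb} gives (i)$\iff$(ii)$\iff$ $\overline{I^n}=\overline{I^n}\colon J^{\infty}$ for all $n$. It remains to show that $\overline{I^n}\colon J^{\infty}=I^{(n)}$ whenever $R$ is regular and $I$ is radical. The inclusion $I^{(n)}=I^n\colon J^{\infty}\subseteq\overline{I^n}\colon J^{\infty}$ is clear.

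For the reverse inclusion, the key fact is that $I^{(n)}$ is integrally closed when $R$ is regular and $I$ is radical. Indeed, $I^{(n)}=\bigcap_{P\in\mathrm{Min}(I)}P^nR_P\cap R$, and each $R_P$ is a regular local ring. In a regular local ring the powers of the maximal ideal are integrally closed, since $\mathrm{gr}_{PR_P}(R_P)$ is a polynomial ring and so the order function is a valuation. Contractions and intersections of integrally closed ideals are integrally closed, so $I^{(n)}$ is integrally closed. Therefore $\overline{I^n}\subseteq I^{(n)}$.

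Saturating with respect to $J$ then gives $\overline{I^n}\colon J^{\infty}\subseteq I^{(n)}\colon J^{\infty}$. Finally, $I^{(n)}\colon J^{\infty}=I^{(n)}$: every associated prime of $I^{(n)}$ is a minimal prime of $I$, and none of these contains $J$, since $J$ is contained in no minimal prime of $I$. This proves $\overline{I^n}\colon J^{\infty}=I^{(n)}$.

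With this identity, condition (iii) of Theorem \ref{adic=symb} reads $\overline{I^n}=I^{(n)}$, which is (iii) of the corollary.

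\emph{Main obstacle.} The delicate point is the integral closedness of $I^{(n)}$, which is exactly where regularity and radicality are used. For a possibly non-normal ambient ring or non-reduced $I$, powers $P^nR_P$ need not be integrally closed. Here I would cite \cite[Chapter 6]{HS06} for normality of $\mathfrak{m}$-adic powers in regular local rings, rather than reprove it.
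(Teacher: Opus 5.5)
Your proposal is correct and follows essentially the paper's route: choose $J=\bigcap_{P\in\mathrm{Ass}^*(I)\setminus\mathrm{Min}(I)}P$ so that $I^n\colon J^{\infty}=I^{(n)}$, apply Theorem \ref{adic=symb}, and identify $\overline{I^n}\colon J^{\infty}$ with $I^{(n)}$ using normality of $PR_P$ in the regular local ring $R_P$. The paper phrases that last step as $\overline{I^n}R_P=\overline{P^nR_P}=P^nR_P=I^nR_P$, using that localization commutes with integral closure, whereas you phrase it as $I^{(n)}$ being integrally closed; your explicit check of the Setup \ref{setup4} hypotheses on $J$ and your separate treatment of the degenerate case $\mathrm{Ass}^*(I)=\mathrm{Min}(I)$ (where $J=R$ lies outside the setup) supply details the paper leaves implicit.
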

\begin{proof}
$(i) \iff (iii)$. Recall that $I^{(n)} = \bigcap_{P\in\mathrm{Min}(R/I)}(I^nR_P \cap R) = I^n \colon J^{\infty}$, where $J=\bigcap_{P\in \mathrm{Ass}^*(I)\setminus \mathrm{Min}(I)} P$ as described in Section \ref{gen_symb}. In view of Theorem \ref{adic=symb} it suffices to show that $I^n \colon J^{\infty} = \overline{I^n} \colon J^{\infty}$. Since localization commutes with integeral closure by \cite[Proposition 1.1.4]{HS06}, we have $\overline{I^n}R_P = \overline{I^n R_P}$. Because $I=\bigcap_{P\in\mathrm{Min}(R/I)}P$ and localization commutes with taking powers, it follows that $I^n R_P = P^nR_P$. As $R$ is regular, $R_P$ is a regular local ring; hence its maximal ideal $PR_P$ is normal. Thus, $$\overline{I^n}R_P = \overline{I^n R_P} = \overline{P^nR_P} = P^nR_P = I^nR_P$$ and this proves our claim.
\end{proof}

\subsection{Density functions for saturated filtrations in dimension two}

The motivation for the following result comes from \cite[Lemma 6.8]{DDRV25}.

\begin{theorem}\label{dimension_two}
 Let $R$ be as in Setup \ref{setup4} and suppose further that $\dim R =2$. Let $\mathbb{I} = \{I_n\}_{n\in\mathbb{N}}$ be any filtration of graded ideals such that $\mathrm{height}\;I_1 = 1$. Let $J\subseteq \mathfrak{m}_R$ be another graded ideal such that $\sqrt{J}\not\subset \sqrt{I_1}$. Consider the saturation filtration $\mathbb{I}^{\mathrm{sat}}_J = \{I_n \colon J^{\infty}\}_{n\in\mathbb{N}}$. Then its associated density function $f_{\mathbb{I}^{\mathrm{sat}}_J}$ has the following form:
 $$f_{\mathbb{I}^{\mathrm{sat}}_J}(x) = \begin{cases}
                                         0 & \text{if}\;\;0\leq x\leq \alpha_{\mathbb{I}^{\mathrm{sat}}_J},\\
                                         2\big(e(R)x-\alpha_{\mathbb{I}^{\mathrm{sat}}_J}\big) & \text{if}\;\;\alpha_{\mathbb{I}^{\mathrm{sat}}_J} \leq x<\infty.
                                        \end{cases}
$$
\end{theorem}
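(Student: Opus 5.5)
We prove the formula in the form $f_{\mathbb{I}^{\mathrm{sat}}_J}(x)=2\max\{0,\,e(R)x-c\}$, with $c=\lim_n e_n/n$ defined below, and then identify $c=e(R)\,\alpha_{\mathbb{I}^{\mathrm{sat}}_J}$. One caveat up front: the statement as written, with $2\big(e(R)x-\alpha_{\mathbb{I}^{\mathrm{sat}}_J}\big)$, vanishes at $\alpha_{\mathbb{I}^{\mathrm{sat}}_J}$ only when $e(R)=1$. I therefore expect the intended expression to be $2e(R)\big(x-\alpha_{\mathbb{I}^{\mathrm{sat}}_J}\big)$, which agrees with the statement when $e(R)=1$, and the plan below proves that version.

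Put $K_n=I_n\colon J^{\infty}$. Because $d=2$, $\mathrm{height}\,I_1=1$, $J\subseteq\mathfrak{m}_R$ and $\sqrt{J}\not\subset\sqrt{I_1}$, the ideal $K_n$ is a proper, nonzero, $\mathfrak{m}_R$-saturated graded ideal with $\dim R/K_n=1$. Hence $R/K_n$ is a one-dimensional graded Cohen–Macaulay ring. It defines a zero-dimensional subscheme $Z_n$ of the curve $V=\mathrm{Proj}\,R$, and we set $e_n:=e(R/K_n)=\mathrm{length}(Z_n)$. Since $k$ is infinite, there is a linear form $z\in R_1$ that is a nonzerodivisor on $R/K_n$ (it may depend on $n$, which is harmless). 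It follows that the Hilbert function $H_n(m)=\dim_k(R/K_n)_m$ is nondecreasing in $m$ and bounded by $e_n$. Therefore
\[
\dim_k(K_n)_m\ \ge\ \dim_k R_m-e_n\qquad\text{for all } m .
\]
Dividing by $n/2$ (recall $n^{d-1}/d!=n/2$) and using $\dim_k R_m=e(R)m+O(1)$ gives the lower bound $f_{\mathbb{I}^{\mathrm{sat}}_J}(x)\ge 2\big(e(R)x-e_n/n\big)+o(1)$.

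For the matching upper bound I will work on the normalization $\widetilde V\to V$, a smooth projective curve on which $\mathcal{O}_V(1)$ pulls back to a line bundle of degree $e(R)$. A nonzero $g\in(K_n)_m$ gives a section of $\mathcal{O}_V(m)\otimes\mathcal{I}_{Z_n}$. Pulled back, it becomes a section of a line bundle of degree at most $e(R)m-e_n+C$, where $C$ is a constant coming from the conductor of $\widetilde V\to V$ and independent of $n$. On a smooth curve a line bundle of degree $D$ satisfies $h^0\le\max\{0,D+1\}$. Also, $(K_n)_m\to\Gamma(V,\mathcal{I}_{Z_n}(m))$ is injective because $K_n$ is saturated, and $\Gamma(V,\cdot)\to\Gamma(\widetilde V,\cdot)$ is injective. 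Together these give
\[
\dim_k(K_n)_m\ \le\ \max\{0,\;e(R)m-e_n+C+1\}.
\]
I expect this comparison between colength on $V$ and degree on $\widetilde V$, uniformly in $n$, to be the main obstacle. My first attempt is to bound the length of $\mathcal{O}_{\widetilde V}/\mathcal{I}_{Z_n}\mathcal{O}_{\widetilde V}$ from below by $e_n$ minus a bounded error supported at the singular points, using that the conductor ideal annihilates $\widetilde{\mathcal{O}}/\mathcal{O}$.

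Combining the two bounds at $m=\lfloor xn\rfloor$ and dividing by $n/2$ gives
\[
2\big(e(R)x-e_n/n\big)+o(1)\ \le\ \frac{\dim_k(K_n)_{\lfloor xn\rfloor}}{n/2}\ \le\ 2\max\{0,\,e(R)x-e_n/n\}+o(1).
\]
By Theorem \ref{mainthm}(i) the middle term converges for $x\ne\alpha_{\mathbb{I}^{\mathrm{sat}}_J}$. Choosing such an $x$ large, the sandwich forces $e_n/n$ to converge to a limit $c$, and then $f_{\mathbb{I}^{\mathrm{sat}}_J}(x)=2\max\{0,\,e(R)x-c\}$. From the upper bound, $(K_n)_m=0$ whenever $e(R)m<e_n-C-1$; from the lower bound, $(K_n)_m\neq 0$ once $\dim_k R_m>e_n$. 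Together these give $\alpha_{\mathbb{I}^{\mathrm{sat}}_J}=c/e(R)$. Hence $f_{\mathbb{I}^{\mathrm{sat}}_J}$ vanishes on $[0,\alpha_{\mathbb{I}^{\mathrm{sat}}_J}]$ and equals $2e(R)\big(x-\alpha_{\mathbb{I}^{\mathrm{sat}}_J}\big)$ beyond it; continuity at $\alpha_{\mathbb{I}^{\mathrm{sat}}_J}$ is then automatic from this explicit formula.
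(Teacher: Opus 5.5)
Your reading of the formula is right. The paper's own proof yields $2\bigl(e(R)x-\delta\bigr)$ for $x\ge\delta/e(R)$, where $\delta=\lim_n\deg E_n/n$. So $\alpha_{\mathbb{I}^{\mathrm{sat}}_J}=\delta/e(R)$, and the intended value is $2e(R)\bigl(x-\alpha_{\mathbb{I}^{\mathrm{sat}}_J}\bigr)$.

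The step you single out as the main obstacle does close along the lines you suggest. Locally at $P\in V$, let $A=\mathcal{O}_{V,P}$, let $B$ be its normalization, let $\mathfrak{c}=A:_A B$ be the conductor, and let $\mathfrak{a}$ be the stalk of $\mathcal{I}_{Z_n}$. Since $\mathfrak{c}B=\mathfrak{c}$, the product $\mathfrak{c}\mathfrak{a}=\mathfrak{c}\cdot\mathfrak{a}B$ is an invertible ideal of $B$ contained in $\mathfrak{a}$. All residue fields are $k$, so lengths over $A$ and $B$ agree, and
\[
\ell(A/\mathfrak{a})\le\ell(A/\mathfrak{c}\mathfrak{a})=\ell(B/\mathfrak{c})+\ell(B/\mathfrak{a}B)-\ell(B/A)=\ell(A/\mathfrak{c})+\ell(B/\mathfrak{a}B).
\]
Hence $C=\ell(\mathcal{O}_V/\mathfrak{c})$ works uniformly in $n$. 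Alternatively, take a principal reduction $x$ of $\mathfrak{a}$; then $\mathfrak{a}B=xB$ and $\ell(A/xA)=\ell(B/xB)$, which even gives $C=0$.

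You should also justify that $e_n/n$ is bounded before choosing $x$ large. Fix $0\neq g\in(K_1)_a$; then $g^n\in(K_n)_{an}$ is nonzero, so your upper bound forces $e_n\le e(R)an+C$.

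With these points your argument is correct, and it differs from the paper's. The paper works with the single invariant $\deg E_n$ on the normalization $\varphi\colon X\to V$:
\begin{itemize}
\item Fekete's lemma for the subadditive sequence $\deg E_n$ produces $\delta$;
\item Riemann--Roch computes $h^0(mH-E_n)$;
\item the sequence $0\to\overline{\mathcal{I}_n}\to\varphi_*\mathcal{O}_X(-E_n)\to\mathcal{G}_n\to0$, with $\mathcal{G}_n\subseteq\varphi_*\mathcal{O}_X/\mathcal{O}_V$ of bounded length, gives upper and lower bounds at once.
\end{itemize}
The catch is that this computes the density of $\{\overline{I_n}\colon\mathfrak{m}_R^{\infty}\}$. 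The paper must therefore invoke Theorem \ref{density_integral}, and with it the Noetherian approximation and \cite{DRT25}, to get back to $I_n\colon\mathfrak{m}_R^{\infty}$.

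Your route avoids integral closures entirely. The lower bound is purely commutative-algebraic: the Hilbert function of the one-dimensional Cohen--Macaulay ring $R/K_n$ is bounded by its multiplicity. The upper bound needs only $h^0\le\max\{0,D+1\}$. The price is that you handle two invariants, $e_n$ on $V$ and $\deg E_n$ on $\widetilde V$, which must be compared uniformly. You also take the convergence of $e_n/n$ from Theorem \ref{mainthm}$(i)$ rather than from Fekete's lemma. A by-product is the intrinsic formula $e(R)\,\alpha_{\mathbb{I}^{\mathrm{sat}}_J}=\lim_n e\bigl(R/(I_n\colon J^{\infty})\bigr)/n$.
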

\begin{proof}
\emph{Step 1}: Assume that $J=\mathfrak{m}_R$. Let $V = \mathrm{Proj}\;R$ be the projective curve with a very ample invertible sheaf $\mathcal{O}_V(1)$. Let $\mathcal{I}_n$ be the ideal sheaf associated to the ideal $I_n$ on $V$. Let $\varphi\colon X \to V$ denote the normalization of $V$. Since $X$ is a nonsingular projective curve, we can write $$\mathcal{I}_n\mathcal{O}_X = \mathcal{O}_X(-E_n)$$ for some effective divisor $E_n$ on $X$. The filtration property ensures that $E_1 \leq E_2 \leq \cdots$, and $E_{n_1} + E_{n_2} \geq E_{n_1+n_2}$ for all $n_1,n_2\in\mathbb{N}$. In other words, $\{\deg E_n\}_{n\in\mathbb{N}}$ is a non-decreasing subadditive sequence of positive integers, and therefore the limit $\delta = \lim\limits_{n\to\infty}\frac{\deg E_n}{n}$ exists by Fekete's subadditive lemma. Let $H$ be the pullback to $X$ of a linear hyperplane section on $V$. As $\varphi$ is a finite morphism, $H$ is ample and \cite[Chapter III, Exercise 4.1]{Har77} yields
\begin{equation}\label{dim2_eq1}
 H^i\left(X,\mathcal{O}_X(mH-E_n)\right) = H^i\left(V,\varphi_*\mathcal{O}_X(-E_n)\otimes \mathcal{O}_V(m)\right)
\end{equation}
for all $i,n \in\mathbb{N}$ and $m\in\mathbb{Z}$. Then the Riemann-Roch formula \cite[Chapter IV.1]{Har77} shows that
\begin{equation}\label{dim2_eq2}
 \dim_k\Gamma\left(X,\mathcal{O}_X(mH-E_n)\right) = \begin{cases}
                                                     0 & \text{if}\;\;m<\frac{\deg E_n}{\deg H},\\
                                                     m\deg H - \deg E_n + 1-g & \text{if}\;\;m>\frac{\deg E_n}{\deg H} + \frac{2g-2}{\deg H},
                                                    \end{cases}
\end{equation}
where $g = \dim_k H^1(X,\mathcal{O}_X)$ is the genus of $X$.

Since $\varphi$ is also birational, there is an exact sequence of $\mathcal{O}_V$-modules, $$0 \to \mathcal{O}_V \to \varphi_*\mathcal{O}_X \to \mathcal{G}\to 0,$$ where $\dim \left(\mathrm{Supp}\;\mathcal{G}\right)\leq 0$. For each $n\in\mathbb{N}$, this induces exact sequences $$0 \to \overline{\mathcal{I}_n} = \mathcal{O}_{V} \cap \varphi_*\mathcal{O}_{X}(-E_n) \to \varphi_*\mathcal{O}_{X}(-E_n) \to \mathcal{G}_n \to 0,$$ where $\overline{\mathcal{I}_n}$ is the integral closure of the ideal sheaf $\mathcal{I}_n$, and $\mathcal{G}_n \subseteq \mathcal{G}$ is a coherent $\mathcal{O}_V$-module. Tensoring with $\mathcal{O}_V\left(\lfloor xn\rfloor\right)$ and taking global sections yields the left exact sequence
\begin{equation}\label{dim2_eq3}
 0 \to \Gamma\big(V,\overline{\mathcal{I}_n}\otimes\mathcal{O}_V\left(\lfloor xn\rfloor\right)\big) \to \Gamma\big(V,\varphi_*\mathcal{O}_X(-E_n) \otimes \mathcal{O}_V\left(\lfloor xn\rfloor\right)\big) \to \Gamma\big(V,\mathcal{G}_n\otimes \mathcal{O}_V\left(\lfloor xn\rfloor\right)\big).
\end{equation}
Since $\mathcal{G}$ is supported on a zero-dimensional closed subscheme, we have $$\lim_{n\to\infty}\dfrac{\dim_k \Gamma\big(V,\mathcal{G}_n\otimes \mathcal{O}_V\left(\lfloor xn\rfloor\right)\big)}{n} \leq \lim_{n\to\infty}\dfrac{\dim_k \Gamma\big(V,\mathcal{G}\otimes \mathcal{O}_V\left(\lfloor xn\rfloor\right)\big)}{n} = 0.$$ Using this fact together with \eqref{dim2_eq1}, \eqref{dim2_eq2}, and \eqref{dim2_eq3} gives
\begin{multline}\label{dim2_eq4}
 \lim_{n\to\infty}\dfrac{\dim_k \Gamma\big(V,\overline{\mathcal{I}_n}\otimes\mathcal{O}_V\left(\lfloor xn\rfloor\right)\big)}{n} = \lim_{n\to\infty}\dfrac{\dim_k \Gamma \big(V,\varphi_*\mathcal{O}_{X}(-E_n)\otimes\mathcal{O}_V\left(\lfloor xn\rfloor\right)\big)}{n}\\
 = \lim_{n\to\infty}\dfrac{\dim_k \Gamma \left(X,\mathcal{O}_X\left(\lfloor xn\rfloor H - E_n\right)\right)}{n}
 = \begin{cases}
    0 & \text{if}\;\;x\leq \frac{\delta}{e(R)},\\
    e(R)x-\delta & \text{if}\;\;x\geq \frac{\delta}{e(R)}.
   \end{cases}
\end{multline}

By the Serre-Grothendieck correspondence \cite[Proposition 2.2]{Har67}, there is an exact sequence $$0 \to \overline{I_n} \colon \mathfrak{m}_R^{\infty} \to \bigoplus_{m\in\mathbb{N}}\Gamma\big(V,\overline{\mathcal{I}_n}\otimes\mathcal{O}_V(m)\big) \to H^1_{\mathfrak{m}_R}\big(\overline{I_n} \colon \mathfrak{m}_R^{\infty}\big) \to 0$$ of graded $R$-modules. As $\left(H^1_{\mathfrak{m}_R}(R)\right)_m = 0$ for all $m\geq \mathrm{reg}(R)$, we conclude that for all integers $m\geq \mathrm{reg}(R)$ and $n\geq 0$, $$\dim_k{\big(\overline{I_n} \colon \mathfrak{m}_R^{\infty}\big)}_m = \dim_k\Gamma\big(V,\overline{\mathcal{I}_n}\otimes\mathcal{O}_V(m)\big).$$ Thus, Theorem \ref{density_integral} and \eqref{dim2_eq4} together yield $$\lim_{n\to\infty}\dfrac{\dim_k{\big(I_n \colon \mathfrak{m}_R^{\infty}\big)}_{\lfloor xn\rfloor}}{n} = \lim_{n\to\infty}\dfrac{\dim_k{\big(\overline{I_n} \colon \mathfrak{m}_R^{\infty}\big)}_{\lfloor xn\rfloor}}{n} = \begin{cases}
    0 & \text{if}\;\;x\leq \frac{\delta}{e(R)},\\
    e(R)x-\delta & \text{if}\;\;x\geq \frac{\delta}{e(R)}.
   \end{cases}$$

\emph{Step 2}. Let $J$ be as in the theorem. For every $n \geq 1$, one has $I_n \colon J^{\infty} = \left(I_n \colon J^{\infty}\right) \colon \mm_R^{\infty}$. Now apply Step 1 to the filtration $\mathbb{I}^{\mathrm{sat}}_J=\left\{I_n \colon J^{\infty}\right\}_{n\in\mathbb{N}}$.
\end{proof}

\subsection{Density functions for generalized symbolic powers in dimension three}

We begin with an elementary lemma whose proof is omitted.

\begin{lemma}\label{lemma_elementary}
Let $p_1(x), p_2(x) \in \mathbb{Q}[x]$ be distinct polynomials such that $1\leq \deg p_1(x), \deg p_2(x) \leq 2$. If $a\in \mathbb{R}$ satisfies $p_1(a) = p_2(a)$ and $p^{\prime}_1(a) = p^{\prime}_2(a)$, then $a\in \mathbb{Q}$.
\end{lemma}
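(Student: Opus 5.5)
Consider the difference $q(x) = p_1(x) - p_2(x) \in \mathbb{Q}[x]$. It is a nonzero polynomial, since $p_1\neq p_2$, and its degree is at most $2$. The hypotheses $p_1(a)=p_2(a)$ and $p_1'(a)=p_2'(a)$ say exactly that $q(a)=0$ and $q'(a)=0$, so $a$ is a root of $q$ of multiplicity at least two. I will show that such a root of a rational polynomial of degree at most $2$ must be rational, splitting into cases by $\deg q$.

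\emph{Degree $0$.} Then $q$ is a nonzero constant and has no roots at all, contradicting $q(a)=0$.

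\emph{Degree $1$.} Write $q(x)=bx+c$ with $b,c\in\mathbb{Q}$ and $b\neq 0$. Then $q'(a)=b\neq 0$, contradicting $q'(a)=0$. (Alternatively, $a=-c/b$ would already be rational.)

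\emph{Degree $2$.} Write $q(x)=bx^2+cx+e$ with $b,c,e\in\mathbb{Q}$ and $b\neq 0$. The condition $q'(a)=2ab+c=0$ gives $a=-c/(2b)\in\mathbb{Q}$. This uses that division by $2$ is allowed, which holds since we are in characteristic zero over $\mathbb{Q}$ and $\mathbb{R}$.

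There is no real obstacle here; the only point to keep track of is that the hypothesis $1\le \deg p_i$ is not actually needed, because only $\deg q\le 2$ and $q\neq 0$ are used.
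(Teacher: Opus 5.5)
Your argument is correct and complete: $q=p_1-p_2\in\mathbb{Q}[x]$ is a nonzero polynomial of degree at most $2$ with $q(a)=q'(a)=0$, which rules out degrees $0$ and $1$ and forces $a=-c/(2b)\in\mathbb{Q}$ in degree $2$. The paper omits the proof as elementary, and your double-root argument is the natural one; you are also right that the lower bound $\deg p_i\geq 1$ plays no role.
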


The next theorem shows that in dimension three, the generalized symbolic density function is a piecewise polynomial, possibly with countably many pieces.

\begin{theorem}\label{dimension_three}
Adopt Setup \ref{setup4}. Assume further that $\dim R=3$ and that the base field $k$ has characteristic zero. Fix a compact interval $[\alpha,\beta]\subset\mathbb{R}$ satisfying
$\alpha_{\mathbb{I}^{\mathrm{sat}}_J}<\alpha<\beta$. Then the restriction of the generalized symbolic density function $f_{\mathbb{I}^{\mathrm{sat}}_J}$ to $[\alpha,\beta]$ is of exactly one of the following two forms:
\begin{enumerate}
\item[$(i)$] There exists a polynomial $p(x)\in\mathbb{Q}[x]$ with $1\leq\deg p(x)\leq 2$ such that $$f_{\mathbb{I}^{\mathrm{sat}}_J}(x)=p(x)$$ for all $x\in [\alpha,\beta]$.
\item[$(ii)$] There exist finitely many rational numbers $\eta_1,\ldots,\eta_r$ with $$\eta_0:=\alpha<\eta_1<\cdots<\eta_r<\eta_{r+1}:=\beta,$$ and distinct polynomials $p_0(x),\ldots,p_r(x)\in\mathbb{Q}[x]$, each satisfying $1\leq\deg p_i(x)\leq 2$, such that $$p_{i-1}(\eta_i)=p_i(\eta_i) \quad\text{and}\quad p^{\prime}_{i-1}(\eta_i)=p^{\prime}_i(\eta_i),$$ for all $i=1,\ldots,r$, and $$f_{\mathbb{I}^{\mathrm{sat}}_J}(x)=p_{i}(x)$$ for all $x\in[\eta_{i},\eta_{i+1}]$ and $i=0,\ldots,r$.
\end{enumerate}
\end{theorem}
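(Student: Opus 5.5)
We will reduce the statement to the geometry of Theorem \ref{symb_geom}. By that theorem there are a normal projective surface $X$ (since $\dim R=3$, $\dim V=2$), a big-and-nef class $H$, and an effective Cartier divisor $F$ such that $f_{\mathbb{I}^{\mathrm{sat}}_J}(x)=3\,\mathrm{vol}_X(xH-F)$. Since $k$ has characteristic zero, we take a resolution of singularities $\mu\colon \widetilde{X}\to X$. Volume is a birational invariant under pullback, so $\mathrm{vol}_X(xH-F)=\mathrm{vol}_{\widetilde X}(x\mu^*H-\mu^*F)$. Hence we may assume $X$ is a nonsingular projective surface and $D_x:=xH-F$ is a line in $N^1(X)_{\mathbb{R}}$ with rational endpoints data.

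For each $x$ with $D_x$ big we use the Zariski decomposition $D_x=P_x+N_x$. Then $\mathrm{vol}(D_x)=P_x^2$. The key input is \cite{BKS04}: the big cone of a smooth surface admits a locally finite decomposition into locally rational polyhedral chambers. On each chamber the support of $N_x$ is a fixed set of curves $C_1,\dots,C_s$ with negative definite intersection matrix.

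On a chamber, $N_x=\sum c_i(x)C_i$ is determined by the linear system $(D_x-\sum c_iC_j)\cdot C_j=0$. This system has rational coefficients and right-hand side affine in $x$, so the $c_i(x)$ are affine in $x$ with rational coefficients. Consequently $P_x^2=D_x^2-N_x^2$ restricted to the segment is a polynomial in $\mathbb{Q}[x]$ of degree at most $2$.

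Since $\mathrm{vol}(D_x)\ge (x-\alpha_{\mathbb{I}^{\mathrm{sat}}_J})^2e(R)$-type growth holds and $f$ is strictly increasing by Theorem \ref{symb_geom}(iii), the polynomial is nonconstant, so its degree is $1$ or $2$.

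The compact segment $\{D_x: x\in[\alpha,\beta]\}$ lies in the big cone, because $\alpha>\alpha_{\mathbb{I}^{\mathrm{sat}}_J}$. By local finiteness it therefore meets only finitely many chambers. This yields finitely many breakpoints $\eta_1<\dots<\eta_r$ and polynomials $p_i$ on the corresponding intervals. After merging adjacent pieces with equal polynomials, the $p_i$ are distinct. If no breakpoint remains, we are in case $(i)$.

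At each $\eta_i$, continuity of $f$ gives $p_{i-1}(\eta_i)=p_i(\eta_i)$. The $C^1$ property from Theorem \ref{symb_geom}(iii) (the differentiability of volume, \cite{LM09}, \cite{BFJ09}) gives $p'_{i-1}(\eta_i)=p'_i(\eta_i)$. Lemma \ref{lemma_elementary} then forces $\eta_i\in\mathbb{Q}$.

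The main obstacle is making the chamber decomposition of \cite{BKS04} rigorous along our line: finitely many chambers meet a compact subset of the big cone, and the negative part has constant support on each open subinterval. We would handle this by noting that the Zariski chambers are locally finite in the interior of the big cone, which is exactly where the segment lies. Rationality of the coefficients of the $p_i$ holds because $H$, $F$ and the curves $C_j$ are integral classes.
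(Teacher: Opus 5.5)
Your proposal is correct and follows essentially the same route as the paper. Both arguments reduce via Theorem \ref{symb_geom} and a resolution to $3\,\mathrm{vol}_X(xH-F)$ on a smooth surface, then use the Zariski decomposition and the locally finite Zariski chambers of \cite{BKS04} along the compact segment, solve the negative-definite linear system to get quadratics in $\mathbb{Q}[x]$, and combine $C^1$-smoothness of the volume with Lemma \ref{lemma_elementary} to make the breakpoints rational. The only differences are cosmetic: you treat the nef region as the chamber with empty negative part instead of the paper's separate Case 2, and you explicitly merge equal adjacent pieces and use strict monotonicity to get $\deg p_i\geq 1$, both of which the paper leaves implicit.
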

\begin{proof}
By Theorem \ref{symb_geom} and passing to a resolution of singularities, there exist a nonsingular projective surface $X$ with a projective birational morphism $$\varphi\colon X \longrightarrow V = \mathrm{Proj}\;R,$$ and an effective Cartier divisor $F$ on $X$ supported on the exceptional locus, such that for all $x\in\mathbb{R}_{\geq 0}$, $$f_{\mathbb{I}^{\mathrm{sat}}_J}(x) = 3\cdot\mathrm{vol}_X(xH-F),$$ where $H$ denotes the pullback to $X$ of a hyperplane section on $V$. Set $D_x =xH-F$. Then $D_x$ is big for all $x>\alpha_{\mathbb{I}^{\mathrm{sat}}_J}$.

\emph{Case 1:} Assume that $D_x$ is non-nef for all $\alpha\leq x<\beta$. By \cite[Theorem 1.1]{BKS04}, the big non-nef divisor $D_x$ admits a Zariski decomposition $$D_x = P_{D_x} + N_{D_x},$$ such that
\begin{itemize}
 \item $P_{D_x}$ is a big and nef $\mathbb{R}$-divisor,
 \item $N_{D_x} = \sum_i N_{i,D_x}$ is a nonzero effective $\mathbb{R}$-divisor such that the intersection matrix $[(N_{i,D_x} \cdot N_{j,D_x})]$ of its irreducible components is negative definite,
 \item $(P_{D_x} \cdot N_{i,D_x}) =0$ for each irreducible component $N_{i,D_x}$ of $N_{D_x}$.
\end{itemize}
In particular, $$\mathrm{vol}_X(D_x)  = P_{D_x}^2 = (D_x^2) - (N_{D_x}^2) = (xH-F)^2 - (N_{D_x}^2).$$ Moreover, if $x\in\mathbb{Q}$ then $P_{D_x}$ and $N_{D_x}$ are $\mathbb{Q}$-divisors. By \cite[Theorem 1.2]{BKS04}, there is a locally finite decomposition of the big cone $\mathrm{Big}(X)$ into rational locally polyhedral subcones (called \emph{Zariski chambers}) such that in each subcone the support of the negative part of the Zariski decomposition of the divisors is constant, and the volume function is given by a single quadratic polynomial.

The line segment $\{xH-F \mid \alpha \leq x\leq \beta\} \subset \mathrm{Big}(X)$ will intersect only finitely many Zariski chambers of $\mathrm{Big}(X)$. If it intersects at least two chambers, there exist finitely many real numbers $$\eta_0:=\alpha<\eta_1<\cdots<\eta_r<\eta_{r+1}:=\beta,$$ such that for every $i=0,\ldots,r$, the subsegment $\{xH-F \mid \eta_i < x<\eta_{i+1}\}$ is contained inside a single Zariski chamber. Thus, for all $x\in(\eta_{i},\eta_{i+1})$, we can write $$N_{D_x} = \sum_{j=1}^{s_i}a_{i,j}(x)E_{i,j},$$ where $E_{i,1},\ldots,E_{i,s_i}$ are fixed prime divisors, and the coefficients $a_{i,1}(x),\ldots,a_{i,s_i}(x)$ are determined by the linear system of equations
$$\left(xH-F - \sum_{j=1}^{s_i}a_{i,j}(x)E_{i,j}\right) \cdot E_{i,j} = 0 \quad \forall j=1,\ldots,s_i.$$

Since the intersection matrix $[(E_{i,j}\cdot E_{i,j^{\prime}})]_{1\leq j,j^{\prime}\leq s_i}$ is negative definite with integer entries, it is invertible over $\mathbb{Q}$. As $(H\cdot E_{i,j})$ and $(F\cdot E_{i,j})$ are also integers, solving this linear system of equations shows that each $a_{i,j}(x)$ is an affine linear function with rational coefficients. Hence, $$\mathrm{vol}_X(xH-F) = (D_x^2) - (N_{D_x}^2) = (xH-F)^2 - \left(\sum_{j=1}^{s_i}a_{i,j}(x)E_{i,j}\right)^2.$$ Thus, on the open interval $(\eta_{i},\eta_{i+1})$, one has $f_{\mathbb{I}^{\mathrm{sat}}_J}(x)=p_{i}(x)$, where $p_i(x)\in\mathbb{Q}[x]$ is a nonzero polynomial of degree at most two. Moreover, by the differentiability of the volume function \cite[Corollary 4.27]{LM09}, adjacent polynomials and their derivatives agree on common endpoints, i.e., $$p_{i-1}(\eta_i)=p_i(\eta_i) \quad\text{and}\quad p^{\prime}_{i-1}(\eta_i)=p^{\prime}_i(\eta_i) \quad \forall i=1,\ldots,r.$$ It follows from Lemma \ref{lemma_elementary} that $\eta_1,\ldots,\eta_r \in \mathbb{Q}$.

Otherwise, if $\{xH-F \mid \alpha< x<\beta\}$ lies in a single Zariski chamber, the above arguments show that there exists a polynomial $p(x)\in\mathbb{Q}[x]$ with $1\leq\deg p(x)\leq 2$ such that for all $x\in [\alpha,\beta]$, $$f_{\mathbb{I}^{\mathrm{sat}}_J}(x)=p(x),$$

\emph{Case 2:} Assume that $D_x$ is nef for some $\alpha\leq x<\beta$. Set $x_0 = \min\{x \mid xH-F \;\text{is nef and}\;\alpha\leq x\leq \beta\}$. Then for all $x\geq x_0$, we have $$\mathrm{vol}_X(xH-F) = (xH-F)^2.$$ On the interval $[\alpha,x_0]$, we may proceed as in Case 1.
\end{proof}

One can refer to \cite[Section 7]{DRT25} for some examples.

\section{Mixed multiplicities of filtrations of equigenerated ideals}\label{section 8}

\subsection{Mixed multiplicities of filtrations of ideals}

Let $(R,\mathfrak{m}_R)$ be a Noetherian local ring of dimension $d>0$. Let $L$ be an $\mathfrak{m}_R$-primary ideal and $I$ be an ideal of positive height in $R$. It follows from \cite{Bha57} that for all $m,n\gg 0$, the Hilbert function $\lambda_R \left(L^{m}I^{n}/L^{m+1}I^{n}\right)$ agrees with a polynomial in $m,n$ of total degree $d-1$. Moreover, it can be written as $$\sum_{i=0}^{d-1} \dfrac{e_i\left(L \vert I\right)}{(d-1-i)!i!}m^{d-1-i}n^i + \text{lower degree terms},$$ where the coefficients $e_i\left(L \vert I\right) \in \mathbb{N}$. In \cite{KV89} and \cite{Tru01}, the numbers $e_i\left(L \vert I\right)$ are called the \emph{mixed multiplicities} of $L$ and $I$. Alternatively, we may write
$$\sum_{i=0}^{d-1} \dfrac{e_i\left(L \vert I\right)}{(d-i)!i!}m^{d-i}n^i = \lim_{t\to\infty}\dfrac{\lambda_R\left(I^{nt}/L^{mt}I^{nt}\right)}{t^d}$$
for all $m,n\in\mathbb{N}$. Under mild hypotheses on the ring $R$, this definition can be generalized to filtrations of ideals, see \cite{CSS19} and \cite{CM22}.

\begin{theorem}\cite[Theorem 3.11]{CM22}\label{CM}
Further assume that $R$ is analytically irreducible. Let $\mathbb{L}=\{L_{n}\}_{n\in\mathbb{N}}$ be a filtration of $\mathfrak{m}_R$-primary ideals and $\mathbb{I}=\{I_{n}\}_{n\in\mathbb{N}}$ be a filtration of nonzero ideals in $R$. Then there exists a homogeneous polynomial $H_{\mathbb{L}\vert \mathbb{I}}(m,n)$ of total degree $d$ with nonnegative real coefficients such that $$H_{\mathbb{L}\vert \mathbb{I}}(m,n) = \lim_{t\to\infty}\dfrac{\lambda_R\left(I_{nt}/L_{mt}I_{nt}\right)}{t^d}$$ for all $m,n\in\mathbb{N}$. Additionally, the polynomial $H_{\mathbb{L}\vert \mathbb{I}}(m,n)$ has no term of the form $en^d$ with $e\neq 0$.
\end{theorem}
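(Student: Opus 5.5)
The plan is to reduce the statement to the known Noetherian case by the truncation device of Theorem~\ref{approx_noeth_filt}, together with the results of \cite{CSS19}. For each $a\ge1$ I consider the truncated filtrations $\mathbb{L}^{[a]}$ and $\mathbb{I}^{[a]}$. Each $L^{[a]}_n$ is $\mathfrak{m}_R$-primary, since it contains a power of $L_1$. For Noetherian filtrations I pass to a Veronese subfiltration $L_{cn}=L_c^n$, $I_{cn}=I_c^n$. The classical Bhattacharya polynomial for the pair $(L_c,I_c)$, recalled above, then gives
\[
\lim_{t\to\infty}\frac{\lambda_R\big(I^{[a]}_{nt}/L^{[a]}_{mt}I^{[a]}_{nt}\big)}{t^d}=H_a(m,n)=\sum_{i=0}^{d-1}\frac{e_i^{(a)}}{(d-i)!\,i!}m^{d-i}n^i .
\]
Here the coefficients satisfy $e_i^{(a)}\geq 0$, and there is no $n^d$ term. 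The rescaling $t\mapsto ct$ only changes the coefficients by a factor of $c^{-d}$.

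The main step is to show that, for fixed $(m,n)$, the limit defining $H_{\mathbb{L}\vert\mathbb{I}}(m,n)$ exists and equals $\lim_{a\to\infty}H_a(m,n)$. For existence I would use the standard Okounkov body argument, in the style of Lemma~\ref{lem_properties}, which relies on analytic irreducibility. Following \cite{Cut14}, this hypothesis provides a valuation $\nu$ dominating $R$ with suitable linear bounds (Izumi-type). I write
\[
\lambda_R(I_{nt}/L_{mt}I_{nt})=\lambda_R(R/L_{mt}I_{nt})-\lambda_R(R/I_{nt}).
\]
For a non-$\mathfrak{m}_R$-primary $I$ each of these terms is infinite, so instead I work with the truncations $R/(L_{mt}I_{nt}+\mathfrak{m}_R^{ct})$ and $R/(I_{nt}+\mathfrak{m}_R^{ct})$, with $c$ large. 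I count each as the difference of lattice points in two semigroups attached to $\nu$. Theorem~\ref{KK1} then gives limits equal to differences of covolumes of convex regions. Replacing $\mathbb{L},\mathbb{I}$ by $\mathbb{L}^{[a]},\mathbb{I}^{[a]}$ enlarges nothing, and the associated bodies increase to those of $\mathbb{L},\mathbb{I}$ because $I^{[a]}_n=I_n$ for $n\le a$. So, exactly as in the proof of Theorem~\ref{approx_noeth_filt}(iii), countable additivity of Lebesgue measure gives $H_a(m,n)\to H_{\mathbb{L}\vert\mathbb{I}}(m,n)$.

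Granting this convergence, the conclusion follows by a bivariate version of Lemma~\ref{elementary}. Each $H_a$ is a homogeneous polynomial of degree $d$ in the $d$ monomials $m^{d-i}n^i$, $0\le i\le d-1$. Pointwise convergence at $d$ points $(m,n)=(1,n_j)$ with distinct $n_j$ inverts a Vandermonde system. This forces the coefficient vectors to converge. The limit $H_{\mathbb{L}\vert\mathbb{I}}$ is therefore a homogeneous polynomial of degree $d$ with no $n^d$ term, and its coefficients are nonnegative as limits of nonnegative numbers.

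I expect the obstacle to be the middle step. Representing $\lambda_R(I_{nt}/L_{mt}I_{nt})$ through a finite-volume difference of Okounkov bodies requires a uniform truncation $\mathfrak{m}_R^{ct}$ that does not change the length, that is, $\mathfrak{m}_R^{ct}\cap I_{nt}\subseteq L_{mt}I_{nt}$. Securing this uniformly in $t$ is exactly where analytic irreducibility and a linear Artin--Rees or Izumi bound must enter. If this fails, I would instead invoke \cite[Theorem 3.11]{CM22} and its proof directly, since the statement is precisely that result.
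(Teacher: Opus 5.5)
Your outline breaks at exactly the step you flag, and the paper only cites \cite{CM22} here, so nothing in it fills the hole. Both the existence of $\lim_{t}\lambda_R(I_{nt}/L_{mt}I_{nt})/t^d$ for the original filtrations and the convergence $H_a(m,n)\to H_{\mathbb{L}\vert\mathbb{I}}(m,n)$ depend on writing the length as a difference of two truncated covolumes. That requires $I_{nt}\cap\mathfrak{m}_R^{ct}\subseteq L_{mt}I_{nt}$ for one fixed $c$ and all $t$. You do not prove this, and falling back on \cite[Theorem 3.11]{CM22} is circular.

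Analytic irreducibility or an Izumi bound cannot supply this inclusion. Izumi compares $\nu$ with the $\mathfrak{m}_R$-adic order, whereas you need a linear Artin--Rees bound for the members of an arbitrary family, and nothing controls that when $\mathbb{I}$ is non-Noetherian. For instance, take $R=k[[x,y,z]]$, $L_n=\mathfrak{m}_R^n$, and $I_n=x^{n+1}\big((x)+(y,z)^{n^3}\big)$ for $n\geq 1$. This is a filtration, since $I_aI_b\subseteq (x^{a+b+2})\subseteq I_{a+b}$ and $I_{n+1}\subseteq (x^{n+2})\subseteq I_n$. Consider the monomials $x^{t+1}y^az^b$ with $t^3\leq a+b<t^3+t$. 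They lie in $I_t\cap\mathfrak{m}_R^{ct}$ as soon as $t^2\geq c$. They do not lie in $\mathfrak{m}_R^tI_t$: after cancelling $x^{t+1}$, every monomial of $\mathfrak{m}_R^t\big((x)+(y,z)^{t^3}\big)$ not divisible by $x$ has degree at least $t^3+t$. So your inclusion fails for every $c$. Worse, there are more than $t\cdot t^3$ such monomials, so $\lambda_R(I_t/L_tI_t)\geq t^4$ and $\lambda_R(I_t/L_tI_t)/t^3\to\infty$.

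So the limit your middle step is meant to produce need not be finite for an arbitrary filtration of nonzero ideals. As displayed, the statement fails for this $\mathbb{I}$. A correct version needs some additional control on $\mathbb{I}$ that is not in the displayed hypotheses, whatever restriction \cite{CM22} actually imposes; such control holds, for example, for Noetherian filtrations. Any proof has to use that extra hypothesis precisely at the point you left open.

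The remaining pieces are sound as far as they go:
\begin{itemize}
\item Bhattacharya's theorem handles the Noetherian truncations along $t\in c\mathbb{Z}$. The other residue classes of $t$ still need their own sandwich argument, since $\lambda_R(I/LI)$ is not monotone in $I$.
\item Your Vandermonde argument at the points $(1,n_j)$ correctly turns pointwise convergence of the $H_a$ into convergence of coefficients. Nonnegativity and the absence of an $n^d$ term then pass to the limit.
\end{itemize}
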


In view of Theorem \ref{CM} we may write $$H_{\mathbb{L}\vert \mathbb{I}}(m,n) = \sum_{i=0}^{d-1} \dfrac{e_i\left(\mathbb{L} \vert \mathbb{I}\right)}{(d-i)!i!}m^{d-i}n^i,$$ where $e_i\left(\mathbb{L} \vert \mathbb{I}\right) \in \mathbb{R}_{\geq 0}$ and will be called the \emph{mixed multiplicities} of $\mathbb{L}$ and $\mathbb{I}$. The following is a version of the ``Volume = Multiplicity formula''.

\begin{theorem}\cite[Corollary 3.13]{CM22}\label{vol=mult}
 Let the assumptions be as in Theorem \ref{CM}. Then $$e_i\left(\mathbb{L} \vert \mathbb{I}\right) = \lim_{p\to\infty}\dfrac{e_i\left(L_p \vert I_p\right)}{p^d}.$$
\end{theorem}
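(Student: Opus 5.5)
The plan is to deduce the statement from Theorem \ref{CM} by showing that the polynomials attached to the adic filtrations $\{L_p^n\}_{n}$ and $\{I_p^n\}_{n}$, after rescaling by $p^d$, converge pointwise to $H_{\mathbb{L}\vert\mathbb{I}}$. Coefficient convergence will then follow from a two-variable version of Lemma \ref{elementary}.

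For a fixed $p$, apply Theorem \ref{CM} to the adic filtrations $\{L_p^n\}_{n\in\mathbb{N}}$ and $\{I_p^n\}_{n\in\mathbb{N}}$. This gives
\[
H_p(m,n):=\lim_{t\to\infty}\frac{\lambda_R\left(I_p^{nt}/L_p^{mt}I_p^{nt}\right)}{t^d}=\sum_{i=0}^{d-1}\frac{e_i(L_p\vert I_p)}{(d-i)!\,i!}m^{d-i}n^i .
\]
All $H_p/p^d$ and $H_{\mathbb{L}\vert\mathbb{I}}$ are homogeneous of degree $d$ in $(m,n)$. Evaluating at the $d$ points $(1,j)$, $j=0,\dots,d-1$, and inverting the Vandermonde matrix exactly as in the proof of Lemma \ref{elementary}, it suffices to prove
\[
\lim_{p\to\infty}\frac{H_p(m,n)}{p^d}=H_{\mathbb{L}\vert\mathbb{I}}(m,n)\qquad\text{for all } m,n\in\mathbb{N}.
\]
Since every $e_i$ has the form $e_i=e_i\cdot 1^{d-i}\cdot 1^i\cdot(\ldots)$ recovered linearly from these values, this yields $e_i(\mathbb{L}\vert\mathbb{I})=\lim_p e_i(L_p\vert I_p)/p^d$.

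To prove the pointwise limit, I would convert lengths into lattice-point counts, in the spirit of Section \ref{section 4}. The difficulty is that $I_{nt}$ need not be $\mathfrak{m}_R$-primary, so $\lambda_R(I_{nt}/L_{mt}I_{nt})$ is not a difference of finite colengths and is not monotone under inclusions of ideals. The fix I would use is this:
\begin{itemize}
\item Since $R$ is analytically irreducible, fix a valuation $\nu$ as in Lemma \ref{lem_valuation} (after a Cutkosky-type construction as in Lemma \ref{lem1}).
\item Choose $c$ with $\mathfrak{m}_R^{c}\subseteq L_1$, so that $\mathfrak{m}_R^{cm}\subseteq L_m$.
\item By Izumi's theorem, choose a constant $\beta$ such that $\nu(f)\ge\beta\, k$ forces $f\in\mathfrak{m}_R^{k}$.
\item Then for a suitable linear function $\ell(m,n)$, set $K_{m,n}:=\{f\in R:\nu(f)\ge \ell(m,n)\}$ and write
\[
\lambda_R(I_{n}/L_{m}I_{n})=\lambda_R\bigl(I_n/(I_n\cap K_{m,n})\bigr)-\lambda_R\bigl(L_mI_n/(L_mI_n\cap K_{m,n})\bigr).
\]
\end{itemize}
Each term on the right is the number of $\nu$-values in a truncated slice of a semigroup in $\mathbb{N}^{d}\times\mathbb{N}^2$: one built from $\{I_n\}$, the other from $\{L_mI_n\}$. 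Its growth in $t$ is a truncated Newton--Okounkov volume, by Theorem \ref{KK1} applied after truncation, as in \cite{Cut14}. The same description applies to the $p$-th adic filtrations, with semigroups generated by the degree-$p$ pieces.

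The main step, and the expected obstacle, is showing that the rescaled Okounkov bodies of the adic filtrations generated by $I_p$ and $L_pI_p$ exhaust those of $\mathbb{I}$ and $\mathbb{L}\mathbb{I}$ as $p\to\infty$. This should be done as in the proof of Theorem \ref{approx_noeth_filt}\,$(iii)$. The inclusions $I_p^{n}\subseteq I_{pn}$ and $L_p^{m}I_p^{n}\subseteq L_{pm}I_{pn}$ give containment of the bodies, scaled by $1/p$. Every point of the limiting body arising at level $p$ already lies in the level-$p$ body. Hence, along $p=r!$, the bodies increase and their union is dense, and countable additivity of Lebesgue measure gives convergence of volumes; for general $p$ one sandwiches using $p\mid p'$. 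The truncation by $K_{m,n}$ must be handled uniformly in $p$. Here I would use that the truncating half-space is fixed after rescaling, so the bodies stay inside a fixed compact set, which is exactly where Izumi's linear bound is needed.
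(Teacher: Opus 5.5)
The paper does not prove this statement; it only quotes \cite[Corollary 3.13]{CM22}. Your Vandermonde reduction is fine. The exhaustion of the rescaled bodies also works, but not literally at $p=T$: for $f\in L_{mT}I_{nT}$ take $p=mnT$ and use $f^{mn}\in L_{mT}^{mn}I_{nT}^{mn}\subseteq L_p^{m}I_p^{n}$. The step everything rests on, however, is false as stated. This is the truncation identity
\[
\lambda_R(I_n/L_mI_n)=\lambda_R\bigl(I_n/(I_n\cap K_{m,n})\bigr)-\lambda_R\bigl(L_mI_n/(L_mI_n\cap K_{m,n})\bigr).
\]
The right-hand side equals $\lambda_R\bigl(I_n/(L_mI_n+I_n\cap K_{m,n})\bigr)$, so the identity holds exactly when $I_n\cap K_{m,n}\subseteq L_mI_n$. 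Izumi's bound together with $\mathfrak{m}_R^{cm}\subseteq L_m$ only gives $I_n\cap K_{m,n}\subseteq I_n\cap L_m$. Getting from $I_n\cap L_m$ down to $L_mI_n$ is an Artin--Rees problem. You would need $I_n\cap\mathfrak{m}_R^{\ell(m,n)}\subseteq L_mI_n$ with $\ell$ linear, uniformly along the filtration, and for the $p$-adic approximants uniformly in $p$ after rescaling. Nothing in your sketch supplies this, and for non-Noetherian filtrations it can fail. In $R=k[x,y]_{(x,y)}$ let $I_n$ be generated by the monomials $x^uy^v$ with $u\log(v+2)\ge n$. Since $u\log(v+2)$ is superadditive, this is a filtration, and $xy^{V_n}$ with $V_n=\lceil e^n\rceil-2$ is a minimal generator of $I_n$. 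Its $\nu$-value is at least a fixed positive multiple of $V_n+1$. Hence $xy^{V_{nt}}$ lies in $I_{nt}\cap K_{mt,nt}$ for $t\gg0$ whatever linear $\ell$ you choose, but not in $\mathfrak{m}_RI_{nt}\supseteq L_{mt}I_{nt}$. Likewise $(xy^{V_p})^{s}$ is a minimal generator of $I_p^{s}$ of order about $se^p$. So the rescaled truncation level for the $p$-adic families would have to tend to infinity with $p$, and the truncating half-space is not ``fixed after rescaling''.

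This cannot be repaired by bookkeeping alone. In $k[x,y,z]_{(x,y,z)}$, take the analogous filtration: monomials $x^uy^\alpha z^\beta$ with $u\log(\alpha+\beta+2)\ge n$. Its minimal generators include the $V_n+1$ monomials $xy^\alpha z^\beta$ with $\alpha+\beta=V_n$. Hence $\lambda_R(I_n/L_mI_n)\ge\mu(I_n)\ge e^n-1$ for every $\mathfrak{m}_R$-primary filtration $\mathbb{L}$ and every $m\ge1$. So for an arbitrary filtration $\mathbb{I}$, as Theorem \ref{CM} is restated here, $\lim_t\lambda_R(I_{nt}/L_{mt}I_{nt})/t^d$ need not be finite. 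Your identity, if it held, would express that limit as a truncated volume of a compact body. A correct argument therefore has to bring in an additional hypothesis on $\mathbb{I}$, of linear Artin--Rees or bounded-generation type. It must use that hypothesis exactly at this step, uniformly in $p$: either to force $I_n\cap K_{m,n}\subseteq L_mI_n$, or to show that the error $\lambda_R\bigl((L_mI_n+I_n\cap K_{m,n})/L_mI_n\bigr)$ is $o(t^d)$ along the rays $(mt,nt)$. Your proposal treats this step as automatic.
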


Henceforth, we restrict to the following graded setup:

\subsection{Setup}\label{setup5} Let $R$ be as in Setup \ref{setup}. Let $I\subseteq R$ be a graded ideal generated in equal degrees $d_I$ such that $1\leq \mathrm{height}\;I\leq d-1$.

\subsection{Intersection numbers and mixed multiplicities of equigenerated ideals}

We recall a construction from Theorem \ref{symb_geom}. Adopt Setup \ref{setup5}. Let $V = \mathrm{Proj}\;R$ with a very ample invertible sheaf $\mathcal{O}_V(1)$, and $\mathcal{I}$ be the ideal sheaf associated to $I$ on $V$. There is an integer $m_0\geq 0$ such that ${\left(H^1_{\mathfrak{m}_R}(R)\right)}_m = 0$ for all $m\geq m_0$. From the Serre-Grothendieck correspondence one has $$\Gamma\left(V, \mathcal{I}^n\otimes \mathcal{O}_V(m)\right) = {\left(I^n : \mathfrak{m}_R^{\infty}\right)}_m$$ for all integers $m\geq m_0$ and $n\geq 0$. Let $$\pi \colon X = \mathrm{Bl}_{\mathcal{I}}(V) \longrightarrow V$$ be the blow-up of $V$ along $\mathcal{I}$. Then $\mathcal{I}$ becomes locally principal on $X$, i.e., there is an effective Cartier divisor $E_I$ on $X$ such that $\mathcal{I}\mathcal{O}_X = \mathcal{O}_{X}(-E_I)$. Let $H$ be the pullback to $X$ of a hyperplane section on $V$. By \cite[Lemma 3.3]{CEL01} or \cite[Lemma 5.4.24]{Laz04a}, there exists an integer $n_0\geq 0$ such that for all integers $n\geq n_0$, $$\pi_*\mathcal{O}_X(-nE_I) = \mathcal{I}^n.$$ We conclude using the projection formula that for all integers $m\geq m_0$ and $n\geq n_0$,
\begin{equation}\label{eq_imp_1}
\Gamma\left(X,\mathcal{O}_X(mH-nE_I)\right) = \Gamma\left(V, \mathcal{I}^n\otimes \mathcal{O}_V(m)\right) = {\left(I^n : \mathfrak{m}_R^{\infty}\right)}_m.
\end{equation}

The next result (see also \cite[Proposition 5.23]{DRT25}) identifies the mixed multiplicities $e_i\left(\mathfrak{m}_R \vert I\right)$ with intersection numbers of nef divisors on $X$. We give a direct proof.

\begin{proposition}\label{prop_intersect}
Let the notations be as above. Then the following statements are true:
\begin{enumerate}
 \item[$(i)$] $H$ is a big and nef divisor on $X$.
 \item[$(ii)$] $d_IH-E_I$ is a nef divisor on $X$. Moreover, it is big if and only if $I$ has maximal analytic spread.
 \item[$(iii)$] For each $i=0,\ldots,d-1$, one has $e_i\left(\mathfrak{m}_R \vert I\right) = \left((d_IH-E_I)^{i}\cdot H^{d-1-i}\right)$.
\end{enumerate}
\end{proposition}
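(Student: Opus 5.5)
The plan is to realize $X$ as a subvariety of a product of projective spaces and read all three statements off the bigraded coordinate ring; positivity is then automatic from base-point freeness. Since $I$ is generated in the single degree $d_I$, choose minimal homogeneous generators $g_0,\ldots,g_r\in R_{d_I}$. The blow-up $X=\mathrm{Bl}_{\mathcal{I}}(V)$ is then $\mathrm{Proj}$ of the bigraded ring
\[
T=\bigoplus_{m,n\geq 0} {\left(I^n\right)}_{m+nd_I}
\]
(equivalently the Rees algebra $R[It]$ regraded). This embeds $X\subseteq V\times\mathbb{P}^r\subseteq \mathbb{P}^N\times\mathbb{P}^r$, with $\mathcal{O}_X(1,0)=\mathcal{O}_X(H)$ and $\mathcal{O}_X(0,1)=\mathcal{O}_X(d_IH-E_I)$. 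The second identification holds because $\mathcal{I}\mathcal{O}_X\otimes\mathcal{O}_V(d_I)$ is generated by the sections $g_j$, and these pull back to the $\mathbb{P}^r$ coordinates.

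\emph{Part (i).} $H$ is the pullback of an ample divisor, so it is base point free, hence nef. It is big because $\pi$ is birational, so $\mathrm{vol}_X(H)=\mathrm{vol}_V(\mathcal{O}_V(1))=e(R)>0$.

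\emph{Part (ii).} $d_IH-E_I$ is the pullback of $\mathcal{O}_{\mathbb{P}^r}(1)$ under the second projection, hence globally generated and therefore nef. A nef divisor is big iff its top self-intersection is positive. Here $(d_IH-E_I)^{d-1}=\deg(p_2)\cdot\deg(\mathrm{im}\,p_2)$ if $p_2\colon X\to\mathbb{P}^r$ is generically finite onto its image, and $0$ otherwise. The image of $p_2$ is $\mathrm{Proj}\,k[I_{d_I}]$, which is also $\mathrm{Proj}$ of the fiber cone $F(I)=\oplus_n I^n/\mathfrak{m}_RI^n$; this uses equigeneration, since $k[g_0,\ldots,g_r]\cong F(I)$. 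That image has dimension $\ell(I)-1$. So bigness holds iff $\ell(I)-1=d-1$, i.e. $\ell(I)=d$ is maximal.

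\emph{Part (iii).} Compute the bigraded Hilbert polynomial of $T$. For $m,n\gg0$, Serre vanishing for the ample bundle $\mathcal{O}(1,1)$ (combined with nefness of both factors, or just the standard theory of multigraded Hilbert polynomials on $\mathbb{P}^N\times\mathbb{P}^r$) gives
\[
\dim_k T_{(m,n)}=\chi\bigl(X,\mathcal{O}_X(mH+n(d_IH-E_I))\bigr)=\sum_{i}\binom{d-1}{i}\frac{(H^{d-1-i}\cdot(d_IH-E_I)^i)}{(d-1)!}m^{d-1-i}n^i+\text{lower terms},
\]
by asymptotic Riemann--Roch. On the algebraic side, note that $\mathfrak{m}_R^mI^n/\mathfrak{m}_R^{m+1}I^n$ is, by equigeneration, exactly the degree $m+nd_I$ piece of $I^n$ in degrees $\geq nd_I$, i.e. $T_{(m,n)}$. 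Comparing top-degree coefficients with the Bhattacharya polynomial $\sum_i \frac{e_i(\mathfrak{m}_R\vert I)}{(d-1-i)!\,i!}m^{d-1-i}n^i$ yields $e_i(\mathfrak{m}_R\vert I)=((d_IH-E_I)^i\cdot H^{d-1-i})$, since $\binom{d-1}{i}/(d-1)!=1/((d-1-i)!\,i!)$.

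\emph{Main obstacle.} The delicate point is the identification $\dim_k T_{(m,n)}=h^0(X,\mathcal{O}_X(m,n))$ for $m,n\gg0$, with $T$ possibly not saturated. I would handle it by noting that $T$ and its saturation $\oplus H^0$ agree in large bidegrees; this follows from \eqref{eq_imp_1} together with the standard fact that $\mathrm{Proj}$ of a bigraded ring and its truncations coincide. Alternatively one can simply pass to the localization at $\mathfrak{m}_R$, where Bhattacharya's definition is stated.
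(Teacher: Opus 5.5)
Your proposal is correct in substance but takes a different route from the paper. The paper proves $(iii)$ along rays. It writes $\sum_i\binom{d-1}{i}e_i(\mathfrak{m}_R\vert I)m^{d-1-i}n^i$ as $\lim_t \dim_k(I^{nt})_{d_Int+mt}/(t^{d-1}/(d-1)!)$. It then replaces $I^{nt}$ by $I^{nt}:\mathfrak{m}_R^{\infty}$ in these degrees, using the linear bound $\mathrm{reg}(I^v)\le d_Iv+u_0$ from \cite{CHT99, Kod00, TW05}. Next it passes to sections on $X$ via \eqref{eq_imp_1}, and finally uses that the volume of the nef divisor $mH+n(d_IH-E_I)$ equals its top self-intersection. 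Bigness in $(ii)$ is read off from $e_{d-1}(\mathfrak{m}_R\vert I)=(d_IH-E_I)^{d-1}$ and Trung's criterion \cite{Tru01}, which says $e_{d-1}>0$ iff $\ell(I)=d$.

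You instead treat $T$ as the bihomogeneous coordinate ring of $X\subseteq\mathbb{P}^N\times\mathbb{P}^r$. You match its whole bigraded Hilbert polynomial with Snapper's polynomial $\chi(X,\mathcal{O}_X(mH+n(d_IH-E_I)))$. You prove the bigness criterion directly from $p_2$ and $p_2(X)=\mathrm{Proj}\,k[I_{d_I}]\cong\mathrm{Proj}\,F(I)$. This is more self-contained: it needs neither the regularity theorem nor Trung's result. Combined with $(iii)$, it actually reproves Trung's criterion for equigenerated ideals.

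The step you should repair is your resolution of the ``main obstacle''. \eqref{eq_imp_1} identifies $H^0(X,\mathcal{O}_X(mH+n(d_IH-E_I)))$ with $(I^n:\mathfrak{m}_R^{\infty})_{m+nd_I}$, not with $T_{(m,n)}=(I^n)_{m+nd_I}$. The discrepancy is $H^0_{\mathfrak{m}_R}(R/I^n)_{m+nd_I}$. Showing that it vanishes once $m$ exceeds a bound independent of $n$ is exactly the regularity input the paper imports. Compatibility of $\mathrm{Proj}$ with truncations does not give this, and localizing at $\mathfrak{m}_R$ does not touch it.

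What does work is the bigraded Serre theorem you mention only parenthetically. Let $B=(x)\cap(y)$ be the irrelevant ideal of $\mathbb{P}^N\times\mathbb{P}^r$. Then $\dim_kT_{(m,n)}-\chi(X,\mathcal{O}_X(m,n))=\sum_i(-1)^i\dim_kH^i_B(T)_{(m,n)}$. Every $H^i_B(T)_{(m,n)}$ vanishes for $m,n\gg0$ by the Mayer--Vietoris sequence for $(x)$, $(y)$, $(x)+(y)$, because:
\begin{itemize}
\item $H^i_{(x)}(T)$ vanishes in large $x$-degree, uniformly in the $y$-degree;
\item $H^i_{(y)}(T)$ vanishes in large $y$-degree;
\item $H^i_{(x,y)}(T)$ vanishes in large total degree.
\end{itemize}
With this in place of the appeal to \eqref{eq_imp_1}, your proof of $(iii)$ is complete. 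Note that you never need $T_{(m,n)}=H^0$ separately, only $\dim_kT_{(m,n)}=\chi$ for $m,n\gg0$.
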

\begin{proof}
Note that $\mathcal{O}_V(1)$ is globally generated on $V$, so $\pi^* \mathcal{O}_V(1) = \mathcal{O}_X(H)$ is globally generated on $X$. Moreover, $\mathrm{vol}_X(H) = (H^{d-1}) = {\mathcal{O}_V(1)}^{d-1} = e(R)>0$. Hence, $H$ is big and nef.

As $I$ is generated in degree $d_I$, the coherent sheaf $\mathcal{I}\otimes \mathcal{O}_V(d_I)$ is globally generated on $V$. Consequently, its pullback $\pi^*\left(\mathcal{I}\otimes \mathcal{O}_V(d_I)\right) = \mathcal{O}_X(d_IH-E_I)$ is globally generated on $X$, and hence $d_IH-E_I$ nef.

From the definition of mixed multiplicities and using a grading argument \cite[Lemma 5.2]{DDRV25}, it follows that for all integers $m\geq 1$ and $n\geq 1$, one has $$\sum_{i=0}^{d-1}\binom{d-1}{i} e_i(\mathfrak{m}_R\vert I) m^{d-1-i}n^i = \lim_{t\to\infty}\dfrac{\lambda_R\left(\mathfrak{m}_R^{mt}I^{nt}/\mathfrak{m}_R^{mt+1}I^{nt}\right)}{t^{d-1}/(d-1)!} = \lim_{t\to\infty}\dfrac{\dim_k \left(I^{nt}\right)_{d_Int+mt}}{t^{d-1}/(d-1)!}.$$

It follows from \cite{CHT99}, \cite{Kod00}, and \cite{TW05}, that there exists an integer $u_0\geq 0$ such that $\mathrm{reg}(I^v) \leq d_Iv+u_0$ for all integers $v\geq 0$. Therefore, $${\left(I^v : \mathfrak{m}_R^{\infty}\right)}_u = {\left(I^v\right)}_u$$ for all integers $u\geq d_Iv+u_0$ and $v\geq 0$. From the above identities and \eqref{eq_imp_1}, we obtain
\begin{align*}
 \lim_{t\to\infty}\dfrac{\dim_k \left(I^{nt}\right)_{d_Int+mt}}{t^{d-1}/(d-1)!} &= \lim_{t\to\infty}\dfrac{\dim_k \left(I^{nt} : \mathfrak{m}_R^{\infty}\right)_{d_Int+mt}}{t^{d-1}/(d-1)!}\\
 &= \lim_{t\to\infty}\dfrac{\dim_k\Gamma\left(X,\mathcal{O}_X\left(t\left(n(d_IH-E_I) + mH\right)\right)\right)}{t^{d-1}/(d-1)!}\\
 &= \left(mH+n(d_IH-E_I)\right)^{d-1}\\
 &= \sum_{i=0}^{d-1}\binom{d-1}{i} \left((d_IH-E_I)^i\cdot H^{d-1-i}\right)m^{d-1-i}n^i.
\end{align*}

Here, we have used the fact that the volume of a nef divisor equals its top self-intersection number, see \cite[Corollary 1.4.41]{Laz04a}. The desired expression of $e_i(\mathfrak{m}_R\vert I)$ follows from comparing the coefficients. In particular, $e_{d-1}\left(\mathfrak{m}_R \vert I\right) = \left(d_IH-E_I\right)^{d-1} = \mathrm{vol}_X\left(d_IH-E_I\right)$. Thus, $d_IH-E_I$ is big if and only if $e_{d-1}\left(\mathfrak{m}_R \vert I\right)>0$ if and only if $I$ has maximal analytic spread, see \cite[Corollary 3.6 and Corollary 3.7]{Tru01}.
\end{proof}

\begin{remark}\label{rem_intersect}
Proposition \ref{prop_intersect} admits the following generalization. Let $X^{\prime}$ be a projective variety equipped with a projective birational morphism $\varphi \colon X^{\prime} \to V$ such that $\mathcal{I}\mathcal{O}_{X^{\prime}}$ is locally principal. Write $\mathcal{I}\mathcal{O}_{X^{\prime}} = \mathcal{O}_{X^{\prime}}(-E_I^{\prime})$ for some effective Cartier divisor $E_I^{\prime}$ on $X^{\prime}$. Let $H^{\prime}$ denote the pullback to $X^{\prime}$ of a hyperplane section on $V$. By the universal property of blow up \cite[Proposition 7.14]{Har77}, there exists a unique morphism $\theta \colon X^{\prime} \to X$ such that $\varphi = \pi \circ \theta$. Hence, by \cite[Chapter I, Proposition 6]{Kle66}, for every integer $0\leq j\leq d-1$, we have $$\big(H^{j}\cdot E_I^{d-1-j}\big) = \big((\theta^*H)^{j}\cdot (\theta^*E_I)^{d-1-j}\big) = \big({H^\prime}^{j}\cdot {E_I^{\prime}}^{d-1-j}\big).$$ In particular, for every integer $0\leq i\leq d-1$, $$e_i\left(\mathfrak{m}_R \vert I\right) = \big((d_IH^{\prime}-E_I^{\prime})^{i}\cdot {H^{\prime}}^{d-1-i}\big).$$
\end{remark}

\subsection{Inequalities involving intersection numbers of nef divisors}

The following inequalities were established in \cite[Corollary 1.6.3 and Example 1.6.4]{Laz04a}, where the base field is assumed to be algebraically closed (also see \cite[Lemma 33]{Huh12}). These inequalities are in fact true over an arbitrary field, see \cite[Corollary 6.3]{Cut15}.

\begin{theorem}[Inequalities of Khovanskii and Teissier]\label{Khovanskii-Teissier}
Let $Y$ be an $n$-dimensional integral projective variety over a field $k$. Let $\alpha, \beta \in {N^1\left(Y\right)}_{\mathbb{R}}$ be nef classes on $Y$. Put $s_i = \left(\alpha^i\cdot \beta^{n-i}\right)$. Then for all $i=1,\ldots,n-1$,
\begin{equation}\label{ineq1}
 s_i^2 \geq s_{i-1}s_{i+1}.
\end{equation}
Moreover,
\begin{equation}\label{ineq3}
 {\left(\left(\alpha + \beta\right)^{n}\right)}^{\tfrac{1}{n}} \geq {\left(\alpha^{n}\right)}^{\tfrac{1}{n}} + {\left(\beta^{n}\right)}^{\tfrac{1}{n}}.
\end{equation}
\end{theorem}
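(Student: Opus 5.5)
The plan is to reduce \eqref{ineq1} to the Hodge index theorem on a surface, and then deduce \eqref{ineq3} from \eqref{ineq1} by a purely numerical argument.

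\textbf{Step 1: reduction to $\mathbb{Q}$-ample, then integral very ample classes.} Both sides of \eqref{ineq1} and \eqref{ineq3} are polynomial, hence continuous, in $\alpha,\beta\in N^1(Y)_{\mathbb{R}}$. Nef classes are limits of ample $\mathbb{Q}$-classes: fix an ample $A$ and replace $\alpha,\beta$ by $\alpha+\varepsilon A$ and $\beta+\varepsilon A$. It therefore suffices to treat ample $\mathbb{Q}$-classes. By homogeneity ($s_i$ is bihomogeneous of bidegree $(i,n-i)$ in $(\alpha,\beta)$, so both sides of \eqref{ineq1} scale the same way), we may further assume that $\alpha,\beta$ are classes of very ample Cartier divisors $D,E$.

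\textbf{Step 2: reduction to an algebraically closed field and to surfaces.} Intersection numbers are invariant under the flat base change $Y_{\bar k}\to Y$. However, $Y_{\bar k}$ may fail to be integral. I would handle this by writing its fundamental cycle as $\sum m_j[Y_j]$ with $Y_j$ integral. The intersection numbers of the pulled-back classes are then $s_i=\sum_j m_j s_i^{(j)}$. The Cauchy--Schwarz-type combination of the inequalities $s_i^{(j)2}\ge s^{(j)}_{i-1}s^{(j)}_{i+1}$ does not immediately give the inequality for the sum, so I would instead prove the stronger mixed statement: for nef $\beta_1,\dots,\beta_{n-2}$ on an integral variety,
\[
(\alpha\cdot\beta\cdot\beta_1\cdots\beta_{n-2})^2\ \ge\ (\alpha^2\cdot\beta_1\cdots\beta_{n-2})(\beta^2\cdot\beta_1\cdots\beta_{n-2}).
\]
This is a reverse Cauchy--Schwarz inequality for a quadratic form of Lorentzian signature. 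It is additive over components only in a weak sense, so alternatively I would simply follow \cite[Corollary 6.3]{Cut15}, where the arbitrary-field case is handled by base change. This is the step I expect to be the main technical obstacle.

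Over $\bar k$, with $Y$ integral, I would cut by $n-2$ general members of the very ample linear systems. Taking $i-1$ copies from $|D|$ and $n-i-1$ copies from $|E|$, Bertini gives an integral surface $S$ with
\[
s_{i-1}=(E^2\cdot S),\qquad s_i=(D\cdot E\cdot S),\qquad s_{i+1}=(D^2\cdot S).
\]
Then pass to a resolution $\widetilde S\to S$; this exists for surfaces in any characteristic by Lipman. By the projection formula, the intersection numbers of pullbacks are unchanged.

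\textbf{Step 3: Hodge index on $\widetilde S$.} The pullbacks of $D$ and $E$ are nef and big, and the Hodge index theorem gives $(D\cdot E)^2\ge (D^2)(E^2)$. This is exactly \eqref{ineq1}.

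\textbf{Step 4: \eqref{ineq1} $\Rightarrow$ \eqref{ineq3}.} If some $s_i=0$, the desired bound follows directly from nonnegativity. Otherwise, log-concavity of the sequence $(s_i)$ gives
\[
s_i\ \ge\ s_0^{(n-i)/n}\,s_n^{i/n}.
\]
Expanding
\[
\left((\alpha+\beta)^n\right)=\sum_i\binom{n}{i}s_i\ \ge\ \sum_i\binom{n}{i}s_0^{(n-i)/n}s_n^{i/n}=\left(s_0^{1/n}+s_n^{1/n}\right)^n
\]
yields \eqref{ineq3}.
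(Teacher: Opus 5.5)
The paper gives no proof of this statement. It cites \cite[Corollary 1.6.3 and Example 1.6.4]{Laz04a} over algebraically closed fields and \cite[Corollary 6.3]{Cut15} over arbitrary fields. Over $\bar k$ your argument is essentially Lazarsfeld's proof and is fine: perturb to ample classes, scale to very ample ones, cut by general members of $|D|$ and $|E|$ down to an integral surface, resolve, apply Hodge index, then expand $(\alpha+\beta)^n$ binomially.

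The genuine gap is the descent from $k$ to $\bar k$ in Step 2, and your proposed remedy does not close it. The mixed inequality $(\alpha\cdot\beta\cdot\beta_1\cdots\beta_{n-2})^2\ge(\alpha^2\cdot\beta_1\cdots\beta_{n-2})(\beta^2\cdot\beta_1\cdots\beta_{n-2})$ is no more additive over the components of $[Y_{\bar k}]=\sum_j m_j[Y_j]$ than \eqref{ineq1} is. Triples $(a_j,b_j,c_j)$ with $b_j^2\ge a_jc_j$ need not add up to such a triple, e.g.\ $(4,2,1)+(1,2,4)=(5,4,5)$. Falling back on \cite{Cut15} just reproduces the paper's citation rather than supplying an argument.

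The missing idea is the Galois symmetry of the components.
\begin{itemize}
\item Since $Y$ is integral, $\mathrm{Aut}(\bar k/k)$ acts on $Y_{\bar k}$ by scheme automorphisms over $Y$. It permutes the components $Y_1,\ldots,Y_r$ transitively, because $Y_{\bar k}\to Y_{k^{\mathrm{sep}}}$ is a homeomorphism and $\mathrm{Gal}(k^{\mathrm{sep}}/k)$ is transitive on the components of $Y_{k^{\mathrm{sep}}}$.
\item Each such $\sigma$ identifies the local rings at the generic points of $Y_j$ and $\sigma(Y_j)$, so all $m_j$ equal a common $m$.
\item Each $\sigma$ fixes the pulled-back bundles $\mathcal{O}_Y(aD+bE)_{\bar k}$. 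It therefore induces $\sigma$-semilinear isomorphisms between their cohomology on $Y_j$ and on $\sigma(Y_j)$. So the Euler characteristic polynomials in $(a,b)$ agree, and hence so do all the intersection numbers.
\end{itemize}
Therefore $s_i=mr\,s_i^{(1)}$ for every $i$. Now \eqref{ineq1} holds on the integral $\bar k$-variety $Y_1$, where your Bertini and Hodge index argument applies, and it transfers to $Y$.

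A smaller point concerns Step 4. Log-concavity alone does not exclude internal zeros: $(1,0,0,1)$ satisfies every instance of \eqref{ineq1} but violates $s_i\ge s_0^{(n-i)/n}s_n^{i/n}$. So ``follows from nonnegativity'' is not a valid justification when $s_0,s_n>0$. Instead, run Step 4 only for ample $\alpha,\beta$, where all $s_i>0$, and pass to nef classes by continuity as in Step 1.
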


The following version of Diskant inequality appears in \cite[Theorem F]{BFJ09}, where the base field is assumed to be algebraically closed of characteristic zero. Its extension to arbitrary fields is given in \cite{Cut15}.

\begin{theorem}[Diskant's inequality]\label{diskant}
Let $Y$ be an $n$-dimensional integral projective variety over a field $k$. Let $\alpha, \beta \in {N^1\left(Y\right)}_{\mathbb{R}}$ be big and nef classes on $Y$. Put $s_i = \left(\alpha^i\cdot \beta^{n-i}\right)$. Let $r(\alpha,\beta)$ be the largest real number $r$ such $\alpha - r\beta$ is pseudo-effective. Then $$\left(\frac{s_{n-1}}{s_0}\right)^{\tfrac{1}{n-1}} - \left(\left(\frac{s_{n-1}}{s_0}\right)^{\tfrac{n}{n-1}} - \frac{s_{n}}{s_0}\right)^{\tfrac{1}{n}} \leq r(\alpha,\beta) \leq \frac{s_n}{s_{n-1}}.$$
\end{theorem}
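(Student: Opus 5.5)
This is the Diskant inequality, stated with credit to \cite[Theorem F]{BFJ09} over algebraically closed fields of characteristic zero and to \cite{Cut15} in general. I would therefore not re-derive it from scratch. The plan is to reduce to the cited setting where possible and, for completeness, sketch the standard convex-geometric mechanism. Throughout, take $n\ge 2$ so that the exponent $\tfrac{1}{n-1}$ makes sense.

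\textbf{Upper bound.} This part is elementary. Let $r<r(\alpha,\beta)$, so that $\alpha-r\beta$ is pseudoeffective. Intersecting the psef class $\alpha-r\beta$ with the nef classes $\alpha^{n-1}$ gives $(\alpha-r\beta)\cdot\alpha^{n-1}\ge 0$, i.e. $s_n-r s_{n-1}\ge 0$. Here we use that the product of a psef class with a product of nef classes is nonnegative, which follows since nef classes are limits of amples and effective divisors meet amples nonnegatively. Moreover $s_{n-1}>0$ because both classes are big and nef. Letting $r\uparrow r(\alpha,\beta)$ gives $r(\alpha,\beta)\le s_n/s_{n-1}$.

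\textbf{Lower bound, reduction.} Over an algebraically closed field of characteristic zero I would invoke \cite[Theorem F]{BFJ09} directly. Its proof takes a one-parameter family $t\mapsto \mathrm{vol}(\alpha-t\beta)$ for $0\le t<r(\alpha,\beta)$. By the derivative formula for volume on the big cone, $\tfrac{d}{dt}\mathrm{vol}(\alpha-t\beta)=-n\langle(\alpha-t\beta)^{n-1}\rangle\cdot\beta$, with positive intersection products. The key estimate is the Teissier-type inequality $\langle(\alpha-t\beta)^{n-1}\rangle\cdot\beta\ge \mathrm{vol}(\alpha-t\beta)^{(n-1)/n}\,\mathrm{vol}(\beta)^{1/n}$. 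This, together with the concavity bound $\mathrm{vol}(\alpha-t\beta)^{1/n}\ge$ (linear in $t$) obtained from Khovanskii--Teissier, yields a differential inequality. Integrating it from $0$ to $r(\alpha,\beta)$, where the volume vanishes, produces exactly the displayed lower bound after normalizing by $s_0=\beta^n$.

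\textbf{Arbitrary fields.} Base change to $\bar k$ preserves intersection numbers, and nefness and bigness are preserved under flat base change. Pseudoeffectivity descends as well: if $\alpha-r\beta$ is psef over $\bar k$, then Galois averaging shows it is psef over $k$. Hence $r(\alpha,\beta)$ is unchanged. Integrality may fail after base change; I would handle this by passing to a component of $Y_{\bar k}$ with multiplicities, as done in \cite{Cut15}. Positive characteristic is the genuine obstacle, because the differentiability of the volume used in the ODE argument is established in \cite{LM09} via Okounkov bodies, which are characteristic-free, but the positive-product formalism is not. So I would follow \cite{Cut15}, replacing the differential argument by the Okounkov-body version of the classical Diskant inequality. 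Concretely, $\Delta(\alpha)\supseteq \Delta(\alpha-r\beta)+r\Delta(\beta)$, and one applies the convex-geometric Diskant inequality to the bodies $\Delta(\alpha)$ and $\Delta(\beta)$, whose mixed volumes bound the $s_i$. I expect matching these mixed volumes with the $s_i$ to be the hardest step.
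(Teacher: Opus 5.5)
\textbf{Verdict: the lower bound has a genuine gap.} Relying on \cite[Theorem F]{BFJ09} and \cite{Cut15} is all the paper does, and your proof of the upper bound $r(\alpha,\beta)\le s_n/s_{n-1}$ is correct. However, both mechanisms you offer for the lower bound run in the wrong direction.

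\textbf{The differential argument.} The estimate you call key, $\langle(\alpha-t\beta)^{n-1}\rangle\cdot\beta\ge \mathrm{vol}(\alpha-t\beta)^{(n-1)/n}\mathrm{vol}(\beta)^{1/n}$, is a \emph{lower} bound on $-\frac{1}{n}\frac{d}{dt}\mathrm{vol}(\alpha-t\beta)$. It says the volume drops at least this fast, so integrating it gives only $r(\alpha,\beta)\le (s_n/s_0)^{1/n}$. Concavity of $t\mapsto \mathrm{vol}(\alpha-t\beta)^{1/n}$ likewise only reproduces $r\le s_n/s_{n-1}$. A lower bound on $r$ needs an \emph{upper} bound on the derivative, namely
\[
\big(\langle(\alpha-t\beta)^{n-1}\rangle\cdot\beta\big)^{\frac{1}{n-1}}\le s_{n-1}^{\frac{1}{n-1}}-t\,s_0^{\frac{1}{n-1}}\qquad (0\le t<r(\alpha,\beta)).
\]
To get it, write $\alpha=(\alpha-t\beta)+t\beta$ and expand $s_{n-1}=\langle\alpha^{n-1}\rangle\cdot\beta$ using superadditivity of positive products. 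Then bound each term $\langle(\alpha-t\beta)^{n-1-j}\cdot\beta^{j}\rangle\cdot\beta$ from below by Khovanskii--Teissier. Put $x=s_{n-1}^{1/(n-1)}$ and $w=s_0^{1/(n-1)}$; this gives $\frac{d}{dt}\mathrm{vol}(\alpha-t\beta)\ge -n(x-tw)^{n-1}$. Integrate over $[0,r]$, using $\mathrm{vol}(\alpha-r\beta)=0$, to get $x^n-s_n w\ge (x-rw)^n$ with $x-rw\ge 0$. Dividing by $w^n$ and taking $n$-th roots gives exactly the stated bound.

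\textbf{The Okounkov-body route in positive characteristic.} This fails for the same reason. The inclusion $\Delta(\alpha)\supseteq\Delta(\alpha-r\beta)+r\Delta(\beta)$ shows that a translate of $r\Delta(\beta)$ fits in $\Delta(\alpha)$ whenever $\alpha-r\beta$ is big. So $r(\alpha,\beta)$ is at most the relative inradius of $\Delta(\alpha)$ with respect to $\Delta(\beta)$. The convex Diskant inequality bounds that inradius from \emph{below}, so it says nothing about $r(\alpha,\beta)$.

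The discrepancy can be strict. Take $\mathbb{P}^1\times\mathbb{P}^1$ with the flag given by the diagonal and a point on it. Then $\alpha=(2,1)$ and $\beta=(1,2)$ both have Okounkov body $\{(t,y): 0\le t\le 1,\ 0\le y\le 3-2t\}$, since $\alpha-tC$ and $\beta-tC$ are nef for $t\le 1$. So the convex inradius is $1$, whereas $r(\alpha,\beta)=1/2$. Here $1/2$ equals the Diskant bound, since $s_0=s_2=4$ and $s_1=5$.

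By contrast, the step you flagged as hardest is harmless. One has $n!\,V(\Delta(\alpha)[n-1],\Delta(\beta))\le s_{n-1}$, and the Diskant bound is decreasing in $s_{n-1}$.

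So for arbitrary $k$, simply invoke \cite{Cut15}, which the paper cites for exactly this extension, rather than replacing the differential argument. Your base-change detour then becomes unnecessary. Galois averaging would not handle purely inseparable extensions anyway.
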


The following result, known as the reverse Khovanskii-Teissier inequality, gives an inequality involving three nef classes. It was first proved in \cite{LX17} on compact K\"{a}hler manifolds. The algebraic version below appears in \cite[Theorem 1.1]{JL23}, where the base field was assumed to be algebraically closed. The result holds over an arbitrary field, as noted in \cite[Remark 1.14]{GHMSW25}. A purely combinatorial proof is given in \cite{HX24}.

\begin{theorem}[reverse Khovanskii-Teissier inequality]\label{reverse}
Let $Y$ be an $n$-dimensional integral projective variety over a field $k$. Let $\alpha, \beta, \gamma \in {N^1\left(Y\right)}_{\mathbb{R}}$ be nef classes on $Y$. Then
$$\binom{n}{i} \left(\beta^i\cdot \alpha^{n-i}\right) \left(\alpha^i\cdot \gamma^{n-i}\right) \geq \left(\alpha^n\right) \left(\beta^i\cdot \gamma^{n-i}\right)$$ for all $i=1,\ldots,n-1$.
\end{theorem}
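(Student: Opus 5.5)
The plan is to reduce to the case where $\alpha$ is ample and all classes are rational, and then to extract the inequality from a Siu/Morse-type bigness criterion, first for $i=1$ and then for general $i$ by induction. By multilinearity and continuity of intersection numbers on ${N^1(Y)}_{\mathbb{R}}$, we may replace $\alpha,\beta,\gamma$ by $\alpha+\varepsilon h$, $\beta+\varepsilon h$, $\gamma+\varepsilon h$ with $h$ ample, approximate by $\mathbb{Q}$-classes, and clear denominators. So it suffices to treat ample integral Cartier classes. If $k$ is not algebraically closed, we base change to $\overline{k}$ and pass to an irreducible component of $Y_{\overline{k}}$, as in \cite{Cut15}. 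This rescales all intersection numbers by the same factor, which cancels because the inequality is homogeneous of the same total weight on both sides (two factors on each side). So we may assume $k$ is algebraically closed.

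\emph{Case $i=1$.} Put $c=n(\beta\cdot\alpha^{n-1})/(\alpha^n)$. Siu's inequality (the algebraic Morse inequality \cite[Theorem 2.2.15]{Laz04a}) gives
$$\mathrm{vol}_Y(tA-B)\geq t^n(A^n)-nt^{n-1}(A^{n-1}\cdot B)$$
for nef $A,B$. For every $t>c$ this is positive, so $t\alpha-\beta$ is big, and hence $c\alpha-\beta$ is pseudoeffective. Intersecting a pseudoeffective class with the nef class $\gamma^{n-1}$ gives a nonnegative number, so $c(\alpha\cdot\gamma^{n-1})\geq(\beta\cdot\gamma^{n-1})$. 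This is exactly the case $i=1$.

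\emph{General $i$.} I would prove the stronger cycle-level statement
$$\binom{n}{i}\,(\beta^i\cdot\alpha^{n-i})\,\alpha^i-(\alpha^n)\,\beta^i\in\overline{\mathrm{Eff}}_{n-i}(Y),$$
meaning the difference is a limit of effective $(n-i)$-cycles, or at least pairs nonnegatively with all products of nef divisors. Pairing it with $\gamma^{n-i}$ then gives the theorem. To prove it, I would write $\beta^i=\beta\cdot\beta^{i-1}$ and induct on $i$. Represent $\beta^{i-1}$ (for $\beta$ very ample) by a general complete intersection subvariety $Z$ of dimension $n-i+1$. On $Z$, apply the case $i=1$ to $\alpha|_Z$, $\beta|_Z$: this shows $c_Z\,\alpha|_Z-\beta|_Z$ is pseudoeffective on $Z$, where $c_Z=(n-i+1)(\beta\cdot\alpha^{n-i}\cdot\beta^{i-1})/(\alpha^{n-i+1}\cdot\beta^{i-1})$. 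Pushing forward gives $c_Z\,\alpha\beta^{i-1}-\beta^i\geq 0$ in the cycle sense. This replaces one $\beta$ by $\alpha$ at the cost of the factor $c_Z$. Iterating, and commuting the roles of $\alpha$ and $\beta$ in the intermediate cycles, leads to a telescoping product of ratios of the form
$$\frac{(n-j+1)\,(\beta^{j}\cdot\alpha^{n-j})}{(\beta^{j-1}\cdot\alpha^{n-j+1})}.$$
The product of the numerical factors is $n!/(n-i)!$, and the intersection ratios telescope to $(\beta^i\cdot\alpha^{n-i})/(\alpha^n)$.

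The main obstacle is that the telescoping produces $n!/(n-i)!$ rather than $\binom{n}{i}$, so I lose a factor of $i!$. To recover it, I would compare with the Khovanskii--Teissier log-concavity of Theorem \ref{Khovanskii-Teissier} applied to $s_j=(\beta^j\cdot\alpha^{n-j})$: the sequence $s_j/s_{j-1}$ is non-increasing, so the product of ratios is controlled by $s_i/s_0$. If that still does not close the gap, my fallback is the Lorentzian route. Brändén--Huh show that $\mathrm{vol}(x\alpha+y\beta+z\gamma)=((x\alpha+y\beta+z\gamma)^n)$ is a Lorentzian polynomial in $(x,y,z)$. All four numbers in the inequality are its coefficients, and the claimed bound should follow from the known coefficient inequalities for Lorentzian polynomials (normalized coefficients are ultra-log-concave along M-convex directions), as in the combinatorial proof of \cite{HX24}.
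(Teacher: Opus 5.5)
Your reductions (perturbing to ample rational classes, passing to $\overline{k}$) and your proof of the case $i=1$ are fine. For that case, Siu's inequality makes $\frac{n(\beta\cdot\alpha^{n-1})}{(\alpha^n)}\alpha-\beta$ pseudoeffective, and you then pair it with $\gamma^{n-1}$. For comparison, the paper gives no argument; it quotes \cite[Theorem 1.1]{JL23}, extended to arbitrary $k$ by \cite[Remark 1.14]{GHMSW25}.

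\textbf{The gap is the case $2\le i\le n-1$.} Chaining Siu-type divisor inequalities as you describe proves the inequality only with $\binom{n}{i}$ replaced by $n!/(n-i)!=i!\binom{n}{i}$. Even restricting at every step to an $(n-i+1)$-dimensional complete intersection of copies of $\alpha$ and $\beta$ only improves the constant to $(n-i+1)^i$. That is still strictly larger than $\binom{n}{i}$ in this range.

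Your proposed repair cannot help. With $s_j=(\beta^j\cdot\alpha^{n-j})$, the intersection ratios already telescope exactly to $s_i/s_0$, so log-concavity of $(s_j)$ has nothing left to improve. The missing $i!$ comes from the numerical factors of Siu's inequality, applied $i$ times.

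The loss is genuine. Take $Y=\mathbb{P}^i\times\mathbb{P}^{n-i}$, let $F_1,F_2$ be the pulled-back hyperplane classes, and set $\alpha=F_1+F_2$, $\beta=F_1$, $\gamma=F_2$. Then $(\alpha^n)=\binom{n}{i}$ and $(\beta^i\cdot\alpha^{n-i})=(\alpha^i\cdot\gamma^{n-i})=(\beta^i\cdot\gamma^{n-i})=1$. So the theorem holds with equality, while your chain yields only $1\le i!$. You need an input that is already sharp in codimension $i$; this is what \cite{JL23} (via Okounkov bodies) and \cite{HX24} provide.

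\textbf{The Lorentzian fallback does not close the gap.} First, it is a pointer to \cite{HX24}, not an argument. Second, the mechanism you name is too weak. In the volume polynomial $((x\alpha+y\beta+z\gamma)^n)$, your four numbers sit at the exponents $(n,0,0)$, $(0,i,n-i)$, $(n-i,i,0)$, $(i,0,n-i)$. These are the vertices of a parallelogram, and the inequality compares the products along its two diagonals. Log-concavity along lines never gives such a comparison.

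For instance, take $n=2$ and the array $c_{200}=c_{110}=c_{101}=1$, $c_{020}=c_{002}=0$, $c_{011}=100$. It has M-convex support and satisfies every inequality $c_\mu^2\ge c_{\mu+e_a-e_b}\,c_{\mu-e_a+e_b}$. Yet it violates $2c_{110}c_{101}\ge c_{200}c_{011}$, which is the case $n=2$, $i=1$ of the theorem.

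What is actually needed is the full signature (Hodge index) condition. For $n=2$, the inequality follows from Cauchy--Schwarz on $\alpha^{\perp}$, where the intersection form is negative semidefinite, combined with $(\beta^2),(\gamma^2)\ge 0$. Extracting the general codimension-$i$ statement from the Lorentzian property is the substantive content of \cite{HX24}, not a routine consequence of coefficient log-concavity.
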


\subsection{Inequalities involving mixed multiplicities of equigenerated ideals}

The following corollary recovers a result of Huh \cite[Corollary 22]{Huh12}, originally asked by Trung and Verma \cite[Question 2.7]{TV07}. For equigenerated monomial ideals, the log-concavity \eqref{log-concave} follows from \cite[Theorem 2.4]{TV07} and the Alexandrov-Fenchel inequality for mixed volumes.

\begin{corollary}[Huh]\label{cor_huh}
Let $R$ and $I$ be as in Setup \ref{setup5}. Then
\begin{equation}\label{log-concave}
 {\left(e_i\left(\mathfrak{m}_R \vert I\right)\right)}^2 \geq e_{i-1}\left(\mathfrak{m}_R \vert I\right) e_{i+1}\left(\mathfrak{m}_R \vert I\right)
\end{equation}
for all $i=1,\ldots,d-2$.
\end{corollary}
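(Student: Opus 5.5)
The plan is to combine Proposition \ref{prop_intersect} with the Khovanskii--Teissier inequality, Theorem \ref{Khovanskii-Teissier}. Let $\pi\colon X=\mathrm{Bl}_{\mathcal{I}}(V)\to V$, $H$ and $E_I$ be as constructed before Proposition \ref{prop_intersect}. Then $X$ is an integral projective variety of dimension $n=d-1$. It is integral because $V=\mathrm{Proj}\,R$ is integral ($R$ is a domain) and the blow-up of an integral scheme along a nonzero ideal sheaf is integral. Here $\mathcal{I}\neq 0$ since $\mathrm{height}\, I\leq d-1$ means $I$ is not $\mathfrak{m}_R$-primary, so $\mathcal{I}$ is a nonzero proper ideal sheaf.

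Set $\alpha = d_IH-E_I$ and $\beta = H$, viewed as classes in $N^1(X)_{\mathbb{R}}$. By Proposition \ref{prop_intersect}$(i)$ and $(ii)$, both classes are nef. By Proposition \ref{prop_intersect}$(iii)$,
\[
s_i := \left(\alpha^i\cdot \beta^{d-1-i}\right) = \left((d_IH-E_I)^i\cdot H^{d-1-i}\right) = e_i\left(\mathfrak{m}_R\vert I\right)
\]
for $0\leq i\leq d-1$.

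Now apply inequality \eqref{ineq1} of Theorem \ref{Khovanskii-Teissier} with $n=d-1$. For every $i=1,\ldots,n-1=d-2$ it gives $s_i^2\geq s_{i-1}s_{i+1}$. This is exactly \eqref{log-concave}.

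The only step needing some care is checking that the hypotheses of Theorem \ref{Khovanskii-Teissier} hold, namely integrality of $X$ and nefness. Both follow directly from the earlier results cited above, so I expect no genuine obstacle. The substantive input, the identification of the mixed multiplicities with intersection numbers, has already been done in Proposition \ref{prop_intersect}. When $d=2$ the range $i=1,\ldots,d-2$ is empty and there is nothing to prove.
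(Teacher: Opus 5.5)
Your proposal is correct and is exactly the paper's argument: identify $e_i(\mathfrak{m}_R\vert I)$ with $((d_IH-E_I)^i\cdot H^{d-1-i})$ via Proposition \ref{prop_intersect}, then apply \eqref{ineq1} of Theorem \ref{Khovanskii-Teissier} to the nef classes $d_IH-E_I$ and $H$ on $X$. One minor correction: integrality of the blow-up needs $\mathcal{I}\neq 0$, and that comes from $I\neq 0$ (i.e.\ $\mathrm{height}\, I\geq 1$), whereas the bound $\mathrm{height}\, I\leq d-1$ only makes $\mathcal{I}$ a proper ideal sheaf.
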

\begin{proof}
By Proposition \ref{prop_intersect}, for every $i=0,\ldots,d-1$, $$e_i\left(\mathfrak{m}_R \vert I\right) = \left((d_IH-E_I)^{i}\cdot H^{d-1-i}\right),$$ where $d_IH-E_I$ and $H$ are nef divisors on $X$. Applying Theorem \ref{Khovanskii-Teissier} with $Y=X$, $\alpha = d_IH-E_I$ and $\beta = H$, yields \eqref{log-concave}.
\end{proof}

\begin{corollary}\label{cor_rev}
Let $R$ and $I$ be as in Setup \ref{setup}. Let $J\subseteq R$ be another graded ideal generated in equal degrees $d_J$ such that $1\leq \mathrm{height}\;J\leq d-1$. Then
\begin{equation}\label{ineq0}
 {\left(e_{d-1}\left(\mathfrak{m}_R \vert IJ\right)\right)}^{\tfrac{1}{d-1}} \geq {\left(e_{d-1}\left(\mathfrak{m}_R \vert I\right)\right)}^{\tfrac{1}{d-1}} + {\left(e_{d-1}\left(\mathfrak{m}_R \vert J\right)\right)}^{\tfrac{1}{d-1}},
\end{equation}
and
\begin{equation}\label{Khov_Teiss}
 \sum_{i=0}^{d-1} {\binom{d-1}{i}}^2 e_{i}\left(\mathfrak{m}_R \vert I\right) e_{d-1-i}\left(\mathfrak{m}_R \vert J\right) \geq e(R) e_{d-1}\left(\mathfrak{m}_R \vert IJ\right).
\end{equation}
\end{corollary}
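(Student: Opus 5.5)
We will realize all the mixed multiplicities in the statement as intersection numbers on a single variety, and then read \eqref{ineq0} off the Khovanskii--Teissier inequality \eqref{ineq3} and \eqref{Khov_Teiss} off the reverse inequality of Theorem \ref{reverse}.

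\emph{Common model.} Let $V=\mathrm{Proj}\,R$ and let $\mathcal{I},\mathcal{J}$ be the ideal sheaves of $I$ and $J$. Take $X'$ to be the blow-up of $V$ along the product $\mathcal{I}\mathcal{J}$. On $X'$ each of $\mathcal{I}\mathcal{O}_{X'}$, $\mathcal{J}\mathcal{O}_{X'}$ and $\mathcal{I}\mathcal{J}\mathcal{O}_{X'}$ is invertible; for the first two this holds because locally a product of ideals is principal in a domain only if each factor is. Write $\mathcal{I}\mathcal{O}_{X'}=\mathcal{O}_{X'}(-E_I')$ and $\mathcal{J}\mathcal{O}_{X'}=\mathcal{O}_{X'}(-E_J')$, so that $E_{IJ}'=E_I'+E_J'$. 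Let $H'$ be the pullback of a hyperplane section.

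\emph{Translating to intersection numbers.} The ideal $IJ$ is generated in the single degree $d_I+d_J$, and $1\le \mathrm{height}\,IJ\le d-1$. So Proposition \ref{prop_intersect} together with Remark \ref{rem_intersect}, applied on $X'$ to $I$, to $J$ and to $IJ$, gives
\begin{align*}
e_i(\mathfrak{m}_R\vert I)&=(A^i\cdot H'^{d-1-i}),\qquad e_i(\mathfrak{m}_R\vert J)=(B^i\cdot H'^{d-1-i}),\\
e_i(\mathfrak{m}_R\vert IJ)&=((A+B)^i\cdot H'^{d-1-i}),
\end{align*}
where $A=d_IH'-E_I'$ and $B=d_JH'-E_J'$. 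The classes $A$, $B$ and $H'$ are nef, since they are pullbacks of globally generated sheaves.

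\emph{Proving \eqref{ineq0}.} Apply \eqref{ineq3} with $n=d-1$, $\alpha=A$ and $\beta=B$ on $Y=X'$. This gives $((A+B)^{d-1})^{1/(d-1)}\ge (A^{d-1})^{1/(d-1)}+(B^{d-1})^{1/(d-1)}$, which is exactly \eqref{ineq0}.

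\emph{Proving \eqref{Khov_Teiss}.} Expand the right-hand side binomially:
\[
e_{d-1}(\mathfrak{m}_R\vert IJ)=\sum_i\binom{d-1}{i}(A^i\cdot B^{d-1-i}).
\]
For each term, apply Theorem \ref{reverse} with $n=d-1$, $\alpha=H'$, $\beta=A$ and $\gamma=B$. This yields
\[
\binom{d-1}{i}(A^i\cdot H'^{d-1-i})(H'^i\cdot B^{d-1-i})\ \ge\ (H'^{d-1})\,(A^i\cdot B^{d-1-i}),
\]
that is, $\binom{d-1}{i}e_i(\mathfrak{m}_R\vert I)\,e_{d-1-i}(\mathfrak{m}_R\vert J)\ge e(R)\,(A^i\cdot B^{d-1-i})$, using $(H'^{d-1})=e(R)$. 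The theorem is stated for $1\le i\le d-2$. For $i=0$ and $i=d-1$ the inequality holds with equality, because $e_0(\mathfrak{m}_R\vert I)=(H'^{d-1})=e(R)$ and $e_0(\mathfrak{m}_R\vert J)=e(R)$. Multiplying the $i$-th inequality by $\binom{d-1}{i}$ and summing over $i$ gives \eqref{Khov_Teiss}.

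\emph{Main obstacle.} The delicate step is justifying the common model: the invertibility of $\mathcal{I}\mathcal{O}_{X'}$ and $\mathcal{J}\mathcal{O}_{X'}$ on the blow-up of the product, and the legitimacy of using Remark \ref{rem_intersect} there. Remark \ref{rem_intersect} requires only a projective birational $X'\to V$ on which the relevant ideal becomes locally principal, which is what we have. We must also confirm that $IJ$ meets Setup \ref{setup5}. Its height is at least $1$ because $I$ and $J$ are nonzero in a domain. It is at most $d-1$ because $\sqrt{IJ}=\sqrt{I}\cap\sqrt{J}$ is contained in $\sqrt{I}$, which has height at most $d-1$.
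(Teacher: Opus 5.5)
Your proposal is correct and follows the paper's proof essentially step for step. You blow up $V$ along $\mathcal{I}\mathcal{J}$, use Proposition \ref{prop_intersect} and Remark \ref{rem_intersect} to express the mixed multiplicities of $I$, $J$, $IJ$ as intersection numbers of the nef classes $H'$, $A$, $B$, $A+B$, apply \eqref{ineq3} to obtain \eqref{ineq0}, and apply Theorem \ref{reverse} with $(\alpha,\beta,\gamma)=(H',A,B)$, multiplying by $\binom{d-1}{i}$ and summing, to obtain \eqref{Khov_Teiss}. Your separate check of the endpoint indices $i=0$ and $i=d-1$, where Theorem \ref{reverse} is not stated but equality holds trivially, is slightly more careful than the paper, which invokes the theorem for all $i=0,\ldots,d-1$ without comment.
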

\begin{proof}
Let $\pi\colon X \to V$ be the blow-up of $V$ along the product ideal sheaf $\mathcal{IJ}$, where $\mathcal{I}$ and $\mathcal{J}$ are the ideal sheaves associated to $I$ and $J$, respectively. Write $\mathcal{I}\mathcal{O}_X = \mathcal{O}_X(-E_I)$ and $\mathcal{J}\mathcal{O}_X = \mathcal{O}_X(-E_J)$, where $E_I$ and $E_J$ are effective Cartier divisors on $X$. Let $H$ denote the pullback to $X$ of a hyperplane section on $V$. Then by Proposition \ref{prop_intersect} and Remark \ref{rem_intersect}, for every $i=0,\ldots,d-1$,
\begin{multline*}
 e_{i}\left(\mathfrak{m}_R \vert I\right) = \left(\left(d_IH-E_I\right)^{i}\cdot H^{d-1-i}\right),\; e_{i}\left(\mathfrak{m}_R \vert J\right) = \left(\left(d_JH-E_J\right)^{i}\cdot H^{d-1-i}\right),\;\;\text{and}\\ e_{i}\left(\mathfrak{m}_R \vert IJ\right) = \left(\left((d_I+d_J)H-E_I-E_J\right)^{i}\cdot H^{d-1-i}\right),
\end{multline*}
where $d_IH-E_I$, $d_JH-E_J$, and $H$ are nef divisors on $X$. We obtain \eqref{ineq0} by applying the inequality \eqref{ineq3} with $Y=X$, $\alpha = d_IH-E_I$ and $\beta=d_JH-E_J$.

For each $i=0,\ldots,d-1$, we apply the reverse Khovanskii-Teissier inequality (Theorem \ref{reverse}) with $Y=X$, $\alpha = H$, $\beta = d_IH-E_I$, and $\gamma = d_JH-E_J$, and obtain
\begin{equation*}
{\binom{d-1}{i}} \left(\left(d_IH-E_I\right)^{i}\cdot H^{d-1-i}\right) \left(H^i\cdot\left(d_JH-E_J\right)^{d-1-i}\right) \geq \left(H^{d-1}\right) \left(\left(d_IH-E_I\right)^{i}\cdot\left(d_JH-E_J\right)^{d-1-i}\right).
\end{equation*}
Now observe that,
\begin{align*}
 \sum_{i=0}^{d-1} {\binom{d-1}{i}}^2 e_{i}\left(\mathfrak{m}_R \vert I\right) e_{d-1-i}\left(\mathfrak{m}_R \vert J\right) &= \sum_{i=0}^{d-1} {\binom{d-1}{i}}^2 \left(\left(d_IH-E_I\right)^{i}\cdot H^{d-1-i}\right) \left(H^i\cdot\left(d_JH-E_J\right)^{d-1-i}\right)\\
 &\geq \sum_{i=0}^{d-1} {\binom{d-1}{i}} \left(H^{d-1}\right) \left(\left(d_IH-E_I\right)^{i}\cdot\left(d_JH-E_J\right)^{d-1-i}\right)\\
 &= e(R) {\big((d_IH-E_I) + (d_JH-E_J)\big)}^{d-1}\\
 &= e(R) e_{d-1}\left(\mathfrak{m}_R \vert IJ\right).
\end{align*}
This proves \eqref{Khov_Teiss}.
\end{proof}

In general, epsilon multiplicities are difficult to compute, and can even be irrational; see \cite[Section 3]{CHST05}. Nevertheless, the next result gives a new lower bound for the epsilon multiplicity of an equigenerated ideal. This lower bound is strictly positive when $I$ has maximal analytic spread and can be computed using the Macaulay2 package {\sf MixedMultiplicity} developed in \cite{GMRV23}.

\begin{corollary}\label{cor_disk}
 Let $R$ and $I$ be as in Setup \ref{setup5}. Then
 \begin{equation}\label{disk_ineq}
 {\left(\dfrac{e_{d-2}\left(\mathfrak{m}_R \vert I\right)}{e(R)}\right)}^{\tfrac{1}{d-2}} - {\left({\left(\dfrac{e_{d-2}\left(\mathfrak{m}_R \vert I\right)}{e(R)}\right)}^{\tfrac{d-1}{d-2}} - \dfrac{e_{d-1}\left(\mathfrak{m}_R \vert I\right)}{e(R)}\right)}^{\tfrac{1}{d-1}} \leq d_I - \alpha_{\mathbb{I}^{\mathrm{sat}}_{\mathfrak{m}_R}} \leq \dfrac{e_{d-1}\left(\mathfrak{m}_R \vert I\right)}{e(R)},
\end{equation}
 where $\alpha_{\mathbb{I}^{\mathrm{sat}}_{\mathfrak{m}_R}}$ is the Waldschmidt constant for the saturation filtration $\mathbb{I}^{\mathrm{sat}}_{\mathfrak{m}_R} = \{I^n : \mathfrak{m}_R^{\infty}\}_{n\in\mathbb{N}}$. Moreover,
 $$\varepsilon(I):= \lim\limits_{n\to\infty}\dfrac{\lambda_R\left(H^0_{\mathfrak{m}_R}(R/I^n)\right)}{n^d/d!} \geq e(R) {\left({\left(\dfrac{e_{d-2}\left(\mathfrak{m}_R \vert I\right)}{e(R)}\right)}^{\tfrac{1}{d-2}} - {\left({\left(\dfrac{e_{d-2}\left(\mathfrak{m}_R \vert I\right)}{e(R)}\right)}^{\tfrac{d-1}{d-2}} - \dfrac{e_{d-1}\left(\mathfrak{m}_R \vert I\right)}{e(R)}\right)}^{\tfrac{1}{d-1}}\right)}^d.$$
\end{corollary}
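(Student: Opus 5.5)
We apply Diskant's inequality (Theorem \ref{diskant}) on the blow-up $\pi\colon X=\mathrm{Bl}_{\mathcal{I}}(V)\to V$ of Proposition \ref{prop_intersect}, which has dimension $n=d-1$. Take $\alpha = d_IH-E_I$ and $\beta = H$. Then $s_i=(\alpha^i\cdot\beta^{d-1-i}) = e_i(\mathfrak{m}_R\vert I)$ by Proposition \ref{prop_intersect}$(iii)$. In particular $s_0=e(R)$, $s_{d-2}=e_{d-2}(\mathfrak{m}_R\vert I)$ and $s_{d-1}=e_{d-1}(\mathfrak{m}_R\vert I)$.

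Theorem \ref{diskant} requires both classes to be big and nef. $H$ is big and nef, while $\alpha$ is nef and is big exactly when $I$ has maximal analytic spread. I would first treat that case and handle the non-big case separately. Here $e_{d-1}=0$, and the left side of \eqref{disk_ineq} becomes $(e_{d-2}/e(R))^{1/(d-2)}-(e_{d-2}/e(R))^{1/(d-2)}=0$. The inequality then reduces to $d_I-\alpha_{\mathbb{I}^{\mathrm{sat}}_{\mathfrak{m}_R}}\ge 0$, which holds because $I^n\subseteq I^n:\mathfrak{m}_R^\infty$ forces $\alpha_{\mathbb{I}^{\mathrm{sat}}_{\mathfrak{m}_R}}\le d_I$. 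The right-hand inequality $0\le 0$ needs $d_I\le\alpha_{\mathbb{I}^{\mathrm{sat}}_{\mathfrak{m}_R}}$, which is the hard direction; I would get it by perturbation, since $d_IH-E_I$ not big means $xH-E_I$ is not big for $x<d_I$. The alternative is to argue by continuity, applying Diskant to $\alpha+\epsilon H$ and letting $\epsilon\to0$.

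The main step is to identify $r(\alpha,\beta)$, the largest $r$ with $(d_I-r)H-E_I$ pseudoeffective, with $d_I-\alpha_{\mathbb{I}^{\mathrm{sat}}_{\mathfrak{m}_R}}$. That is, I need $\alpha_{\mathbb{I}^{\mathrm{sat}}_{\mathfrak{m}_R}}=\min\{x : xH-E_I \text{ pseudoeffective}\}$. By \eqref{eq_imp_1}, $\Gamma(X,\mathcal{O}_X(mH-nE_I))=(I^n:\mathfrak{m}_R^\infty)_m$ for $m\ge m_0$ and $n\ge n_0$. Hence $f_{\mathbb{I}^{\mathrm{sat}}_{\mathfrak{m}_R}}(x)=d\cdot \mathrm{vol}_X(xH-E_I)$, by the argument of Claim 1 in Theorem \ref{symb_geom}. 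The support of the volume function is the big cone, so as in Theorem \ref{symb_geom}$(ii)$ the Waldschmidt constant equals the pseudoeffective threshold. The only care needed is that $X$ may be non-normal; this is harmless, since volumes and pseudoeffective thresholds are unchanged under pullback to the normalization (or one may replace $X$ via Remark \ref{rem_intersect}). Substituting into Diskant's two bounds, with the exponents $1/(n-1)=1/(d-2)$, $n/(n-1)=(d-1)/(d-2)$ and $1/n=1/(d-1)$, gives exactly \eqref{disk_ineq}.

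For the epsilon multiplicity I use \cite[Theorem 5.19]{DRT25}, $\varepsilon(I)=\int_0^\infty(f_{\mathbb{I}^{\mathrm{sat}}_{\mathfrak{m}_R}}-f_{\mathbb{I}})\,dx$. Since $I$ is generated in degree $d_I$, $f_{\mathbb{I}}$ vanishes for $x<d_I$. On the interval $(\alpha_{\mathbb{I}^{\mathrm{sat}}_{\mathfrak{m}_R}},d_I)$, Lemma \ref{easy} gives $f_{\mathbb{I}^{\mathrm{sat}}_{\mathfrak{m}_R}}(x)\ge d\,e(R)(x-\alpha_{\mathbb{I}^{\mathrm{sat}}_{\mathfrak{m}_R}})^{d-1}$. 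Equivalently, $\mathrm{vol}_X(xH-E_I)\ge \mathrm{vol}_X((x-\alpha_{\mathbb{I}^{\mathrm{sat}}_{\mathfrak{m}_R}})H)$ by monotonicity, adding the pseudoeffective class $\alpha_{\mathbb{I}^{\mathrm{sat}}_{\mathfrak{m}_R}}H-E_I$. Integrating, and noting the integrand is nonnegative for $x\ge d_I$ because $f_{\mathbb{I}}\le f_{\mathbb{I}^{\mathrm{sat}}_{\mathfrak{m}_R}}$, gives
\[\varepsilon(I)\ge \int_{\alpha_{\mathbb{I}^{\mathrm{sat}}_{\mathfrak{m}_R}}}^{d_I} d\,e(R)\,(x-\alpha_{\mathbb{I}^{\mathrm{sat}}_{\mathfrak{m}_R}})^{d-1}dx=e(R)\,(d_I-\alpha_{\mathbb{I}^{\mathrm{sat}}_{\mathfrak{m}_R}})^d.\]
Combining this with the lower bound in \eqref{disk_ineq} gives the claim, provided that lower bound is nonnegative. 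This holds because Diskant's left side is at least $0$, since $s_{d-1}/s_0\le (s_{d-2}/s_0)^{(d-1)/(d-2)}$ by log-concavity (Corollary \ref{cor_huh}).

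I expect the main obstacle to be the pseudoeffective-threshold identification on a possibly non-normal blow-up, and the degenerate case where $d_IH-E_I$ is not big.
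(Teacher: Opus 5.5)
Your overall route is the paper's: Diskant's inequality on the blow-up with $\alpha=d_IH-E_I$ and $\beta=H$, then the integral formula for $\varepsilon(I)$ together with the lower bound of Theorem \ref{mainthm}$(ii)$. Several of your steps are sound: the identification $r(\alpha,\beta)=d_I-\alpha_{\mathbb{I}^{\mathrm{sat}}_{\mathfrak{m}_R}}$ directly from \eqref{eq_imp_1}, the degenerate case, and using only $f_{\mathbb{I}}\le f_{\mathbb{I}^{\mathrm{sat}}_{\mathfrak{m}_R}}$ above $d_I$. So is your check, via Corollary \ref{cor_huh}, that the Diskant lower bound is a nonnegative real number before you raise it to the $d$-th power; the paper leaves that point implicit.

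The gap is the sentence claiming that substitution into Diskant ``gives exactly \eqref{disk_ineq}''. The upper bound in Theorem \ref{diskant} is $s_n/s_{n-1}$. With your $s_i$ this is $e_{d-1}(\mathfrak{m}_R\vert I)/e_{d-2}(\mathfrak{m}_R\vert I)$, not $e_{d-1}(\mathfrak{m}_R\vert I)/e(R)=s_n/s_0$. Intersecting the pseudoeffective class $(d_IH-E_I)-rH$ with $(d_IH-E_I)^iH^{d-2-i}$ gives $r\le e_{i+1}/e_i$ for each $i$. The ratio $e_{d-1}/e_0$ is the product of these and can be smaller than each of them. The paper's proof makes the same substitution, and in fact the right-hand inequality as stated fails in general.

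Here is a counterexample. Let $V\subset\mathbb{P}^4$ be the cubic scroll, i.e.\ $\mathbb{F}_1$ embedded by $|C_0+2f|$ with $C_0^2=-1$, $C_0\cdot f=1$, $f^2=0$. Let $R$ be its homogeneous coordinate ring, so $d=3$ and $e(R)=3$, and let $I$ be generated by the three linear forms vanishing on a ruling $f_0$.
\begin{itemize}
\item Then $d_I=1$, and since $|C_0+f|$ is base point free, $\mathcal{I}=\mathcal{O}_V(-f_0)$ is invertible.
\item Hence $X=V$, $E_I=f_0$, $d_IH-E_I=C_0+f$, and $(e_0,e_1,e_2)=(3,2,1)$. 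In particular $I$ has maximal analytic spread.
\item Since $H-2f_0\sim C_0$ is effective, $C_0+2f_0$ is a hyperplane section of $V$. The corresponding linear form vanishes to order $2$ along $f_0$, so it lies in $I^2:\mathfrak{m}_R^\infty$.
\item Therefore $\alpha_{\mathbb{I}^{\mathrm{sat}}_{\mathfrak{m}_R}}\le 1/2$. In fact equality holds, since $xH-f_0=xC_0+(2x-1)f$ is pseudoeffective iff $x\ge 1/2$.
\end{itemize}
So $d_I-\alpha_{\mathbb{I}^{\mathrm{sat}}_{\mathfrak{m}_R}}=1/2>1/3=e_2/e(R)$, whereas Diskant correctly gives $1/3\le 1/2\le e_2/e_1=1/2$.

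What your argument actually proves is \eqref{disk_ineq} with right-hand side $e_{d-1}(\mathfrak{m}_R\vert I)/e_{d-2}(\mathfrak{m}_R\vert I)$. This implies the stated bound only when $e_{d-2}(\mathfrak{m}_R\vert I)\ge e(R)$. That holds, for example, over a polynomial ring, where $e(R)=1$ and $e_{d-2}$ is a positive integer. The left-hand inequality and the $\varepsilon(I)$ estimate use only the lower bound, so they are unaffected.
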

\begin{proof}
If the analytic spread of $I$ is not maximal then $e_{d-1}(\mathfrak{m}_R \vert I)=0$, $d_I= \alpha_{\mathbb{I}^{\mathrm{sat}}_{\mathfrak{m}_R}}$, and $\varepsilon(I)=0$, in which case the inequalities are obvious. Henceforth, assume that $I$ has maximal analytic spread. By Proposition \ref{prop_intersect}, the divisors $d_IH-E_I$ and $H$ are big and nef on $X$, with $$e_0(\mathfrak{m}_R\vert I) = (H^{d-1}) = e(R),\; e_{d-2}\left(\mathfrak{m}_R \vert I\right) = \left((d_IH-E_I)^{d-2}\cdot H\right),\; \text{and}\; e_{d-1}\left(\mathfrak{m}_R \vert I\right) = \left(d_IH-E_I\right)^{d-1}.$$ Furthermore, by part $(ii)$ of Theorem \ref{symb_geom}, it follows that $$d_I-\alpha_{\mathbb{I}^{\mathrm{sat}}_{\mathfrak{m}_R}} = \max\{r\in\mathbb{R}\mid (d_IH-E_I) - rH\;\text{is pseudo-effective}\}.$$  Now to obtain \eqref{disk_ineq}, simply apply the Diskant's inequality (Theorem \ref{diskant}) with $Y=X$, $\alpha = d_IH-E_I$, and $\beta = H$.

Since $I$ is equigenerated, there exists a polynomial $P_I(x) \in \mathbb{Q}[x]$ of degree $d-1$ such that
$$f_{\mathbb{I}}(x) = 0\;\;\forall x<d_I\;\;\text{and}\;\; f_{\mathbb{I}}(x) = P_I(x) = f_{\mathbb{I}^{\mathrm{sat}}_{\mathfrak{m}_R}}(x)\;\; \forall x>d_I,$$ see \cite[Theorems 4.4 and 5.19]{DRT25}. Here, $f_{\mathbb{I}}(x)$ denotes the density function associated with the filtration $\mathbb{I} = \{I^n\}_{n\in\mathbb{N}}$. By \cite[Theorem 5.19]{DRT25} and part $(ii)$ of Theorem \ref{mainthm}, we have $$\varepsilon(I) = \int_{0}^{\infty} \big(f_{\mathbb{I}^{\mathrm{sat}}_{\mathfrak{m}_R}}(x) - f_{\mathbb{I}}(x)\big)dx = \int_{\alpha_{\mathbb{I}^{\mathrm{sat}}_{\mathfrak{m}_R}}}^{d_I} f_{\mathbb{I}^{\mathrm{sat}}_{\mathfrak{m}_R}}(x)dx \geq de(R) \int_{\alpha_{\mathbb{I}^{\mathrm{sat}}_{\mathfrak{m}_R}}}^{d_I} {\big(x-\alpha_{\mathbb{I}^{\mathrm{sat}}_{\mathfrak{m}_R}}\big)}^{d-1}dx = e(R){\big(d_I-\alpha_{\mathbb{I}^{\mathrm{sat}}_{\mathfrak{m}_R}}\big)}^{d}.$$ Finally, applying \eqref{disk_ineq} yields the second assertion.
\end{proof}

The following example shows that the lower bound in Corollary \ref{cor_disk} is sharp.

\begin{example}
Let $R$ be as in Setup \ref{setup5}. Suppose further that $R$ is regular. Let $f$ be a nonzero homogeneous element of $R$, and set $I=(f)\mathfrak{m}_R$. Then $I$ is generated in degree $\deg f+1$, and one can verify that $I^n:\mathfrak{m}_R^\infty=(f^n)$ for all $n\geq 1$. Consequently, $\alpha_{\mathbb{I}^{\mathrm{sat}}_{\mathfrak{m}_R}}=\deg f$, and $e_i(\mathfrak{m}_R\vert I)=1$ for all $i=1,\ldots,d-1$. In particular, all the inequalities in Corollaries \ref{cor_huh} and \ref{cor_disk} become equalities in this example.
\end{example}

In the following result, we use Theorem \ref{vol=mult} and extend Corollaries \ref{cor_huh}, \ref{cor_rev}, and \ref{cor_disk} to filtrations of equigenerated ideals.

\begin{proposition}\label{prop_filt}
 Let $R$ as in Setup \ref{setup5}. Let $\mathbb{I}=\{I_n\}_{n\in\mathbb{N}}$ be a filtration of graded ideals in $R$ such that each $I_n$ is generated in equal degrees $d_{I_n}$ and $1\leq \mathrm{height}\;I_1 \leq d-1$. Let $\mathbf{m}_R = \{\mathfrak{m}_R^n\}_{n\in\mathbb{N}}$. Then the following statements are true:
 \begin{enumerate}
  \item[$(i)$] For all $i=1,\ldots,d-2$, $${\left(e_i\left(\mathbf{m}_R \vert \mathbb{I}\right)\right)}^2 \geq e_{i-1}\left(\mathbf{m}_R \vert \mathbb{I}\right) e_{i+1}\left(\mathbf{m}_R \vert \mathbb{I}\right).$$
  \item[$(ii)$] Let $\mathbb{J}=\{J_n\}_{n\in\mathbb{N}}$ be another filtration of graded ideals in $R$ such that each $J_n$ is generated in equal degrees $d_{J_n}$ and $1\leq \mathrm{height}\;J_1 \leq d-1$. Then $${\left(e_{d-1}\left(\mathbf{m}_R \vert \mathbb{IJ}\right)\right)}^{\tfrac{1}{d-1}} \geq {\left(e_{d-1}\left(\mathbf{m}_R \vert \mathbb{I}\right)\right)}^{\tfrac{1}{d-1}} + {\left(e_{d-1}\left(\mathbf{m}_R \vert \mathbb{J}\right)\right)}^{\tfrac{1}{d-1}},$$
  and $$\sum_{i=0}^{d-1} {\binom{d-1}{i}}^2 e_{i}\left(\mathbf{m}_R \vert \mathbb{I}\right) e_{d-1-i}\left(\mathbf{m}_R \vert \mathbb{J}\right) \geq e(R) e_{d-1}\left(\mathbf{m}_R \vert \mathbb{IJ}\right).$$
  \item[$(iii)$] Let $\alpha_{\mathbb{I}}$ and $\alpha_{\mathbb{I}^{\mathrm{sat}}_{\mathfrak{m}_R}}$ be the Waldschmidt constants associated with the filtrations $\mathbb{I}=\{I_n\}_{n\in\mathbb{N}}$ and $\mathbb{I}^{\mathrm{sat}}_{\mathfrak{m}_R} = \{I_n : \mathfrak{m}_R^{\infty}\}_{n\in\mathbb{N}}$, respectively. Then
  $${\left(\dfrac{e_{d-2}\left(\mathbf{m}_R \vert \mathbb{I}\right)}{e(R)}\right)}^{\tfrac{1}{d-2}} - {\left({\left(\dfrac{e_{d-2}\left(\mathbf{m}_R \vert \mathbb{I}\right)}{e(R)}\right)}^{\tfrac{d-1}{d-2}} - \dfrac{e_{d-1}\left(\mathbf{m}_R \vert \mathbb{I}\right)}{e(R)}\right)}^{\tfrac{1}{d-1}} \leq \alpha_{\mathbb{I}} - \alpha_{\mathbb{I}^{\mathrm{sat}}_{\mathfrak{m}_R}} \leq \dfrac{e_{d-1}\left(\mathbf{m}_R \vert \mathbb{I}\right)}{e(R)}.$$
 \end{enumerate}
\end{proposition}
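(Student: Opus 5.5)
The plan is to pass from the filtration to the individual ideals $I_p$ via Theorem \ref{vol=mult}, apply Corollaries \ref{cor_huh}, \ref{cor_rev}, \ref{cor_disk} to each $I_p$ (resp.\ $I_pJ_p$), and then take limits. First I would localize at $\mathfrak{m}_R$ to place ourselves in the setting of Theorem \ref{CM}. Since $R$ is a standard graded domain over an algebraically closed field, $R_{\mathfrak{m}_R}$ is analytically irreducible (excellent, and $\mathrm{gr}$ is a domain). Mixed multiplicities $e_i(\mathfrak{m}_R\vert I_p)$ of graded ideals agree with those computed after localization. So Theorem \ref{vol=mult} gives
\[
e_i(\mathbf{m}_R\vert\mathbb{I})=\lim_{p\to\infty}\frac{e_i(\mathfrak{m}_R\vert I_p)}{p^d}.
\]
The normalization requires care: I expect the correct scaling for the $i$-th mixed multiplicity to be $p^{i}$ rather than $p^d$, since $e_i(\mathfrak{m}_R\vert I_p)$ is homogeneous of degree $i$ in the second ideal. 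I would fix the exponent by comparing with the defining limit of $H_{\mathbb{L}\vert\mathbb{I}}$. Whatever the normalization, every inequality in (i)--(iii) is homogeneous under it, and this is what makes the limits pass. The hypothesis $\mathrm{height}\,I_1\ge 1$ forces $\mathrm{height}\,I_p\ge1$, and I would assume $\mathrm{height}\,I_p\le d-1$ so that Setup \ref{setup5} applies to each $I_p$. If $I_p$ is $\mathfrak{m}_R$-primary the corollaries still hold with trivial modifications, or I would simply restrict to the relevant $p$.

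For (i), Corollary \ref{cor_huh} gives $e_i(\mathfrak{m}_R\vert I_p)^2\ge e_{i-1}(\mathfrak{m}_R\vert I_p)e_{i+1}(\mathfrak{m}_R\vert I_p)$. Dividing by $p^{2i}$ (both sides have total weight $2i$) and letting $p\to\infty$ yields (i). For (ii), I use that $I_pJ_p$ is equigenerated of degree $d_{I_p}+d_{J_p}$, and that the product filtration satisfies $(\mathbb{IJ})_p\supseteq I_pJ_p$. Here I would take $\mathbb{IJ}$ to mean the filtration whose $p$-th term is $\sum_{a+b=p}I_aJ_b$. The obstacle is that Theorem \ref{vol=mult} for $\mathbb{IJ}$ involves $(\mathbb{IJ})_p$ and not $I_pJ_p$. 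I would handle this by showing that the filtration $\{I_pJ_p\}$ and $\mathbb{IJ}$ have the same $e_{d-1}$, or, more simply, by defining $\mathbb{IJ}=\{I_pJ_p\}$, which is itself a filtration. With that definition, applying Corollary \ref{cor_rev} to $I_p,J_p$, scaling by the appropriate power of $p$, and passing to the limit gives both inequalities in (ii). The term $e(R)$ is unaffected by the scaling.

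For (iii), Corollary \ref{cor_disk} applied to $I_p$ gives
\[
\Phi\bigl(e_{d-2}(\mathfrak{m}_R\vert I_p),e_{d-1}(\mathfrak{m}_R\vert I_p)\bigr)\le d_{I_p}-\alpha_{\{I_p^n:\mathfrak{m}_R^\infty\}_n}\le \frac{e_{d-1}(\mathfrak{m}_R\vert I_p)}{e(R)}.
\]
Dividing by $p$ makes the outer terms converge to the desired expressions in $e_{d-2}(\mathbf{m}_R\vert\mathbb{I})$ and $e_{d-1}(\mathbf{m}_R\vert\mathbb{I})$, by continuity of the left-hand function, which is homogeneous of degree one after scaling. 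It remains to show $d_{I_p}/p\to\alpha_{\mathbb{I}}$, which holds since $d_{I_p}=\alpha(I_p)$. I also need
\[
\frac1p\,\alpha_{\{I_p^n:\mathfrak{m}_R^\infty\}_n}\to\alpha_{\mathbb{I}^{\mathrm{sat}}_{\mathfrak{m}_R}}.
\]
This is the main obstacle. The inequality "$\ge$" follows from $I_p^n:\mathfrak{m}_R^\infty\subseteq I_{pn}:\mathfrak{m}_R^\infty$. For the reverse inequality I would use a truncation argument in the spirit of Theorem \ref{approx_noeth_filt}(i). Given $x>\alpha_{\mathbb{I}^{\mathrm{sat}}_{\mathfrak{m}_R}}$, pick $q$ with $(I_q:\mathfrak{m}_R^\infty)_{\lfloor xq\rfloor}\ne0$, which gives an element $g$ with $g\mathfrak{m}_R^t\subseteq I_q$. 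Then $g^n\mathfrak{m}_R^{tn}\subseteq I_q^n$, so $g^n\in I_q^n:\mathfrak{m}_R^\infty$. Hence $\alpha_{\{I_q^n:\mathfrak{m}^\infty\}}/q\le x$ along multiples of $q$, and monotonicity in $p$ along divisibility chains should control the full limit. The finer point is showing that the limit over all $p$, not just multiples, exists; I expect subadditivity to settle this.
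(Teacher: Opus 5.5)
Your strategy matches the paper's. The paper proves only the second inequality of (ii): it applies Corollary \ref{cor_rev} to $I_n,J_n$, divides by $n^{d-1}$, and passes to the limit using $e_i(\mathbf{m}_R\vert\mathbb{I})=\lim_n e_i(\mathfrak{m}_R\vert I_n)/n^i$. It calls the other parts variations.

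Your $p^i$ normalization is exactly this scaling. The $p^d$ in Theorem \ref{vol=mult} belongs to $e_i(\mathfrak{m}_R^p\vert I_p)=p^{d-i}e_i(\mathfrak{m}_R\vert I_p)$. With it, (i), (ii) and the lower bound in (iii) go through as you describe.

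Three of your hedges are unnecessary:
\begin{itemize}
\item The inclusions $I_1^p\subseteq I_p\subseteq I_1$ give $\sqrt{I_p}=\sqrt{I_1}$. So every $I_p$, and every $I_pJ_p$, satisfies Setup \ref{setup5} automatically.
\item $\mathbb{IJ}$ should simply be $\{I_pJ_p\}$, as the paper uses it. The other reading, $\sum_{a+b=p}I_aJ_b$, contains $I_p+J_p$ because $I_0=J_0=R$.
\item Write $\alpha_p$ for the Waldschmidt constant of $\{I_p^n:\mathfrak{m}_R^\infty\}_n$. The lower bound in (iii) only needs $\alpha_p\ge p\,\alpha_{\mathbb{I}^{\mathrm{sat}}_{\mathfrak{m}_R}}$, which you already have. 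The full convergence $\alpha_p/p\to\alpha_{\mathbb{I}^{\mathrm{sat}}_{\mathfrak{m}_R}}$ follows at once from $p\,\alpha_{\mathbb{I}^{\mathrm{sat}}_{\mathfrak{m}_R}}\le\alpha_p\le\alpha(I_p:\mathfrak{m}_R^\infty)$, using $(I_p:\mathfrak{m}_R^\infty)^n\subseteq I_p^n:\mathfrak{m}_R^\infty$. No divisibility chains are needed.
\end{itemize}

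The genuine gap is the upper bound in (iii). Your claim that every inequality is homogeneous fails there. The quantity $\alpha_{\mathbb{I}}-\alpha_{\mathbb{I}^{\mathrm{sat}}_{\mathfrak{m}_R}}$ has weight $1$, but $e_{d-1}/e(R)$ has weight $d-1$. Dividing the upper bound of Corollary \ref{cor_disk} for $I_p$ by $p$ gives $e_{d-1}(\mathfrak{m}_R\vert I_p)/(p\,e(R))\approx p^{d-2}e_{d-1}(\mathbf{m}_R\vert\mathbb{I})/e(R)$. This diverges once $d\ge 3$ and $e_{d-1}(\mathbf{m}_R\vert\mathbb{I})>0$.

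This cannot be repaired, because the bound is false as stated. So is the upper bound of Corollary \ref{cor_disk}, on which the paper's ``variation'' would rest. Here is a counterexample:
\begin{itemize}
\item Take $R=k[x,y,z]^{(2)}$, so $d=3$ and $e(R)=4$. Let $I=(x^2,xy,xz)R=xk[x,y,z]\cap R$, which has height one and is generated in degree $1$, and let $\mathbb{I}=\{I^n\}$.
\item Then $(I^n)_m=x^nk[x,y,z]_{2m-n}$ for $m\ge n$. Hence $\dim_k(I^{nt})_{(n+m)t}=\dim_k k[x,y,z]_{(n+2m)t}$, which gives $(e_0,e_1,e_2)=(4,2,1)$, and these equal the $e_i(\mathbf{m}_R\vert\mathbb{I})$.
\item Also $I^n:\mathfrak{m}_R^\infty=x^nk[x,y,z]\cap R$, so $\alpha_{\mathbb{I}^{\mathrm{sat}}_{\mathfrak{m}_R}}=1/2$, while $\alpha_{\mathbb{I}}=1$.
\item Thus $\alpha_{\mathbb{I}}-\alpha_{\mathbb{I}^{\mathrm{sat}}_{\mathfrak{m}_R}}=1/2>1/4=e_2(\mathbf{m}_R\vert\mathbb{I})/e(R)$.
\end{itemize}

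The source of the error is Theorem \ref{diskant}. Its upper bound $r(\alpha,\beta)\le s_n/s_{n-1}$ translates to $e_{d-1}(\mathfrak{m}_R\vert I)/e_{d-2}(\mathfrak{m}_R\vert I)$, not to $e_{d-1}/e(R)$; in the example this bound equals $1/2$ and is sharp. With this weight-one bound, your limiting argument works. Combined with the convergence $\alpha_p/p\to\alpha_{\mathbb{I}^{\mathrm{sat}}_{\mathfrak{m}_R}}$ above, it proves $\alpha_{\mathbb{I}}-\alpha_{\mathbb{I}^{\mathrm{sat}}_{\mathfrak{m}_R}}\le e_{d-1}(\mathbf{m}_R\vert\mathbb{I})/e_{d-2}(\mathbf{m}_R\vert\mathbb{I})$ whenever $e_{d-2}(\mathbf{m}_R\vert\mathbb{I})>0$.
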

\begin{proof}
We shall only prove the second assertion of part $(ii)$ of our proposition. The remaining ones are variations of this argument. We apply the inequality \eqref{Khov_Teiss} of Corollary \ref{cor_rev} with $I=I_n$ and $J=J_n$, and obtain
$$\sum_{i=0}^{d-1} {\binom{d-1}{i}}^2 e_{i}\left(\mathfrak{m}_R \vert I_n\right) e_{d-1-i}\left(\mathfrak{m}_R \vert J_n\right) \geq e(R) e_{d-1}\left(\mathfrak{m}_R \vert I_nJ_n\right).$$
Now divide both sides by $n^{d-1}$ so that
$$\sum_{i=0}^{d-1} {\binom{d-1}{i}}^2 \cdot \dfrac{e_{i}\left(\mathfrak{m}_R \vert I_n\right)}{n^i}\cdot \dfrac{e_{d-1-i}\left(\mathfrak{m}_R \vert J_n\right)}{n^{d-1-i}} \geq e(R)\cdot \dfrac{e_{d-1}\left(\mathfrak{m}_R \vert I_nJ_n\right)}{n^{d-1}}.$$
The desired inequality then follows by letting $n\to\infty$ and using the ``volume = multiplicity''-type formulas
$$e_i(\mathbf{m}_R \vert \mathbb{I}) = \lim_{n\to\infty}\dfrac{e_i(\mathfrak{m}_R^n \vert I_n)}{n^d} = \lim_{n\to\infty}\dfrac{n^{d-i}e_i(\mathfrak{m}_R \vert I_n)}{n^d} = \lim_{n\to\infty}\dfrac{e_i(\mathfrak{m}_R \vert I_n)}{n^i}.$$
\end{proof}

\section{Acknowledgments}

The authors would like to thank T\`{a}i Huy H\`{a} and Sudeshna Roy for valuable discussions.

\bibliographystyle{alpha}
\bibliography{References}

@book {laz04a,
    AUTHOR = {Lazarsfeld, Robert},
     TITLE = {Positivity in algebraic geometry. {I}},
    SERIES = {Ergebnisse der Mathematik und ihrer Grenzgebiete. 3. Folge. A
              Series of Modern Surveys in Mathematics [Results in
              Mathematics and Related Areas. 3rd Series. A Series of Modern
              Surveys in Mathematics]},
    VOLUME = {48},
      NOTE = {Classical setting: line bundles and linear series},
 PUBLISHER = {Springer-Verlag, Berlin},
      YEAR = {2004},
     PAGES = {xviii+387},
      ISBN = {3-540-22533-1},
   MRCLASS = {14-02 (14C20)},
  MRNUMBER = {2095471},
MRREVIEWER = {Mihnea\ Popa},
       DOI = {10.1007/978-3-642-18808-4},
       URL = {https://doi.org/10.1007/978-3-642-18808-4},
}

@book {laz04b,
    AUTHOR = {Lazarsfeld, Robert},
     TITLE = {Positivity in algebraic geometry. {II}},
    SERIES = {Ergebnisse der Mathematik und ihrer Grenzgebiete. 3. Folge. A
              Series of Modern Surveys in Mathematics [Results in
              Mathematics and Related Areas. 3rd Series. A Series of Modern
              Surveys in Mathematics]},
    VOLUME = {49},
      NOTE = {Positivity for vector bundles, and multiplier ideals},
 PUBLISHER = {Springer-Verlag, Berlin},
      YEAR = {2004},
     PAGES = {xviii+385},
      ISBN = {3-540-22534-X},
   MRCLASS = {14-02 (14C20 14F05 14F17)},
  MRNUMBER = {2095472},
MRREVIEWER = {Mihnea\ Popa},
       DOI = {10.1007/978-3-642-18808-4},
       URL = {https://doi.org/10.1007/978-3-642-18808-4},
}

@book {ZS60,
    AUTHOR = {Zariski, Oscar and Samuel, Pierre},
     TITLE = {Commutative algebra. {V}ol. {II}},
    SERIES = {The University Series in Higher Mathematics},
 PUBLISHER = {D. Van Nostrand Co., Inc., Princeton, N.J.-Toronto-London-New
              York},
      YEAR = {1960},
     PAGES = {x+414},
   MRCLASS = {16.00 (14.00)},
  MRNUMBER = {120249},
MRREVIEWER = {H.\ T.\ Muhly},
}

@book {AM69,
    AUTHOR = {Atiyah, M. F. and Macdonald, I. G.},
     TITLE = {Introduction to commutative algebra},
 PUBLISHER = {Addison-Wesley Publishing Co., Reading, Mass.-London-Don
              Mills, Ont.},
      YEAR = {1969},
     PAGES = {ix+128},
   MRCLASS = {13.00},
  MRNUMBER = {242802},
MRREVIEWER = {Johnny\ A.\ Johnson},
}

@book {eisenbud,
    AUTHOR = {Eisenbud, David},
     TITLE = {Commutative Algebra},
    SERIES = {Graduate Texts in Mathematics},
    VOLUME = {150},
      NOTE = {With a View Toward Algebraic Geometry},
 PUBLISHER = {Springer-Verlag, New York},
      YEAR = {1995},
     PAGES = {xvi+785},
      ISBN = {0-387-94268-8; 0-387-94269-6},
   MRCLASS = {13-01 (14A05)},
  MRNUMBER = {1322960},
MRREVIEWER = {Matthew\ Miller},
       DOI = {10.1007/978-1-4612-5350-1},
       URL = {https://doi.org/10.1007/978-1-4612-5350-1},
}

@book {HS06,
    AUTHOR = {Huneke, Craig and Swanson, Irena},
     TITLE = {Integral closure of ideals, rings, and modules},
    SERIES = {London Mathematical Society Lecture Note Series},
    VOLUME = {336},
 PUBLISHER = {Cambridge University Press, Cambridge},
      YEAR = {2006},
     PAGES = {xiv+431},
      ISBN = {978-0-521-68860-4; 0-521-68860-4},
   MRCLASS = {13B22 (13A18 13A30 13A35 13H15 14A05)},
  MRNUMBER = {2266432},
MRREVIEWER = {Liam\ O'Carroll},
}

@article {Bro79,
    AUTHOR = {Brodmann, M.},
     TITLE = {Asymptotic stability of {${\rm Ass}(M/I\sp{n}M)$}},
   JOURNAL = {Proc. Amer. Math. Soc.},
  FJOURNAL = {Proceedings of the American Mathematical Society},
    VOLUME = {74},
      YEAR = {1979},
    NUMBER = {1},
     PAGES = {16--18},
      ISSN = {0002-9939,1088-6826},
   MRCLASS = {13E05},
  MRNUMBER = {521865},
MRREVIEWER = {John\ W.\ Petro},
       DOI = {10.2307/2042097},
       URL = {https://doi.org/10.2307/2042097},
}

@book {Rud76,
    AUTHOR = {Rudin, Walter},
     TITLE = {Principles of mathematical analysis},
    SERIES = {International Series in Pure and Applied Mathematics},
   EDITION = {Third},
 PUBLISHER = {McGraw-Hill Book Co., New York-Auckland-D\"usseldorf},
      YEAR = {1976},
     PAGES = {x+342},
   MRCLASS = {26-02},
  MRNUMBER = {385023},
}

@article {Nagata59,
    AUTHOR = {Nagata, Masayoshi},
     TITLE = {On the {$14$}-th problem of {H}ilbert},
   JOURNAL = {Amer. J. Math.},
  FJOURNAL = {American Journal of Mathematics},
    VOLUME = {81},
      YEAR = {1959},
     PAGES = {766--772},
      ISSN = {0002-9327,1080-6377},
   MRCLASS = {12.00 (14.00)},
  MRNUMBER = {105409},
MRREVIEWER = {P.\ Samuel},
       DOI = {10.2307/2372927},
       URL = {https://doi.org/10.2307/2372927},
}

@article{CSTZ24,
      title={F-signature functions of diagonal hypersurfaces}, 
      author={Alessio Caminata and Samuel Shideler and Kevin Tucker and Francesco Zerman},
      journal={arXiv preprint arXiv:2403.12863},
      year={2024}
}

@article{MM25,
      title={$h$-function, {H}ilbert-{K}unz density function and {F}robenius-{P}oincar\'e function}, 
      author={Cheng Meng and Alapan Mukhopadhyay},
      journal={arXiv preprint arXiv:2310.10270},
      year={2025}
}

@article{GHMSW25,
      title={Linear operators preserving volume polynomials}, 
      author={Lukas Grund and June Huh and Mateusz Micha\l{ek} and Hendrik S\"{u}ss and Botong Wang},
      journal={arXiv preprint arXiv:2506.22415},
      year={2025},
}

@article {Li17,
    AUTHOR = {Li, Chi},
     TITLE = {K-semistability is equivariant volume minimization},
   JOURNAL = {Duke Math. J.},
  FJOURNAL = {Duke Mathematical Journal},
    VOLUME = {166},
      YEAR = {2017},
    NUMBER = {16},
     PAGES = {3147--3218},
      ISSN = {0012-7094,1547-7398},
   MRCLASS = {14B05 (13A18 14J45 52A27 53C25 53C55)},
  MRNUMBER = {3715806},
MRREVIEWER = {Ruadha\'i\ Dervan},
       DOI = {10.1215/00127094-2017-0026},
       URL = {https://doi.org/10.1215/00127094-2017-0026},
}

@article {BC11,
    AUTHOR = {Boucksom, S\'ebastien and Chen, Huayi},
     TITLE = {Okounkov bodies of filtered linear series},
   JOURNAL = {Compos. Math.},
  FJOURNAL = {Compositio Mathematica},
    VOLUME = {147},
      YEAR = {2011},
    NUMBER = {4},
     PAGES = {1205--1229},
      ISSN = {0010-437X,1570-5846},
   MRCLASS = {14G25 (11G50 14C20)},
  MRNUMBER = {2822867},
MRREVIEWER = {Dennis\ Eriksson},
       DOI = {10.1112/S0010437X11005355},
       URL = {https://doi.org/10.1112/S0010437X11005355},
}

@article {Bou14,
    AUTHOR = {Boucksom, S\'ebastien},
     TITLE = {Corps d'{O}kounkov (d'apr\`es {O}kounkov,
              {L}azarsfeld-{M}usta\c t\v a{} et {K}aveh-{K}hovanskii)},
   JOURNAL = {Ast\'erisque},
  FJOURNAL = {Ast\'erisque},
    NUMBER = {361},
      YEAR = {2014},
     PAGES = {Exp. No. 1059, vii, 1--41},
      ISSN = {0303-1179,2492-5926},
      ISBN = {978-285629-785-8},
   MRCLASS = {14J60 (13A02 13A18 14C20 20M14 52B12)},
  MRNUMBER = {3289276},
MRREVIEWER = {Yanir\ A.\ Rubinstein},
}

@article {BFJ09,
    AUTHOR = {Boucksom, S\'ebastien and Favre, Charles and Jonsson, Mattias},
     TITLE = {Differentiability of volumes of divisors and a problem of
              {T}eissier},
   JOURNAL = {J. Algebraic Geom.},
  FJOURNAL = {Journal of Algebraic Geometry},
    VOLUME = {18},
      YEAR = {2009},
    NUMBER = {2},
     PAGES = {279--308},
      ISSN = {1056-3911,1534-7486},
   MRCLASS = {14C20 (14C17)},
  MRNUMBER = {2475816},
MRREVIEWER = {James\ McKernan},
       DOI = {10.1090/S1056-3911-08-00490-6},
       URL = {https://doi.org/10.1090/S1056-3911-08-00490-6},
}

@article {Cut21,
    AUTHOR = {Cutkosky, Steven Dale},
     TITLE = {The {M}inkowski equality of filtrations},
   JOURNAL = {Adv. Math.},
  FJOURNAL = {Advances in Mathematics},
    VOLUME = {388},
      YEAR = {2021},
     PAGES = {Paper No. 107869, 63},
      ISSN = {0001-8708,1090-2082},
   MRCLASS = {13H15 (13A18 13B22 14C17)},
  MRNUMBER = {4283761},
MRREVIEWER = {Antoni\ Rangachev},
       DOI = {10.1016/j.aim.2021.107869},
       URL = {https://doi.org/10.1016/j.aim.2021.107869},
}

@article {BLQ24,
    AUTHOR = {Blum, Harold and Liu, Yuchen and Qi, Lu},
     TITLE = {Convexity of multiplicities of filtrations on local rings},
   JOURNAL = {Compos. Math.},
  FJOURNAL = {Compositio Mathematica},
    VOLUME = {160},
      YEAR = {2024},
    NUMBER = {4},
     PAGES = {878--914},
      ISSN = {0010-437X,1570-5846},
   MRCLASS = {14B05 (13H15)},
  MRNUMBER = {4716588},
MRREVIEWER = {Guillaume\ Rond},
       DOI = {10.1112/S0010437X23007972},
       URL = {https://doi.org/10.1112/S0010437X23007972},
}

@article {Fuj18,
    AUTHOR = {Fujita, Kento},
     TITLE = {Optimal bounds for the volumes of {K}\"ahler-{E}instein {F}ano
              manifolds},
   JOURNAL = {Amer. J. Math.},
  FJOURNAL = {American Journal of Mathematics},
    VOLUME = {140},
      YEAR = {2018},
    NUMBER = {2},
     PAGES = {391--414},
      ISSN = {0002-9327,1080-6377},
   MRCLASS = {32Q25 (14J45 53C25 53C55)},
  MRNUMBER = {3783213},
MRREVIEWER = {Cristiano\ Spotti},
       DOI = {10.1353/ajm.2018.0009},
       URL = {https://doi.org/10.1353/ajm.2018.0009},
}

@article {BJ20,
    AUTHOR = {Blum, Harold and Jonsson, Mattias},
     TITLE = {Thresholds, valuations, and {K}-stability},
   JOURNAL = {Adv. Math.},
  FJOURNAL = {Advances in Mathematics},
    VOLUME = {365},
      YEAR = {2020},
     PAGES = {107062, 57},
      ISSN = {0001-8708,1090-2082},
   MRCLASS = {14C20 (14M25)},
  MRNUMBER = {4067358},
MRREVIEWER = {Chenyang\ Xu},
       DOI = {10.1016/j.aim.2020.107062},
       URL = {https://doi.org/10.1016/j.aim.2020.107062},
}

@article {CSS19,
    AUTHOR = {Cutkosky, Steven Dale and Sarkar, Parangama and Srinivasan,
              Hema},
     TITLE = {Mixed multiplicities of filtrations},
   JOURNAL = {Trans. Amer. Math. Soc.},
  FJOURNAL = {Transactions of the American Mathematical Society},
    VOLUME = {372},
      YEAR = {2019},
    NUMBER = {9},
     PAGES = {6183--6211},
      ISSN = {0002-9947,1088-6850},
   MRCLASS = {13H15 (14C17)},
  MRNUMBER = {4024518},
MRREVIEWER = {P.\ Schenzel},
       DOI = {10.1090/tran/7745},
       URL = {https://doi.org/10.1090/tran/7745},
}

@article {Cut15,
    AUTHOR = {Cutkosky, Steven Dale},
     TITLE = {Asymptotic multiplicities},
   JOURNAL = {J. Algebra},
  FJOURNAL = {Journal of Algebra},
    VOLUME = {442},
      YEAR = {2015},
     PAGES = {260--298},
      ISSN = {0021-8693,1090-266X},
   MRCLASS = {13A02},
  MRNUMBER = {3395062},
MRREVIEWER = {Aryampilly\ V.\ Jayanthan},
       DOI = {10.1016/j.jalgebra.2015.03.017},
       URL = {https://doi.org/10.1016/j.jalgebra.2015.03.017},
}

@article {CHST05,
    AUTHOR = {Cutkosky, Steven Dale and H\`a, Huy T\`ai and Srinivasan, Hema
              and Theodorescu, Emanoil},
     TITLE = {Asymptotic behavior of the length of local cohomology},
   JOURNAL = {Canad. J. Math.},
  FJOURNAL = {Canadian Journal of Mathematics. Journal Canadien de
              Math\'ematiques},
    VOLUME = {57},
      YEAR = {2005},
    NUMBER = {6},
     PAGES = {1178--1192},
      ISSN = {0008-414X,1496-4279},
   MRCLASS = {13D45},
  MRNUMBER = {2178557},
MRREVIEWER = {Siamak\ Yassemi},
       DOI = {10.4153/CJM-2005-046-4},
       URL = {https://doi.org/10.4153/CJM-2005-046-4},
}

@article {CHS10,
    AUTHOR = {Cutkosky, Steven Dale and Herzog, J\"urgen and Srinivasan,
              Hema},
     TITLE = {Asymptotic growth of algebras associated to powers of ideals},
   JOURNAL = {Math. Proc. Cambridge Philos. Soc.},
  FJOURNAL = {Mathematical Proceedings of the Cambridge Philosophical
              Society},
    VOLUME = {148},
      YEAR = {2010},
    NUMBER = {1},
     PAGES = {55--72},
      ISSN = {0305-0041,1469-8064},
   MRCLASS = {13A50 (13H15)},
  MRNUMBER = {2575372},
MRREVIEWER = {Yukihide\ Takayama},
       DOI = {10.1017/S0305004109990144},
       URL = {https://doi.org/10.1017/S0305004109990144},
}

@article {Cut14,
    AUTHOR = {Cutkosky, Steven Dale},
     TITLE = {Asymptotic multiplicities of graded families of ideals and
              linear series},
   JOURNAL = {Adv. Math.},
  FJOURNAL = {Advances in Mathematics},
    VOLUME = {264},
      YEAR = {2014},
     PAGES = {55--113},
      ISSN = {0001-8708,1090-2082},
   MRCLASS = {13H15 (14C20)},
  MRNUMBER = {3250280},
MRREVIEWER = {Sergio\ Mathew\ Da Silva},
       DOI = {10.1016/j.aim.2014.07.004},
       URL = {https://doi.org/10.1016/j.aim.2014.07.004},
}

@article{J,
  title={Hilbert polynomials and powers of ideals},
  author={Herzog, J{\"u}rgen and Puthenpurakal, Tony J and Verma, Jugal K},
  journal={Mathematical Proceedings of the Cambridge Philosophical Society},
  volume={145},
  number={3},
  pages={623--642},
  year={2008},
  publisher={Cambridge University Press}
}

@article {HHT07,
    AUTHOR = {Herzog, J\"urgen and Hibi, Takayuki and Trung, Ng\^o{} Vi\^et},
     TITLE = {Symbolic powers of monomial ideals and vertex cover algebras},
   JOURNAL = {Adv. Math.},
  FJOURNAL = {Advances in Mathematics},
    VOLUME = {210},
      YEAR = {2007},
    NUMBER = {1},
     PAGES = {304--322},
      ISSN = {0001-8708,1090-2082},
   MRCLASS = {13A30 (13H10)},
  MRNUMBER = {2298826},
MRREVIEWER = {Irena\ Swanson},
       DOI = {10.1016/j.aim.2006.06.007},
       URL = {https://doi.org/10.1016/j.aim.2006.06.007},
}

@article {Ree61a,
    AUTHOR = {Rees, David},
     TITLE = {A note on analytically unramified local rings},
   JOURNAL = {J. London Math. Soc.},
  FJOURNAL = {The Journal of the London Mathematical Society},
    VOLUME = {36},
      YEAR = {1961},
     PAGES = {24--28},
      ISSN = {0024-6107,1469-7750},
   MRCLASS = {13.95 (16.00)},
  MRNUMBER = {126465},
MRREVIEWER = {H.\ T.\ Muhly},
       DOI = {10.1112/jlms/s1-36.1.24},
       URL = {https://doi.org/10.1112/jlms/s1-36.1.24},
}

@article {Ree61b,
    AUTHOR = {Rees, David},
     TITLE = {{$a$}-transforms of local rings and a theorem on
              multiplicities of ideals},
   JOURNAL = {Proc. Cambridge Philos. Soc.},
  FJOURNAL = {Proceedings of the Cambridge Philosophical Society},
    VOLUME = {57},
      YEAR = {1961},
     PAGES = {8--17},
      ISSN = {0008-1981},
   MRCLASS = {16.00},
  MRNUMBER = {118750},
MRREVIEWER = {H.\ T.\ Muhly},
       DOI = {10.1017/s0305004100034800},
       URL = {https://doi.org/10.1017/s0305004100034800},
}

@article {KV10,
    AUTHOR = {Katz, Daniel and Validashti, Javid},
     TITLE = {Multiplicities and {R}ees valuations},
   JOURNAL = {Collect. Math.},
  FJOURNAL = {Universitat de Barcelona. Collectanea Mathematica},
    VOLUME = {61},
      YEAR = {2010},
    NUMBER = {1},
     PAGES = {1--24},
      ISSN = {0010-0757,2038-4815},
   MRCLASS = {13A30 (13H15)},
  MRNUMBER = {2604855},
MRREVIEWER = {Adela\ N.\ Vraciu},
       DOI = {10.1007/BF03191222},
       URL = {https://doi.org/10.1007/BF03191222},
}

@article {UV08,
    AUTHOR = {Ulrich, Bernd and Validashti, Javid},
     TITLE = {A criterion for integral dependence of modules},
   JOURNAL = {Math. Res. Lett.},
  FJOURNAL = {Mathematical Research Letters},
    VOLUME = {15},
      YEAR = {2008},
    NUMBER = {1},
     PAGES = {149--162},
      ISSN = {1073-2780},
   MRCLASS = {13C15 (13B21 13B22)},
  MRNUMBER = {2367181},
MRREVIEWER = {Aron\ Simis},
       DOI = {10.4310/MRL.2008.v15.n1.a13},
       URL = {https://doi.org/10.4310/MRL.2008.v15.n1.a13},
}

@article {UV11,
    AUTHOR = {Ulrich, Bernd and Validashti, Javid},
     TITLE = {Numerical criteria for integral dependence},
   JOURNAL = {Math. Proc. Cambridge Philos. Soc.},
  FJOURNAL = {Mathematical Proceedings of the Cambridge Philosophical
              Society},
    VOLUME = {151},
      YEAR = {2011},
    NUMBER = {1},
     PAGES = {95--102},
      ISSN = {0305-0041,1469-8064},
   MRCLASS = {13A30 (13A02)},
  MRNUMBER = {2801316},
MRREVIEWER = {Florian\ Enescu},
       DOI = {10.1017/S0305004111000144},
       URL = {https://doi.org/10.1017/S0305004111000144},
}

@article {CMM24,
    AUTHOR = {Cid-Ruiz, Yairon and Mohammadi, Fatemeh and Monin, Leonid},
     TITLE = {Multigraded algebras and multigraded linear series},
   JOURNAL = {J. Lond. Math. Soc. (2)},
  FJOURNAL = {Journal of the London Mathematical Society. Second Series},
    VOLUME = {109},
      YEAR = {2024},
    NUMBER = {3},
     PAGES = {Paper No. e12880, 38},
      ISSN = {0024-6107,1469-7750},
   MRCLASS = {13H15 (13A18 14M25 52B20)},
  MRNUMBER = {4754445},
MRREVIEWER = {Truong\ Le\ Hoang},
       DOI = {10.1112/jlms.12880},
       URL = {https://doi.org/10.1112/jlms.12880},
}

@article {Das21,
    AUTHOR = {Das, Suprajo},
     TITLE = {Epsilon multiplicity for graded algebras},
   JOURNAL = {J. Pure Appl. Algebra},
  FJOURNAL = {Journal of Pure and Applied Algebra},
    VOLUME = {225},
      YEAR = {2021},
    NUMBER = {10},
     PAGES = {Paper No. 106670, 21},
      ISSN = {0022-4049,1873-1376},
   MRCLASS = {13H15 (13A02 13D45 13H05 52B20)},
  MRNUMBER = {4207331},
MRREVIEWER = {Veronique\ Van Lierde},
       DOI = {10.1016/j.jpaa.2021.106670},
       URL = {https://doi.org/10.1016/j.jpaa.2021.106670},
}

@article {DDRV25,
    AUTHOR = {Das, Suprajo and Dubey, Saipriya and Roy, Sudeshna and Verma,
              Jugal K.},
     TITLE = {Computing epsilon multiplicities in graded algebras},
   JOURNAL = {J. Pure Appl. Algebra},
  FJOURNAL = {Journal of Pure and Applied Algebra},
    VOLUME = {229},
      YEAR = {2025},
    NUMBER = {11},
     PAGES = {Paper No. 108107, 28},
      ISSN = {0022-4049,1873-1376},
   MRCLASS = {13H15 (13A02 13A30 13D45 14C20)},
  MRNUMBER = {4970087},
       DOI = {10.1016/j.jpaa.2025.108107},
       URL = {https://doi.org/10.1016/j.jpaa.2025.108107},
}

@article {DRT25,
    AUTHOR = {Das, Suprajo and Roy, Sudeshna and Trivedi, Vijaylaxmi},
     TITLE = {Density functions for epsilon multiplicity and families of
              ideals},
   JOURNAL = {J. Lond. Math. Soc. (2)},
  FJOURNAL = {Journal of the London Mathematical Society. Second Series},
    VOLUME = {111},
      YEAR = {2025},
    NUMBER = {4},
     PAGES = {Paper No. e70155, 51},
      ISSN = {0024-6107,1469-7750},
   MRCLASS = {13H15 (13A30 13B22 14C17 14C20)},
  MRNUMBER = {4892394},
       DOI = {10.1112/jlms.70155},
       URL = {https://doi.org/10.1112/jlms.70155},
}

@article {DRT26,
    AUTHOR = {Das, Suprajo and Roy, Sudeshna and Trivedi, Vijaylaxmi},
     TITLE = {Numerical characterizations for integral dependence of graded
              ideals},
   JOURNAL = {Int. Math. Res. Not. IMRN},
  FJOURNAL = {International Mathematics Research Notices. IMRN},
      YEAR = {2026},
    NUMBER = {10},
     PAGES = {Paper No. rnag088, 22},
      ISSN = {1073-7928,1687-0247},
   MRCLASS = {13A30 (13D40 13E05 13H15)},
  MRNUMBER = {5069554},
       DOI = {10.1093/imrn/rnag088},
       URL = {https://doi.org/10.1093/imrn/rnag088},
}

@article {Tri18,
    AUTHOR = {Trivedi, Vijaylaxmi},
     TITLE = {Hilbert-{K}unz density function and {H}ilbert-{K}unz
              multiplicity},
   JOURNAL = {Trans. Amer. Math. Soc.},
  FJOURNAL = {Transactions of the American Mathematical Society},
    VOLUME = {370},
      YEAR = {2018},
    NUMBER = {12},
     PAGES = {8403--8428},
      ISSN = {0002-9947,1088-6850},
   MRCLASS = {13D40 (13H15 14H60 14N05)},
  MRNUMBER = {3864381},
MRREVIEWER = {Aryampilly\ V.\ Jayanthan},
       DOI = {10.1090/tran/7268},
       URL = {https://doi.org/10.1090/tran/7268},
}

@article {MT19,
    AUTHOR = {Mondal, Mandira and Trivedi, V.},
     TITLE = {Hilbert-{K}unz density function and asymptotic
              {H}ilbert-{K}unz multiplicity for projective toric varieties},
   JOURNAL = {J. Algebra},
  FJOURNAL = {Journal of Algebra},
    VOLUME = {520},
      YEAR = {2019},
     PAGES = {479--516},
      ISSN = {0021-8693,1090-266X},
   MRCLASS = {13D40 (13H15 14M25 52B20 52C22)},
  MRNUMBER = {3885209},
MRREVIEWER = {Christos\ Tatakis},
       DOI = {10.1016/j.jalgebra.2018.10.038},
       URL = {https://doi.org/10.1016/j.jalgebra.2018.10.038},
}

@article {MT20,
    AUTHOR = {Mondal, Mandira and Trivedi, Vijaylaxmi},
     TITLE = {Density function for the second coefficient of the
              {H}ilbert-{K}unz function on projective toric varieties},
   JOURNAL = {J. Algebraic Combin.},
  FJOURNAL = {Journal of Algebraic Combinatorics. An International Journal},
    VOLUME = {51},
      YEAR = {2020},
    NUMBER = {3},
     PAGES = {317--351},
      ISSN = {0925-9899,1572-9192},
   MRCLASS = {14M25 (13D40 13H15 52B20)},
  MRNUMBER = {4096329},
MRREVIEWER = {Margherita\ Barile},
       DOI = {10.1007/s10801-019-00877-8},
       URL = {https://doi.org/10.1007/s10801-019-00877-8},
}

@article {Tri20,
    AUTHOR = {Trivedi, Vijaylaxmi},
     TITLE = {Nondiscreteness of {$F$}-thresholds},
   JOURNAL = {Math. Res. Lett.},
  FJOURNAL = {Mathematical Research Letters},
    VOLUME = {27},
      YEAR = {2020},
    NUMBER = {6},
     PAGES = {1885--1895},
      ISSN = {1073-2780,1945-001X},
   MRCLASS = {13A35 (14G17)},
  MRNUMBER = {4216609},
MRREVIEWER = {Geoffrey\ D.\ Dietz},
       DOI = {10.4310/MRL.2020.v27.n6.a13},
       URL = {https://doi.org/10.4310/MRL.2020.v27.n6.a13},
}

@article {TW21,
    AUTHOR = {Trivedi, Vijaylaxmi and Watanabe, Kei-Ichi},
     TITLE = {Hilbert-{K}unz density functions and {$F$}-thresholds},
   JOURNAL = {J. Algebra},
  FJOURNAL = {Journal of Algebra},
    VOLUME = {567},
      YEAR = {2021},
     PAGES = {533--563},
      ISSN = {0021-8693,1090-266X},
   MRCLASS = {13A35},
  MRNUMBER = {4163076},
MRREVIEWER = {Alessandro\ De Stefani},
       DOI = {10.1016/j.jalgebra.2020.09.025},
       URL = {https://doi.org/10.1016/j.jalgebra.2020.09.025},
}

@article {TW23,
    AUTHOR = {Trivedi, Vijaylaxmi},
     TITLE = {The {H}ilbert-{K}unz density functions of quadric
              hypersurfaces},
   JOURNAL = {Adv. Math.},
  FJOURNAL = {Advances in Mathematics},
    VOLUME = {430},
      YEAR = {2023},
     PAGES = {Paper No. 109207, 63},
      ISSN = {0001-8708,1090-2082},
   MRCLASS = {13D40 (14J60)},
  MRNUMBER = {4617943},
MRREVIEWER = {Tony\ J.\ Puthenpurakal},
       DOI = {10.1016/j.aim.2023.109207},
       URL = {https://doi.org/10.1016/j.aim.2023.109207},
}

@article {DST15,
    AUTHOR = {Dumnicki, Marcin and Szpond, Justyna and Tutaj-Gasi\'nska,
              Halszka},
     TITLE = {Asymptotic {H}ilbert polynomials and limiting shapes},
   JOURNAL = {J. Pure Appl. Algebra},
  FJOURNAL = {Journal of Pure and Applied Algebra},
    VOLUME = {219},
      YEAR = {2015},
    NUMBER = {10},
     PAGES = {4446--4457},
      ISSN = {0022-4049,1873-1376},
   MRCLASS = {13F20 (13D40 14N20)},
  MRNUMBER = {3346500},
MRREVIEWER = {Dan\ Yan},
       DOI = {10.1016/j.jpaa.2015.02.026},
       URL = {https://doi.org/10.1016/j.jpaa.2015.02.026},
}

@article {May14,
    AUTHOR = {Mayes, Sarah},
     TITLE = {The asymptotic behaviour of symbolic generic initial systems
              of generic points},
   JOURNAL = {J. Pure Appl. Algebra},
  FJOURNAL = {Journal of Pure and Applied Algebra},
    VOLUME = {218},
      YEAR = {2014},
    NUMBER = {3},
     PAGES = {381--390},
      ISSN = {0022-4049,1873-1376},
   MRCLASS = {14N25 (13C40 13D10 13F20)},
  MRNUMBER = {3124204},
MRREVIEWER = {Ralf\ Fr\"oberg},
       DOI = {10.1016/j.jpaa.2013.06.002},
       URL = {https://doi.org/10.1016/j.jpaa.2013.06.002},
}

@book {Har77,
    AUTHOR = {Hartshorne, Robin},
     TITLE = {Algebraic geometry},
    SERIES = {Graduate Texts in Mathematics},
    VOLUME = {No. 52},
 PUBLISHER = {Springer-Verlag, New York-Heidelberg},
      YEAR = {1977},
     PAGES = {xvi+496},
      ISBN = {0-387-90244-9},
   MRCLASS = {14-01},
  MRNUMBER = {463157},
MRREVIEWER = {Robert\ Speiser},
}

@book {Har67,
    AUTHOR = {Hartshorne, Robin},
     TITLE = {Local cohomology},
    SERIES = {Lecture Notes in Mathematics},
    VOLUME = {No. 41},
      NOTE = {A seminar given by A. Grothendieck, Harvard University, Fall,
              1961},
 PUBLISHER = {Springer-Verlag, Berlin-New York},
      YEAR = {1967},
     PAGES = {vi+106},
   MRCLASS = {14.55 (18.00)},
  MRNUMBER = {224620},
MRREVIEWER = {F.\ Oort},
}

@article {LM09,
    AUTHOR = {Lazarsfeld, Robert and Musta\c{t}\u{a}, Mircea},
     TITLE = {Convex bodies associated to linear series},
   JOURNAL = {Ann. Sci. \'Ec. Norm. Sup\'er. (4)},
  FJOURNAL = {Annales Scientifiques de l'\'Ecole Normale Sup\'erieure.
              Quatri\`eme S\'erie},
    VOLUME = {42},
      YEAR = {2009},
    NUMBER = {5},
     PAGES = {783--835},
      ISSN = {0012-9593,1873-2151},
   MRCLASS = {14C20 (14E05)},
  MRNUMBER = {2571958},
MRREVIEWER = {Zach\ Teitler},
       DOI = {10.24033/asens.2109},
       URL = {https://doi.org/10.24033/asens.2109},
}

@article {Oko96,
    AUTHOR = {Okounkov, Andrei},
     TITLE = {Brunn-{M}inkowski inequality for multiplicities},
   JOURNAL = {Invent. Math.},
  FJOURNAL = {Inventiones Mathematicae},
    VOLUME = {125},
      YEAR = {1996},
    NUMBER = {3},
     PAGES = {405--411},
      ISSN = {0020-9910,1432-1297},
   MRCLASS = {58F05 (14L30 52B11 58F06)},
  MRNUMBER = {1400312},
       DOI = {10.1007/s002220050081},
       URL = {https://doi.org/10.1007/s002220050081},
}

@article {KK12,
    AUTHOR = {Kaveh, Kiumars and Khovanskii, A. G.},
     TITLE = {Newton-{O}kounkov bodies, semigroups of integral points,
              graded algebras and intersection theory},
   JOURNAL = {Ann. of Math. (2)},
  FJOURNAL = {Annals of Mathematics. Second Series},
    VOLUME = {176},
      YEAR = {2012},
    NUMBER = {2},
     PAGES = {925--978},
      ISSN = {0003-486X,1939-8980},
   MRCLASS = {52C07 (14M25 20M14 52B20)},
  MRNUMBER = {2950767},
MRREVIEWER = {Alexander\ A.\ Borisov},
       DOI = {10.4007/annals.2012.176.2.5},
       URL = {https://doi.org/10.4007/annals.2012.176.2.5},
}

@incollection {Gre98,
    AUTHOR = {Green, Mark L.},
     TITLE = {Generic initial ideals},
 BOOKTITLE = {Six lectures on commutative algebra ({B}ellaterra, 1996)},
    SERIES = {Progr. Math.},
    VOLUME = {166},
     PAGES = {119--186},
 PUBLISHER = {Birkh\"auser, Basel},
      YEAR = {1998},
      ISBN = {3-7643-5951-X},
   MRCLASS = {13F20 (13P10)},
  MRNUMBER = {1648665},
MRREVIEWER = {J.\ M.\ Landsberg},
}

@article {FKL16,
    AUTHOR = {Fulger, Mihai and Koll\'ar, J\'anos and Lehmann, Brian},
     TITLE = {Volume and {H}ilbert function of {$\mathbb{R}$}-divisors},
   JOURNAL = {Michigan Math. J.},
  FJOURNAL = {Michigan Mathematical Journal},
    VOLUME = {65},
      YEAR = {2016},
    NUMBER = {2},
     PAGES = {371--387},
      ISSN = {0026-2285,1945-2365},
   MRCLASS = {14C20},
  MRNUMBER = {3510912},
MRREVIEWER = {Montserrat\ Teixidor i Bigas},
       DOI = {10.1307/mmj/1465329018},
       URL = {https://doi.org/10.1307/mmj/1465329018},
}

@article {BKS04,
    AUTHOR = {Bauer, Thomas and K\"uronya, Alex and Szemberg, Tomasz},
     TITLE = {Zariski chambers, volumes, and stable base loci},
   JOURNAL = {J. Reine Angew. Math.},
  FJOURNAL = {Journal f\"ur die Reine und Angewandte Mathematik. [Crelle's
              Journal]},
    VOLUME = {576},
      YEAR = {2004},
     PAGES = {209--233},
      ISSN = {0075-4102,1435-5345},
   MRCLASS = {14C20 (14J26 14J28)},
  MRNUMBER = {2099205},
MRREVIEWER = {Flaminio\ Flamini},
       DOI = {10.1515/crll.2004.090},
       URL = {https://doi.org/10.1515/crll.2004.090},
}

@article {KLM13,
    AUTHOR = {K\"uronya, Alex and Lozovanu, Victor and Maclean, Catriona},
     TITLE = {Volume functions of linear series},
   JOURNAL = {Math. Ann.},
  FJOURNAL = {Mathematische Annalen},
    VOLUME = {356},
      YEAR = {2013},
    NUMBER = {2},
     PAGES = {635--652},
      ISSN = {0025-5831,1432-1807},
   MRCLASS = {14C20},
  MRNUMBER = {3048610},
MRREVIEWER = {Halszka\ Tutaj-Gasi\'nska},
       DOI = {10.1007/s00208-012-0859-0},
       URL = {https://doi.org/10.1007/s00208-012-0859-0},
}

@book {Gru07,
    AUTHOR = {Gruber, Peter M.},
     TITLE = {Convex and Discrete Geometry},
    SERIES = {Grundlehren der mathematischen Wissenschaften},
    VOLUME = {336},
 PUBLISHER = {Springer, Berlin},
      YEAR = {2007},
     PAGES = {xiv+578},
      ISBN = {978-3-540-71132-2},
   MRCLASS = {52-02 (11-02 46B20 49-02 49J53 90C25)},
  MRNUMBER = {2335496},
MRREVIEWER = {Aleksandr\ Koldobsky},
}

@article {HHT02,
    AUTHOR = {Herzog, J\"urgen and L\^e{} Tu\^an Hoa and Ng\^o{} Vi\^et
              Trung},
     TITLE = {Asymptotic linear bounds for the {C}astelnuovo-{M}umford
              regularity},
   JOURNAL = {Trans. Amer. Math. Soc.},
  FJOURNAL = {Transactions of the American Mathematical Society},
    VOLUME = {354},
      YEAR = {2002},
    NUMBER = {5},
     PAGES = {1793--1809},
      ISSN = {0002-9947,1088-6850},
   MRCLASS = {13D45},
  MRNUMBER = {1881017},
MRREVIEWER = {P.\ Schenzel},
       DOI = {10.1090/S0002-9947-02-02932-X},
       URL = {https://doi.org/10.1090/S0002-9947-02-02932-X},
}

@article {KV89,
    AUTHOR = {Katz, Daniel and Verma, Jugal K.},
     TITLE = {Extended {R}ees algebras and mixed multiplicities},
   JOURNAL = {Math. Z.},
  FJOURNAL = {Mathematische Zeitschrift},
    VOLUME = {202},
      YEAR = {1989},
    NUMBER = {1},
     PAGES = {111--128},
      ISSN = {0025-5874,1432-1823},
   MRCLASS = {13H15 (13H10)},
  MRNUMBER = {1007742},
MRREVIEWER = {Eduard\ Bod'a},
       DOI = {10.1007/BF01180686},
       URL = {https://doi.org/10.1007/BF01180686},
}

@article {Tru01,
    AUTHOR = {Trung, Ng\^o{} Vi\^et},
     TITLE = {Positivity of mixed multiplicities},
   JOURNAL = {Math. Ann.},
  FJOURNAL = {Mathematische Annalen},
    VOLUME = {319},
      YEAR = {2001},
    NUMBER = {1},
     PAGES = {33--63},
      ISSN = {0025-5831,1432-1807},
   MRCLASS = {13H15 (13A02 13A30 13D40)},
  MRNUMBER = {1812818},
MRREVIEWER = {Catalin\ Ciuperca},
       DOI = {10.1007/PL00004429},
       URL = {https://doi.org/10.1007/PL00004429},
}

@article {Bha57,
    AUTHOR = {Bhattacharya, P. B.},
     TITLE = {The {H}ilbert function of two ideals},
   JOURNAL = {Proc. Cambridge Philos. Soc.},
  FJOURNAL = {Proceedings of the Cambridge Philosophical Society},
    VOLUME = {53},
      YEAR = {1957},
     PAGES = {568--575},
      ISSN = {0008-1981},
   MRCLASS = {09.3X},
  MRNUMBER = {89835},
MRREVIEWER = {H.\ T.\ Muhly},
}

@article {BCG+16,
    AUTHOR = {Bocci, Cristiano and Cooper, Susan and Guardo, Elena and
              Harbourne, Brian and Janssen, Mike and Nagel, Uwe and
              Seceleanu, Alexandra and Van Tuyl, Adam and Vu, Thanh},
     TITLE = {The {W}aldschmidt constant for squarefree monomial ideals},
   JOURNAL = {J. Algebraic Combin.},
  FJOURNAL = {Journal of Algebraic Combinatorics. An International Journal},
    VOLUME = {44},
      YEAR = {2016},
    NUMBER = {4},
     PAGES = {875--904},
      ISSN = {0925-9899,1572-9192},
   MRCLASS = {13F20 (13A02 13F55 14N05 14N20)},
  MRNUMBER = {3566223},
MRREVIEWER = {Christopher\ A.\ Francisco},
       DOI = {10.1007/s10801-016-0693-7},
       URL = {https://doi.org/10.1007/s10801-016-0693-7},
}

@article {Cut86,
    AUTHOR = {Cutkosky, Steven Dale},
     TITLE = {Zariski decomposition of divisors on algebraic varieties},
   JOURNAL = {Duke Math. J.},
  FJOURNAL = {Duke Mathematical Journal},
    VOLUME = {53},
      YEAR = {1986},
    NUMBER = {1},
     PAGES = {149--156},
      ISSN = {0012-7094,1547-7398},
   MRCLASS = {14C20 (14J40)},
  MRNUMBER = {835801},
MRREVIEWER = {Takao\ Fujita},
       DOI = {10.1215/S0012-7094-86-05309-3},
       URL = {https://doi.org/10.1215/S0012-7094-86-05309-3},
}

@article {JL23,
    AUTHOR = {Jiang, Chen and Li, Zhiyuan},
     TITLE = {Algebraic reverse {K}hovanskii-{T}eissier inequality via
              {O}kounkov bodies},
   JOURNAL = {Math. Z.},
  FJOURNAL = {Mathematische Zeitschrift},
    VOLUME = {305},
      YEAR = {2023},
    NUMBER = {2},
     PAGES = {Paper No. 26, 14},
      ISSN = {0025-5874,1432-1823},
   MRCLASS = {14C20 (14C17 14M25)},
  MRNUMBER = {4645768},
MRREVIEWER = {Pietro\ Sabatino},
       DOI = {10.1007/s00209-023-03349-9},
       URL = {https://doi.org/10.1007/s00209-023-03349-9},
}

@article {LX17,
    AUTHOR = {Lehmann, Brian and Xiao, Jian},
     TITLE = {Correspondences between convex geometry and complex geometry},
   JOURNAL = {\'Epijournal G\'eom. Alg\'ebrique},
  FJOURNAL = {\'Epijournal de G\'eom\'etrie Alg\'ebrique. EPIGA},
    VOLUME = {1},
      YEAR = {2017},
     PAGES = {Art. 6, 29},
      ISSN = {2491-6765},
   MRCLASS = {14C20 (32Q15 52A39)},
  MRNUMBER = {3743109},
MRREVIEWER = {Eugenii\ Shustin},
       DOI = {10.46298/epiga.2017.volume1.2038},
       URL = {https://doi.org/10.46298/epiga.2017.volume1.2038},
}

@article {Huh12,
    AUTHOR = {Huh, June},
     TITLE = {Milnor numbers of projective hypersurfaces and the chromatic
              polynomial of graphs},
   JOURNAL = {J. Amer. Math. Soc.},
  FJOURNAL = {Journal of the American Mathematical Society},
    VOLUME = {25},
      YEAR = {2012},
    NUMBER = {3},
     PAGES = {907--927},
      ISSN = {0894-0347,1088-6834},
   MRCLASS = {14B05 (05B35 14C17)},
  MRNUMBER = {2904577},
MRREVIEWER = {Paolo\ Aluffi},
       DOI = {10.1090/S0894-0347-2012-00731-0},
       URL = {https://doi.org/10.1090/S0894-0347-2012-00731-0},
}

@article {TV07,
    AUTHOR = {Trung, Ng\^o{} Vi\^et and Verma, Jugal K.},
     TITLE = {Mixed multiplicities of ideals versus mixed volumes of
              polytopes},
   JOURNAL = {Trans. Amer. Math. Soc.},
  FJOURNAL = {Transactions of the American Mathematical Society},
    VOLUME = {359},
      YEAR = {2007},
    NUMBER = {10},
     PAGES = {4711--4727},
      ISSN = {0002-9947,1088-6850},
   MRCLASS = {13H15 (05E99 13A30 13D40 52B20)},
  MRNUMBER = {2320648},
MRREVIEWER = {Mihai\ Cipu},
       DOI = {10.1090/S0002-9947-07-04054-8},
       URL = {https://doi.org/10.1090/S0002-9947-07-04054-8},
}

@article {HX24,
    AUTHOR = {Hu, Jiajun and Xiao, Jian},
     TITLE = {Intersection theoretic inequalities via {L}orentzian
              polynomials},
   JOURNAL = {Math. Ann.},
  FJOURNAL = {Mathematische Annalen},
    VOLUME = {390},
      YEAR = {2024},
    NUMBER = {2},
     PAGES = {2859--2896},
      ISSN = {0025-5831,1432-1807},
   MRCLASS = {14C30 (05E05 32Q15 52A40)},
  MRNUMBER = {4801842},
       DOI = {10.1007/s00208-024-02822-y},
       URL = {https://doi.org/10.1007/s00208-024-02822-y},
}

@article {GMRV23,
    AUTHOR = {Goel, Kriti and Mukundan, Vivek and Roy, Sudeshna and Verma,
              Jugal K.},
     TITLE = {Algorithms for computing mixed multiplicities, mixed volumes,
              and sectional {M}ilnor numbers},
   JOURNAL = {J. Softw. Algebra Geom.},
  FJOURNAL = {Journal of Software for Algebra and Geometry},
    VOLUME = {13},
      YEAR = {2023},
    NUMBER = {1},
     PAGES = {1--12},
      ISSN = {1948-7916},
   MRCLASS = {13-04 (13A30 13H15)},
  MRNUMBER = {4633631},
       DOI = {10.2140/jsag.2023.13.1},
       URL = {https://doi.org/10.2140/jsag.2023.13.1},
}

@article {CM22,
    AUTHOR = {Cid-Ruiz, Yairon and Monta\~no, Jonathan},
     TITLE = {Mixed multiplicities of graded families of ideals},
   JOURNAL = {J. Algebra},
  FJOURNAL = {Journal of Algebra},
    VOLUME = {590},
      YEAR = {2022},
     PAGES = {394--412},
      ISSN = {0021-8693,1090-266X},
   MRCLASS = {13H15},
  MRNUMBER = {4332034},
MRREVIEWER = {Alessandro\ De Stefani},
       DOI = {10.1016/j.jalgebra.2021.10.010},
       URL = {https://doi.org/10.1016/j.jalgebra.2021.10.010},
}

@article {CEL01,
    AUTHOR = {Cutkosky, Steven Dale and Ein, Lawrence and Lazarsfeld,
              Robert},
     TITLE = {Positivity and complexity of ideal sheaves},
   JOURNAL = {Math. Ann.},
  FJOURNAL = {Mathematische Annalen},
    VOLUME = {321},
      YEAR = {2001},
    NUMBER = {2},
     PAGES = {213--234},
      ISSN = {0025-5831,1432-1807},
   MRCLASS = {14F05 (13F20)},
  MRNUMBER = {1866486},
MRREVIEWER = {Thomas\ Bauer},
       DOI = {10.1007/s002080100220},
       URL = {https://doi.org/10.1007/s002080100220},
}

@article {CHT99,
    AUTHOR = {Cutkosky, Steven Dale and Herzog, J\"urgen and Trung, Ng\^o{}
              Vi\^et},
     TITLE = {Asymptotic behaviour of the {C}astelnuovo-{M}umford
              regularity},
   JOURNAL = {Compositio Math.},
  FJOURNAL = {Compositio Mathematica},
    VOLUME = {118},
      YEAR = {1999},
    NUMBER = {3},
     PAGES = {243--261},
      ISSN = {0010-437X,1570-5846},
   MRCLASS = {13D45 (13A30 13C99 14F17)},
  MRNUMBER = {1711319},
MRREVIEWER = {Vincenzo\ Di Gennaro},
       DOI = {10.1023/A:1001559912258},
       URL = {https://doi.org/10.1023/A:1001559912258},
}

@article {TW05,
    AUTHOR = {Trung, Ng\^o{} Vi\^et and Wang, Hsin-Ju},
     TITLE = {On the asymptotic linearity of {C}astelnuovo-{M}umford
              regularity},
   JOURNAL = {J. Pure Appl. Algebra},
  FJOURNAL = {Journal of Pure and Applied Algebra},
    VOLUME = {201},
      YEAR = {2005},
    NUMBER = {1-3},
     PAGES = {42--48},
      ISSN = {0022-4049,1873-1376},
   MRCLASS = {13D45},
  MRNUMBER = {2158746},
MRREVIEWER = {Isabel\ Bermejo},
       DOI = {10.1016/j.jpaa.2004.12.043},
       URL = {https://doi.org/10.1016/j.jpaa.2004.12.043},
}

@article {Kod00,
    AUTHOR = {Kodiyalam, Vijay},
     TITLE = {Asymptotic behaviour of {C}astelnuovo-{M}umford regularity},
   JOURNAL = {Proc. Amer. Math. Soc.},
  FJOURNAL = {Proceedings of the American Mathematical Society},
    VOLUME = {128},
      YEAR = {2000},
    NUMBER = {2},
     PAGES = {407--411},
      ISSN = {0002-9939,1088-6826},
   MRCLASS = {13D45 (14B15)},
  MRNUMBER = {1621961},
MRREVIEWER = {P.\ Schenzel},
       DOI = {10.1090/S0002-9939-99-05020-0},
       URL = {https://doi.org/10.1090/S0002-9939-99-05020-0},
}

@article {Kle66,
    AUTHOR = {Kleiman, Steven L.},
     TITLE = {Toward a numerical theory of ampleness},
   JOURNAL = {Ann. of Math. (2)},
  FJOURNAL = {Annals of Mathematics. Second Series},
    VOLUME = {84},
      YEAR = {1966},
     PAGES = {293--344},
      ISSN = {0003-486X},
   MRCLASS = {14.55 (14.10)},
  MRNUMBER = {206009},
       DOI = {10.2307/1970447},
       URL = {https://doi.org/10.2307/1970447},
}

@incollection {Tei73,
    AUTHOR = {Teissier, Bernard},
     TITLE = {Cycles \'evanescents, sections planes et conditions de
              {W}hitney},
 BOOKTITLE = {Singularit\'es \`a{} {C}arg\`ese ({R}encontre {S}ingularit\'es
              {G}\'eom. {A}nal., {I}nst. \'Etudes {S}ci., {C}arg\`ese,
              1972)},
    SERIES = {Ast\'erisque},
    VOLUME = {Nos. 7 et 8},
     PAGES = {285--362},
 PUBLISHER = {Soc. Math. France, Paris},
      YEAR = {1973},
   MRCLASS = {32C40},
  MRNUMBER = {374482},
MRREVIEWER = {J.\ A.\ Morrow},
}
\end{document}